\crefname{theorem}{Theorem}{Theorems}
\crefname{fact}{Fact}{Facts}
\crefname{note}{Note}{Notes}
\crefname{lemma}{Lemma}{Lemmas}
\crefname{alg}{Algorithm}{Algorithms}
\crefname{remark}{Remark}{Remarks}
\crefname{example}{Example}{Examples}
\crefname{prop}{Proposition}{Propositions}
\crefname{conj}{Conjecture}{Conjectures}
\crefname{cor}{Corollary}{Corollaries}
\crefname{definition}{Definition}{Definitions}
\crefname{Relation}{Relation}{Relations}
\crefname{equation}{\!\!}{\!\!} %Remove spacing around phantom equation name
\definecolor{labelkey}{rgb}{0.8, 0.0, 0.0}
\tikzstyle directed=[postaction={decorate,decoration={markings,
    mark=at position #1 with {\arrow{>}}}}]
\newcommand{\hackcenter}[1]{
 \xy (0,0)*{#1}; \endxy}
\tikzset{->-/.style={decoration={
  markings,
  mark=at position #1 with {\arrow{>}}},postaction={decorate}}}
\tikzset{middlearrow/.style={
        decoration={markings,
            mark= at position 0.5 with {\arrow{#1}} ,
        },
        postaction={decorate}
    }
}
\newcommand{\ord}{\textup{ord}}
\theoremstyle{plain}
\newtheorem{theorem}{Theorem}
\newtheorem{corollary}[theorem]{Corollary}
\newtheorem{proposition}[theorem]{Proposition}
\newtheorem{lemma}[theorem]{Lemma}
\theoremstyle{definition}
\newtheorem{example}[theorem]{Example}
\newtheorem{definition}[theorem]{Definition}
\newtheorem{conjecture}[theorem]{Conjecture}
\theoremstyle{definition}
\newtheorem{remark}[theorem]{Remark}
\numberwithin{equation}{subsection}
\numberwithin{theorem}{subsection}
\newcommand{\refequal}[1]{\xy {\ar@{=}^{#1}
(-1,0)*{};(1,0)*{}};
\endxy}
\newcommand{\Hom}{{\rm Hom}}
\renewcommand{\to}{\rightarrow}
\newcommand{\con}{\ensuremath{con}}
\renewcommand{\exp}{\ensuremath{exp}}
\def\Web{{\mathbf{Web}}}
\def\gl{\mathfrak{gl}}
\def\Webaff{\mathbf{Web}^{\textup{aff}}_{\calA}}
\def\WebAaIntro{\mathbf{Web}_{\calA}}
\def\AffWebAaIntro{\mathbf{Web}^{\textup{aff}}_{\calA}}
\def\WebAaI{\mathbf{Web}_{\calA}}
\def\AffWebAaI{\mathbf{Web}^{\textup{aff}}_{\calA}}
\def\ThinAffWebAaI{\mathbf{Web}^{\textup{aff},1}_{\calA}}
\def\KThinAffWebAaI{{}_{\KK}\mathbf{Web}^{\textup{aff},1}_{\calA}}
\def\KKThinAffWebAaI{\mathbf{Web}^{\textup{aff},1}_{\KK \otimes \calA}}
\def\WebAaIthin{\mathbf{Web}^{\textup{aff, thin}}_{\calA}}
\newcommand{\catEnd}{\bm{\mathcal{End}}}
\def\a{\mathfrak{a}}
\def\Z{{\mathbbm Z}}
\def\tr{\mathrm{tr}}%
\def \KK {\mathbbm{K}}
\def \k {\mathbbm{k}}
\def \Z {\mathbbm{Z}}
\def \Ob{\operatorname{Ob}}
\def \Id {{\rm Id}}
\newcommand{\nc}{\newcommand}
\nc\rnc{\renewcommand}
\nc\Kar{\operatorname{Kar}}
\nc\modQ {{\mathbb Q}}
\nc\modZ {{\mathbb Z}}
\nc\simeqto{\overset{\simeq}{\longrightarrow }}
\nc\K{\mathcal {K}}
\nc\CC{\mathbf{C}}
\newcommand{\modgl}{\mathrm{mod}\text{-}\gl_n(A)}
\nc\qh{\mathcal{H}}
\nc\hbm{\mathcal{B}}
\nc\bu{\mathbf{u}}
\nc\bv{\mathbf{v}}
\nc\bZ{\mathbf{Z}}
\nc\theirs{\mathrm{theirs}}
\nc\ours{\mathrm{ours}}
\nc\hE{\mathcal{\hat E}}
\nc\bK{\mathbf{K}}
\nc\bw{\mathbf{w}}
\nc\bh{\mathbf{h}}
\nc\tE{\tilde{E} }
\nc\ba{\mathbf{a}}
\nc\bb{\mathbf{b}}
\nc\bbx{\mathbf{x}}
\nc\bby{\mathbf{y}}
\def\WebLamThin{\mathbf{Web}^{\Lambda,\textup{thin}}_{\mathcal{A}}}
\def\WebAffThin{\mathbf{Web}^{\textup{aff}, \textup{thin}}_{\mathcal{A}}}
\def\WebLam{\mathbf{Web}^{\Lambda}_{\mathcal{A}}}
\def\WebLamThin{\mathbf{Web}^{\Lambda,\textup{thin}}_{\mathcal{A}}}
\nc{\bs}{\mathbf{s}}
\nc{\bc}{\mathbf{c}}
\nc{\bL}{\mathbf{L}}
\nc{\bM}{\mathbf{m}}
\nc{\br}{\mathbf{r}}
\nc{\bbf}{\mathbf{f}}
\nc{\bbg}{\mathbf{g}}
\nc{\bbh}{\mathbf{h}}
\nc{\bt}{\mathbf{t}}
\nc{\bx}{\mathbf{x}}
\nc{\by}{\mathbf{y}}
\nc{\bp}{\mathbf{p}}
\nc{\bq}{\mathbf{q}}
\DeclareMathAlphabet{\mathpzc}{OT1}{pzc}{m}{it}
\newcommand{\End}{\operatorname{End}}
\renewcommand{\SS}{\mathfrak{S}}
\newcommand{\bi}{\mathbf{i}}
\newcommand{\bj}{\mathbf{j}}
\newcommand{\bk}{\mathbf{k}}
\newcommand{\BasisB}{\mathtt{B}}
\newcommand{\BasisP}{\mathtt{P}}
\newcommand{\basisb}{\mathtt{b}}
\newcommand{\bnu}{\boldsymbol{\nu}}
\nc{\bom}{\boldsymbol{\omega}}
\nc{\bsi}{\boldsymbol{\sigma}}
\nc{\bmu}{\boldsymbol{\mu}}
\nc{\bz}{\mathbf{z}}
\newcommand{\Cliff}{\operatorname{Cl}}
\newcommand{\bd}{\mathbf{d}}
\newcommand{\0}{\bar{0}}
\renewcommand{\1}{\bar{1}}
\renewcommand{\WebAaI}{\mathbf{Web}_{\calA}}
\renewcommand{\AffWebAaI}{\mathbf{Web}^{\textup{aff}}_{\calA}}
\newcommand{\nakayamai}{i}
\newcommand{\nakayamaj}{j}
\newcommand{\nakayamak}{k}
\newcommand{\nakayamal}{\ell}
\newcommand{\nakayamaf}{{}_{1}\hspace{-0.4mm}f}
\newcommand{\nakayamag}{{}_{1}g}
\newcommand{\nakayamamu}{{}_{1}\mu}
\newcommand{\nakayamab}{{}_{1}\hspace{-0.2mm}b}
\newcommand{\nakayamac}{{}_{1}\hspace{-0.3mm}c}
\newcommand{\BasisC}{\mathtt{C}}
\newcommand{\fh}{\mathfrak{h}}
\newcommand{\fb}{\mathfrak{b}}
\newcommand{\fu}{\mathfrak{u}}
\newcommand{\gr}{\operatorname{gr}}
\newcommand{\calA}{\mathcal{A}}
\newcommand{\calM}{\mathcal{M}}
\newcommand{\KAffWebAI}{\mathbf{Web}^{\textup{aff}}_{\KK \otimes \calA}}
\newcommand{\id}{\Id}
\newcommand{\catC}{\bm{\mathcal{C}}}
\newcommand{\catD}{\bm{\mathcal{D}}}
\title[Superalgebra deformations of web categories: Affine and cyclotomic webs]{
Superalgebra deformations of web categories:\\ Affine and cyclotomic webs}
\begin{document}
\setcounter{tocdepth}{2}

\author{Nicholas Davidson}
\email{davidsonnj@cofc.edu}
\address{Department of Mathematics\\ College of Charleston \\ Charleston, SC, USA}

\author{Jonathan R. Kujawa}
\email{kujawaj@oregonstate.edu}
\address{Department of Mathematics\\ Oregon State University \\ Corvallis, OR, USA}

\author{Robert Muth}
\email{muthr@duq.edu}
\address{Department of Mathematics and Computer Science \\ Duquesne University \\ Pittsburgh, PA, USA}

\date{\today}

\begin{abstract}
Let \(\k\) be a characteristic zero domain.  
We define and study a diagrammatic monoidal \(\k\)-linear supercategory \(\AffWebAaI\) associated to any locally unital Frobenius \(\k\)-superalgebra \(A\).
% For the data \(\mathcal{A} = (A, \a, I)\), where \(A\) is a locally unital Frobenius \(\k\)-superalgebra \(A\) with even subalgebra \(\a \subseteq A_{\bar 0}\) and distinguished idempotents \(I\), we define and study an associated diagrammatic monoidal \(\k\)-linear supercategory \(\AffWebAaI\). 
This category can be viewed variously as an {\em affinization} of the finite web category \(\WebAaI\) previously defined by the authors and Zhu, as a {\em thickening} of the degenerate affine wreath product algebras defined by Savage, or as a {\em Frobenius deformation} of affine web categories defined by Song and Wang. 
We show that there is an asymptotically faithful family of functors from \(\AffWebAaI\) to the monoidal supercategory of endofunctors of \(\gl_n(A)\)-modules for every $n \geq 1$, and use this to establish a basis of `decorated double coset diagrams' for morphism spaces in \(\AffWebAaI\).   We also define and establish basis results for the cyclotomic quotient category \(\WebLam\) associated with a cyclotomic datum $\Lambda$.  \end{abstract}

\maketitle

\setcounter{tocdepth}{1}
\tableofcontents

\section{Introduction}

\subsection{Overview}  
  The goal of the present paper is to pull together several strands of modern representation theory.  Throughout, we work over an characteristic zero domain $\k$.

The first strand is web categories. These are $\k$-linear diagrammatic categories first introduced to describe certain categories of representations for Lie algebras \cite{CKM,Kuperberg}.  They have since played a prominent role in Lie theory, algebraic combinatorics, combinatorial representation theory, low-dimensional topology, cluster algebras, and other areas, with the references too numerous to list.   

The second strand is the degenerate affine wreath product algebra.  Wreath product algebras naturally generalize the symmetric groups and share many of their nice representation theoretic features.  In the past few decades it became clear the symmetric groups are part of a larger story that is revealed by viewing them as a cyclotomic quotient of the degenerate affine Hecke algebra.  In particular, this point of view leads to categorical representation theory, categorification, and the algebras introduced by Khovanov--Lauda and Rouquier.  These developments motivate recent work on degenerate affine wreath product algebras and related structures (e.g., see \cite{BSW,Khovanov-Heisenberg,LaiMinets,LaiNakanoXiang,RossoSavage,SavageAff,WanWang} and references therein).

The third strand is Schurification.  Given an associative $\k$-superalgebra $A = A_{\0}\oplus A_{\1}$ and a subalgebra $\a \subseteq A_{\0}$, one can construct a $\k$-superalgebra $T_{(A,\a)}(n,d)$ for any pair of natural numbers $n$ and $d$.  Let $V=A^{\oplus n}$.  Then $T_{(A,\a)}(n,d)$ is a $\k$-form inside the algebra of endomorphisms of the wreath product $A \wr \SS_{d}$ acting in the obvious way on $V^{\otimes d}$, where the choice of subalgebra $\a$ controls the $\k$-form one obtains.  Unlike the full endomorphism algebra, the $\k$-superalgebra $T_{(A,\a)}(n,d)$ often has the well-behaved and interesting representation theory one expects from something which deserves to be called the Schur algebra associated to $A$ (e.g., see \cite{KM2,KM,KMqh,KleschevWeinschelbaum}). The importance of this algebra is underscored by the fact that, as conjectured by Turner, the RoCK blocks of the symmetric group $\SS_{d}$ over a field $\mathbb{F}$ of positive characteristic are Morita equivalent to $T_{(A,\a )}(n,d)\otimes_{\Z}\mathbb{F}$ \cite{EK,EKRoCK} when $A$ is the type {\tt A} zigzag superalgebra defined over $\k = \Z$, see \cref{zigzagcat}. Thus, these algebras control a significant portion of the modular representation theory of the symmetric groups.  Similar statements are conjecturally true for the spin representations of the symmetric groups \cite[Conjecture 1]{KleLiv} and the classical Schur algebras \cite[Conjecture 7.58]{KMqh}.

\subsection{Finite webs}   Let $A = A_{\0} \oplus A_{\1}$ be a locally unital superalgebra over $\k$, with distinguished idempotents \(I \subseteq A\).  
Let $\a \subseteq  A_{\0}$ be locally unital even subalgebra, and assume there exists a $\k$-basis for $\a$ that can be extended to a $\k$-basis of $A$.  This is, of course, automatic if $\k$ is a field and is a mild assumption in general.  Write $\calA$ for the data $(A,\a)_I$.

In \cite{DKMZ} we introduced the web category $\WebAaIntro$ associated to this data.  This is a diagrammatic $\k$-linear monoidal category defined via generators and relations.  The objects are tuples of symbols of the form \(i^{(x)}\), where \(i \in I\) and \(x \in \Z_{\geq 0}\), with concatenation of words providing the monoidal product.  The generating morphisms of \(\WebAaIntro\) are given by the diagrams:
\begin{align}\label{introgen1}
\hackcenter{
\begin{tikzpicture}[scale=.8]
  \draw[ultra thick,blue] (0,0)--(0,0.2) .. controls ++(0,0.35) and ++(0,-0.35) .. (-0.4,0.9)--(-0.4,1);
  \draw[ultra thick,blue] (0,0)--(0,0.2) .. controls ++(0,0.35) and ++(0,-0.35) .. (0.4,0.9)--(0.4,1);
      \node[above] at (-0.4,1) {$ \scriptstyle i^{\scriptstyle (x)}$};
      \node[above] at (0.4,1) {$ \scriptstyle i^{\scriptstyle (y)}$};
      \node[below] at (0,0) {$ \scriptstyle i^{\scriptstyle (x+y)} $};
\end{tikzpicture}},
\qquad
\qquad
\hackcenter{
\begin{tikzpicture}[scale=.8]
  \draw[ultra thick,blue ] (-0.4,0)--(-0.4,0.1) .. controls ++(0,0.35) and ++(0,-0.35) .. (0,0.8)--(0,1);
\draw[ultra thick, blue] (0.4,0)--(0.4,0.1) .. controls ++(0,0.35) and ++(0,-0.35) .. (0,0.8)--(0,1);
      \node[below] at (-0.4,0) {$ \scriptstyle i^{ \scriptstyle (x)}$};
      \node[below] at (0.4,0) {$ \scriptstyle i^{ \scriptstyle (y)}$};
      \node[above] at (0,1) {$ \scriptstyle i^{ \scriptstyle (x+y)}$};
\end{tikzpicture}},
\qquad
\qquad
\hackcenter{
\begin{tikzpicture}[scale=.8]
  \draw[ultra thick,red] (0.4,0)--(0.4,0.1) .. controls ++(0,0.35) and ++(0,-0.35) .. (-0.4,0.9)--(-0.4,1);
  \draw[ultra thick,blue] (-0.4,0)--(-0.4,0.1) .. controls ++(0,0.35) and ++(0,-0.35) .. (0.4,0.9)--(0.4,1);
      \node[above] at (-0.4,1) {$ \scriptstyle j^{ \scriptstyle (y)}$};
      \node[above] at (0.4,1) {$ \scriptstyle i^{ \scriptstyle (x)}$};
       \node[below] at (-0.4,0) {$ \scriptstyle i^{ \scriptstyle (x)}$};
      \node[below] at (0.4,0) {$ \scriptstyle j^{ \scriptstyle (y)}$};
\end{tikzpicture}},
\qquad
\qquad
\hackcenter{
\begin{tikzpicture}[scale=.8]
  \draw[ultra thick, blue] (0,0)--(0,0.5);
   \draw[ultra thick, red] (0,0.5)--(0,1);
   \draw[thick, fill=yellow]  (0,0.5) circle (7pt);
    \node at (0,0.5) {$ \scriptstyle f$};
     \node[below] at (0,0) {$ \scriptstyle i^{ \scriptstyle (z)}$};
      \node[above] at (0,1) {$ \scriptstyle j^{ \scriptstyle (z)}$};
\end{tikzpicture}},
\end{align}
for \(i,j \in I\), \(x,y \in \Z_{\geq 0}\), \(z \in \Z_{>0}\), and \(f \in jAi\) if $z=1$ and $f \in  j \a i$ if $z \geq 2$.  We call these the split, merge, crossing and coupon morphisms, respectively.  Our convention is to read diagrams from bottom to top.  Composition is given by vertical concatenation and the monoidal product is given by horizontal concatenation.  Morphisms in $\WebAaIntro$ are then $\k$-linear combinations of diagrams built by repeated concatenation of generating diagrams, subject to a fairly simple set of local relations. 

To understand a $\k$-linear monoidal category given by generators and relations, one would like an explicit $\k$-basis for each morphism space. Doing so establishes that the morphism spaces have the expected size and that they do not contain torsion.  Having an explicit basis also facilitates computations and proofs.  A standard way of obtaining such bases in this setting is to define a representation of the category that is sufficiently faithful.

We did this for $\WebAaIntro$ in \cite{DKMZ}.  In that paper, we showed that for every $n \geq 1$, there exists a  functor
\[
G_{n}: \WebAaIntro \to \modgl, 
\] where $\modgl$ is the category of right supermodules for the Lie superalgebra $\gl_{n}(A)$.   On objects, $G_n(i^{(x)}) = S^x({}_iV)$, where ${}_iV = (iA)^{\oplus n}$ is the natural right (\(i\)-colored) supermodule for $\gl_n(A)$, and $S^x({}_iV)$ is its $x$th symmetric power.  On generating morphisms, the merge and split webs  are sent to homomorphisms induced by the usual product and coproducts on the symmetric superalgebra of ${}_iV$, while the coupon labeled by $f\in jAi$ is sent to the homomorphism given by left multiplication by $f$.  In \cref{Gthm} below we recall this functor more precisely.  

We also showed that the family of functors $\{G_n \}_{n \geq 1}$ is \emph{asympotically faithful} in the sense that, for any nonzero morphism $\phi$ in $\WebAaIntro$, there exists an $N$ so that $G_n(\phi) \neq 0$ for $n \geq N$.  This allowed us to determine a $\k$-basis for the morphism spaces of $\WebAaIntro$.  We also proved that $T_{(A,\a)}(n,d)$ appears naturally in $\WebAaIntro$ as an explicit sum of morphism spaces.  In particular, this establishes a diagrammatic description for $T_{(A,\a)}(n,d)$.  In addition, we obtained a Howe duality for the pair $\left( \gl_{m}(A), \gl_{n}(A)\right)$ under the assumption that $\k$ is a field and $A$ is semisimple, generalizing Howe dualities that appear in the literature.

\subsection{Affine webs}
Assume that $\calA = (A,\a)_I$ is as above.  In addition, assume that $A$ is a Frobenius superalgebra (we in fact require the slightly more strict assumption that \((A,\a)_I\) constitutes a `great pair'; see \cref{S:PlurimoreFrobenius} for further details and examples). 
In the present paper we introduce and study a (degenerate) affine web category $\AffWebAaIntro$ attached to the data $\calA$, along with their cyclotomic quotients.   This amounts to adding additional generators to $\WebAaIntro$,
\begin{align}\label{introgen2}
\hackcenter{
\begin{tikzpicture}[scale=.8]
  \draw[ultra thick, blue] (0,0)--(0,0.5);
   \draw[ultra thick, blue] (0,0.5)--(0,1);
   \draw[thick, fill=black]  (0,0.5) circle (5pt);
     \node[below] at (0,0) {$ \scriptstyle i^{(z)}$};
      \node[above] at (0,1) {$ \scriptstyle i^{(z)}$};
\end{tikzpicture}},
\end{align} for every $i \in I, z \in \Z_{\geq 1}$.  We call these generators `affine dots'. There are, of course, additional local relations to describe how these generators interact with the generating morphisms from $\WebAaIntro$.  See \cref{defwebaa} for details. Morphisms in \(\AffWebAaI\) are \(\k\)-linear combinations of diagrams which locally look like \cref{introgen1,introgen2}.
For example, if \(i,j \in I\), \(\alpha \in iAj\), \(\beta \in i \a i\) and \(\gamma \in j \a i\), then  
a representative morphism in \(\AffWebAaI(i^{(2)} j^{(4)} i^{(2)}, j^{(5)}i^{(3)})\) is shown below.
\begin{align*}
\\
\hackcenter{}
\hackcenter{
\begin{overpic}[height=27mm]{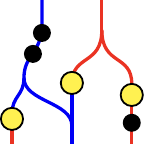}
  \put(8,-1){\makebox(0,0)[t]{$\scriptstyle i^{(\hspace{-0.2mm}2\hspace{-0.2mm})}$}}    
    \put(50,-1){\makebox(0,0)[t]{$\scriptstyle j^{(\hspace{-0.2mm}4\hspace{-0.2mm})}$}}    
      \put(92,-1){\makebox(0,0)[t]{$\scriptstyle i^{(\hspace{-0.2mm}2\hspace{-0.2mm})}$}}    
      \put(29,101){\makebox(0,0)[b]{$\scriptstyle j^{(\hspace{-0.2mm}5\hspace{-0.2mm})}$}}  
      \put(71,101){\makebox(0,0)[b]{$\scriptstyle i^{(\hspace{-0.2mm}3\hspace{-0.2mm})}$}}  
       \put(8.2,17){\makebox(0,0)[]{$\scriptstyle \gamma$}}  
       \put(50,41.8){\makebox(0,0)[]{$\scriptstyle \alpha$}}  
        \put(91.5,34){\makebox(0,0)[]{$\scriptstyle \beta$}}  
       \put(4,34){\makebox(0,0)[]{$\scriptstyle j^{(\hspace{-0.2mm}2\hspace{-0.2mm})}$}}  
        \put(32,20){\makebox(0,0)[]{$\scriptstyle j^{(\hspace{-0.2mm}3\hspace{-0.2mm})}$}}   
           \put(61,30){\makebox(0,0)[]{$\scriptstyle j^{(\hspace{-0.2mm}1\hspace{-0.2mm})}$}}   
           \put(55,63){\makebox(0,0)[]{$\scriptstyle i^{(\hspace{-0.2mm}1\hspace{-0.2mm})}$}}   
\end{overpic}
}
\;
+
\;\;
\hackcenter{
\begin{overpic}[height=27mm]{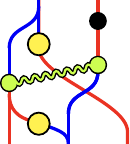}
  \put(8,-1){\makebox(0,0)[t]{$\scriptstyle i^{(\hspace{-0.2mm}2\hspace{-0.2mm})}$}}    
    \put(50,-1){\makebox(0,0)[t]{$\scriptstyle j^{(\hspace{-0.2mm}4\hspace{-0.2mm})}$}}    
      \put(92,-1){\makebox(0,0)[t]{$\scriptstyle i^{(\hspace{-0.2mm}2\hspace{-0.2mm})}$}}    
      \put(29,101){\makebox(0,0)[b]{$\scriptstyle j^{(\hspace{-0.2mm}5\hspace{-0.2mm})}$}}  
      \put(71,101){\makebox(0,0)[b]{$\scriptstyle i^{(\hspace{-0.2mm}3\hspace{-0.2mm})}$}}  
       \put(27,13.5){\makebox(0,0)[]{$\scriptstyle \alpha$}}  
        \put(27,69){\makebox(0,0)[]{$\scriptstyle \gamma$}}  
       \put(19,28){\makebox(0,0)[]{$\scriptstyle i^{(\hspace{-0.2mm}1\hspace{-0.2mm})}$}}  
           \put(59,22){\makebox(0,0)[]{$\scriptstyle j^{(\hspace{-0.2mm}3\hspace{-0.2mm})}$}}   
                  \put(8,84){\makebox(0,0)[]{$\scriptstyle j^{(\hspace{-0.2mm}3\hspace{-0.2mm})}$}}   
\end{overpic}
}
\\
\end{align*}
The squiggly green decoration is called a `teleporter' and is shorthand for a certain collection of coupon morphisms (see \cref{teleportsec}). Here, nomen est omen: these teleporters satisfy remarkable properties (\cref{telerel}) inherited from the Frobenius structure of \(A\). Our teleporters can be viewed as thickened analogues of those in \cite{BSW}.

\subsection{Main results}  Let $\catEnd (\modgl)$ be the category of endofunctors of the category of $\gl_{n}(A)$-modules.  For $n \geq 1$, in \cref{Hthm} we establish a monoidal functor, 
\[
H_{n}: \AffWebAaIntro \to \catEnd (\modgl).
\] On generating objects, $H_{n}(x) = - \otimes S^{x}({}_iV)$.  On generating morphisms the split, merge, crossing and coupon go to the obvious natural transformations that act on the symmetric powers, while the affine dots go to a map defined using a `thick' Casimir tensor.  We call this the \emph{(symmetric) defining representation} of $\AffWebAaIntro$.   Moreover, we prove in Theorem~\ref{defrepind77} that this family of functors is asymptotically faithful, allowing us to determine a $\k$-basis of `decorated double coset diagrams' for the morphism spaces of $\AffWebAaIntro$ in \cref{affwebbasis}.

We also consider cyclotomic quotients of $\AffWebAaIntro$.  Given a {\em regular} cyclotomic datum \(\Lambda\) of level \(\ell\), the cyclotomic quotient category \(\WebLam\) is defined in \cref{cycquosecmain}, after which we establish that the morphism spaces are free $\k$-modules with a basis consisting of those diagrams with fewer than \(\ell\) affine dots on a given strand.  The regularity condition on our cyclotomic data is inconvenient to verify in specific examples, but it is required to establish our basis results when working integrally (over $\k$).  After proving our basis results, we finish \cref{cycquosecmain} by establishing the regularity condition holds in several important cases, including the case where \(\k\) is a field of characteristic zero, or \(\a = A_{\bar 0}\), or \(A\) is a zigzag algebra.  Since these cover the examples of greatest interest to the authors, we leave the general case of when the regularity holds as an open question.

\subsection{Related works} 
As noted in this introduction, \(\AffWebAaI\) can be be considered as an \emph{affinization} of the finite web category \(\WebAaI\), and up to Morita equivalence, {\em affinization} of the Schurification \(T_{(A,\a)}(n,d)\). In \cite{SavageAff}, Savage constructed {\em (degenerate) affine wreath product algebras} associated to Frobenius algebras \(A\), and their cyclotomic quotients. Our categories \(\AffWebAaI\) and \(\WebLam\) can alternately be viewed as a {\em thickening} of these objects.  In particular, Savage's constructions live inside these categories as thin-strand subcategories, see \cref{degenaffcon,degencycconn}.

In independent work, Song--W.\ Wang introduce the affine web category and affine Schur category in \cite{SongWang-Schur,SongWang-webs} and study their relation to cyclotomic Schur algebras and finite W-algebras.  Their starting point is the same as ours in the special case $A=\k$, but we have different goals in mind.  For example, they only give the (skew-symmetric) defining representation under the assumption that $\k$ is a field; conversely, we do not consider connections to finite $W$-algebras.  Shen--Song--W.\ Wang extend some of these results to the quantum setting in \cite{ShenSongWang-quantumwebs}.  Likewise, generalizing work from \cite{BrKu,BCK}, Song--X.\ Wang independently introduce type $Q$ affine webs in \cite{SongWang-Qwebs} which corresponds to the current paper when  $A=\Cliff_{1}$,  the Clifford superalgebra on one generator. On a related front, in the forthcoming work \cite{BrundanIvanov}, Brundan-Ivanov study diagrammatics for the degenerate affine Schur category modified to incorporate symmetric polynomials on thick strands, and study the diagrammatic image of the Yangian associated to \(\mathfrak{gl}_n(\k)\).
Other related works appear in \cite{BSW,LaiMinets,MaksimauMinets}, see below.

\subsection{Further directions}
This paper establishes the foundations of affine $A$-webs and their cyclotomic qoutients, and sets the stage for a number of areas of research.

As discussed above, finite webs allow for a diagrammatic description of algebras which are Morita equivalent to the RoCK blocks of the symmetric groups and (conjecturally) related structures in positive characteristic.  This paper is part of a program to prove these conjectures as well as suitably modified versions at affine and higher cyclotomic levels.  It is worth mentioning that the generators-and-relations presentation and $\k$-bases for $\AffWebAaIntro$ and its cyclotomic quotients established here will be an essential ingredient in that work. In particular, we propose the following, which generalizes the Morita equivalence of \cite[Corollary 6.17]{KMaffzig} to the arbitrary characteristic setting.

\begin{conjecture}\label{KLRconj}
Let \({\tt X} \in \{{\tt A}_n, {\tt D}_n, {\tt E}_{6,7,8}\}\), and take \(\mathcal{A}\) to be the data of the zigzag algebra \(\bar{Z}({\tt X})\) as described in \cref{zigzagcat}. Then the affine web category \(\AffWebAaI\) is equivalent to the category \(\bigoplus_{d =0}^\infty C_{d\delta} \textup{-mod}\) of finitely-generated {\em imaginary KLR} modules in affine type \({\tt X}^{(1)}\) (see \cite[\S2.6]{KMaffzig}).
\end{conjecture}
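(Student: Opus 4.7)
The strategy is to construct a monoidal $\k$-linear functor $F : \AffWebAaI \to \bigoplus_{d \geq 0} C_{d\delta}\textup{-mod}$ (viewing the target as a monoidal category under parabolic induction) and then verify that $F$ is an equivalence using the basis result of \cref{affwebbasis} on the source and the analogous PBW-type basis for the imaginary Khovanov--Lauda--Rouquier algebras $C_{d\delta}$ on the target. The starting point is the Morita equivalence of \cite[Corollary 6.17]{KMaffzig}, which in characteristic zero identifies $C_{d\delta}\textup{-mod}$ with an endomorphism category controlled by the zigzag algebra $\bar Z({\tt X})$ via an explicit minimal projective generator. The first step is to promote that generator to a family of `thick' projective generators indexed by compositions, parametrizing symmetric powers of the natural module on the web side; the corresponding idempotent truncations are the natural targets for $F(i^{(x)})$.

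\textbf{Construction of the functor.} Once the object map is in place, I would define $F$ on generators by sending: the merge/split morphisms to the canonical structure maps relating the `thickened' projectives to their constituents; the crossing to the KLR crossing (suitably averaged across thick strands); coupons labeled by $f\in j\a i$ (or $f\in jAi$ for thin strands) to elements obtained by composing the zigzag generators in the Morita generator; teleporters to the Frobenius-dual sums forced by the Nakayama form on $\bar Z({\tt X})$; and finally, the affine dots to the imaginary polynomial generators identified in \cite{KMaffzig}, thickened via elementary symmetric functions on groups of adjacent strands. Checking the local relations of \cref{defwebaa} then becomes a finite case-by-case verification against the explicit relations of the imaginary KLR algebra; the teleporter relations of \cref{telerel} are precisely what the Frobenius structure of $\bar Z({\tt X})$ dictates on the imaginary side, so this should match on the nose.

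\textbf{Full faithfulness and essential surjectivity.} To see $F$ is essentially surjective one notes that the images of the thick idempotents, together with parabolic induction (i.e.\ horizontal concatenation) and polynomial action (i.e.\ affine dots), generate all of $\bigoplus_d C_{d\delta}\textup{-mod}$ by the same arguments used in \cite{KMaffzig}. For full faithfulness, the asymptotically faithful defining representation of \cref{Hthm} together with the decorated-double-coset basis of \cref{affwebbasis} reduces the problem to comparing two spanning sets of the same predicted size; the imaginary KLR side admits a matching PBW-type basis (polynomial generators times coset representatives times zigzag decorations) of this size. The main obstacle, in my view, will be this last dimension comparison done integrally: one must promote the characteristic-zero Morita equivalence of \cite{KMaffzig} to a statement over an arbitrary characteristic-zero domain $\k$, which requires showing both that the relevant idempotents lift and that the regularity condition of \cref{cycquosecmain} propagates appropriately to all cyclotomic levels $\ell$. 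Once that integrality is secured, the equivalence $F$ should follow from a Nakayama-style lemma applied to the generating thick idempotents.
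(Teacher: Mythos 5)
The statement you are addressing is labeled a \emph{conjecture} in the paper, not a theorem: the authors explicitly write that they ``propose the following,'' and \cref{KLRconj} is precisely the open problem this paper's machinery is intended to eventually attack. There is therefore no ``paper's own proof'' to compare against, and your text should not be mistaken for one either --- it is a strategy sketch, as your repeated hedges (``should match,'' ``I would,'' ``should follow'') make clear.

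As a \emph{strategy}, your outline is sensible and broadly consistent with what one reads between the lines of the paper: construct a monoidal functor on generators, verify relations, and then compare bases using \cref{affwebbasis} on the web side. You correctly identify the two load-bearing difficulties. But neither is actually discharged in your proposal, and both are genuine gaps rather than formalities. First, the assertion that the imaginary KLR algebra $C_{d\delta}$ admits a ``matching PBW-type basis \ldots\ of this size'' is not currently a theorem for general $\tt X$ beyond the ${\tt A}_1$ case settled by Maksimau--Minets; the paper itself flags that the analogous claim for coil Schur algebras is \emph{conjectural} (citing \cite[Conjecture 8.3]{LaiMinets}), so the dimension comparison you need for full faithfulness is not available to quote. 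Second, the step you describe as ``a finite case-by-case verification'' of the relations \cref{AssocRel}--\cref{AaIntertwine} and \cref{AffDotRel1}--\cref{DotCrossRel} against the imaginary KLR presentation is not routine: the paper's teleporter calculus (\cref{telerel}) is a nontrivial piece of technology developed precisely because these identities are subtle, and you have not exhibited the candidate images of the thick affine dot or the thick teleporter on the KLR side, let alone checked \cref{DotCrossRel}. Your identification of the integrality issue (lifting the Morita equivalence of \cite[Corollary 6.17]{KMaffzig} from characteristic zero to a domain $\k$, and propagating regularity in the sense of \cref{cycquosecmain}) is likewise the right concern, but the sentence offers no mechanism for resolving it. In short, this is a plausible roadmap --- and likely close to the intended one --- but it does not constitute a proof, and the paper does not provide one either; the conjecture remains open.
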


Work related to \cref{KLRconj} has appeared elsewhere in the literature. The case \({\tt A}_1\) was solved in \cite{MaksimauMinets}, and we expect our \(\AffWebAaI\) category should be connected to the {\em curve Schur algebras} \(\mathcal{S}^{\mathbb{P}^1}_n\) defined therein when one takes \(A = \k[x]/(x^2)\), \(\a = \k\). In \cite{LaiMinets}, the authors have defined {\em coil Schur algebras} which also conjecturally describe imaginary KLR strata---we expect that (for certain parameter choices),  our \(\AffWebAaI\) provides a diagrammatic, generators-and-relations presentation for their algebras up to Morita equivalence (see \cite[Conjecture 8.3, Remark 6.11]{LaiMinets}). It would be interesting to fully delineate the connections to all the objects above.

An integral codeterminant basis is presented for finite webs for $A=\k$ in \cite{BEPO}.  See \cite{Elias-LightLadders} for a similar basis.  These bases, in particular, show these categories have the structure of an object-adapted cellular category as defined in \cite{EliasLauda}.  In light of the results of \cite{KMqh} --- where it is shown that if $A$ is based quasi-hereditary, then $T_{(A,\a)}(n,d)$ is again based quasi-hereditary via a generalized codeterminant basis --- it is natural to speculate that the bases of \cite{BEPO,Elias-LightLadders} should generalize to $\AffWebAaIntro$ and its cyclotomic quotients when $\calA$ is based quasi-hereditary.

When $A$ is Frobenius, one can also define oriented webs.  This would be a thick version of the Heisenberg category studied in \cite{BSW}. As the authors discuss in Remark~10.8 of \emph{loc.\  cit.}, such a thick calculus could be helpful in computing Grothendieck rings when the base ring is an integral domain.  Another interesting problem would be to define quantized affine Frobenius webs using the quantum wreath products from \cite{LaiNakanoXiang}.

\subsection{Acknowledgements}  The main results of this paper were obtained by the authors while supported by the American Institute of Mathematics SQuaRE program.  We are pleased to thank them for providing a productive environment for our collaboration.  The second author was partially supported by the Simons Foundation (Grants No.\ 963912 and 525043).  The third author was partially supported by an AMS-Simons PUI Research Enhancement Grant, and by ICERM while in residence for the program Categorification and Computation in Algebraic Combinatorics. 

\section{Frobenius superalgebras}\label{S:PlurimoreFrobenius}

\subsection{Frobenius great pairs}

Let $\k$ be an characteristic zero domain viewed as an associative superalgebra concentrated in parity $\0 \in \Z_{2}=\Z /2\Z$.  Let $\KK$ be the field of fractions for $\k$, also viewed as a superalgebra concentrated in parity $\0$.  Unless otherwise noted, unlabelled tensor products of $\k$-supermodules are over $\k$: $M \otimes N = M \otimes_{\k} N$.

The prefix ``super'' indicates a $\Z_{2}$-grading and that signs are introduced according to the ``sign rule'' for graded objects.   We often leave the prefix ``super'' implicit.  We assume the reader is familiar with the basics of (locally unital) associative $\k$-superalgebras, supermodules, supermodule homomorphisms, (monoidal) supercategories, supernatural transformations, etc., and only remark as needed to set our notations and conventions.

\subsubsection{Frobenius superalgebras}

  Let \(A = A_{\0} \oplus A_{\1}\) be a locally unital \(\k\)-superalgebra with distinguished idempotent set \(I \subseteq A_{\0 }\). In particular, $A = \oplus_{i,j \in I} jAi$.  By definition, a basis for a locally unital $\k$-superalgebra $A$ will be a homogeneous \(\k\)-basis \({}_j\BasisB_{i}\) for $jAi$, for all \(i,j \in I\). We say $A$ is \emph{locally finite} if \({}_j\BasisB_{i}\) is a finite set for all \(i,j \in I\).  Unless stated otherwise, all locally unital superalgebras will be assumed to be locally finite.  Write $\BasisB=\sqcup_{i,j \in I} ({}_j\BasisB_{i})$. Write $\BasisB_{\0}$, $\BasisB_{\1}$ for the even and odd elements of $\BasisB$, respectively.  For $k \in I$ write ${}_{k}\BasisB$ for $\sqcup_{i} \left( {}_{k}\BasisB_{i}\right)$ and write $\BasisB_{k}$ for $\sqcup_{j} \left( {}_{j}\BasisB_{k}\right)$.

By definition, an automorphism of a locally unital superalgebra \(A\) is an even superalgebra isomorphism $\eta: A \to A$ that fixes the distinguished idempotents of $A$.   For such an automorphism \(\eta\) and a right \(A\)-module \(M\), we define the right \emph{\(\eta\)-twisted} module \(M^\eta\) to be \(M\) as a \(\k\)-module, with \(A\)-action given by \(m \cdot f = m\eta(f)\) for all $f \in A$ and $m \in M$. Likewise, for a left $A$-module $M$ define the left \emph{$\eta$-twisted} module ${}^{\eta}M$ via the \(A\)-action given by \(f \cdot m = \eta(f)m\).

Let $A$ be a locally unital $\k$-superalgebra and let \(A^* = \bigoplus_{i,j \in I} (jAi)^* = \bigoplus_{i,j \in I} \Hom_\k(jAi, \k)\) denote the (local) dual.  Consider \(A^*\) as an \(A\)-bimodule via
\begin{equation}\label{E:dual-action}
(f \cdot \theta \cdot g)(h) := (-1)^{\bar f \bar \theta + \bar f \bar g + \bar f \bar h}\theta(ghf),
\end{equation}
for homogneous \(\theta \in A^*\) and homogeneous \(f,g,h \in A\).  Using this formula it is an easy calculation to show $j(A^{*})i = (i A j)^{*}$ for all $i,j \in I$.

Here and below, when given a homogeneous element $m$ of a $\k$-supermodule $M= M_{\0}\oplus M_{\1 }$, we write $\overline{m} \in \Z_{2}$ for the parity of $m$.  Note, too, that we often give a formula only on homogeneous elements and leave it understood that the general case is given by linearity.

\begin{definition}\label{D:Frobenius-Superalgebra}
If a locally unital superalgebra \(A\) admits an even $\k$-linear map \(\tr:A \to \k\) and an automorphism \(\psi: A \to A\) of locally unital superalgebras such that the map 
\begin{equation}\label{E:trace-condition3}
\phi_{\tr} : A \to {}^{\psi}\left(A^{*} \right), \quad f \mapsto \left( \phi_{tr}(f) : g \mapsto \tr(fg) \right).
\end{equation} is a parity preserving \(A\)-bimodule isomorphism, 
then we say the data $(A, \tr , \psi )$ makes $A$ a \emph{Frobenius superalgebra}.
\end{definition}

For a Frobenius superalgebra, we call $\tr$ the \emph{trace map} and $\psi$ the \emph{Nakayama automorphism}.  For brevity in upcoming formulas, write ${}_{t}a$ for $\psi^{-t}(a)$ for all $t \in \Z$ and $a \in A$.

A straightforward check verifies that $(A, \tr , \psi )$ makes $A$ a Frobenius superalgebra if and only if the  map $\varphi_{\tr} := \phi_{\tr} \circ \psi^{-1}$  given by
\[\varphi_\tr : A \to \left( A^*\right)^{\psi^{-1}}, \qquad f \mapsto (\varphi_\tr(f): g \mapsto (-1)^{\bar f \bar g}\tr(gf)).
\] is an $A$-bimodule isomorphism.  Our definition of Frobenius superalgebra is an extension of the definition used in \cite{SavageAff} to the locally unital setting.  Many of the basic results about these algebras can be found there.   For example, the following result is standard.

\begin{lemma} \label{L:nakayama} Say $A$ is a locally unital $\k$-superalgebra such that $(A, \tr , \psi)$ is a Frobenius superalgebra. Then,
\[
\tr \left(fg \right) = (-1)^{\bar{f}\bar{g}}\tr \left( g\psi(f)\right) =(-1)^{\bar{f}\bar{g}}\tr \left( \psi^{-1}(g)f\right),
\] for homogeneous $f,g \in A$.
\end{lemma}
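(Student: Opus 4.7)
The plan is to unpack the left-$A$-linearity condition on $\phi_{\tr}\colon A \to {}^{\psi}(A^*)$, specialize to extract the first identity, and then deduce the second by a substitution.

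The twisted left action on ${}^{\psi}(A^*)$ is $h \cdot \theta = \psi(h)\cdot_{A^{*}}\!\theta$, which by \eqref{E:dual-action} evaluates at $g\in A$ as $(-1)^{\bar h\bar\theta + \bar h \bar g}\theta\bigl(g\psi(h)\bigr)$. Comparing the two sides of $\phi_{\tr}(hf)(g) = (h\cdot \phi_{\tr}(f))(g)$ and using that $\phi_{\tr}$ is parity-preserving (so $\overline{\phi_{\tr}(f)} = \bar f$), one obtains, for all homogeneous $h,f,g \in A$,
\[
\tr(hfg) \;=\; (-1)^{\bar h(\bar f + \bar g)}\tr\!\bigl(fg\,\psi(h)\bigr).
\]
Specializing $f = e_m$ for a distinguished idempotent with $he_m = h$ and $e_m g = g$ (always available for composable $h,g$ by local unitality) collapses this to the first identity $\tr(hg) = (-1)^{\bar h\bar g}\tr\!\bigl(g\psi(h)\bigr)$. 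For non-composable $h,g$ both sides of that identity vanish: $\psi$ fixes the distinguished idempotents, and the auxiliary fact that $\tr$ vanishes on off-diagonal blocks $kAk'$ with $k \neq k'$ can be extracted from the displayed identity by taking $h = e_k$, $g = e_{k'}$.

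The second equality follows by substitution. Applying the first identity with $h$ replaced by $\psi^{-1}(g)$ and $g$ replaced by $f$, and noting $\overline{\psi^{-1}(g)} = \bar g$ since $\psi$ is even, yields $\tr\!\bigl(\psi^{-1}(g)f\bigr) = (-1)^{\bar f\bar g}\tr(fg)$, which rearranges to the claim. The only genuine bookkeeping obstacle is correctly tracking the signs from \eqref{E:dual-action} under the $\psi$-twist on the left; the locally unital aspect adds a routine idempotent insertion but no substantive difficulty.
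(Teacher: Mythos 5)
Your proof is correct. The paper itself gives no argument for this lemma, remarking only that it is standard, so there is nothing to compare against; your derivation is the expected one: extract the identity $\tr(hfg) = (-1)^{\bar h(\bar f+\bar g)}\tr(fg\,\psi(h))$ from left-$A$-linearity of $\phi_{\tr}$ into the $\psi$-twisted dual, insert an idempotent for $f$ to collapse to the first equality, and substitute $f\mapsto\psi^{-1}(g)$, $g\mapsto f$ to obtain the second. Your sign-tracking through \cref{E:dual-action} and the twist is accurate (using that $\psi$ is even, so $\overline{\psi(h)}=\bar h$ and $\overline{\phi_{\tr}(f)}=\bar f$), and the auxiliary observation $\tr(kAk')=0$ for $k\neq k'$ does indeed fall out of the same displayed identity by taking $h=e_k$, $g=e_{k'}$ and using orthogonality of the distinguished idempotents, so the treatment of non-composable pairs is not circular. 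One small remark: only the left-module half of the bimodule condition carries information here — the right-module condition for $\phi_{\tr}$ is tautological given \cref{E:dual-action} — so invoking just left-linearity, as you do, is exactly the right move.
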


\subsubsection{Results on Frobenius superalgebras}

Assume $(A, \tr , \psi)$ is a Frobenius $\k$-superalgebra.  Let \(\BasisB\) be a basis for $A$. Then $A^*$ has a dual basis $\{b^* \mid b \in \BasisB\}$ defined by $b^*(c) = \delta_{b,c}$ for all $b,c \in \BasisB$.  Here $\delta_{b,c}$ is the Kronecker delta function; more generally, if $P$ is a proposition, then we write $\delta_{P}$ for the function that is $1$ if $P$ is true, and $0$ if $P$ is false.

If 
\[
b^\vee := \phi_{\tr}^{-1} (b^*)
\]
for all $b \in \BasisB$, then the set $\BasisB^{\vee} = \{b^\vee \mid b \in \BasisB\}$ forms a homogeneous basis for $A$ satisfying $\tr(b^\vee c) = \delta_{b,c}$ for all \(b,c \in \BasisB\). We call $\BasisB^{\vee}$ the \emph{(left) dual basis} for $A$.

Note that the operation $c \mapsto  c^{\vee}$ is dependent on the choice of a basis.  We sometimes leave this choice implicit. For example, when we write $(b^{\vee})^{\vee}$ in the next lemma and later, this should be understood as the dual of $b^{\vee}$ with respect to the basis $\BasisB^{\vee}$.  That is, it is the element of $A$ determined by $\tr \left(\left( (b^{\vee})^{\vee} \right)\left( c^{\vee}\right)\right)=\delta_{b^{\vee},c^{\vee}}=\delta_{b,c}$ for all $c \in \BasisB$.

The following properties are straightforward to check.

\begin{lemma}\label{FrobFacts}  Let $(A, \tr , \psi)$ be a Frobenius superalgebra with basis $\BasisB$.  The following statements are true.
\begin{enumerate}
\item The trace function satisfies
\begin{align*}
\tr(jAi) = \begin{cases}
\k & \textup{if } j = i;\\
0 & \textup{otherwise}.
\end{cases}
\end{align*}
\item The map \(\varphi_\tr\) restricts to a linear isomorphism \(\varphi_{\tr} : jAi \xrightarrow{\sim} (i Aj)^*\) for all \(i,j \in I\).
\item The set \(({}_j \BasisB_i)^\vee = \{b^\vee \mid b \in {}_j \BasisB_i\}\) is a \(\k\)-basis for \(iAj\).
\item If $I \subseteq \BasisB$, then for all \(b \in \BasisB\), \(\tr(b^\vee) = \delta_{b \in I}\).
\item For all \(b \in \BasisB\), \((b^\vee)^\vee = (-1)^{\bar b}\psi^{-1}(b)\).
\item For all $b \in \BasisB$, $\psi (b^{\vee}) = \psi (b)^{\vee}$, where $\psi(b)^{\vee}$ is the dual of $\psi(b)$ defined using the basis $\left\{\psi(b) \mid b \in  \BasisB \right\}$.
\end{enumerate}
\end{lemma}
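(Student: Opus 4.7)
The plan is to extract each of (1)--(6) directly from the Frobenius bimodule isomorphism $\phi_\tr$, the action formula \eqref{E:dual-action}, and the $\psi$-symmetry of the trace provided by \cref{L:nakayama}. Because $\psi$ fixes every idempotent in $I$ pointwise, the twist on ${}^\psi A^*$ is invisible when cutting by outer idempotents, so $\phi_\tr$ restricts to a $\k$-linear iso $iAj \xrightarrow{\sim} i(A^*)j$ for every $i,j \in I$, and the action formula identifies $i(A^*)j$ canonically with $(jAi)^*$. This already yields (2) after postcomposing with $\psi^{-1}$ (which preserves each $jAi$), and gives (1) at once: for $f \in jAi$ with $j \neq i$, the functional $\phi_\tr(f)$ lies in $j(A^*)i$ and hence vanishes on the idempotent $i$, whence $\tr(f) = \tr(fi) = \phi_\tr(f)(i) = 0$; meanwhile, surjectivity onto $(iAi)^*$ produces an element of $iAi$ with trace $1$, so $\tr(iAi) = \k$.

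Part (3) then follows by applying $\phi_\tr^{-1}$ to the dual basis $\{b^* \mid b \in {}_j\BasisB_i\}$ of $(jAi)^* = i(A^*)j$, whose preimage is precisely $({}_j\BasisB_i)^\vee$ and which, by the restriction in the previous paragraph, sits inside $iAj$. For (4), assuming $I \subseteq \BasisB$, I would take $b \in {}_i\BasisB_j$ and compute
\[
\tr(b^\vee) = \tr(b^\vee \cdot i) = \phi_\tr(b^\vee)(i) = b^*(i) = \delta_{b,i},
\]
which is $1$ precisely when $b$ coincides with the idempotent basis element $i$, confirming $\tr(b^\vee) = \delta_{b \in I}$.

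For (5), the strategy is to verify that $(-1)^{\bar b}\psi^{-1}(b)$ satisfies the defining property of the dual of $b^\vee$ relative to the basis $\BasisB^\vee$. Applying the identity $\tr(fg) = (-1)^{\bar f\bar g}\tr(g\psi(f))$ from \cref{L:nakayama} with $f = \psi^{-1}(b)$ and $g = c^\vee$ yields $\tr(\psi^{-1}(b)\, c^\vee) = (-1)^{\bar b \bar c}\tr(c^\vee b) = (-1)^{\bar b \bar c}\delta_{b,c}$, and since $(-1)^{\bar b \bar c} = (-1)^{\bar b}$ on the Kronecker support $b = c$, the claim drops out. For (6), the key intermediate step is the $\psi$-invariance $\tr \circ \psi = \tr$, itself an immediate consequence of \cref{L:nakayama} applied with $g$ a local idempotent. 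Using that $\psi$ is a superalgebra automorphism, one then computes
\[
\tr(\psi(b^\vee)\,\psi(c)) = \tr(\psi(b^\vee c)) = \tr(b^\vee c) = \delta_{b,c},
\]
so $\psi(b^\vee)$ satisfies the characterizing property of the dual of $\psi(b)$ in the $\psi$-translated basis, giving (6).

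None of the steps poses a real obstacle; the only places demanding care are the Koszul signs coming from \eqref{E:dual-action} when identifying $i(A^*)j$ with $(jAi)^*$, and the locally unital (rather than unital) nature of $A$, which forces a choice of the appropriate idempotent on the correct side whenever one would otherwise ``multiply by $1$''.
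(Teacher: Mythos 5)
Your proof is correct; the paper itself gives no proof of this lemma, remarking only that the statements are ``straightforward to check,'' and your argument supplies precisely the natural route via the bimodule isomorphism $\phi_\tr$, the action formula \eqref{E:dual-action} cutting $A^*$ by idempotents, and the $\psi$-symmetry of the trace from \cref{L:nakayama}. Each step you carry out is sound: (1) via support of $\phi_\tr(f)$ and surjectivity onto $(iAi)^*$; (2)--(3) from the componentwise restriction of the bimodule iso; (4) from $\phi_\tr(b^\vee)(i)=b^*(i)$ when $I\subseteq\BasisB$; (5) from the Nakayama identity with $(-1)^{\bar b\bar c}=(-1)^{\bar b}$ on the diagonal; and (6) from $\tr\circ\psi=\tr$, which indeed drops out of \cref{L:nakayama} once one restricts to $iAi$ by part (1).

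One small slip worth flagging: in your treatment of (2) you say ``after postcomposing with $\psi^{-1}$,'' but $\varphi_\tr=\phi_\tr\circ\psi^{-1}$ is obtained by \emph{pre}composing $\phi_\tr$ with $\psi^{-1}$. This does not affect the argument --- since $\psi^{-1}$ is an even automorphism fixing $I$, it preserves each $jAi$ on either side, so the restricted map is an isomorphism regardless --- but the wording should be corrected to match the paper's definition of $\varphi_\tr$.
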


\subsubsection{Good pairs and Frobenius great pairs}

\begin{definition}
   Let \(I \subseteq \a  \subseteq A_{\bar 0}\) be an even subalgebra of \(A\) that contains $I$, the set of distinguished idempotents. We say that \(\calA = (A,\a )_I\) is a {\em good pair} provided there exists a basis $\BasisB$ for $A$ such that \({}_j\basisb_{i}:={}_j\BasisB_i \cap \a \) is a \(\k\)-basis for \(j\a i\), and \(i \in {}_i \basisb_i \) for all $i \in I$.  
\end{definition}
We call a basis of $A$ a \emph{good basis} if it satisfies the above definition.  If $\calA = (A,\a )_{I}$ is a good pair we will usually choose to use a good basis for $A$.  However, we will occasionally need to make use of other bases.  For example, if $\BasisB$ is a good basis, then $\BasisB^{\vee}$ may fail to be a good basis.

\begin{definition}\label{FrobDef}
We say that the data \(\calA = ((A,\a )_I, \tr , \psi) \) is a \emph{Frobenius good pair} provided that $(A,\a )_{I}$ is a good pair, $(A,\tr, \psi)$ is a locally unital Frobenius superalgebra, and the Nakayama automorphism restricts to an automorphism of $\a $. 
\end{definition}

Let \(\calA = ((A,\a )_I, \tr , \psi) \) be a Frobenius good pair.  Let $\left( A/(A_{\1}+\a )\right)^* \xrightarrow{\pi^*} A^{*}$ be the dual of the canonical projection map $\pi: A \to A/(A_{\1}+\a )$.
\begin{lemma}\label{L:Plurimore-Frobenius-Conditions}  Let \(\calA = ((A,\a )_I, \tr , \psi) \) be a Frobenius good pair.  Then the following are equivalent:
\begin{enumerate}
\item There exists a good basis $\BasisB$ with the property that $b^{\vee} \in \a $ for all $b \in \BasisB_{\0}\setminus \basisb_{\0}$.
\item The $\k$-linear map
\begin{equation}
\left(A/(A_{\1}+\a ) \right)^* \xrightarrow{\pi^{*}} A^* \xrightarrow{\phi^{-1}_{\tr}} A
\end{equation} has image contained in $\a $.
\item Every good basis $\BasisB$ has the property that $b^{\vee} \in \a $ for all $b \in \BasisB_{\0}\setminus \basisb_{\0}$.
\end{enumerate}
\end{lemma}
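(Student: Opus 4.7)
\medskip

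\noindent\textbf{Proof plan.}  The implication (3) $\Rightarrow$ (1) is immediate since a good basis is guaranteed by the assumption that $\calA$ is a good pair, so it suffices to establish (1) $\Rightarrow$ (2) $\Rightarrow$ (3).  The organizing observation is that, for any good basis $\BasisB$, one has an internal direct sum decomposition
\[
A = \a \oplus \Span_\k(\BasisB_{\0} \setminus \basisb_{\0}) \oplus A_{\1}
\]
of $\k$-supermodules, where the first two summands account for $A_{\0}$ and the splitting of $\a$ out of $A_{\0}$ is witnessed by the good basis condition $\basisb_{\0} = \BasisB_{\0} \cap \a$.  Dualizing, the subspace of functionals in $A^*$ that annihilate $A_{\1} + \a$ is precisely the image $\pi^*((A/(A_{\1}+\a))^*)$, and inside the dual basis $\{b^* \mid b \in \BasisB\}$ this subspace is spanned exactly by $\{b^* \mid b \in \BasisB_{\0} \setminus \basisb_{\0}\}$.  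Applying the parity-preserving $A$-bimodule isomorphism $\phi_{\tr}^{-1}$ then identifies
\[
\phi_{\tr}^{-1}\bigl( \pi^*((A/(A_{\1}+\a))^*) \bigr) = \Span_\k\{b^{\vee} \mid b \in \BasisB_{\0} \setminus \basisb_{\0}\}.
\]

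For (1) $\Rightarrow$ (2), I would take the particular good basis $\BasisB$ provided by (1).  The displayed equality above immediately shows the image of the composition $\phi_{\tr}^{-1} \circ \pi^*$ is spanned by elements that lie in $\a$ by hypothesis, so the image lands in $\a$.

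For (2) $\Rightarrow$ (3), I would take an arbitrary good basis $\BasisB$ and any $b \in \BasisB_{\0} \setminus \basisb_{\0}$.  The functional $b^* \in A^*$ vanishes on $A_{\1}$ because $b$ is even, and vanishes on $\a$ because $\basisb_{\0}$ is a $\k$-basis of $\a$ and $b \notin \basisb_{\0}$.  Hence $b^* \in \pi^*((A/(A_{\1}+\a))^*)$, and applying $\phi_{\tr}^{-1}$ together with hypothesis (2) yields $b^{\vee} \in \a$.

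The only step with any real content is the identification of $\pi^*((A/(A_{\1}+\a))^*)$ with $\Span_\k\{b^* \mid b \in \BasisB_{\0} \setminus \basisb_{\0}\}$, which rests on the direct sum decomposition above; the parity-preserving property of $\phi_{\tr}$ (already recorded in \cref{D:Frobenius-Superalgebra}) ensures no sign or parity issues arise when transporting this span through $\phi_{\tr}^{-1}$.  I anticipate no serious obstacle, as the argument reduces to basis-chasing once the kernel/image of $\pi^*$ is correctly described.
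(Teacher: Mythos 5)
Your proof is correct and takes essentially the same route as the paper's: both hinge on identifying, via the dual basis, the image of $\pi^*$ with $\Span_\k\{b^* \mid b \in \BasisB_{\0}\setminus\basisb_{\0}\}$ and then transporting through $\phi_{\tr}^{-1}$ to land on $\Span_\k\{b^\vee \mid b \in \BasisB_{\0}\setminus\basisb_{\0}\}$. The only cosmetic difference is that you phrase the key step as ``image of $\pi^*$ equals the annihilator of $A_{\1}+\a$,'' whereas the paper verifies $\pi^*(\pi(b)^*) = b^*$ by direct evaluation; these are the same computation packaged differently.
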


\begin{proof} 

We first prove $(1)$ implies $(2)$.  Say $\BasisB$ is a good basis that satisfies the property in $(1)$.  Since $\left\{ \pi (c) \mid c\in  \BasisB_{\0} \setminus \basisb_{\0}  \right\}$ is a basis for $A/(A_{\1}+\a )$, a basis for $\left(A/(A_{\1}+\a ) \right)^{*}$ is given by $\left\{\pi (b)^{*} \mid b \in \BasisB_{\0} \setminus \basisb_{\0}   \right\}$, where $\pi(b)^{*}$ is defined by declaring $\pi(b)^{*}(\pi(c)) = \delta_{b,c}$ for all $ c\in  \BasisB_{\0} \setminus \basisb_{\0}$.  
The linear functional $\pi^{*}(\pi(b)^{*}) \in A^*$ evaluates on $c \in \BasisB$ by
$$
\pi^{*}(\pi(b)^{*})(c) = \pi(b)^*(\pi(c)) = \delta_{b,c}.
$$  In other words, $\pi^{*}(\pi (b)^{*})=b^{*}$ as linear functionals on $A$.  By the definition of $b^{\vee}$, $ \phi_{\tr}^{-1}\left(  \pi^{*}(\pi (b)^{*}) \right)=\phi_{\tr}^{-1}\left( b^{*}\right) =b^{\vee}$, which is an element of $\a$ by assumption.  Thus, we have found a basis of $\left(A/(A_{\1}+\a ) \right)^*$ whose image under $\phi^{-1}_{\tr} \circ \pi^*$ is contained in $\a$, establishing the desired containment in (2).

We next prove that $(2)$ implies $(3)$.   Let $b \in \BasisB_{\0 } \setminus \basisb_{\0}$. Using the notation of the previous paragraph and the definition of $b^{\vee}$, $\phi_{\tr}(b^{\vee}) = b^{*}=\pi^{*}(\pi(b)^{*})$ and so $b^{\vee} = \varphi_{\tr}^{-1}(\pi^{*}(\pi(b)^{*}))$ is in $\a$, as desired.

It is immediate that $(3)$ implies $(1)$.
\end{proof}

\begin{definition}\label{D:plurimore-Frobenius}  We say that the data $\calA = ((A,\a)_I, \tr , \psi)$ is a Frobenius {\em great} pair (or simply that $\calA$ is \emph{great}) if it is a Frobenius good pair and if it satisfies the equivalent conditions of \cref{L:Plurimore-Frobenius-Conditions}. 
\end{definition}

Unless otherwise stated, going forward we will assume that $\calA = ((A,\a)_I, \tr , \psi)$ is a Frobenius great pair. To better understand the role of these assumptions, see \cref{SS:Examples} for examples and \cref{SS:Assumptions} for further discussion.

\subsection{Examples}\label{SS:Examples}

\subsubsection{ \texorpdfstring{$A= \k$}{A=k} and other Frobenius superalgebras}\label{SS:FirstExamplesofFrobeniusAlgebras}  Let $A=\a =\k$, let $I = \{1 \}$, let $\tr : A \to \k$ be the identity map, and let $\psi: A \to A$ be the identity map.  Then $\calA = ((\k, \k )_{\{1 \}}, \tr, \psi)$ is trivially a Frobenius great pair.

More generally, let $(A, \tr , \psi)$ be a locally unital Frobenius superalgebra with distinguished idempotents $I$. If $A$ admits a homogeneous basis $\BasisB$ for $A$ that contains $I$, then $(A,A_{\0})_{I}$ is a good pair.   Furthermore, $ ((A, A_{\0})_{I}, \tr , \psi)$ is trivially a Frobenius great pair.  In particular, since a locally unital Frobenius algebra $(A,\tr , \psi)$ with distinguished idempotents $I$ can always be viewed as a superalgebra concentrated in parity $\0$, it follows that $((A,A)_{I}, \tr , \psi)$ is always a Frobenius great pair if it admits a $\k$-basis that contains $I$.

If $A=\k G$ is the group ring of a finite group $G$ with identity element $e_{G}\in G$, then it is well-known that taking $\tr (g) = \delta_{e_{G},g}$ and $\psi (g) =g $ for all $g \in G$ makes $A$ into a Frobenius superalgebra concentrated in parity $\0$.  If $H$ is a proper subgroup of $G$, then $((\k G, \k H)_{\{e_{G} \}},\tr , \psi)$ is a Frobenius good pair but is never great.

If \(A = \k[x]/(x^m)\) is a truncated polynomial algebra for some \(m \in \Z_{>0}\), then \(A = A_{\bar 0}\) is symmetric Frobenius with trace map given by \(\tr(x^t) = \delta_{t, m-1}\). If \(\a = A\) then \(\mathcal{A} = ((A,\a)_I, \tr, \psi))\) is a Frobenius great pair by the above discussion. One other interesting option for a Frobenius great pair is the case \(\a = \k\langle x^2 \rangle\) when \(m\) is even.

The next example is a locally unital version of Laurent polynomials.  Let $\mathcal{C}$ be the $\k$-linear category with set of objects equal to the integers and with $\Hom_{\mathcal{C}}(a,b) = \k T_{b,a}$, where, by definition, $T_{c,b} \circ T_{b,a} = T_{c,a}$ for all $a,b,c \in \Z$.  Let $A = \oplus_{a,b \in \Z} \Hom_{\mathcal{C}}(a,b) = \oplus_{a,b \in \Z} \k T_{b,a}$ be the path algebra.  This can be made into a superalgebra by declaring $\overline{T}_{b,a} = \overline{b-a} \in \Z_{2}$.  Let $\a$ be the $\k$-span of $\left\{T_{b,a} \mid \overline{b-a} = \bar{0}, \text{ and }  b \geq a \right\}$ and let $I = \left\{T_{a,a} \mid a \in \Z \right\}$.  Define $\tr : A \to \k$ by $\tr (T_{b,a}) = \delta_{a,b}$, and $\psi: A \to A$ be the identity map.  Then both $\left( (A,A_{\0})_{I}, \tr, \psi\right)$ and $\left( (A,\a )_{I}, \tr, \psi\right)$ are Frobenius great pairs.

\subsubsection{Clifford and Grassmann superalgebras}  

Let $A=\Cliff_{t}=\k[c_{1}, \dotsc , c_{t}]/(\{ c_{i}c_{j}=-c_{j}c_{i}, c_{i}^{2}=1\mid 1 \leq i \neq j \leq  t \})$ be the Clifford superalgebra on $t$ generators with $\Z_{2}$-grading given by declaring $\bar{c_{i}}=\1$ for $i=1, \dotsc , t$.  Then $A$ has $\k$-basis given by $\BasisB = \left\{c_{1}^{r_{1}}\dotsb c_{t}^{r_{t}} \mid r_{k} \in \left\{0,1 \right\}  \right\}$. Set $I=\{1_{A} \}$.  Let $\tr : A \to \k$ be given by 
\[
\tr (c_{1}^{r_{1}}\dotsb c_{t}^{r_{t}}) = \begin{cases}  1  & \text{ if } r_{1}=\dotsb = r_{t}=0;   \\
                0 & \text{ otherwise,} 

\end{cases} 
\] and let $\psi :A \to A$ be given by $\psi (x) = (-1)^{\bar{x}}x$.  Then, both $((A,\k 1_{A})_{\{1_{A} \}},\tr , \psi)$ and $((A,A_{\0})_{\{1_{A} \}},\tr , \psi)$ are good Frobenius pairs, but only $((A,A_{\0})_{\{1_{A} \}},\tr , \psi)$ is a Frobenius great pair when $t > 1$. 

Now let $A = \Lambda_{t} =  \k [x_{1}, \dotsc , x_{t}]/(\{ x_{i}x_{j}=-x_{j}x_{i}, x_{i}^{2}=0 \mid 1 \leq i \neq j \leq  t \} )$ be the Grassmann superalgebra on $t$ generators with $\Z_{2}$-grading given by declaring $\bar{x_{i}}=\1$ for $i=1, \dotsc , t$.  Then $A$ has $\k$-basis given by $\BasisB = \left\{x_{1}^{r_{1}}\dotsb x_{t}^{r_{t}} \mid r_{k} \in \left\{0,1 \right\}  \right\}$. Set $I=\{1_{A} \}$.  Let $\tr : A \to \k$ be given by
\[
\tr (x_{1}^{r_{1}}\dotsb x_{t}^{r_{t}})=\begin{cases}  1  & \text{ if } r_{1}=\dotsb = r_{t}=1;   \\
                0 & \text{ otherwise,} 

\end{cases} 
\] and let $\psi :A \to A$ be given by $\psi (x) = (-1)^{\bar{x}}x$. In order for $\tr$ to be an even map, let us assume $t=2p$ for some $p > 0$.  Let $\a$ be the $\k$-span of $1_{A}$ and all even basis elements that satisfy $\sum_{i=1}^{t} r_{i} \geq p$.  Then $\a$ is easily seen to be a subalgebra of $A_{\0}$.  A direct check verifies that $((A,\k 1_{A})_{\{1_{A} \}},\tr , \psi)$,  $((A,A_{\0})_{\{1_{A} \}},\tr , \psi)$, and $((A,\a )_{\{1_{A} \}},\tr , \psi)$ are Frobenius good pairs, but only $((A,A_{\0})_{\{1_{A} \}},\tr , \psi)$ and $((A,\a )_{\{1_{A} \}},\tr , \psi)$ are Frobenius great pairs

\subsubsection{Zigzag superalgebras}\label{zigzagcat}
Let \(\Gamma = (I,E)\) be a simple connected graph with no loops. For each pair of connected vertices \(i\textup{\textemdash} j\), fix \(\varepsilon_{ij}, \varepsilon_{ji} \in \{\pm 1\}\). Then the associated locally unital {\em zigzag superalgebra} \(\overline{Z}(\Gamma) = \overline{Z} = \bigoplus_{i,j \in I} j\overline{Z}i\) has distinguished set of orthogonal idempotents corresponding to the vertices \(I\) of \(\Gamma\), with the following \(\k\)-basis for each subspace:
\begin{align*}
j\overline{Z}i =
\begin{cases}
\k\{i, c_i\} & \textup{if } i =j;\\
\k\{a_{ji} \} & \textup{if } i\textup{\textemdash} j;\\
0 & \textup{otherwise},
\end{cases}
\end{align*} 
Multiplication is defined via the orthogonality of the idempotents \(I\), coupled with the additional rules:
\begin{align*}
a_{kj}a_{ji} =\delta_{ik} \varepsilon_{ji} c_i,
\qquad
c_ia_{ij} = a_{ij}c_j = 0,
\qquad
c_i^2 = 0.
\end{align*}
The parity of elements is defined by \(\bar i = \bar{c}_i = \bar 0\), \(\bar{a}_{ij} = \bar 1\). 
Taking \(\tr(i) = \tr(a_{ji}) = 0\), \(\tr(c_i) = 1\) for all \(i,j \in I\), and \(\psi(a_{ji}) = -\varepsilon_{ij} \varepsilon_{ji} a_{ji}\), it is straightforward to verify that \(((\bar{Z}, \k I)_I, \tr, \psi)\) is a Frobenius great pair.

\subsubsection{Trivial extension superalgebras}\label{trivextsec}
The trivial extension algebra is a well-known technique for turning a finite-dimensional associative algebra into a Frobenius algebra. Suitably modified, this machine also applies here.

Let $C$ be a locally finite, locally unital $\k$-superalgebra and let $C^{*}$ be the dual viewed as a bimodule via \cref{E:dual-action}.  Set 
\[
E(C) = C^{*} \oplus C
\] as a $\k$-module.  The $\Z_{2}$-grading is given by declaring $E(C)_{r} = (C^{*}_{r}, C_{r})$ for $r \in \Z_{2}$.  In particular, this implies $\overline{(\alpha,a)}= \overline{\alpha}=\overline{a}$.   Define the product by
\[
(\alpha, a) \cdot (\beta, b) = (\alpha \cdot b + a \cdot \beta, ab).
\]  This makes $E(C)$ into a locally unital $\k$-superalgebra, where $C$ is identified as a subsuperalgebra of $E(C)$ in the obvious way.  In particular, $I \subseteq C \subseteq E(C)$ provides a set of distinguished idempotents.  Moreover, $jE(C)i = j\cdot(C^{*})\cdot i \oplus jCi = (iCj)^{*}\oplus jCi$ for all $i,j \in I$, and $E(C)=\bigoplus_{i,j \in I} jE(C)i$.

If $\BasisC$ is a homogenous basis for $C$, $c^{*} \in C^{*}$ is defined by $c^{*}(d) = \delta_{c,d}$ for all $d \in \BasisC$, and if we set $\BasisC^{*}= \left\{c^{*} \mid c \in \BasisC  \right\}$, then the basis $\BasisB = \left\{(c^{*},0), (0,c) \mid c \in \BasisC  \right\}$ makes $(E(C), C_{\0})_{I}$ a good pair.  Define a trace map by
\[
\tr : E(C) \to \k, \quad \tr (\alpha,a)= \sum_{i \in I} \alpha(i),
\] and let the Nakayama automorphism $\psi: E(C) \to E(C)$ be the identity map.  We claim the data $\left((E(C), C_{\0})_{I}, \tr , \psi \right)$ is a Frobenius great pair.

Consider the $\k$-linear map $\phi_{\tr}: E(C) \to E(C)^{*}$ defined by the trace map as in \cref{E:trace-condition3}. We claim this map is a $k$-linear isomorphism. Note that for homogeneous $(\alpha, a), (\beta, b) \in E(c)$ one has
\begin{equation*}
\left[  \phi_{\tr}(\alpha,a)\right](\beta, b)  = \tr ((\alpha, a)(\beta, b)) 
        = \tr (\alpha \cdot b + a \cdot \beta, ab)
       = \alpha (b) + (-1)^{\bar{a}\bar{\beta}} \beta(a).
\end{equation*}    Let $c,d \in \BasisC$.  The previous calculation verifies that
\begin{align*}
\phi_{\tr}(c^{*},0)(0,d) &= c^{*}(d) = \delta_{c,d}, \\
\phi_{\tr}(c^{*},0)(d^{*},0) & =0, \\
\phi_{\tr}(0,c)(0,d) &= 0, \\
\phi_{\tr}(0,c)(d^{*},0) &= (-1)^{\bar{c}\bar{d}} d^{*}(c) = (-1)^{\bar{c}} \delta_{c,d}.
\end{align*}  That is, $\phi_{\tr}(c^{*},0) = (0,c)^{*}$ and $\phi_{\tr}(0,c) = (-1)^{\bar{c}}(c^{*},0)^{*}$.  Thus, $\phi_{\tr}$ defines a homogeneous bijection between $\k$-bases of $E(C)$ and $E(C)^{*}$ and, hence, defines an isomorphism of $\k$-modules.  Moreover, this calculation shows that $(c^{*}, 0)^{\vee} = \phi_{\tr}^{-1}\left( (c^{*},0)^{*}\right) = (0,c) \in C_{\0}$ for $c \in \BasisC_{\0}$ and so the data $\left((E(C), C_{\0})_{\{1 \}}, \tr , \psi \right)$ satisfies the great condition.

We next claim that $\phi_{\tr }$ is a morphism of $(E(C), E(C))$-bimodules.  Let $(\alpha, a)$, $(\beta, b)$, and $(\gamma, g)$ be homogenous elements of $E(C)$.  Then
\begin{align*}
\left((\alpha, a) \cdot \phi_{\tr}(\beta, b) \right) (\gamma, g)
&=  (-1)^{\overline{(\alpha, a)}(\overline{(\beta,b)}+\overline{(\gamma, g)})}\phi_{\tr}(\beta,b)((\gamma, g)(\alpha,a)) \\
&= (-1)^{\overline{a}(\overline{b}+\overline{g})}\phi_{\tr}(\beta,b)((\gamma \cdot a + g\cdot \alpha,ga)) \\
&= (-1)^{\overline{a}(\overline{b}+\overline{g})}\tr\left((\beta,b)(\gamma \cdot a + g\cdot \alpha,ga)\right) \\
&= (-1)^{\overline{a}(\overline{b}+\overline{g})}\tr\left(\beta\cdot ga + b \cdot\gamma \cdot a + bg\cdot \alpha,bga)\right) \\
&=(-1)^{\overline{a}(\overline{b}+\overline{g})}\left(\beta(ga) + (-1)^{\bar{b}(\bar{g}+\bar{a})}\gamma(ab)
+ (-1)^{(\bar{b}+\bar{g})\bar{\alpha}} \alpha(bg) \right),
\end{align*}
whereas,
\begin{align*}
\left(\phi_{\tr}((\alpha, a)(\beta, b) \right) (\gamma, g) &= \left(\phi_{\tr} (\alpha \cdot b + a \cdot \beta, ab) \right)(\gamma, g) \\
&= \tr \left( (\alpha \cdot b + a \cdot \beta, ab)(\gamma, g) \right) \\
&= \tr \left( \alpha \cdot bg + ab \cdot \gamma + a \cdot \beta \cdot g, abg \right) \\
& = \alpha(bg) + (-1)^{(\bar{a}+\bar{b})\bar{g}}\gamma(ab) + (-1)^{\bar{a}(\bar{b}+\bar{g})} \beta(ga).
\end{align*}  Thus $\phi_{\tr}$ is a homomorphism of left $E(C)$-modules.

On the other hand,
\begin{equation*}
\left( \phi_{\tr}(\beta, b) \cdot (\alpha, a) \right) (\gamma, g) 
= \phi_{\tr}(\beta, b)  \left( (\alpha, a)  (\gamma, g) \right) 
= \tr \left((\beta, b)  (\alpha, a)  (\gamma, g) \right)
= \tr \left((\beta, b) (\alpha, a)\right)(\gamma, g).
\end{equation*} Thus $\phi_{\tr}$ is a homomorphism of right $E(C)$-modules.  This verifies that $\left((E(C), C_{\0})_{I}, \tr , \psi \right)$ is a Frobenius great pair.

\subsection{A discussion of assumptions}\label{SS:Assumptions} Our assumptions on $\k$ and $\calA = ((A,a)_{I}, \tr , \psi)$ were selected to include the examples and applications of interest to the authors.  The reader may wish to make different assumptions.  In this section we briefly discuss how toggling on/off various assumptions affects the results of the paper.

\subsubsection{Ground ring}
The reader may prefer to work with a ground field of characteristic zero. If so, then the affine web category for any choice of $\a \subseteq A_{\0}$  is isomorphic to the one obtained by selecting $\a =A_{\0}$, in which case the Frobenius great pair requirements are trivially satisfied.  Likewise, cyclotomic datum are automatically regular (see \cref{cycquosecmain}).  Thus the results of this paper hold unconditionally for any locally unital Frobenius superalgebra when working over a field of characteristic zero. Moreover, a number of the definitions and proofs can be simplified.

\subsubsection{Locally finite, locally unital Frobenius superalgebras}
Readers who are not interested in $\Z_{2}$-gradings may assume that $A$ is an associative $\k$-algebra.  One can view $A$ as a superalgebra concentrated in parity $0$ by declaring $A = A_{\0} \oplus A_{\1} = A \oplus 0$.  The results of this paper apply with the simplification that all signs due to $\Z_{2}$-gradings disappear.

Associative superalgebras frequently come with a distinguished set of idempotents. For example, if the superalgebra is defined as the quotient of the path superalgebra of some quiver (e.g., see the zigzag superalgebras and \cite[Section IV.16]{SY1}).  Another important example is if $\mathcal{C}$ is a small category enriched over the category of free $k$-supermodules with a fixed set of isomorphism classes of objects, $X$.  The associated path algebra, 
\[
\bigoplus_{x,y \in X} \Hom_{\mathcal{C}}(x,y),
\] is naturally a locally unital $\k$-superalgebra with $I=\left\{\Id_{x} \mid x \in X  \right\}$ the set of distinguished idempotents.

The Frobenius property \cref{E:trace-condition3} forces $A$ to satisfy a suitable finiteness property.  When $A$ is unital, a Frobenius structure forces $A$ to have finite rank.  If $A$ is locally unital, then local finiteness is the appropriate choice (see (2) of \cref{FrobFacts}). By allowing locally unital superalgebras, our framework allows for examples that have infinite total rank like the one in \cref{SS:FirstExamplesofFrobeniusAlgebras}. Another important advantage of including a set of distinguished idempotents is that it allows for greater refinement in the definitions and results.

The reader who prefers their associative superalgebras to be unital may choose $I=\left\{1_{A} \right\}$.  The results of this paper simplify in that strands in $\AffWebAaI$ have a single color (and, hence, are determined by their thickness), the crossing generators in \cref{WebAaGens} are omitted, and the relations \cref{Cox,SplitIntertwineRel,MergeIntertwineRel} are omitted.

For simplicity's sake this paper assumes the Nakayama automorphism fixes the distinguished idempotents.   The results of this paper go through with appropriate modifications as long as the Nakayama automorphism fixes the distinguished idempotents as a set.  In that case, for example,  the affine dot would define a morphism in $\Hom_{\AffWebAaI}(i, \psi^{-1} (i))$ for each $i \in I$.  Fearing most readers would find the associated bookkeeping a journey into madness (and, indeed, the authors spent some time on this particular journey), we forgo that extra level of generality.

Likewise, while we choose not to do so, it would be interesting to allow the trace map to be odd.  This odd trace map occurs, for example, when studying Frobenius structures on odd rank Grassman superalgebras, or by modifying the Frobenius structure on odd rank Clifford algebras that we described in the previous section.  We expect the results of this paper to generalize with suitable modifications (e.g., odd Casimir elements in $A\otimes  A$ and odd affine dots in $\AffWebAaI$) along with additional signs that would need to be tracked.

\subsubsection{Frobenius great pairs}  The Frobenius structure on $A$ is essential for defining the Casimir elements, the affine dot (e.g., see \cref{DotCrossRel}), and the action of the affine dot on $\gl_{n}(A)$-supermodules (e.g., see \cref{L:hatH}).  The even subalgebra $\a \subseteq A_{\0}$ and the notion of a good pair $(A,\a)_{I}$ is needed if we wish to define $\AffWebAaI$ integrally (i.e., over the integral domain $\k$).  Different choices of $\a $ can be viewed as making different choices of $\k$-form inside the  $\KK$-linear category $\KAffWebAI$ defined using the superalgebra $\KK \otimes_{\k} A$.  It is known that the choice of good pair $(A,\a)_{I}$  dramatically affects the representation theory of the superalgebras $T_{(A, \a)_{I}}(n,d)$ obtained from the Schurification of $(A,\a)_{I}$ \cite{KM}.  We expect the choice of good pair to be influential here, as well.

Likewise, the fact we wish to work integrally motivates the assumption that $\calA$ is a Frobenius great pair.  Specifically, the great condition  ensures the coefficients which appear in relation \cref{DotCrossRel} lie in $\k$.

\subsection{Casimir elements in a Frobenius superalgebra}\label{SS:Casimir-elements-in-A}  Let $(A,\tr , \psi)$ be a locally unital Frobenius superalgebra.  Let \(M\) be a left \(A\)-module, \(N\) be a right \(A\)-module, and let \(\eta\) be an automorphism of \(A\). Then there is a natural isomorphism of \((A,A)\)-bimodules:
\begin{align}\label{natisodual}
D_{\eta}: N^* \otimes (M^*)^\eta \to ((M \otimes N)^*)^\eta, \qquad \beta \otimes \alpha \mapsto ((x \otimes y) \mapsto (-1)^{\bar \beta \bar \alpha + \bar \beta \bar x} \alpha(x) \beta(y)
\end{align}

For \(i \in I\), define \(m_{i}: A \nakayamai  \otimes \nakayamai A \to A\) as the restriction of the multiplication map on \(A\). Then we define an \((A,A)\)-bimodule homomorphism
\begin{align*}
\Delta_i: A \to  Ai \otimes \nakayamai A
\end{align*}
as the composition
\begin{equation*}
A \xrightarrow{\varphi_\tr} (A^*)^{\psi^{-1}} \xrightarrow{m_{i}^*}
(A\nakayamai  \otimes \nakayamai A)^*)^{\psi^{-1}}
\xrightarrow{D_{\psi^{-1}}^{-1}}
(\nakayamai A)^* \otimes ((A\nakayamai )^*)^{\psi^{-1}}
\xrightarrow{\varphi_\tr^{-1} \otimes \varphi_\tr^{-1}} Ai \otimes \nakayamai A, 
\end{equation*}
where \(D_{\psi^{-1}}\) is given by \cref{natisodual}.

\begin{lemma}\label{DelC}
Let \(i,j  \in I\). Then, \(\Delta_i(j) = \sum_{b \in {}_j\BasisB_i} b \otimes b^\vee\).
\end{lemma}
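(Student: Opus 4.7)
The plan is to trace the image of the idempotent $j \in A$ through each of the four maps in the composition defining $\Delta_i$, verifying at each stage that a recognizable expression emerges. Since $\bar j = 0$, the first map yields the functional $\varphi_\tr(j) : g \mapsto \tr(gj)$, and pulling back by $m_i^*$ gives the functional on $Ai \otimes iA$ sending $x \otimes y \mapsto \tr(xyj)$. These first two steps are immediate from the definitions.

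The heart of the argument is to verify that the preimage of this functional under $D_{\psi^{-1}}$ is the tensor $\sum_{b \in {}_j \BasisB_i} \varphi_\tr(b) \otimes \varphi_\tr(b^\vee)$. Applying $D_{\psi^{-1}}$ to this expression using \cref{natisodual} and the definition of $\varphi_\tr$, and using that $\bar b = \bar{b^\vee}$ (since $\phi_\tr$ is parity preserving), the summands evaluated on $x \otimes y$ simplify to
\[
\sum_{b \in {}_j \BasisB_i} (-1)^{\bar b + \bar b \bar y} \tr(xb^\vee)\tr(yb).
\]
Parity considerations force the trace factors to vanish unless $\bar x = \bar y = \bar b$, in which case the exponent $\bar b + \bar b \bar y$ reduces to $0 \pmod 2$. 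The sum therefore collapses to $\sum_b \tr(xb^\vee)\tr(yb)$. Using the identity $y = \sum_b \tr(yb)\, b^\vee$ for $y \in iAj$---which is an immediate consequence of $\tr(b^\vee c) = \delta_{b,c}$ and the fact that $({}_j\BasisB_i)^\vee$ is a basis for $iAj$---this sum becomes $\tr(xy) = \tr(xyj)$ when $x \in jAi$ and $y \in iAj$. A short orthogonality check (both sides vanish on $j'Ai \otimes iAj''$ unless $j' = j'' = j$) reduces the general case $x \in Ai$, $y \in iA$ to this situation.

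Applying $\varphi_\tr^{-1} \otimes \varphi_\tr^{-1}$ in the final step sends $\varphi_\tr(b) \otimes \varphi_\tr(b^\vee)$ back to $b \otimes b^\vee$, yielding the claimed formula. The main obstacle is the careful sign bookkeeping in the middle step; the key observation that makes it work is that the parity constraint imposed by the trace function kills precisely the Koszul-style signs introduced by $D_{\psi^{-1}}$, after which the argument reduces to the defining property of the dual basis.
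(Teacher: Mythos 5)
Your proof is correct and takes essentially the same approach as the paper's: trace $j$ through the chain of maps defining $\Delta_i$, reduce everything to a verification that $D_{\psi^{-1}}\bigl(\sum_b \varphi_\tr(b) \otimes \varphi_\tr(b^\vee)\bigr)$ equals the functional $x \otimes y \mapsto \tr(xyj)$, and in the end invoke the dual-basis pairing $\tr(b^\vee c) = \delta_{b,c}$. The only divergence is in how that verification is carried out. The paper evaluates both functionals on the specific bidual basis pairs $(u^\vee)^\vee \otimes v^\vee$ (for $u,v \in \BasisB_i$), so that both traces collapse immediately to Kronecker deltas and the sign is visibly trivial on the support; you instead keep $x \otimes y$ general, observe that the even-ness of $\tr$ forces $\bar x = \bar y = \bar b$ on the support so the Koszul sign $(-1)^{\bar b + \bar b \bar y}$ vanishes, and then collapse the sum via the dual-basis expansion $y = \sum_b \tr(yb)\,b^\vee$. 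Your version avoids introducing the bidual basis $(\BasisB^\vee)^\vee$ at the cost of the explicit parity step and the final orthogonality reduction to $jAi \otimes iAj$; both buy essentially the same thing. (As a small bonus, your computed sign $(-1)^{\bar b + \bar b \bar y}$ is the literally correct one from the definition of $D_{\psi^{-1}}$ and $\varphi_\tr$; the paper writes $(-1)^{\bar b + \bar b \bar x}$, which agrees with it only on the support $\bar x = \bar y = \bar b$, so the discrepancy is harmless there but worth noting.)
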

\begin{proof}
Let \(u,v \in \BasisB_i\). Set \(x = (u^\vee)^\vee\), \(y = v^\vee\). By Lemma~\ref{FrobFacts}(3), such elements \(x,y\) comprise bases for the spaces \(A \nakayamai  \) and \( \nakayamai A\) respectively. We have
\begin{align*}
 (m_{j,i}^* \circ \varphi_{\tr}(j))(x \otimes y) &= \varphi_\tr(j)(xy) = \tr(xyj) = \delta_{y \in Aj}\tr(xy) =\delta_{v \in {}_j\BasisB_i} \tr((u^\vee)^\vee v^\vee) =\delta_{v \in {}_j\BasisB_i} \delta_{u,v}.
\end{align*}
On the other hand, 
\begin{align*}
\left( (D_{\psi^{-1}} \circ \varphi_\tr \otimes \varphi_\tr\left(  \sum_{b \in {}_j \BasisB_i} b \otimes b^\vee\right)\right)(x \otimes y)
&=
D_{\psi^{-1}} \left(   \sum_{b \in {}_j \BasisB_i}  \varphi_\tr(b) \otimes \varphi_\tr(b^\vee) \right)( x \otimes y) \\
&=
 \sum_{b \in {}_j \BasisB_i} (-1)^{\bar b + \bar b \bar x}\tr(xb^\vee) \tr(yb)\\
 &=
  \sum_{b \in {}_j \BasisB_i} (-1)^{\bar b + \bar b \bar u} \tr((u^\vee)^\vee b^\vee) \tr(v^\vee b) \\
 & =   \sum_{b \in {}_j \BasisB_i} (-1)^{\bar b + \bar b \bar u} \delta_{u,b} \delta_{v,b}
 = \delta_{v \in {}_j\BasisB_i}\delta_{u,v}.
\end{align*}
Thus \( m_{j,i}^* \circ \varphi_{\tr}(j) =D \circ \varphi_\tr \otimes \varphi_\tr\left(  \sum_{b \in {}_j \BasisB_i} b \otimes b^\vee\right) \).  The result then follows from the definition of \(\Delta_i\).
\end{proof}

It follows that elements of the form \(\sum_{b \in {}_j\BasisB_i} b \otimes b^\vee\) are independent of the choice of basis \(\BasisB\).  In particular, for any $i,j \in I$, we can replace the basis ${}_j \BasisB_i$ for $jAi$ with the basis $({}_i \BasisB_j)^{\vee} = \{b^{\vee} \mid b \in {}_i \BasisB_j\}$, which implies
\begin{align}\label{indchoice}
\sum_{b \in {}_j\BasisB_i} b \otimes b^\vee = \sum_{b \in {}_{i} \BasisB_{j}} b^\vee \otimes (b^\vee)^\vee.
\end{align}  
We call these the \emph{(local) Casimir elements} of $A \otimes A$.

Let \(\tau: A \otimes A \to A \otimes A\) be the signed flip map \(x \otimes y \mapsto (-1)^{\bar x \bar y} y \otimes x\).
\begin{lemma}\label{taulem}
For all \(i,j \in I\), we have \(\tau \left(  \sum_{b \in {}_j\BasisB_i} b \otimes b^\vee  \right) = \sum_{b \in {}_i\BasisB_{j}} \psi^{-1}(b) \otimes b^\vee\).
\end{lemma}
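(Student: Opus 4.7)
My plan is to start from the formula $\Delta_i(j) = \sum_{b \in {}_j\BasisB_i} b \otimes b^\vee$ of Lemma~\ref{DelC}, apply the flip $\tau$, and then massage both sides until they coincide, using the basis-independence identity (\ref{indchoice}) together with parts (5) and (6) of Lemma~\ref{FrobFacts}.

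First, applying $\tau$ to Lemma~\ref{DelC} gives $\tau(\Delta_i(j)) = \sum_{b \in {}_j\BasisB_i} (-1)^{\bar b\,\bar{b^\vee}}\, b^\vee \otimes b$. Because $\phi_\tr$ is parity-preserving (Definition~\ref{D:Frobenius-Superalgebra}), $\bar{b^\vee} = \bar b$, so the sign collapses to $(-1)^{\bar b}$ and the left hand side of the claim becomes $\sum_{b \in {}_j\BasisB_i}(-1)^{\bar b}\, b^\vee \otimes b$.

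Next, I transform the target right hand side. Beginning with the Casimir $\sum_{c \in {}_i\BasisB_j} c \otimes c^\vee$, the basis-independence identity (\ref{indchoice}) applied with the alternate basis $({}_j\BasisB_i)^\vee$ of $iAj$ rewrites this as $\sum_{b \in {}_j\BasisB_i} b^\vee \otimes (b^\vee)^\vee$, and Lemma~\ref{FrobFacts}(5) turns it into $\sum_b (-1)^{\bar b}\, b^\vee \otimes \psi^{-1}(b)$. Applying $\psi^{-1} \otimes \id$ throughout yields
\[
\sum_{c \in {}_i\BasisB_j} \psi^{-1}(c) \otimes c^\vee \;=\; \sum_{b \in {}_j\BasisB_i} (-1)^{\bar b}\, \psi^{-1}(b^\vee) \otimes \psi^{-1}(b).
\]

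Closing the loop will require verifying the identity $\sum_b (-1)^{\bar b} b^\vee \otimes b = \sum_b (-1)^{\bar b}\psi^{-1}(b^\vee) \otimes \psi^{-1}(b)$, i.e.\ that $\tau(\Delta_i(j))$ is $(\psi^{-1} \otimes \psi^{-1})$-invariant. I plan to deduce this from the $(\psi \otimes \psi)$-invariance of the Casimir itself: running (\ref{indchoice}) once more with the basis $\{\psi(b) \mid b \in \BasisB\}$ and invoking Lemma~\ref{FrobFacts}(6) to identify $\psi(b)^\vee = \psi(b^\vee)$ produces $\sum_b b \otimes b^\vee = \sum_b \psi(b) \otimes \psi(b^\vee)$, and since $\tau$ commutes with $\psi \otimes \psi$ (as $\psi$ is even), the invariance transfers to $\tau(\Delta_i(j))$. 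The only real care point is the sign bookkeeping around the two different meanings of the superscript $\vee$ (relative to $\BasisB$ versus $\BasisB^\vee$) entering through Lemma~\ref{FrobFacts}(5); the parity-preservation of $\phi_\tr$ and the evenness of $\psi$ are precisely what keep the signs consistent.
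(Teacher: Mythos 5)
Your proof is correct, but it takes a noticeably more roundabout route than the paper. The paper changes basis \emph{before} flipping: it first invokes \cref{indchoice} to rewrite $\sum_{b \in {}_j\BasisB_i} b \otimes b^\vee$ as $\sum_{b \in {}_i\BasisB_j} b^\vee \otimes (b^\vee)^\vee$, then applies $\tau$ (picking up the sign $(-1)^{\bar b}$), and then one application of \cref{FrobFacts}(5) absorbs that sign and yields $\sum_{b \in {}_i\BasisB_j}\psi^{-1}(b)\otimes b^\vee$ in a single stroke --- three short steps, using only \cref{indchoice} and \cref{FrobFacts}(5). You instead apply $\tau$ \emph{first}, obtaining $\sum_b(-1)^{\bar b}b^\vee\otimes b$, then separately massage the target to $\sum_b(-1)^{\bar b}\psi^{-1}(b^\vee)\otimes\psi^{-1}(b)$, and close the gap by establishing $(\psi\otimes\psi)$-invariance of the Casimir via \cref{FrobFacts}(6). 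That invariance argument is itself correct --- $\sum_b b\otimes b^\vee = \sum_b\psi(b)\otimes\psi(b)^\vee = \sum_b\psi(b)\otimes\psi(b^\vee)$ by \cref{indchoice} and \cref{FrobFacts}(6), and $\tau$ commutes with $\psi\otimes\psi$ since $\psi$ is even --- so the whole chain holds up. What the paper's ordering buys is economy: doing the basis change before $\tau$ aligns the two factors so that \cref{FrobFacts}(5) immediately produces the stated form, whereas your order forces you to pull in \cref{FrobFacts}(6) and an extra invariance observation. Both are valid; the paper's is the tighter argument and, in hindsight, the ``right'' order of operations for this lemma.
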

\begin{proof}
The result follows from a direct calculation:
\begin{align*}
\tau \left(\sum_{b \in {}_j\BasisB_i} b \otimes b^\vee   \right)
=
\tau \left( \sum_{b \in {}_{i} \BasisB_{j}} b^\vee \otimes (b^\vee)^\vee   \right) =  \sum_{b \in {}_{i} \BasisB_{j}}  (-1)^{\bar b} (b^\vee)^\vee \otimes b^\vee\;\;
=
\sum_{b \in {}_i\BasisB_{j}} \psi^{-1}(b) \otimes b^\vee,
\end{align*} where the first and last equalities follow from \cref{indchoice} and \cref{FrobFacts}(5), respectively.
\end{proof}

Given a locally unital $\k$-superalgebra $A$, the $\k$-supermodule $A \otimes A$ is again a locally unital $\k$-superalgebra with product given by $(f \otimes g)(x \otimes y) = (-1)^{\bar{g}\bar{x}}fx \otimes gy$.

\begin{lemma}\label{translem}
Let \(f \in kA j\), \(g \in \ell A\nakayamai \). Then the product on $A \otimes A$ satisfies 
\begin{align}\label{transport}
(f \otimes g) \left( \sum_{b \in {}_j\BasisB_i} b \otimes b^\vee\right) =(-1)^{\bar f \bar g} \left( \sum_{b \in {}_k \BasisB_{\ell}} b \otimes b^\vee \right)(\psi(g) \otimes f).
\end{align}
and
\begin{align}\label{transportrev}
(g \otimes f) \left(  \sum_{b \in {}_i\BasisB_{j}} \psi^{-1}(b) \otimes b^\vee\right) =(-1)^{\bar f \bar g} \left(  \sum_{b \in {}_{\ell}\BasisB_{k}} \psi^{-1}(b) \otimes b^\vee \right)(f \otimes \psi(g)).
\end{align}
\end{lemma}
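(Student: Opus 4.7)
The plan is to reduce \cref{transport} to two elementary ``Casimir shift'' identities, chain them together, and then deduce \cref{transportrev} by applying the signed flip $\tau$ to \cref{transport}. Throughout, I will write $c^i_j := \sum_{b \in {}_j\BasisB_i} b \otimes b^\vee$ for the local Casimir.

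First I would establish a non-twisted left-to-right shift: for any $a \in kAj$, $(a \otimes 1_i)\, c^i_j = c^i_k\,(1_i \otimes a)$. This is a direct consequence of the bimodule property of $\Delta_i$ established via \cref{DelC}: expanding $\Delta_i(a)$ as $a \cdot \Delta_i(j)$ on the one hand and as $\Delta_i(k) \cdot a$ on the other gives the two sides, and no signs intervene since neither action crosses the tensor. Next I would prove the more delicate Nakayama-twisted shift: for $g \in \ell Ai$, $(1_k \otimes g)\, c^i_k = c^\ell_k\,(\psi(g) \otimes 1_k)$. To prove this, I would expand the second tensor factor of the left-hand side (namely $gb^\vee \in \ell Ak$) in the dual basis $\{(b')^\vee : b' \in {}_k\BasisB_\ell\}$ via the reproducing identity $y = \sum_{b'} \tr(yb')\,(b')^\vee$; matching coefficients of $(b')^\vee$ on each side reduces the claim to $\sum_{b \in {}_k\BasisB_i} (-1)^{\bar g \bar b}\tr(g b^\vee b')\, b = (-1)^{\bar{b'}\bar g}\, b'\psi(g)$. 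This in turn follows by using \cref{L:nakayama} to rewrite $\tr(g\, b^\vee b') = (-1)^{\bar g(\bar b + \bar{b'})}\tr(b^\vee \cdot b'\psi(g))$ and then recognizing $\sum_b \tr(b^\vee (b'\psi(g)))\, b = b'\psi(g)$ via the dual-basis reproducing identity $x = \sum_b \tr(b^\vee x)\, b$ for $x \in kAi$; the accumulated signs collapse modulo $2$ to precisely $(-1)^{\bar{b'}\bar g}$.

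To obtain \cref{transport}, I would then decompose $(f \otimes g) = (-1)^{\bar f \bar g}(1 \otimes g)(f \otimes 1)$ and chain the two shifts: Shift~1 converts $(f \otimes 1)c^i_j$ into $c^i_k(1 \otimes f)$, and Shift~2 then converts $(1 \otimes g)c^i_k$ into $c^\ell_k(\psi(g) \otimes 1)$, yielding the stated identity after absorbing $(1 \otimes f)$ into the right action. For \cref{transportrev}, the key observation is that the signed flip $\tau: A \otimes A \to A \otimes A$ is in fact a superalgebra automorphism (comparing $\tau((x \otimes y)(z \otimes w))$ with $\tau(x \otimes y)\tau(z \otimes w)$, the sign exponents differ by $2\bar y \bar z \equiv 0 \pmod 2$). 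Applying $\tau$ to both sides of \cref{transport}, using $\tau(f \otimes g) = (-1)^{\bar f \bar g} g \otimes f$ and invoking \cref{taulem} to identify $\tau(c^i_j)$ and $\tau(c^\ell_k)$, delivers \cref{transportrev} after a short sign cancellation. The main obstacle is the Nakayama-twisted Shift~2: the twist $\psi(g)$ on the right-hand side emerges only through careful sign bookkeeping inside the trace via \cref{L:nakayama}; the remaining steps (Shift~1, the chaining, and the $\tau$-derivation of \cref{transportrev}) are essentially formal.
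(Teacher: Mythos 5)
Your proof is correct, and it shares the overall skeleton with the paper's argument: both establish the left shift $(f\otimes i)\bigl(\sum_{b\in{}_j\BasisB_i}b\otimes b^\vee\bigr)=\bigl(\sum_{b\in{}_k\BasisB_i}b\otimes b^\vee\bigr)(i\otimes f)$ from the bimodule property of $\Delta_i$, both chain a left shift with a right shift to get \cref{transport}, and both obtain \cref{transportrev} by applying the flip $\tau$ to \cref{transport} and invoking \cref{taulem}. The genuine difference is in how the Nakayama-twisted right shift is proved. The paper avoids ever touching the trace directly: it conjugates by $\tau$, uses \cref{taulem} to turn $\tau(c^i_j)$ into $\sum\psi^{-1}(b)\otimes b^\vee$, pulls out a $\psi^{-1}\otimes 1$ so the computation reduces back to the already-established left shift, and then flips back --- the twist $\psi(g)$ appears as the bookkeeping residue of applying $\psi^{-1}$ to $g\psi^{-1}(b)$. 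Your Shift~2 instead expands $gb^\vee$ in the dual basis via the reproducing identity $y=\sum_{b'}\tr(yb')(b')^\vee$, applies \cref{L:nakayama} to migrate $g$ around the trace, and then reassembles via the dual-basis identity $x=\sum_b\tr(b^\vee x)\,b$. Your route is more hands-on and self-contained (it isolates \cref{L:nakayama} as the single source of the twist, and uses \cref{taulem} only for \cref{transportrev}), whereas the paper's route stays entirely at the level of the $\Delta_i$-bimodule property and \cref{taulem}, never unfolding to the trace; both are valid, and either derivation of the right shift may be substituted for the other. One small remark: you assert in passing that $\tau$ is a superalgebra automorphism of $A\otimes A$; this is true (the offending signs differ by $2\bar y\bar z\equiv 0$) and is implicitly used by the paper too when it factors $\tau$ across a product, so it is worth stating but is not an extra hypothesis.
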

\begin{proof}
First, recalling that \(\Delta_i\) is an \(A\)-bimodule map,
\begin{equation}\label{leftpast}
(f \otimes \nakayamai ) \left( \sum_{b \in {}_j\BasisB_i} b \otimes b^\vee\right)  = f \cdot \Delta_i(j) = \Delta_i(f)
= \Delta_i(k) \cdot f = \left( \sum_{b \in {}_k\BasisB_i} b \otimes b^\vee\right) ( i \otimes f),
\end{equation}
where the first and last equalities are from \cref{DelC}. 
This identity along with use of the flip map yields: 
\begin{align*}
(j \otimes g)\left( \sum_{b \in {}_j\BasisB_i} b \otimes b^\vee\right)
&=
\tau\left( (g \otimes j) \tau\left( \sum_{b \in {}_j\BasisB_i} b \otimes b^\vee   \right)  \right)\\%%
&= \tau\left(  \sum_{b \in {}_i \BasisB_{j}} g\psi^{-1}(b) \otimes b^\vee \right) \\%%
&= \tau  (\psi^{-1} \otimes 1) \left(  \sum_{b \in {}_i \BasisB_{j}} \psi(g)b \otimes b^\vee  \right) \\%%
& = \tau \left( (\psi^{-1} \otimes 1) \left(  \sum_{b \in {}_{\ell} \BasisB_{j}} b \otimes b^\vee\psi(g) \right)\right)\\
&= \tau \left(  \sum_{b \in {}_{\ell} \BasisB_{j}} \psi^{-1}(b) \otimes b^\vee\psi(g) \right) \\
& =\tau \left(  \sum_{b \in {}_{\ell} \BasisB_{j}} \psi^{-1}(b) \otimes b^\vee \right) \tau( j \otimes \psi(g))\\
&= \left( \sum_{b \in {}_j \BasisB_{\ell}} b \otimes b^\vee \right) (\psi(g) \otimes j).
\end{align*}
Since $(f \otimes g) = (f \otimes \ell)(j \otimes g)$, this establishes  \cref{transport}.  The second identity follows from applying \(\tau\) to both sides of \cref{transport} and applying \cref{taulem}.
\end{proof}

\section{The finite web category \texorpdfstring{$\WebAaI$}{WebAaI}}

We assume the reader is familar with constructing monoidal supercategories by generators and relations, diagrammatic supercategories, and the like.  A recommended reference is \cite{BE}.  This section summarizes needed definitions and results from \cite{DKMZ}.  Further details can be found in therein.

\subsection{Combinatorics}
\subsubsection{Counting and sets}
We use the generalized binomial coefficient: 
\begin{align*}
\binom{n}{k}:= \frac{n(n-1) \cdots (n-k+1)}{k!} \in \Z,
\end{align*}
for any \(n \in \Z\), \(k \in \Z_{\geq 0}\).  Given integers $a,b$ with $a \leq b$, let $[a,b] = \{a, a+1, \dotsc , b \}$.  Given sets $X$ and $Y$, we write $X^{Y}$ for the set of tuples indexed by $Y$ consisting of elements from $X$. 

\subsubsection{Compositions}
For \(\bx = (x_1, \ldots, x_t) \in \Z_{\geq 0}^t\),  set
\begin{align*}
 |\bx| = t,\qquad\lVert \bx \rVert = \sum_{k=1}^t x_k,\qquad  \bx! =  \prod_{k=1}^t (x_k!).
 \end{align*}
Set
\begin{align*}
\Omega(n,d) = \{ \bx = (x_1, \ldots, x_n) \in \Z_{\geq 0}^n \mid \lVert \bx \rVert = d\} \qquad \text{and} \qquad 
\Omega =  \bigsqcup_{n,d \in \Z_{\geq 0}}  \Omega(n,d).
\end{align*}
We also set
\begin{align*}
\mathfrak{S}_{\bx} := \mathfrak{S}_{x_1} \times \cdots \times \mathfrak{S}_{x_t} \leq \mathfrak{S}_{\lVert \bx \rVert}.
\end{align*}
If \(\lVert \bx \rVert = \lVert \by \rVert\) and \(\mathfrak{S}_{\by} \leq \mathfrak{S}_{\bx}\), then we write \(\mathscr{D}_{\bx}^{\by}\) for the set of minimal length right coset representatives for \(  \mathfrak{S}_{\by} \backslash \mathfrak{S}_{\bx}\).

\subsubsection{Colored compositions}
For \(n,d \in \Z_{\geq 0}\) and a set \(I\), an {\em \(n\)-part \(I\)-colored composition of \(d\)} is the data of a composition \(\bx = (x_1, \ldots, x_n) \in \Omega(n,d)\), and a tuple \(\bi = (i_1, \ldots, i_n) \in I^n\), which we write as \(\bi^{(\bx)} = (i_1^{(x_1)}, \ldots, i_n^{(x_n)})\). For two such \(I\)-colored compositions, we declare \(\bi^{(\bx)} = \bj^{(\bx)}\) provided \(i_t = j_t\) whenever \(x_t \neq 0\); i.e., the coloring of the zero components of a composition is irrelevant for us. Set
\begin{align*}
\Omega_I(n,d) = \{ \bi^{(\bx)} = (i_1^{(x_1)}, \ldots, i_n^{(x_n)}) \mid \bi \in I^n, \, \bx \in \Omega(n,d)\} \qquad \text{and} \qquad 
\Omega_I =  \bigsqcup_{n,d \in \Z_{\geq 0}}  \Omega_I(n,d).
\end{align*}
We consider \(\Omega_I\) as a monoid under concatenation, with identity the empty composition \(() \in \Omega_I(0,0)\). We define the monoid of {\em reduced} \(I\)-colored compositions \(\widehat{\Omega}_I\) by imposing the following additional relation on \(\Omega_I\): \(() = (i^{(0)})\) for all \(i \in I\). In other words, we have \(\bi^{(\bx)} = \bj^{(\by)}\) in \(\widehat{\Omega}_I\) provided the colored compositions are identical after deleting all the zero components. We write
\begin{align*}
\widehat{\Omega}_I(n,d) = \{ \bi^{(\bx)} = (i_1^{(x_1)}, \ldots, i_n^{(x_n)}) \mid \bi \in I^n, \, \bx \in \Omega(n,d)\} \subseteq \widehat{\Omega}_I.
\end{align*}
For \(\bi^{(\bx)}= i_1^{(x_1)} \cdots i_t^{(x_t)}  \in \widehat{\Omega}\), we will write \(\bi^\bx := ((i_1^{(1)})^{ x_1},  \ldots, (i_t^{(1)})^{ x_t}) \in \widehat{\Omega}\). We will also associate \(\bi \in I^n\) with \(\bi^{(1^n)}=(i_1^{(1)}, \ldots, i_n^{(1)}) \in \widehat{\Omega}\).

\begin{example}
If \(I = \{\alpha, \beta\}\), then \(\alpha^{(2)} \beta^{(0)} \beta^{(3)} = \alpha^{(2)} \alpha^{(0)} \beta^{(3)}\) in both \( \Omega_I\) and \(\widehat{\Omega}_I\), while \(\alpha^{(2)} \beta^{(0)} \beta^{(3)} =  \alpha^{(2)} \beta^{(3)}\) in \(\widehat{\Omega}_I\) but not in \(\Omega_I\).
\end{example}

\subsection{The \texorpdfstring{$\WebAaI$}{WebAaI} category}\label{defweb}  Let \(\calA = (A, \a)_I\) be a good pair.  It will be useful to set \(A^{(1)} =A\) and \(A^{(z)} = \a \) for \(z > 1\).
\begin{definition}\label{defwebaa}
 Let \(\WebAaI\) be the strict monoidal \(\k\)-linear supercategory defined as follows.  
We have \(\Ob(\WebAaI)=\widehat{ \Omega}_I\). The monoidal structure on objects in \(\WebAaI\) is inherited from the monoid \(\widehat{ \Omega}_I\); in particular the objects of \(\WebAaI\) are monoidally generated by objects \(i^{(x)}\) for \(i \in I\), \(x \in \Z_{\geq 0}\), with \(i^{(0)}\) equating to the unit object \(\mathbbm{1} \in \WebAaI\) for all \(i \in I\). 

The generating morphisms of \(\WebAaI\) are given by the diagrams:
\begin{align}\label{WebAaGens}
\hackcenter{
\begin{tikzpicture}[scale=.8]
  \draw[ultra thick,blue] (0,0)--(0,0.2) .. controls ++(0,0.35) and ++(0,-0.35) .. (-0.4,0.9)--(-0.4,1);
  \draw[ultra thick,blue] (0,0)--(0,0.2) .. controls ++(0,0.35) and ++(0,-0.35) .. (0.4,0.9)--(0.4,1);
      \node[above] at (-0.4,1) {$ \scriptstyle i^{\scriptstyle (x)}$};
      \node[above] at (0.4,1) {$ \scriptstyle i^{\scriptstyle (y)}$};
      \node[below] at (0,0) {$ \scriptstyle i^{\scriptstyle (x+y)} $};
\end{tikzpicture}},
\qquad
\qquad
\hackcenter{
\begin{tikzpicture}[scale=.8]
  \draw[ultra thick,blue ] (-0.4,0)--(-0.4,0.1) .. controls ++(0,0.35) and ++(0,-0.35) .. (0,0.8)--(0,1);
\draw[ultra thick, blue] (0.4,0)--(0.4,0.1) .. controls ++(0,0.35) and ++(0,-0.35) .. (0,0.8)--(0,1);
      \node[below] at (-0.4,0) {$ \scriptstyle i^{ \scriptstyle (x)}$};
      \node[below] at (0.4,0) {$ \scriptstyle i^{ \scriptstyle (y)}$};
      \node[above] at (0,1) {$ \scriptstyle i^{ \scriptstyle (x+y)}$};
\end{tikzpicture}},
\qquad
\qquad
\hackcenter{
\begin{tikzpicture}[scale=.8]
  \draw[ultra thick,red] (0.4,0)--(0.4,0.1) .. controls ++(0,0.35) and ++(0,-0.35) .. (-0.4,0.9)--(-0.4,1);
  \draw[ultra thick,blue] (-0.4,0)--(-0.4,0.1) .. controls ++(0,0.35) and ++(0,-0.35) .. (0.4,0.9)--(0.4,1);
      \node[above] at (-0.4,1) {$ \scriptstyle j^{ \scriptstyle (y)}$};
      \node[above] at (0.4,1) {$ \scriptstyle i^{ \scriptstyle (x)}$};
       \node[below] at (-0.4,0) {$ \scriptstyle i^{ \scriptstyle (x)}$};
      \node[below] at (0.4,0) {$ \scriptstyle j^{ \scriptstyle (y)}$};
\end{tikzpicture}},
\qquad
\qquad
\hackcenter{
\begin{tikzpicture}[scale=.8]
  \draw[ultra thick, blue] (0,0)--(0,0.5);
   \draw[ultra thick, red] (0,0.5)--(0,1);
   \draw[thick, fill=yellow]  (0,0.5) circle (7pt);
    \node at (0,0.5) {$ \scriptstyle f$};
     \node[below] at (0,0) {$ \scriptstyle i^{ \scriptstyle (z)}$};
      \node[above] at (0,1) {$ \scriptstyle j^{ \scriptstyle (z)}$};
\end{tikzpicture}},
\end{align}
for \(i,j \in I\), \(x,y \in \Z_{\geq 0}\), \(z \in \Z_{>0}\), \(f \in jA^{(z)}i\), where diagrams are to be read from bottom to top. We call these morphisms {\em split}, {\em merge}, {\em crossing}, and {\em coupon}, respectively. Splits, merges and crossings have parity \(\bar 0\), and the parity of the \(f\) coupon is \(\bar{f}\). We say a strand labeled by \(i^{(x)}\) has {\em color} \(i\) and {\em thickness} \(x\). We refer to strands of thickness 1 as {\em thin} strands, otherwise we call them {\em thick}.  Note that, since \(A^{(1)} =A\) and \(A^{(z)} = \a \) for \(z > 1\), thick strands may only be decorated by coupons in the even subalgebra \(\a \), whereas thin strands may be decorated with arbitrary coupons in \(A\).

Going forward, we will use the following conventions:
\begin{itemize}
\item Strands of thickness $0$ (and any coupons thereon) are to be deleted;
\item Diagrams containing a strand of negative thickness are to be read as zero.
\item When coupons on separate strands are depicted at the same height, the coupon on the left should be considered to be slightly above the coupon on the right.
\end{itemize}

The relations imposed on the morphisms in \(\WebAaI\) are given in \cref{AssocRel} - \cref{AaIntertwine}:

%%%%%%%%%%%%%%%%%%%%%%%%%BEGIN ASSOCIATIVITY
{\em Web-associativity.} For all \(i \in I\), \(x,y,z \in \Z_{\geq 0}\):
\begin{align}\label{AssocRel}
\hackcenter{
% [inline block 0: 43 envs, 40487 chars -> data_tex | \begin{tikzpicture}[scale=.8]   \draw[ultra thick,blue] (0,0)--(0,0.2) .. controls ++(0,0.35) and ++(0,-0.35) .. (-0.4,0...]
},
\end{align}
in \(\WebAaI(i^{(x_1 + \cdots + x_n)}, i^{(x_1)} \cdots  i^{(x_n)})\)
and
 \(\WebAaI(i^{(x_1)} \cdots i^{(x_n)}, i^{(x_1 + \cdots + x_n)})\), respectively.  The diagrams should be interpreted as \(n-1\) vertically composed splits (or merges). By \cref{AssocRel} the resulting morphism is independent of the split (or merge) order. 

For \(\bi^{(\bx)} = i_1^{(x_1)} \cdots i_t^{(x_t)} \in \textup{Ob}(\WebAaI)\) recall that \(\bi^\bx = (i_1^{(1)})^{ x_1}  \cdots (i_t^{(1)})^{ x_t} \in \textup{Ob}(\WebAaI)\), and 
\begin{align*}
Y_{\bi^{(\bx)}}&:= y_{i_1}^{(1, \ldots, 1)} \otimes \cdots \otimes y_{i_t}^{(1, \ldots, 1)} \in \WebAaI(\bi^{(\bx)}, \bi^\bx)\\
Z_{\bi^{(\bx)}}&:= z_{i_1}^{(1, \ldots, 1)} \otimes \cdots \otimes z_{i_t}^{(1, \ldots, 1)} \in \WebAaI(\bi^\bx,\bi^{(\bx)})
\end{align*}
Then for \(\bi^{(\bx)}, \bj^{(\by)} \in \textup{Ob}(\WebAaI)\), we have linear maps: 
\begin{align*}
\exp_{\bi^{(\bx)}, \bj^{(\by)}}: \WebAaI(\bi^{(\bx)}, \bj^{(\by)}) \to \WebAaI(\bi^\bx, \bj^\by),
\qquad
f \mapsto Y_{\bj^{(\by)}}\circ f\circ Z_{\bi^{(\bx)}}\\
\con_{\bi^{(\bx)}, \bj^{(\by)}}: \WebAaI(\bi^{\bx}, \bj^{\by}) \to \WebAaI(\bi^{(\bx)}, \bj^{(\by)}),
\qquad
f \mapsto Z_{\bj^{(\by)}}\circ f\circ Y_{\bi^{(\bx)}}
\end{align*}
We refer to these maps as \emph{explosion} and \emph{contraction}, respectively. 
The next lemma follows by repeated application of \cref{KnotholeRel}.

\begin{lemma}\label{L:ExplCon} For all \(f \in \WebAaI(\bi^{(\bx)}, \bj^{(\by)})\), we have
\begin{align*}
\con_{\bi^{(\bx)}, \bj^{(\by)}} \circ \exp_{\bi^{(\bx)}, \bj^{(\by)}} (f)= \bx! \,\by! \, f.
\end{align*}
\end{lemma}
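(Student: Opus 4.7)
The plan is to reduce the statement to the single-strand claim that an explode-then-contract composition on one thick strand yields a factorial, and then take tensor products. First, unpacking the definitions gives
\[
\con_{\bi^{(\bx)}, \bj^{(\by)}} \circ \exp_{\bi^{(\bx)}, \bj^{(\by)}}(f) = Z_{\bj^{(\by)}} \circ Y_{\bj^{(\by)}} \circ f \circ Z_{\bi^{(\bx)}} \circ Y_{\bi^{(\bx)}},
\]
so it suffices to prove the two scalar identities
\[
Z_{\bi^{(\bx)}} \circ Y_{\bi^{(\bx)}} = \bx!\,\id_{\bi^{(\bx)}},
\qquad
Z_{\bj^{(\by)}} \circ Y_{\bj^{(\by)}} = \by!\,\id_{\bj^{(\by)}}.
\]
Since both \(Y\) and \(Z\) are defined as horizontal tensor products of single-strand exploders/contractors \(y_i^{(1,\ldots,1)}\) and \(z_i^{(1,\ldots,1)}\), and since composition respects the monoidal structure, both identities follow once I establish, for each \(i \in I\) and each \(x \in \Z_{\geq 0}\),
\[
z_i^{(1,\ldots,1)} \circ y_i^{(1,\ldots,1)} = x!\,\id_{i^{(x)}}.
\]

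The plan for this single-strand identity is induction on \(x\), using the knothole relation \eqref{KnotholeRel} together with web-associativity \eqref{AssocRel}. The base cases \(x = 0, 1\) are trivial. For the inductive step, I would use \eqref{AssocRel} to rewrite \(y_i^{(1,\ldots,1)}: i^{(x)} \to (i^{(1)})^{\otimes x}\) as first splitting \(i^{(x)}\) into \(i^{(x-1)} \otimes i^{(1)}\) and then applying \(y_i^{(1,\ldots,1)} \otimes \id_{i^{(1)}}\); dually for \(z_i^{(1,\ldots,1)}\). The associativity rewrite lets me isolate an outer split-followed-by-merge between an \(i^{(x-1)}\) strand and an \(i^{(1)}\) strand in the middle of the composition, on which \eqref{KnotholeRel} fires to produce the scalar \(\binom{x}{1} = x\) times a shorter explode-contract composition on \(i^{(x-1)}\). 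Applying the inductive hypothesis \((x-1)!\) then yields \(x!\,\id_{i^{(x)}}\) as required.

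Tensoring these single-strand identities and using the interchange law in the monoidal supercategory gives \(Z_{\bi^{(\bx)}} \circ Y_{\bi^{(\bx)}} = \prod_k x_k!\,\id = \bx!\,\id\) and the analogous identity for \(\by\). Substituting back into the unpacked expression and using that scalars are central in \(\k\)-linear morphism spaces, I conclude
\[
\con \circ \exp(f) = (\by!\,\id_{\bj^{(\by)}}) \circ f \circ (\bx!\,\id_{\bi^{(\bx)}}) = \bx!\,\by!\,f.
\]

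The only potentially fiddly step is the single-strand induction, since one has to choose the associativity rewrite carefully so that the knothole relation applies to two adjacent strands of the right thicknesses. Everything else is formal bookkeeping with tensor products of identities, so I expect no genuine obstacle beyond writing the induction cleanly.
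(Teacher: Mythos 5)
Your proof is correct and takes essentially the same approach as the paper, which dispatches the lemma with the remark that it ``follows by repeated application of \eqref{KnotholeRel}.'' Your reduction to the single-strand identity $z_i^{(1,\ldots,1)}\circ y_i^{(1,\ldots,1)} = x!\,\id_{i^{(x)}}$ via induction on $x$ (using \eqref{AssocRel} to peel off one strand and \eqref{KnotholeRel} to produce the factor $\binom{x}{1}=x$), followed by tensoring, is exactly the careful elaboration of that remark.
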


For any \(x \in \Z_{\geq 0}\) and \(i_1,\ldots, i_x \in I\), Lemma~\ref{CoxArb} implies there is a natural way to associate a permutation \(\sigma \in \mathfrak{S}_x\) to a morphism in \(\Hom_{\AffWebAaI}\left( i_1^{(1)} \cdots i_x^{(1)}, i_{\sigma^{-1}(1)}^{(1)} \cdots i_{\sigma^{-1}(x)}^{(1)}\right)\) which associates simple transpositions with the appropriately colored crossing. 

\begin{lemma}\label{bigwishtosym}
For all \(i \in I\), and \(\bx = (x_1, \ldots, x_m) \in \Z_{\geq 0}^m\), \(\by = (y_1, \ldots, y_n) \in \Z_{\geq 0}^n\) with \(\lVert \bx \rVert = \lVert \by \rVert\), we have
\begin{align}\label{GreenProdRule}
\hackcenter{}
\hackcenter{
\begin{overpic}[height=35mm]{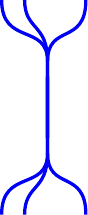}
    \put(-2,-1){\makebox(0,0)[t]{$\scriptstyle i^{(\hspace{-0.2mm}x_1\hspace{-0.4mm})}$}}
    \put(14,-1){\makebox(0,0)[t]{$\scriptstyle  i^{(\hspace{-0.2mm}x_2\hspace{-0.3mm})}$}}
     \put(27,-1){\makebox(0,0)[t]{$\scriptstyle \cdots $}}
     \put(40,-1){\makebox(0,0)[t]{$\scriptstyle  i^{(\hspace{-0.2mm}x_m\hspace{-0.2mm})}$}}
      \put(10,50){\makebox(0,0)[]{$\scriptstyle  i^{(\lVert \bx \rVert)}$}}
    \put(-2,101){\makebox(0,0)[b]{$\scriptstyle i^{(\hspace{-0.2mm}y_1\hspace{-0.4mm})}$}}
    \put(14,101){\makebox(0,0)[b]{$\scriptstyle  i^{(\hspace{-0.2mm}y_2\hspace{-0.3mm})}$}}
     \put(27,101){\makebox(0,0)[b]{$\scriptstyle \cdots $}}
     \put(40,101){\makebox(0,0)[b]{$\scriptstyle  i^{(\hspace{-0.2mm}y_n\hspace{-0.2mm})}$}}
     %
      %%%%
\end{overpic}
}
=
\hackcenter{}
\;
\sum_{\alpha}
\;\;
\hackcenter{
\begin{overpic}[height=35mm]{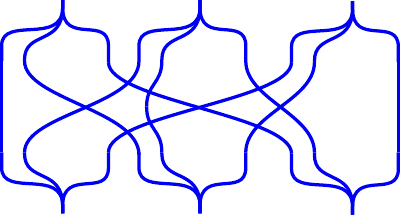}
    \put(17,-0){\makebox(0,0)[t]{$\scriptstyle i^{(\hspace{-0.2mm}x_1\hspace{-0.4mm})}$}}
    \put(51,-0){\makebox(0,0)[t]{$\scriptstyle  i^{(\hspace{-0.2mm}x_2\hspace{-0.3mm})}$}}
     \put(68,-0){\makebox(0,0)[t]{$\scriptstyle \cdots $}}
     \put(88,-0){\makebox(0,0)[t]{$\scriptstyle  i^{(\hspace{-0.2mm}x_m\hspace{-0.2mm})}$}}
      \put(17,54){\makebox(0,0)[b]{$\scriptstyle i^{(\hspace{-0.2mm}y_1\hspace{-0.4mm})}$}}
    \put(51,54){\makebox(0,0)[b]{$\scriptstyle  i^{(\hspace{-0.2mm}y_2\hspace{-0.3mm})}$}}
     \put(68,54){\makebox(0,0)[b]{$\scriptstyle \cdots $}}
     \put(88,54){\makebox(0,0)[b]{$\scriptstyle  i^{(\hspace{-0.2mm}y_n\hspace{-0.2mm})}$}}
     \put(6,6){\makebox(0,0)[t]{$\scriptstyle {i}^{(\hspace{-0.2mm}\alpha_{1\hspace{-0.2mm},\hspace{-0.2mm}1\hspace{-0.2mm}})}$}}
     \put(13,18){\makebox(0,0)[t]{$\scriptstyle {i}^{(\hspace{-0.2mm}\alpha_{1\hspace{-0.2mm},\hspace{-0.2mm}2}\hspace{-0.2mm})}$}}
      \put(20,13){\makebox(0,0)[t]{$\scriptstyle {}^{\cdots}$}}
      \put(24,6){\makebox(0,0)[t]{$\scriptstyle {i}^{(\hspace{-0.2mm}\alpha_{1\hspace{-0.2mm},\hspace{-0.2mm}n}\hspace{-0.2mm})}$}}
      %%%%
      \put(40,6){\makebox(0,0)[t]{$\scriptstyle {i}^{(\hspace{-0.2mm}\alpha_{2\hspace{-0.2mm},\hspace{-0.2mm}1}\hspace{-0.2mm})}$}}
     \put(47.5,18){\makebox(0,0)[t]{$\scriptstyle {i}^{(\hspace{-0.2mm}\alpha_{2\hspace{-0.2mm},\hspace{-0.2mm}2}\hspace{-0.2mm})}$}}
      \put(53,13){\makebox(0,0)[t]{$\scriptstyle {}^{\cdots}$}}
      \put(58,6){\makebox(0,0)[t]{$\scriptstyle {i}^{(\hspace{-0.2mm}\alpha_{2\hspace{-0.2mm},\hspace{-0.2mm}n}\hspace{-0.2mm})}$}}
      %%%%
      \put(78,6){\makebox(0,0)[t]{$\scriptstyle {i}^{(\hspace{-0.2mm}\alpha_{m\hspace{-0.2mm},\hspace{-0.2mm}1}\hspace{-0.2mm})}$}}
     \put(86,18){\makebox(0,0)[t]{$\scriptstyle {i}^{(\hspace{-0.2mm}\alpha_{m\hspace{-0.2mm},\hspace{-0.2mm}2}\hspace{-0.2mm})}$}}
      \put(90,13){\makebox(0,0)[t]{$\scriptstyle {}^{\cdots}$}}
      \put(96,6){\makebox(0,0)[t]{$\scriptstyle {i}^{(\hspace{-0.2mm}\alpha_{m\hspace{-0.2mm},\hspace{-0.2mm}n}\hspace{-0.2mm})}$}}
      %%%%
      %%%%%%
      \put(8,48){\makebox(0,0)[b]{$\scriptstyle {i}^{(\hspace{-0.2mm}\alpha_{1\hspace{-0.2mm},\hspace{-0.2mm}1}\hspace{-0.2mm})}$}}
       \put(13,36){\makebox(0,0)[b]{$\scriptstyle {i}^{(\hspace{-0.2mm}\alpha_{2\hspace{-0.2mm},\hspace{-0.2mm}1}\hspace{-0.2mm})}$}}
        \put(20,39){\makebox(0,0)[b]{$\scriptstyle {}^{\cdots}$}}
         \put(24.5,48){\makebox(0,0)[b]{$\scriptstyle {i}^{(\hspace{-0.2mm}\alpha_{m\hspace{-0.2mm},\hspace{-0.2mm}1})}$}}
      %%%%
        \put(42,48){\makebox(0,0)[b]{$\scriptstyle {i}^{(\hspace{-0.2mm}\alpha_{1\hspace{-0.2mm},\hspace{-0.2mm}2}\hspace{-0.2mm})}$}}
     \put(47.5,36){\makebox(0,0)[b]{$\scriptstyle {i}^{(\hspace{-0.2mm}\alpha_{2\hspace{-0.2mm},\hspace{-0.2mm}2}\hspace{-0.2mm})}$}}
      \put(53,39){\makebox(0,0)[b]{$\scriptstyle {}^{\cdots}$}}
      \put(59,48){\makebox(0,0)[b]{$\scriptstyle {i}^{(\hspace{-0.2mm}\alpha_{m\hspace{-0.2mm},\hspace{-0.2mm}2}\hspace{-0.2mm})}$}}
      %%%%
        \put(81,48){\makebox(0,0)[b]{$\scriptstyle {i}^{(\hspace{-0.2mm}\alpha_{1\hspace{-0.2mm},\hspace{-0.2mm}n}\hspace{-0.2mm})}$}}
     \put(86,36){\makebox(0,0)[b]{$\scriptstyle {i}^{(\hspace{-0.2mm}\alpha_{2\hspace{-0.2mm},\hspace{-0.2mm}n}\hspace{-0.2mm})}$}}
      \put(90,39){\makebox(0,0)[b]{$\scriptstyle {}^{\cdots}$}}
      \put(97,48){\makebox(0,0)[b]{$\scriptstyle {i}^{(\hspace{-0.2mm}\alpha_{m\hspace{-0.2mm},\hspace{-0.2mm}n}\hspace{-0.2mm})}$}}
      %%%%
\end{overpic}
},
\\
\nonumber
\end{align}
where the sum is over all \(m \times n\) matrices \(\alpha = (\alpha_{r,s})_{r \in [1,m],s \in [1,n]}\) such that \(\sum_{r=1}^m \alpha_{r,t} = y_t\) and \(\sum_{s=1}^n \alpha_{t,s} = x_t\). In particular, we have
\begin{align*}
{}
\hackcenter{
\begin{tikzpicture}[scale=0.8]
  \draw[ultra thick, blue] (0,0)--(0,0.2) .. controls ++(0,0.35) and ++(0,-0.35)  .. (0.7,0.8)--(0.7,1.2)
  .. controls ++(0,0.35) and ++(0,-0.35)  .. (0,1.8)--(0,2); 
    \draw[ultra thick, blue] (1.4,0)--(1.4,0.2) .. controls ++(0,0.35) and ++(0,-0.35)  .. (0.7,0.8)--(0.7,1.2)
  .. controls ++(0,0.35) and ++(0,-0.35)  .. (1.4,1.8)--(1.4,2); 
    %\draw[thick] (0,0)--(0,-0.2) .. controls ++(0,-0.35) and ++(0,0.35) .. (0.4,-0.9)--(0.4,-1) 
  %.. controls ++(0,-0.35) and ++(0,0.35) .. (0.8,-1.7)--(0.8,-1.8); 
  %\draw[thick] (0,0)--(0,-0.2) .. controls ++(0,-0.5) and ++(0,0.5) .. (-0.8,-1.5)--(-0.8,-1.8);
     \node[below] at (0,0) {$\scriptstyle i^{\scriptstyle (1)}$};
     \node[below] at (1.4,0) {$\scriptstyle i^{\scriptstyle (1)}$};
      \node[above] at (0,2) {$\scriptstyle i^{\scriptstyle (1)}$};
      \node[above] at (1.4,2) {$\scriptstyle i^{\scriptstyle (1)}$};
                  \node[left] at (0.6,1) {$\scriptstyle i^{\scriptstyle (x)}$};
      %\node[below] at (0.4,0.5) {$1$};
      %\node[above] at (0.4,1.5) {$1$};
        \node[above] at (0.7,2) {$\scriptstyle \cdots$};
         \node[below] at (0.7,-0.2) {$\scriptstyle \cdots$};
\end{tikzpicture}}
=
\sum_{\sigma \in \mathfrak{S}_x}
\hackcenter{
\begin{tikzpicture}[scale=0.8]
  \draw[ultra thick, blue] (0,0)--(0,2);
   \draw[ultra thick, blue] (1.4,0)--(1.4,2);
    %\draw[thick] (0,0)--(0,-0.2) .. controls ++(0,-0.35) and ++(0,0.35) .. (0.4,-0.9)--(0.4,-1) 
  %.. controls ++(0,-0.35) and ++(0,0.35) .. (0.8,-1.7)--(0.8,-1.8); 
  %\draw[thick] (0,0)--(0,-0.2) .. controls ++(0,-0.5) and ++(0,0.5) .. (-0.8,-1.5)--(-0.8,-1.8);
     \node[below] at (0,0) {$\scriptstyle i^{\scriptstyle (1)}$};
     \node[below] at (1.4,0) {$\scriptstyle i^{\scriptstyle (1)}$};
      \node[above] at (0,2) {$\scriptstyle i^{\scriptstyle (1)}$};
      \node[above] at (1.4,2) {$\scriptstyle i^{\scriptstyle (1)}$};
                  \node[left] at (0.6,1) {$\scriptstyle i^{\scriptstyle (x)}$};
      %\node[below] at (0.4,0.5) {$1$};
      %\node[above] at (0.4,1.5) {$1$};
        \node[above] at (0.7,2) {$\scriptstyle \cdots$};
         \node[below] at (0.7,-0.2) {$\scriptstyle \cdots$};
         \draw[draw=black, rounded corners, thick, fill=orange!50!yellow] (-0.2,0.5) rectangle ++(1.8,1);
          \node[] at (0.7,1) {$ \sigma$};
\end{tikzpicture}}.
\end{align*}
for all \(x \in \Z_{>0}\).
\end{lemma}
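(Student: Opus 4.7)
My plan is to prove this by induction on $n$ (the number of output strands), using the merge-split relation (\ref{MSrel}) as the essential engine, with web associativity (\ref{AssocRel}) and the intertwining and Coxeter relations handling the connective tissue. Recall from \cref{CoxArb} that the intertwining relations hold even when the colors agree, which is the situation we are in throughout.

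For the base case $n = 1$, both sides reduce to the $m$-fold merge $i^{(x_1)} \otimes \cdots \otimes i^{(x_m)} \to i^{(\lVert \bx \rVert)}$: the top split is trivial, and on the RHS the sum collapses to the single column matrix $\alpha_{r,1} = x_r$, producing no crossings.

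For the inductive step, use associativity (\ref{AssocRel}) to rewrite the top split as first peeling off $i^{(y_1)}$ from $i^{(\lVert \bx \rVert)}$ and then splitting the residual $i^{(\lVert \bx \rVert - y_1)}$ into $i^{(y_2)} \otimes \cdots \otimes i^{(y_n)}$. Decompose the $m$-fold merge on the bottom into a sequence of binary merges, and apply (\ref{MSrel}) iteratively to each binary merge feeding into the first top split; this produces a sum indexed by choices $\alpha_{r,1} \in \Z_{\geq 0}$ with $\sum_r \alpha_{r,1} = y_1$, where a strand of thickness $\alpha_{r,1}$ peels off each $i^{(x_r)}$ and crosses over the remaining strands to join the $i^{(y_1)}$ merge. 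Next apply the split- and merge-intertwining relations (\ref{SplitIntertwineRel}, \ref{MergeIntertwineRel}) to slide these crossings into the canonical grid position shown on the RHS of (\ref{GreenProdRule}). What remains above is the merge of the $m$ strands of thicknesses $x_r - \alpha_{r,1}$ into $i^{(\lVert \bx \rVert - y_1)}$ followed by a split into $i^{(y_2)} \otimes \cdots \otimes i^{(y_n)}$, which is precisely an instance of the lemma with parameters $\bx' = (x_r - \alpha_{r,1})_r$ and $\by' = (y_2, \ldots, y_n)$. Applying the inductive hypothesis and combining the outer sum over $(\alpha_{r,1})_r$ with the inner sum over $m \times (n-1)$ matrices yields the full sum over $m \times n$ matrices $\alpha$ with row sums $\bx$ and column sums $\by$, as required.

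The special case at the end of the lemma is the instance $\bx = \by = (1^x)$: the matrices $\alpha$ with all row and column sums equal to $1$ are precisely the $x \times x$ permutation matrices, which are in canonical bijection with $\mathfrak{S}_x$. For each such $\alpha$, all strands of thickness $0$ disappear by the diagrammatic convention in \cref{defwebaa}, leaving exactly the thin-strand braid diagram associated to the corresponding permutation.

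The main obstacle will be the bookkeeping in the crossing-sliding step: one must verify that after repeated applications of the intertwining relations, the crossings assemble into the clean grid pattern depicted on the RHS of (\ref{GreenProdRule}) rather than some shuffled configuration, and that the order of binary-merge expansion does not affect the final expression (which it cannot, by associativity). Fortunately, all morphisms involved (splits, merges, and same-color crossings) have parity $\bar 0$, so no signs from the supercategory structure arise to complicate the combinatorics.
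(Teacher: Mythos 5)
Your proof is correct and follows the induction-via-merge-split route that the paper itself acknowledges is available ("one may see this via a straightforward induction proof using relation (\ref{MSrel}) as the base case") but then does not carry out. The paper's actual argument takes a different, more conceptual path: it invokes \cite[Theorem 7.4.2]{DKMZ}, which identifies \(\bigoplus_{\bx, \by} \WebAaI(i^{(\bx)}, i^{(\by)})\) (for fixed \(\lVert \bx \rVert = \lVert \by \rVert = d\)) with the classical Schur algebra \(S_{\k}(d,d)\), and then observes that \eqref{GreenProdRule} is precisely Green's product rule \cite[(2.3b)]{Green} for the product \(\xi_{1^{y_1}\cdots n^{y_n}, 1^d} \cdot \xi_{1^d, 1^{x_1}\cdots m^{x_m}}\). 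The trade-off is clear: your induction is self-contained and stays entirely within the diagrammatic calculus, at the cost of the crossing-bookkeeping you rightly flag; the paper's argument is a one-liner that transfers the combinatorics to a classical theorem, at the cost of citing the Schur-algebra isomorphism from the companion paper. Your inductive step is sound — associativity \eqref{AssocRel} to peel off the \(i^{(y_1)}\) output, iterated \eqref{MSrel} over the binary merges on the bottom to produce the column \((\alpha_{r,1})_r\), the intertwining relations \eqref{SplitIntertwineRel}, \eqref{MergeIntertwineRel} (valid in the same-color situation by \cref{CoxArb}) to normalize the crossings, and the inductive hypothesis on the residual \((\bx', \by')\) web — and the identification of the permutation special case via \(0/1\)-matrices with row and column sums \(1\) is exactly right. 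One small note: when the paper says "using \eqref{MSrel} as the base case," it likely means anchoring the induction at \(m=n=2\), where \eqref{MSrel} \emph{is} the statement; your choice to anchor at \(n=1\) instead (with \eqref{MSrel} used only in the inductive step) works equally well.
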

\begin{proof}
One may see this via a straightforward induction proof using relation (\ref{MSrel}) as the base case. More directly, we have by \cite[Theorem 7.4.2]{DKMZ} that \(\bigoplus_{\bx, \by,  \lVert \bx \rVert = \lVert \by \rVert = d} \WebAaI(i^{(\bx)}, i^{(\by)})\) is isomorphic to the classical Schur algebra \(S_{\k}(d,d)\), and from this point of view, (\ref{GreenProdRule}) merely expresses Green's product rule (see \cite[(2.3b)]{Green}) applied to the product
\(\xi_{1^{y_1}2^{y_2} \cdots n^{y_n}, 1^d} \cdot \xi_{1^d, 1^{x_1} 2^{x_2} \cdots m^{x_m}}\).
\end{proof}

\section{Teleporter morphisms}\label{teleportsec}
\label{SSS:Additional-shorthand-coupons}
Assume throughout this section that \(\calA = ((A, \a)_I, \tr , \psi)\) is a Frobenius great pair. 
Let \(x \in \Z_{\geq 0}\), \(i,j \in I\), \(f \in jAi\). We first define some shorthand coupons in \(\WebAaI(i^{(x)}, j^{(x)})\).
Set:
\begin{align}\label{greendotdef}
\hackcenter{
\begin{tikzpicture}[scale=0.8]
  \draw[ultra thick, blue] (0,0)--(0,0.9);
  \draw[ultra thick, red] (0,0.9)--(0,1.8);
 \draw[draw=black, rounded corners, thick, fill=lime] (-0.5,0.6) rectangle ++(1,0.6);
    \node at (0,0.9) {$\scriptstyle [f]^{\scriptstyle \diamond}$};
     \node[below] at (0,0) {$\scriptstyle i^{\scriptstyle(x)}$};
      \node[above] at (0,1.8) {$\scriptstyle j^{\scriptstyle(x)}$};
\end{tikzpicture}}
:=
\hackcenter{
\begin{tikzpicture}[scale=0.8]
  \draw[ultra thick, blue] (0,0)--(0,0.1) .. controls ++(0,0.35) and ++(0,-0.35) .. (-0.6,0.6)--(-0.6,0.9);
    \draw[ultra thick, red]  (-0.6,0.9)--(-0.6,1.2) 
  .. controls ++(0,0.35) and ++(0,-0.35) .. (0,1.7)--(0,1.8);
  \draw[ultra thick, blue] (0,0)--(0,0.1) .. controls ++(0,0.35) and ++(0,-0.35) .. (0.6,0.6)--(0.6,0.9);
    \draw[ultra thick, red]  (0.6,0.9)--(0.6,1.2) 
  .. controls ++(0,0.35) and ++(0,-0.35) .. (0,1.7)--(0,1.8);
  .. controls ++(0,0.35) and ++(0,-0.35) .. (0,1.7)--(0,1.8);
   \draw[thick, fill=yellow] (-0.6,0.9) circle (7pt);
     \draw[thick, fill=yellow] (0.6,0.9) circle (7pt);
       \node[above] at (0,1.8) {$\scriptstyle j^{\scriptstyle(x)}$};
         \node[below] at (0,0) {$\scriptstyle i^{\scriptstyle(x)}$};
           \node at (-0.6,0.9) {$\scriptstyle f$};
             \node at (0.6,0.9) {$\scriptstyle f$};
             \node[right] at (0.5,0.3) {$\scriptstyle i^{\scriptstyle(1)}$};
               \node[left] at (-0.5,0.3) {$\scriptstyle i^{\scriptstyle(1)}$};
                \node at (0,0.9) {$\scriptstyle \cdots $};
\end{tikzpicture}}
\qquad
\text{ and }
\qquad
\hackcenter{
\begin{tikzpicture}[scale=0.8]
  \draw[ultra thick, blue] (0,0)--(0,0.9);
  \draw[ultra thick, red] (0,0.9)--(0,1.8);
 \draw[draw=black, rounded corners, thick, fill=lime] (-0.5,0.6) rectangle ++(1,0.6);
    \node at (0,0.9) {$\scriptstyle [f] $};
     \node[below] at (0,0) {$\scriptstyle i^{\scriptstyle(x)}$};
      \node[above] at (0,1.8) {$\scriptstyle j^{\scriptstyle(x)}$};
\end{tikzpicture}}
\;
&:=
\begin{cases}
\;\;\;\;
\hackcenter{
\begin{tikzpicture}[scale=0.8]
  \draw[ultra thick, blue] (0,0)--(0,0.9);
  \draw[ultra thick, red] (0,0.9)--(0,1.8);
     \draw[thick, fill=yellow] (0,0.9) circle (8pt);
    \node at (0,0.9) {$\scriptstyle f$};
     \node[below] at (0,0) {$\scriptstyle i^{\scriptstyle(x)}$};
      \node[above] at (0,1.8) {$\scriptstyle j^{\scriptstyle(x)}$};
\end{tikzpicture}}
&
\textup{if }f \in j\a i, \textup{ or } x=1 \textup{ and } f \in jA_{\bar 1}i;
\\
\;\;\;\;
\hackcenter{
\begin{tikzpicture}[scale=0.8]
  \draw[ultra thick, blue] (0,0)--(0,0.9);
  \draw[ultra thick, red] (0,0.9)--(0,1.8);
 \draw[draw=black, rounded corners, thick, fill=lime] (-0.5,0.6) rectangle ++(1,0.6);
    \node at (0,0.9) {$\scriptstyle [f]^{\scriptstyle \diamond}$};
     \node[below] at (0,0) {$\scriptstyle i^{\scriptstyle(x)}$};
      \node[above] at (0,1.8) {$\scriptstyle j^{\scriptstyle(x)}$};
\end{tikzpicture}}
&
\textup{if }
f \in jA_{\bar 0}i \backslash j\a i.
\end{cases}
\end{align}

We will say a finitely supported function \(\mu: {}_j\BasisB_i \to \Z_{\geq 0}\) is a {\em restricted \( {}_j\BasisB_i\)-composition} provided that \(\mu(b) \leq 1\) whenever \(\overline{b} = \bar 1\). We define the following associated values:
\begin{align*}
|\mu| = \sum_{b \in  {}_j\BasisB_i} \mu(b), 
\qquad
\{\mu\}^! = \prod_{b \in {}_j\BasisB_i} \mu(b)!,
\qquad
\{\mu\}^!_{\a} = \prod_{\substack{b \in {}_j\BasisB_i \\ b \in \a} } \mu(b)!,
\qquad \text{and}\qquad
\{\mu^\vee\}^!_{\a} = \prod_{\substack{b \in {}_j\BasisB_i \\ b^\vee \in \a } } \mu(b)!.
\end{align*}
Then for such \(\mu\), we enumerate \(\textup{supp}(\mu) = \{b^{(1)}_\mu >  \cdots > b^{(d_\mu)}_\mu\}\), where \(d_\mu = |\textup{supp}(\mu)|\), and define the following morphisms:
\begin{align}\label{resmu}
\hackcenter{
\begin{tikzpicture}[scale=0.8]
  \draw[ultra thick, blue] (0,0)--(0,0.9);
  \draw[ultra thick, red] (0,0.9)--(0,1.8);
  %\draw[thick, fill=yellow]  (0,0.9) circle (9pt);
  \draw[draw=black, rounded corners, thick, fill=lime] (-0.3,0.6) rectangle ++(0.6,0.6);
    \node at (0,0.9) {$\scriptstyle \mu$};
     \node[below] at (0,0) {$\scriptstyle i^{\scriptstyle(|\mu|)}$};
      \node[above] at (0,1.8) {$\scriptstyle j^{\scriptstyle(|\mu|)}$};
\end{tikzpicture}}
:=
\hackcenter{
\begin{tikzpicture}[scale=0.8]
  \draw[ultra thick, blue] (0,0)--(0,0.05) .. controls ++(0,0.35) and ++(0,-0.35) .. (-1.2,0.45)--(-1.2,1.2);
    \draw[ultra thick, red]  (-1.2,1.2)--(-1.2,1.35) 
  .. controls ++(0,0.35) and ++(0,-0.35) .. (0,1.75)--(0,1.8);
  \draw[ultra thick, blue] (0,0)--(0,0.05) .. controls ++(0,0.35) and ++(0,-0.35) .. (1.2,0.45)--(1.2,1.2);
    \draw[ultra thick, red]  (1.2,1.2)--(1.2,1.35) 
  .. controls ++(0,0.35) and ++(0,-0.35) .. (0,1.75)--(0,1.8);
  .. controls ++(0,0.35) and ++(0,-0.35) .. (0,1.75)--(0,1.8);
    \draw[draw=black, rounded corners, thick, fill=lime] (-1.9,0.6) rectangle ++(1.2,0.6);
        \draw[draw=black, rounded corners, thick, fill=lime] (0.7,0.6) rectangle ++(1.2,0.6);
  \node at (-1.3,0.9) {$\scriptstyle [b_{\mu}^{(1)}]$};
    \node at (1.3,0.9) {$\scriptstyle [b_{\mu}^{(d_\mu)}]$};
       \node[above] at (0,1.8) {$\scriptstyle j^{\scriptstyle(|\mu|)}$};
         \node[below] at (0,0) {$\scriptstyle i^{\scriptstyle(|\mu|)}$};
             \node[left] at (-0.6,0.05) {$\scriptstyle i^{\scriptstyle(\mu(b_{\mu}^{(1)}))}$};
               \node[right] at (0.6,0.05) {$\scriptstyle i^{\scriptstyle(\mu(b_{\mu}^{(d_\mu)}))}$};
                \node at (0.04,0.9) {$\scriptstyle \cdots $};
\end{tikzpicture}}
\qquad
\textup{and}
\qquad
\hackcenter{
\begin{tikzpicture}[scale=0.8]
  \draw[ultra thick, blue] (0,0)--(0,0.9);
  \draw[ultra thick, red] (0,0.9)--(0,1.8);
  %\draw[thick, fill=yellow]  (0,0.9) circle (9pt);
  \draw[draw=black, rounded corners, thick, fill=lime] (-0.3,0.6) rectangle ++(0.6,0.6);
    \node at (0,0.9) {$\scriptstyle \nakayamamu$}; 
     \node[below] at (0,0) {$\scriptstyle \nakayamai^{\scriptstyle(|\mu|)}$};
      \node[above] at (0,1.8) {$\scriptstyle \nakayamaj^{\scriptstyle(|\mu|)}$};
\end{tikzpicture}}
:=
\hackcenter{
\begin{tikzpicture}[scale=0.8]
  \draw[ultra thick, blue] (0,0)--(0,0.05) .. controls ++(0,0.35) and ++(0,-0.35) .. (-1.2,0.45)--(-1.2,1.2);
    \draw[ultra thick, red]  (-1.2,1.2)--(-1.2,1.35) 
  .. controls ++(0,0.35) and ++(0,-0.35) .. (0,1.75)--(0,1.8);
  \draw[ultra thick, blue] (0,0)--(0,0.05) .. controls ++(0,0.35) and ++(0,-0.35) .. (1.2,0.45)--(1.2,1.2);
    \draw[ultra thick, red]  (1.2,1.2)--(1.2,1.35) 
  .. controls ++(0,0.35) and ++(0,-0.35) .. (0,1.75)--(0,1.8);
  .. controls ++(0,0.35) and ++(0,-0.35) .. (0,1.75)--(0,1.8);
    \draw[draw=black, rounded corners, thick, fill=lime] (-1.9,0.6) rectangle ++(1.2,0.6);
        \draw[draw=black, rounded corners, thick, fill=lime] (0.6,0.6) rectangle ++(1.4,0.6);
  \node at (-1.3,0.9) {$\scriptstyle [\nakayamab_{\mu}^{(1)}]$};
    \node at (1.3,0.9) {$\scriptstyle [\nakayamab_{\mu}^{(d_\mu)}]$};
       \node[above] at (0,1.8) {$\scriptstyle \nakayamaj^{\scriptstyle(|\mu|)}$};
         \node[below] at (0,0) {$\scriptstyle \nakayamai^{\scriptstyle(|\mu|)}$};
             \node[left] at (-0.6,0.05) {$\scriptstyle \nakayamai^{\scriptstyle(\mu(b_{\mu}^{(1)}))}$};
               \node[right] at (0.6,0.05) {$\scriptstyle \nakayamai^{\scriptstyle(\mu(b_{\mu}^{(d_\mu)}))}$};
                \node at (0.04,0.9) {$\scriptstyle \cdots $};
\end{tikzpicture}}
\end{align}
recalling that we write ${}_{t}a$ for $\psi^{-t}(a)$ for all $t \in \Z$ and $a \in A$.
We further define a `checked' version of this morphism:
\begin{align}\label{resmu}
{}
\hackcenter{
\begin{tikzpicture}[scale=0.8]
  \draw[ultra thick, red] (0,0)--(0,0.9);
  \draw[ultra thick, blue] (0,0.9)--(0,1.8);
  %\draw[thick, fill=yellow]  (0,0.9) circle (9pt);
  \draw[draw=black, rounded corners, thick, fill=lime] (-0.3,0.6) rectangle ++(0.6,0.6);
    \node at (0,0.9) {$\scriptstyle \mu^\vee$};
     \node[below] at (0,0) {$\scriptstyle j^{\scriptstyle(|\mu|)}$};
      \node[above] at (0,1.8) {$\scriptstyle \nakayamai^{\scriptstyle(|\mu|)}$};
\end{tikzpicture}}
:=
\hackcenter{
\begin{tikzpicture}[scale=0.8]
  \draw[ultra thick, red] (0,0)--(0,0.05) .. controls ++(0,0.35) and ++(0,-0.35) .. (-1.2,0.45)--(-1.2,1.2);
    \draw[ultra thick, blue]  (-1.2,1.2)--(-1.2,1.35) 
  .. controls ++(0,0.35) and ++(0,-0.35) .. (0,1.75)--(0,1.8);
  \draw[ultra thick, red] (0,0)--(0,0.05) .. controls ++(0,0.35) and ++(0,-0.35) .. (1.2,0.45)--(1.2,1.2);
    \draw[ultra thick, blue]  (1.2,1.2)--(1.2,1.35) 
  .. controls ++(0,0.35) and ++(0,-0.35) .. (0,1.75)--(0,1.8);
  .. controls ++(0,0.35) and ++(0,-0.35) .. (0,1.75)--(0,1.8);
    \draw[draw=black, rounded corners, thick, fill=lime] (-2.1,0.6) rectangle ++(1.6,0.6);
        \draw[draw=black, rounded corners, thick, fill=lime] (0.5,0.6) rectangle ++(1.6,0.6);
  \node at (-1.3,0.9) {$\scriptstyle [(b_{\mu}^{(d_\mu)})^\vee]$};
    \node at (1.3,0.9) {$\scriptstyle [(b_{\mu}^{(1)})^\vee]$};
       \node[above] at (0,1.8) {$\scriptstyle \nakayamai^{\scriptstyle(|\mu|)}$};
         \node[below] at (0,0) {$\scriptstyle j^{\scriptstyle(|\mu|)}$};
             \node[left] at (-0.6,0.05) {$\scriptstyle j^{\scriptstyle(\mu(b_{\mu}^{(d_\mu)}))}$};
               \node[right] at (0.6,0.05) {$\scriptstyle j^{\scriptstyle(\mu(b_{\mu}^{(1)}))}$};
                \node at (0.04,0.9) {$\scriptstyle \cdots $};
\end{tikzpicture}}
\end{align}
For \(t \in \Z_{\geq 0}\), we write \({}_j\mathcal{B}^t_i\) for the set of restricted \({}_j \BasisB_i\)-compositions \(\mu\) such that \(|\mu| = t\). Then we set \({}_j \mathcal{B}_i = \bigcup_{t \in \Z_{\geq 0}} {}_j \mathcal{B}_i^t\). For any \(m \in \Z_{\geq 0}\), \(i,j, k_1, \ldots, k_m \in I\), \(x, y_1, \ldots, y_m \in \Z_{\geq 0}\), we utilize the following shorthand notation:
\begin{align}\label{thicktransporterdef}
{}
\hackcenter{
\begin{tikzpicture}[scale=0.8]
  \draw[ultra thick, blue] (0,0)--(0,0.9);
  \draw[ultra thick, red] (0,0.9)--(0,1.8);
  %\draw[thick, fill=yellow]  (0,0.9) circle (9pt);
     \node[below] at (0,0) {$\scriptstyle i^{\scriptstyle(x)}$};
      \node[above] at (0,1.8) {$\scriptstyle j^{\scriptstyle(x)}$};
      %%%%%
      \draw[ultra thick, red] (0+3,0)--(0+3,0.9);
        \draw[ultra thick, blue] (0+3,0.9)--(0+3,1.8);
  %\draw[thick, fill=yellow]  (0,0.9) circle (9pt);
     \node[below] at (0+3,0) {$\scriptstyle j^{\scriptstyle(x)}$};
      \node[above] at (0+3,1.8) {$\scriptstyle \nakayamai^{\scriptstyle(x)}$};
      %%%%
        \draw[ultra thick, gray] (1,0)--(1,1.8);
           \node[below] at (1,0) {$\scriptstyle k_1^{\scriptstyle(y_1)}$};
              \node[above] at (1,1.8) {$\scriptstyle k_1^{\scriptstyle(y_1)}$};
         \draw[ultra thick, gray] (2,0)--(2,1.8);
              \node[below] at (2,0) {$\scriptstyle k_m^{\scriptstyle(y_m)}$};
              \node[above] at (2,1.8) {$\scriptstyle k_m^{\scriptstyle(y_m)}$};
              \node[] at (1.5,1.3) {$\scriptstyle \cdots$};
              \node[] at (1.5,0.5) {$\scriptstyle \cdots$};
              %%%%%
                        \draw[thick, black, fill=black]  (0,0.9) circle (6pt);
        \draw[thick,black, fill=black]  (3,0.9) circle (6pt);
                   \draw[decorate, decoration={snake,segment length=5pt, amplitude=1pt}, line width=1mm, black] (0,0.9)--(3,0.9);
      \draw[thick, lime, fill=lime]  (0,0.9) circle (5pt);
        \draw[thick, lime, fill=lime]  (3,0.9) circle (5pt);
      \draw[decorate, decoration={snake,segment length=5pt, amplitude=1pt}, line width=0.5mm, lime] (0,0.9)--(3,0.9);
\end{tikzpicture}}
:=
\sum_{\mu \in {}_j\mathcal{B}^x_i}
\frac{\{\mu\}^!_{\a}\{\mu^\vee\}^!_{\a}}{\{\mu\}^!}
\hackcenter{
\begin{tikzpicture}[scale=0.8]
  \draw[ultra thick, blue] (0,0)--(0,0.9);
  \draw[ultra thick, red] (0,0.9)--(0,1.8);
  %\draw[thick, fill=yellow]  (0,0.9) circle (9pt);
  \draw[draw=black, rounded corners, thick, fill=lime] (-0.3,0.6) rectangle ++(0.6,0.6);
    \node at (0,0.9) {$\scriptstyle \mu$};
     \node[below] at (0,0) {$\scriptstyle i^{\scriptstyle(x)}$};
      \node[above] at (0,1.8) {$\scriptstyle j^{\scriptstyle(x)}$};
      %%%%%
      \draw[ultra thick, red] (0+3,0)--(0+3,0.9);
        \draw[ultra thick, blue] (0+3,0.9)--(0+3,1.8);
  %\draw[thick, fill=yellow]  (0,0.9) circle (9pt);
  \draw[draw=black, rounded corners, thick, fill=lime] (-0.3+3,0.6) rectangle ++(0.6,0.6);
    \node at (0+3,0.9) {$\scriptstyle \mu^\vee$};
     \node[below] at (0+3,0) {$\scriptstyle j^{\scriptstyle(x)}$};
      \node[above] at (0+3,1.8) {$\scriptstyle \nakayamai^{\scriptstyle(x)}$};
      %%%%
        \draw[ultra thick, gray] (1,0)--(1,1.8);
           \node[below] at (1,0) {$\scriptstyle k_1^{\scriptstyle(y_1)}$};
              \node[above] at (1,1.8) {$\scriptstyle k_1^{\scriptstyle(y_1)}$};
         \draw[ultra thick, gray] (2,0)--(2,1.8);
              \node[below] at (2,0) {$\scriptstyle k_m^{\scriptstyle(y_m)}$};
              \node[above] at (2,1.8) {$\scriptstyle k_m^{\scriptstyle(y_m)}$};
              \node[] at (1.5,0.9) {$\scriptstyle \cdots$};
\end{tikzpicture}}
\end{align}
and
\begin{align}\label{twistedthicktransporterdef}
{}
\hackcenter{
\begin{tikzpicture}[scale=0.8]
  \draw[ultra thick, blue] (0,0)--(0,0.9);
  \draw[ultra thick, red] (0,0.9)--(0,1.8);
  %\draw[thick, fill=yellow]  (0,0.9) circle (9pt);
     \node[below] at (0,0) {$\scriptstyle i^{\scriptstyle(x)}$};
      \node[above] at (0,1.8) {$\scriptstyle j^{\scriptstyle(x)}$};
      %%%%%
      \draw[ultra thick, red] (0+3,0)--(0+3,0.9);
        \draw[ultra thick, blue] (0+3,0.9)--(0+3,1.8);
  %\draw[thick, fill=yellow]  (0,0.9) circle (9pt);
     \node[below] at (0+3,0) {$\scriptstyle j^{\scriptstyle(x)}$};
      \node[above] at (0+3,1.8) {$\scriptstyle \nakayamai^{\scriptstyle(x)}$};
      %%%%
        \draw[ultra thick, gray] (1,0)--(1,1.8);
           \node[below] at (1,0) {$\scriptstyle k_1^{\scriptstyle(y_1)}$};
              \node[above] at (1,1.8) {$\scriptstyle k_1^{\scriptstyle(y_1)}$};
         \draw[ultra thick, gray] (2,0)--(2,1.8);
              \node[below] at (2,0) {$\scriptstyle k_m^{\scriptstyle(y_m)}$};
              \node[above] at (2,1.8) {$\scriptstyle k_m^{\scriptstyle(y_m)}$};
              \node[] at (1.5,1.3) {$\scriptstyle \cdots$};
              \node[] at (1.5,0.5) {$\scriptstyle \cdots$};
              %%%%%
                        \draw[thick, black, fill=black]  (0,0.9) circle (6pt);
        \draw[thick,black, fill=black]  (3,0.9) circle (6pt);
                   \draw[decorate, decoration={snake,segment length=5pt, amplitude=1pt}, line width=1mm, black] (0,0.9)--(3,0.9);
      \draw[thick, lime, fill=lime]  (0,0.9) circle (5pt);
        \draw[thick, lime, fill=lime]  (3,0.9) circle (5pt);
      \draw[decorate, decoration={snake,segment length=5pt, amplitude=1pt}, line width=0.5mm, lime] (0,0.9)--(3,0.9);
      \node[] at (0,0.9) {$\scriptstyle 1$};
\end{tikzpicture}}
:=
\sum_{\mu \in {}_j\mathcal{B}^x_i}
\frac{\{\mu\}^!_{\a}\{\mu^\vee\}^!_{\a}}{\{\mu\}^!}
\hackcenter{
\begin{tikzpicture}[scale=0.8]
  \draw[ultra thick, blue] (0,0)--(0,0.9);
  \draw[ultra thick, red] (0,0.9)--(0,1.8);
  %\draw[thick, fill=yellow]  (0,0.9) circle (9pt);
  \draw[draw=black, rounded corners, thick, fill=lime] (-0.3,0.6) rectangle ++(0.6,0.6);
    \node at (0,0.9) {$\scriptstyle \nakayamamu$};
     \node[below] at (0,0) {$\scriptstyle i^{\scriptstyle(x)}$};
      \node[above] at (0,1.8) {$\scriptstyle j^{\scriptstyle(x)}$};
      %%%%%
      \draw[ultra thick, red] (0+3,0)--(0+3,0.9);
        \draw[ultra thick, blue] (0+3,0.9)--(0+3,1.8);
  %\draw[thick, fill=yellow]  (0,0.9) circle (9pt);
  \draw[draw=black, rounded corners, thick, fill=lime] (-0.3+3,0.6) rectangle ++(0.6,0.6);
    \node at (0+3,0.9) {$\scriptstyle \mu^\vee$};
     \node[below] at (0+3,0) {$\scriptstyle j^{\scriptstyle(x)}$};
      \node[above] at (0+3,1.8) {$\scriptstyle \nakayamai^{\scriptstyle(x)}$};
      %%%%
        \draw[ultra thick, gray] (1,0)--(1,1.8);
           \node[below] at (1,0) {$\scriptstyle k_1^{\scriptstyle(y_1)}$};
              \node[above] at (1,1.8) {$\scriptstyle k_1^{\scriptstyle(y_1)}$};
         \draw[ultra thick, gray] (2,0)--(2,1.8);
              \node[below] at (2,0) {$\scriptstyle k_m^{\scriptstyle(y_m)}$};
              \node[above] at (2,1.8) {$\scriptstyle k_m^{\scriptstyle(y_m)}$};
              \node[] at (1.5,0.9) {$\scriptstyle \cdots$};
\end{tikzpicture}}
\end{align}
The assumption that $\calA$ is a Frobenius great pair implies that either $b \in \a$ or $b^{\vee} \in \a$ for each $b \in {}_{j}\BasisB_{i}$.  This, in turn, implies  $\{\mu\}^!$  evenly divides the product $\{\mu\}^!_{\a}\{\mu^\vee\}^!_{\a}$ and, hence,  the coefficients in the above sums are integers that can be viewed as elements of $\k$. 
We call elements of the form \cref{thicktransporterdef} (\emph{\(x\)-thick}) \emph{teleporters}, as they can `teleport' coupons from one corner of the barbell shape to the other, see \cref{teleporterrules}. These may be viewed as thickened versions of the teleporters in \cite{BSW}.

\subsection{Teleporter relations}\label{telerel}
Although the definition of the thick teleporter decorations above is somewhat dense and combinatorial, we show below that, morally speaking, the \(x\)-thick teleporter is  obtained by aggregating \(x\) many thin teleporters.

\begin{lemma}\label{blowupteleporter} 
For all \(x \in \Z_{\geq 0}\) and \(i,j \in I\), in  $\WebAaI$ we have:
\begin{align*}
\hackcenter{}
x!
\hackcenter{
\begin{tikzpicture}[scale=0.8]
  \draw[ultra thick,blue] (-0.9,0)--(-0.9,0.9);
   \draw[ultra thick,red] (0.9,0)--(0.9,0.9);
  \draw[ultra thick,blue] (0.9,0.9)--(0.9,1.8);
    \draw[ultra thick,red] (-0.9,0.9)--(-0.9,1.8);
      \node[above] at (-0.9,1.8) {$ \scriptstyle j^{ \scriptstyle (x)}$};
      \node[above] at (0.9,1.8) {$ \scriptstyle \nakayamai^{ \scriptstyle (x)}$};
       \node[below] at (-0.9,0) {$ \scriptstyle i^{ \scriptstyle (x)}$};
      \node[below] at (0.9,0) {$ \scriptstyle j^{ \scriptstyle (x)}$};
        \draw[thick, black, fill=black]  (-0.9,0.9) circle (6pt);
        \draw[thick,black, fill=black]  (0.9,0.9) circle (6pt);
                   \draw[decorate, decoration={snake,segment length=5pt, amplitude=1pt}, line width=1mm, black] (-0.9,0.9)--(0.9,0.9);
      \draw[thick, lime, fill=lime]  (-0.9,0.9) circle (5pt);
        \draw[thick, lime, fill=lime]  (0.9,0.9) circle (5pt);
      \draw[decorate, decoration={snake,segment length=5pt, amplitude=1pt}, line width=0.5mm, lime] (-0.9,0.9)--(0.9,0.9);
      %%%
\end{tikzpicture}}
=
\hackcenter{
\begin{tikzpicture}[scale=0.8]
  \draw[ultra thick, blue] (0,0)--(0,0.1) .. controls ++(0,0.35) and ++(0,-0.35) .. (-0.6,0.6)--(-0.6,0.9);
    \draw[ultra thick, red]  (-0.6,0.9)--(-0.6,1.2) 
  .. controls ++(0,0.35) and ++(0,-0.35) .. (0,1.7)--(0,1.8);
  \draw[ultra thick, blue] (0,0)--(0,0.1) .. controls ++(0,0.35) and ++(0,-0.35) .. (0.6,0.6)--(0.6,0.9);
    \draw[ultra thick, red]  (0.6,0.9)--(0.6,1.2) 
  .. controls ++(0,0.35) and ++(0,-0.35) .. (0,1.7)--(0,1.8);
  .. controls ++(0,0.35) and ++(0,-0.35) .. (0,1.7)--(0,1.8);
       \node[above] at (0,1.8) {$\scriptstyle j^{\scriptstyle(x)}$};
         \node[below] at (0,0) {$\scriptstyle i^{\scriptstyle(x)}$};
             \node[right] at (0.1,0.1) {$\scriptstyle i^{\scriptstyle(1)}$};
               \node[left] at (-0.1,0.1) {$\scriptstyle i^{\scriptstyle(1)}$};
                \node at (0,0.85) {$\scriptstyle \ddots $};
                %%%%%
  \draw[ultra thick, red] (0+1.8,0)--(0+1.8,0.1) .. controls ++(0,0.35) and ++(0,-0.35) .. (-0.6+1.8,0.6)--(-0.6+1.8,0.9);
    \draw[ultra thick, blue]  (-0.6+1.8,0.9)--(-0.6+1.8,1.2) 
  .. controls ++(0,0.35) and ++(0,-0.35) .. (0+1.8,1.7)--(0+1.8,1.8);
  \draw[ultra thick, red] (0+1.8,0)--(0+1.8,0.1) .. controls ++(0,0.35) and ++(0,-0.35) .. (0.6+1.8,0.6)--(0.6+1.8,0.9);
    \draw[ultra thick, blue]  (0.6+1.8,0.9)--(0.6+1.8,1.2) 
  .. controls ++(0,0.35) and ++(0,-0.35) .. (0+1.8,1.7)--(0+1.8,1.8);
  .. controls ++(0,0.35) and ++(0,-0.35) .. (0+1.8,1.7)--(0+1.8,1.8);
       \node[above] at (0+1.8,1.8) {$\scriptstyle \nakayamai^{\scriptstyle(x)}$};
         \node[below] at (0+1.8,0) {$\scriptstyle j^{\scriptstyle(x)}$};
           \node[right] at (0.1+1.8,0.1) {$\scriptstyle j^{\scriptstyle(1)}$};
               \node[left] at (-0.1+1.8,0.1) {$\scriptstyle j^{\scriptstyle(1)}$};
                \node at (0+1.8,1.2) {$\scriptstyle \ddots $};             
                %%%%
                        \draw[thick, black, fill=black]  (-0.6,1.1) circle (5pt);
        \draw[thick,black, fill=black]  (1.2,1.1) circle (5pt);
                   \draw[decorate, decoration={snake,segment length=5pt, amplitude=1pt}, line width=1mm, black] (-0.6,1.1)--(1.2,1.1);
      \draw[thick, lime, fill=lime]  (-0.6,1.1) circle (4pt);
        \draw[thick, lime, fill=lime]  (1.2,1.1) circle (4pt);
      \draw[decorate, decoration={snake,segment length=5pt, amplitude=1pt}, line width=0.5mm, lime] (-0.6,1.1)--(1.2,1.1);
      %%%%%%%%%
                       \draw[thick, black, fill=black]  (-0.6+1.2,0.7) circle (5pt);
        \draw[thick,black, fill=black]  (1.2+1.2,0.7) circle (5pt);
                   \draw[decorate, decoration={snake,segment length=5pt, amplitude=1pt}, line width=1mm, black] (-0.6+1.2,0.7)--(1.2+1.2,0.7);
      \draw[thick, lime, fill=lime]  (-0.6+1.2,0.7) circle (4pt);
        \draw[thick, lime, fill=lime]  (1.2+1.2,0.7) circle (4pt);
      \draw[decorate, decoration={snake,segment length=5pt, amplitude=1pt}, line width=0.5mm, lime] (-0.6+1.2,0.7)--(1.2+1.2,0.7);
\end{tikzpicture}}
\qquad
\textup{and}
\qquad
x!
\hackcenter{
\begin{tikzpicture}[scale=0.8]
  \draw[ultra thick,blue] (-0.9,0)--(-0.9,0.9);
   \draw[ultra thick,red] (0.9,0)--(0.9,0.9);
  \draw[ultra thick,blue] (0.9,0.9)--(0.9,1.8);
    \draw[ultra thick,red] (-0.9,0.9)--(-0.9,1.8);
      \node[above] at (-0.9,1.8) {$ \scriptstyle \nakayamaj^{ \scriptstyle (x)}$};
      \node[above] at (0.9,1.8) {$ \scriptstyle i^{ \scriptstyle (x)}$};
       \node[below] at (-0.9,0) {$ \scriptstyle i^{ \scriptstyle (x)}$};
      \node[below] at (0.9,0) {$ \scriptstyle j^{ \scriptstyle (x)}$};
        \draw[thick, black, fill=black]  (-0.9,0.9) circle (6pt);
        \draw[thick,black, fill=black]  (0.9,0.9) circle (6pt);
                   \draw[decorate, decoration={snake,segment length=5pt, amplitude=1pt}, line width=1mm, black] (-0.9,0.9)--(0.9,0.9);
      \draw[thick, lime, fill=lime]  (-0.9,0.9) circle (5pt);
        \draw[thick, lime, fill=lime]  (0.9,0.9) circle (5pt);
      \draw[decorate, decoration={snake,segment length=5pt, amplitude=1pt}, line width=0.5mm, lime] (-0.9,0.9)--(0.9,0.9);
      %%%
         \node[] at (-0.9,0.9) {$\scriptstyle 1$};
\end{tikzpicture}}
=
\hackcenter{
\begin{tikzpicture}[scale=0.8]
  \draw[ultra thick, blue] (0,0)--(0,0.1) .. controls ++(0,0.35) and ++(0,-0.35) .. (-0.6,0.6)--(-0.6,0.9);
    \draw[ultra thick, red]  (-0.6,0.9)--(-0.6,1.2) 
  .. controls ++(0,0.35) and ++(0,-0.35) .. (0,1.7)--(0,1.8);
  \draw[ultra thick, blue] (0,0)--(0,0.1) .. controls ++(0,0.35) and ++(0,-0.35) .. (0.6,0.6)--(0.6,0.9);
    \draw[ultra thick, red]  (0.6,0.9)--(0.6,1.2) 
  .. controls ++(0,0.35) and ++(0,-0.35) .. (0,1.7)--(0,1.8);
  .. controls ++(0,0.35) and ++(0,-0.35) .. (0,1.7)--(0,1.8);
       \node[above] at (0,1.8) {$\scriptstyle \nakayamaj^{\scriptstyle(x)}$};
         \node[below] at (0,0) {$\scriptstyle i^{\scriptstyle(x)}$};
             \node[right] at (0.1,0.1) {$\scriptstyle i^{\scriptstyle(1)}$};
               \node[left] at (-0.1,0.1) {$\scriptstyle i^{\scriptstyle(1)}$};
                \node at (0,0.85) {$\scriptstyle \ddots $};
                %%%%%
  \draw[ultra thick, red] (0+1.8,0)--(0+1.8,0.1) .. controls ++(0,0.35) and ++(0,-0.35) .. (-0.6+1.8,0.6)--(-0.6+1.8,0.9);
    \draw[ultra thick, blue]  (-0.6+1.8,0.9)--(-0.6+1.8,1.2) 
  .. controls ++(0,0.35) and ++(0,-0.35) .. (0+1.8,1.7)--(0+1.8,1.8);
  \draw[ultra thick, red] (0+1.8,0)--(0+1.8,0.1) .. controls ++(0,0.35) and ++(0,-0.35) .. (0.6+1.8,0.6)--(0.6+1.8,0.9);
    \draw[ultra thick, blue]  (0.6+1.8,0.9)--(0.6+1.8,1.2) 
  .. controls ++(0,0.35) and ++(0,-0.35) .. (0+1.8,1.7)--(0+1.8,1.8);
  .. controls ++(0,0.35) and ++(0,-0.35) .. (0+1.8,1.7)--(0+1.8,1.8);
       \node[above] at (0+1.8,1.8) {$\scriptstyle i^{\scriptstyle(x)}$};
         \node[below] at (0+1.8,0) {$\scriptstyle j^{\scriptstyle(x)}$};
           \node[right] at (0.1+1.8,0.1) {$\scriptstyle j^{\scriptstyle(1)}$};
               \node[left] at (-0.1+1.8,0.1) {$\scriptstyle j^{\scriptstyle(1)}$};
                \node at (0+1.8,1.2) {$\scriptstyle \ddots $};             
                %%%%
                        \draw[thick, black, fill=black]  (-0.6,1.1) circle (5pt);
        \draw[thick,black, fill=black]  (1.2,1.1) circle (5pt);
                   \draw[decorate, decoration={snake,segment length=5pt, amplitude=1pt}, line width=1mm, black] (-0.6,1.1)--(1.2,1.1);
      \draw[thick, lime, fill=lime]  (-0.6,1.1) circle (4pt);
        \draw[thick, lime, fill=lime]  (1.2,1.1) circle (4pt);
      \draw[decorate, decoration={snake,segment length=5pt, amplitude=1pt}, line width=0.5mm, lime] (-0.6,1.1)--(1.2,1.1);
      %%%%%%%%%
                       \draw[thick, black, fill=black]  (-0.6+1.2,0.7) circle (5pt);
        \draw[thick,black, fill=black]  (1.2+1.2,0.7) circle (5pt);
                   \draw[decorate, decoration={snake,segment length=5pt, amplitude=1pt}, line width=1mm, black] (-0.6+1.2,0.7)--(1.2+1.2,0.7);
      \draw[thick, lime, fill=lime]  (-0.6+1.2,0.7) circle (4pt);
        \draw[thick, lime, fill=lime]  (1.2+1.2,0.7) circle (4pt);
      \draw[decorate, decoration={snake,segment length=5pt, amplitude=1pt}, line width=0.5mm, lime] (-0.6+1.2,0.7)--(1.2+1.2,0.7);
      %%%%%%
        \node[] at (-0.6,1.1) {$\scriptstyle 1$};
          \node[] at (0.6,0.7) {$\scriptstyle 1$};
\end{tikzpicture}}.
\end{align*}
\end{lemma}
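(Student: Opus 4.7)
The plan is to expand each of the $x$ thin teleporters on the right-hand side of each equation as a sum over the basis \({}_j\BasisB_i\), then reorganize the resulting sum of \(x\)-tuples by the restricted composition they induce, and collapse blocks of identical coupons via the knothole and \(A\)-split relations. I will first treat the untwisted equation. Expanding each thin teleporter produces \(\sum_{b \in {}_j\BasisB_i} [b] \otimes [b^\vee]\), with \([b]\) on a left thin strand and \([b^\vee]\) on the corresponding right thin strand. Each individual thin teleporter has total parity \(\bar b + \bar{b^\vee} = \bar 0\), so it is an even-parity unit; this means teleporters may be drawn at different heights and freely rearranged without super-interchange signs. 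The RHS thus rewrites as a sum over tuples \((b_1, \ldots, b_x) \in ({}_j\BasisB_i)^x\) of explicit coupon configurations sandwiched between the outer explosion and contraction morphisms.

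Next I will group tuples by the induced composition \(\mu(b) := \#\{k : b_k = b\}\) on \({}_j\BasisB_i\). Using the split-intertwining, merge-intertwining, Coxeter, and cross-absorption relations (\cref{CoxArb,CrossAbsorb}), I can simultaneously permute the thin strands on the left and right by applying crossings on both sides in unison, which preserves each teleporter; these simultaneous crossings are even-parity compositions, so no signs arise. If \(\mu\) is not restricted---i.e., some odd \(b\) has \(\mu(b) \geq 2\)---then after gathering, two adjacent thin strands carry the same odd coupon, and \cref{OddKnotholeRel} annihilates the tuple on both sides. So only restricted compositions \(\mu \in {}_j\mathcal{B}_i^x\) survive, and the number of tuples yielding each such \(\mu\) is the multinomial coefficient \(x!/\{\mu\}^!\), each contributing an identical standardized diagram.

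For fixed restricted \(\mu\) and each \(b \in \textup{supp}(\mu)\), the standardized left side has a block of \(\mu(b)\) consecutive thin strands (each decorated with \([b]\)) exploded from and contracted into a thick strand of size \(\mu(b)\). When \(b \in \a\), \cref{TAaSMRel} pulls the coupon through the splits and merges, reducing the block to a thick \([b]\) coupon multiplied by a pure merge-split morphism, which by \cref{KnotholeRel} equals \(\mu(b)!\) times the identity; the block thus collapses to \(\mu(b)!\,[b]\) on the thick strand of size \(\mu(b)\). When \(b \in jA_{\bar 0}i \setminus \a\), the block is by definition equal to \([b] = [b]^\diamond\) on the thick strand of size \(\mu(b)\) (see \cref{greendotdef}), with no additional factor. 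When \(b\) is odd, \(\mu(b) = 1\) and the block is just the thin coupon \([b]\). Aggregating, the left standardization equals \(\{\mu\}^!_\a \cdot [\mu]\) as defined in \cref{resmu}; an analogous analysis on the right side yields \(\{\mu^\vee\}^!_\a \cdot [\mu^\vee]\). Combining:
\begin{align*}
\text{RHS} \;=\; \sum_{\mu \in {}_j\mathcal{B}_i^x} \frac{x! \, \{\mu\}^!_\a \{\mu^\vee\}^!_\a}{\{\mu\}^!} \, [\mu] \otimes [\mu^\vee] \;=\; x! \cdot \text{LHS},
\end{align*}
matching \cref{thicktransporterdef}. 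The twisted equation is proved identically, with the left-side coupons systematically replaced by Nakayama-twisted forms \([\nakayamab]\) (using \(\nakayamab = \psi^{-1}(b)\)) and the thin-teleporter expansion replaced by \(\sum_b [\nakayamab] \otimes [b^\vee]\).

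The chief obstacle is verifying that the numerical coefficient \(\{\mu\}^!_\a \{\mu^\vee\}^!_\a / \{\mu\}^!\) arises exactly as claimed: this requires careful bookkeeping of when a block collapses with an extra factor \(\mu(b)!\) (case \(b \in \a\)) versus matching the thick-strand coupon definitionally (case \(b \notin \a\)), on both the left and right sides. The Frobenius great-pair hypothesis (\cref{L:Plurimore-Frobenius-Conditions}) guarantees that for each even \(b\), either \(b\) or \(b^\vee\) lies in \(\a\); combined with the restriction \(\mu(b) \leq 1\) for odd \(b\), this ensures \(\{\mu\}^!\) divides \(\{\mu\}^!_\a \{\mu^\vee\}^!_\a\), so all coefficients that appear lie in \(\k\).
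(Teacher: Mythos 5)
Your proof is correct, but it takes a genuinely different route from the paper's. The paper argues by induction on $x$: it peels off one new thin teleporter, expands it as $\sum_b [b] \otimes [b^\vee]$, and then carefully computes the coefficient $\xi(\mu,b)$ obtained when a single coupon $b$ is absorbed into a composition $\mu$, before reindexing over $\mu' = \mu \cup b$ to recover the factor $x+1$. Your argument is direct and non-inductive: expand all $x$ thin teleporters simultaneously to get a sum over ordered tuples $(b_1,\ldots,b_x) \in ({}_j\BasisB_i)^x$, sort into blocks by composition using simultaneous crossings, kill the non-restricted tuples via \cref{OddKnotholeRel}, and collapse each block via \cref{TAaSMRel,KnotholeRel}.

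The trade-off is clear. The inductive approach sidesteps the question of whether sorting an $x$-tuple of thin teleporters past one another introduces super-interchange signs, because only one teleporter is moved per inductive step. Your direct approach must confront this, and you handle it correctly: you observe that each thin teleporter is an even-parity composite (since $\bar b + \bar{b^\vee} = \bar 0$), so the sorting via simultaneous crossings on both sides is sign-free, while the coupons themselves commute with the crossings by \cref{AaIntertwine} and the inserted permutations absorb into the outer split/merge by \cref{CrossAbsorb}. In exchange you get a cleaner conceptual picture in which the three sources of the coefficient---the multinomial count $x!/\{\mu\}^!$, the left-side factor $\{\mu\}^!_\a$, and the right-side factor $\{\mu^\vee\}^!_\a$---appear independently and visibly, rather than being folded into the case-split function $\xi(\mu,b)$ of the paper's induction. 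The two proofs require the same underlying case analysis on whether $b \in \a$, $b^\vee \in \a$, or $b$ is odd, and both invoke the great-pair hypothesis for the same reason (to ensure $\{\mu\}^!$ divides $\{\mu\}^!_\a\{\mu^\vee\}^!_\a$), but they package that bookkeeping quite differently.
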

\begin{proof}
We prove the first statement, the second being similar. We argue by induction on \(x\), with the base case \(x=1\) trivial. Assuming the claim holds for \(x\), we have:

\begin{align*}
\hackcenter{
\begin{tikzpicture}[scale=0.8]
  \draw[ultra thick, blue] (0,0)--(0,0.1) .. controls ++(0,0.35) and ++(0,-0.35) .. (-0.6,0.6)--(-0.6,0.9);
    \draw[ultra thick, red]  (-0.6,0.9)--(-0.6,1.2) 
  .. controls ++(0,0.35) and ++(0,-0.35) .. (0,1.7)--(0,1.8);
  \draw[ultra thick, blue] (0,0)--(0,0.1) .. controls ++(0,0.35) and ++(0,-0.35) .. (0.6,0.6)--(0.6,0.9);
    \draw[ultra thick, red]  (0.6,0.9)--(0.6,1.2) 
  .. controls ++(0,0.35) and ++(0,-0.35) .. (0,1.7)--(0,1.8);
  .. controls ++(0,0.35) and ++(0,-0.35) .. (0,1.7)--(0,1.8);
       \node[above] at (0,1.8) {$\scriptstyle j^{\scriptstyle(x+1)}$};
         \node[below] at (0,0) {$\scriptstyle i^{\scriptstyle(x+1)}$};
             \node[right] at (0.1,0.1) {$\scriptstyle i^{\scriptstyle(1)}$};
               \node[left] at (-0.1,0.1) {$\scriptstyle i^{\scriptstyle(1)}$};
                \node at (0,0.85) {$\scriptstyle \ddots $};
                %%%%%
  \draw[ultra thick, red] (0+1.8,0)--(0+1.8,0.1) .. controls ++(0,0.35) and ++(0,-0.35) .. (-0.6+1.8,0.6)--(-0.6+1.8,0.9);
    \draw[ultra thick, blue]  (-0.6+1.8,0.9)--(-0.6+1.8,1.2) 
  .. controls ++(0,0.35) and ++(0,-0.35) .. (0+1.8,1.7)--(0+1.8,1.8);
  \draw[ultra thick, red] (0+1.8,0)--(0+1.8,0.1) .. controls ++(0,0.35) and ++(0,-0.35) .. (0.6+1.8,0.6)--(0.6+1.8,0.9);
    \draw[ultra thick, blue]  (0.6+1.8,0.9)--(0.6+1.8,1.2) 
  .. controls ++(0,0.35) and ++(0,-0.35) .. (0+1.8,1.7)--(0+1.8,1.8);
  .. controls ++(0,0.35) and ++(0,-0.35) .. (0+1.8,1.7)--(0+1.8,1.8);
       \node[above] at (0+1.8,1.8) {$\scriptstyle \nakayamai^{\scriptstyle(x+1)}$};
         \node[below] at (0+1.8,0) {$\scriptstyle j^{\scriptstyle(x+1)}$};
           \node[right] at (0.1+1.8,0.1) {$\scriptstyle j^{\scriptstyle(1)}$};
               \node[left] at (-0.1+1.8,0.1) {$\scriptstyle j^{\scriptstyle(1)}$};
                \node at (0+1.8,1.2) {$\scriptstyle \ddots $};             
                %%%%
                        \draw[thick, black, fill=black]  (-0.6,1.1) circle (5pt);
        \draw[thick,black, fill=black]  (1.2,1.1) circle (5pt);
                   \draw[decorate, decoration={snake,segment length=5pt, amplitude=1pt}, line width=1mm, black] (-0.6,1.1)--(1.2,1.1);
      \draw[thick, lime, fill=lime]  (-0.6,1.1) circle (4pt);
        \draw[thick, lime, fill=lime]  (1.2,1.1) circle (4pt);
      \draw[decorate, decoration={snake,segment length=5pt, amplitude=1pt}, line width=0.5mm, lime] (-0.6,1.1)--(1.2,1.1);
      %%%%%%%%%
                       \draw[thick, black, fill=black]  (-0.6+1.2,0.7) circle (5pt);
        \draw[thick,black, fill=black]  (1.2+1.2,0.7) circle (5pt);
                   \draw[decorate, decoration={snake,segment length=5pt, amplitude=1pt}, line width=1mm, black] (-0.6+1.2,0.7)--(1.2+1.2,0.7);
      \draw[thick, lime, fill=lime]  (-0.6+1.2,0.7) circle (4pt);
        \draw[thick, lime, fill=lime]  (1.2+1.2,0.7) circle (4pt);
      \draw[decorate, decoration={snake,segment length=5pt, amplitude=1pt}, line width=0.5mm, lime] (-0.6+1.2,0.7)--(1.2+1.2,0.7);
\end{tikzpicture}}
&=
x!
\hackcenter{
\begin{tikzpicture}[scale=0.8]
  \draw[ultra thick, blue] (0,0)--(0,0.1) .. controls ++(0,0.35) and ++(0,-0.35) .. (-0.6,0.6)--(-0.6,0.9);
    \draw[ultra thick, red]  (-0.6,0.9)--(-0.6,1.2) 
  .. controls ++(0,0.35) and ++(0,-0.35) .. (0,1.7)--(0,1.8);
  \draw[ultra thick, blue] (0,0)--(0,0.1) .. controls ++(0,0.35) and ++(0,-0.35) .. (0.6,0.6)--(0.6,0.9);
    \draw[ultra thick, red]  (0.6,0.9)--(0.6,1.2) 
  .. controls ++(0,0.35) and ++(0,-0.35) .. (0,1.7)--(0,1.8);
  .. controls ++(0,0.35) and ++(0,-0.35) .. (0,1.7)--(0,1.8);
       \node[above] at (0,1.8) {$\scriptstyle j^{\scriptstyle(x+1)}$};
         \node[below] at (0,0) {$\scriptstyle i^{\scriptstyle(x+1)}$};
             \node[right] at (0.1,0.1) {$\scriptstyle i^{\scriptstyle(1)}$};
               \node[left] at (-0.1,0.1) {$\scriptstyle i^{\scriptstyle(x)}$};
                \node at (0,0.85) {$\scriptstyle \ddots $};
                %%%%%
  \draw[ultra thick, red] (0+1.8,0)--(0+1.8,0.1) .. controls ++(0,0.35) and ++(0,-0.35) .. (-0.6+1.8,0.6)--(-0.6+1.8,0.9);
    \draw[ultra thick, blue]  (-0.6+1.8,0.9)--(-0.6+1.8,1.2) 
  .. controls ++(0,0.35) and ++(0,-0.35) .. (0+1.8,1.7)--(0+1.8,1.8);
  \draw[ultra thick, red] (0+1.8,0)--(0+1.8,0.1) .. controls ++(0,0.35) and ++(0,-0.35) .. (0.6+1.8,0.6)--(0.6+1.8,0.9);
    \draw[ultra thick, blue]  (0.6+1.8,0.9)--(0.6+1.8,1.2) 
  .. controls ++(0,0.35) and ++(0,-0.35) .. (0+1.8,1.7)--(0+1.8,1.8);
  .. controls ++(0,0.35) and ++(0,-0.35) .. (0+1.8,1.7)--(0+1.8,1.8);
       \node[above] at (0+1.8,1.8) {$\scriptstyle \nakayamai^{\scriptstyle(x+1)}$};
         \node[below] at (0+1.8,0) {$\scriptstyle j^{\scriptstyle(x+1)}$};
           \node[right] at (0.1+1.8,0.1) {$\scriptstyle j^{\scriptstyle(1)}$};
               \node[left] at (-0.1+1.8,0.1) {$\scriptstyle j^{\scriptstyle(x)}$};
                \node at (0+1.8,1.2) {$\scriptstyle \ddots $};             
                %%%%
                        \draw[thick, black, fill=black]  (-0.6,1.1) circle (5pt);
        \draw[thick,black, fill=black]  (1.2,1.1) circle (5pt);
                   \draw[decorate, decoration={snake,segment length=5pt, amplitude=1pt}, line width=1mm, black] (-0.6,1.1)--(1.2,1.1);
      \draw[thick, lime, fill=lime]  (-0.6,1.1) circle (4pt);
        \draw[thick, lime, fill=lime]  (1.2,1.1) circle (4pt);
      \draw[decorate, decoration={snake,segment length=5pt, amplitude=1pt}, line width=0.5mm, lime] (-0.6,1.1)--(1.2,1.1);
      %%%%%%%%%
                       \draw[thick, black, fill=black]  (-0.6+1.2,0.7) circle (5pt);
        \draw[thick,black, fill=black]  (1.2+1.2,0.7) circle (5pt);
                   \draw[decorate, decoration={snake,segment length=5pt, amplitude=1pt}, line width=1mm, black] (-0.6+1.2,0.7)--(1.2+1.2,0.7);
      \draw[thick, lime, fill=lime]  (-0.6+1.2,0.7) circle (4pt);
        \draw[thick, lime, fill=lime]  (1.2+1.2,0.7) circle (4pt);
      \draw[decorate, decoration={snake,segment length=5pt, amplitude=1pt}, line width=0.5mm, lime] (-0.6+1.2,0.7)--(1.2+1.2,0.7);
\end{tikzpicture}}\\
& =
x!
\sum_{\substack{\mu \in {}_j \mathcal{B}^x_i \\ b \in {}_j \BasisB_i}}
\frac{\{\mu\}^!_{\a} \{\mu^\vee\}^!_{\a}}{\{\mu\}^!}
\hackcenter{
\begin{tikzpicture}[scale=0.8]
  \draw[ultra thick, blue] (0,0)--(0,0.1) .. controls ++(0,0.35) and ++(0,-0.35) .. (-0.6,0.6)--(-0.6,0.9);
    \draw[ultra thick, red]  (-0.6,0.9)--(-0.6,1.2) 
  .. controls ++(0,0.35) and ++(0,-0.35) .. (0,1.7)--(0,1.8);
  \draw[ultra thick, blue] (0,0)--(0,0.1) .. controls ++(0,0.35) and ++(0,-0.35) .. (0.6,0.6)--(0.6,0.9);
    \draw[ultra thick, red]  (0.6,0.9)--(0.6,1.2) 
  .. controls ++(0,0.35) and ++(0,-0.35) .. (0,1.7)--(0,1.8);
  .. controls ++(0,0.35) and ++(0,-0.35) .. (0,1.7)--(0,1.8);
       \node[above] at (0,1.8) {$\scriptstyle j^{\scriptstyle(x+1)}$};
         \node[below] at (0,0) {$\scriptstyle i^{\scriptstyle(x+1)}$};
             \node[right] at (0.1,0.1) {$\scriptstyle i^{\scriptstyle(1)}$};
               \node[left] at (-0.1,0.1) {$\scriptstyle i^{\scriptstyle(x)}$};
                \node at (0,0.85) {$\scriptstyle \ddots $};
                %%%%%
  \draw[ultra thick, red] (0+1.8,0)--(0+1.8,0.1) .. controls ++(0,0.35) and ++(0,-0.35) .. (-0.6+1.8,0.6)--(-0.6+1.8,0.9);
    \draw[ultra thick, blue]  (-0.6+1.8,0.9)--(-0.6+1.8,1.2) 
  .. controls ++(0,0.35) and ++(0,-0.35) .. (0+1.8,1.7)--(0+1.8,1.8);
  \draw[ultra thick, red] (0+1.8,0)--(0+1.8,0.1) .. controls ++(0,0.35) and ++(0,-0.35) .. (0.6+1.8,0.6)--(0.6+1.8,0.9);
    \draw[ultra thick, blue]  (0.6+1.8,0.9)--(0.6+1.8,1.2) 
  .. controls ++(0,0.35) and ++(0,-0.35) .. (0+1.8,1.7)--(0+1.8,1.8);
  .. controls ++(0,0.35) and ++(0,-0.35) .. (0+1.8,1.7)--(0+1.8,1.8);
       \node[above] at (0+1.8,1.8) {$\scriptstyle \nakayamai^{\scriptstyle(x+1)}$};
         \node[below] at (0+1.8,0) {$\scriptstyle j^{\scriptstyle(x+1)}$};
           \node[right] at (0.1+1.8,0.1) {$\scriptstyle j^{\scriptstyle(1)}$};
               \node[left] at (-0.1+1.8,0.1) {$\scriptstyle j^{\scriptstyle(x)}$};
                \node at (0+1.8,1.2) {$\scriptstyle \ddots $};             
                %%%%
                  \draw[draw=black, rounded corners, thick, fill=lime] (-0.6-0.3,1.1-0.3) rectangle ++(0+0.3+0.3,+0.3+0.3);
                   \node at (-0.6,1.1) {$\scriptstyle \mu$};
                    \draw[draw=black, rounded corners, thick, fill=lime] (1.2-0.3,1.1-0.3) rectangle ++(0+0.3+0.3,+0.3+0.3);
                      \node at (1.2,1.1) {$\scriptstyle \mu^\vee$};
                      %%%
                       \draw[thick, fill=yellow]  (0.6,0.7) circle (8pt);
                        \node at (0.6,0.7) {$\scriptstyle b$};
                         \draw[thick, fill=yellow]  (2.4,0.7) circle (8pt);
                        \node at (2.4,0.7) {$\scriptstyle b^\vee$};
\end{tikzpicture}} \\
& =
x!
\sum_{\substack{\mu \in {}_j \mathcal{B}^x_i \\ b \in {}_j \BasisB_i}}
\frac{\{\mu\}^!_{\a} \{\mu^\vee\}^!_{\a}}{\{\mu\}^!} \xi(\mu,b)
\hackcenter{
\begin{tikzpicture}[scale=0.8]
  \draw[ultra thick, blue] (0,0)--(0,0.9);
    \draw[ultra thick, red]  (0,0.9)--(0,1.8); 
      \draw[ultra thick, red] (0+1.8,0)--(0+1.8,0.9);
    \draw[ultra thick, blue]  (0+1.8,0.9)--(0+1.8,1.8); 
       \node[above] at (0,1.8) {$\scriptstyle j^{\scriptstyle(x+1)}$};
         \node[below] at (0,0) {$\scriptstyle i^{\scriptstyle(x+1)}$};
                %%%%%
       \node[above] at (0+1.8,1.8) {$\scriptstyle \nakayamai^{\scriptstyle(x+1)}$};
         \node[below] at (0+1.8,0) {$\scriptstyle j^{\scriptstyle(x+1)}$};
                %%%%
                  \draw[draw=black, rounded corners, thick, fill=lime] (0-0.6,0.9-0.3) rectangle ++(0+0.6+0.6,+0.3+0.3);
                   \node at (0,0.9) {$\scriptstyle (\mu \cup b)$};
                    \draw[draw=black, rounded corners, thick, fill=lime] (1.8-0.6,0.9-0.3) rectangle ++(0+0.6+0.6,+0.3+0.3);
                      \node at (1.8,0.9) {$\scriptstyle (\mu \cup b)^\vee$};
                      %%%
\end{tikzpicture}},
\end{align*}

where 
\begin{equation}\label{xicases}
\xi(\mu,b) =
\begin{cases}
(\mu(b)+1)^2 &\textup{if } b \in \a,\; b^\vee \in \a;\\
\mu(b) + 1 & \textup{if }b \in \a, \; b^\vee \notin \a \textup{ or } b \in \a, \; b^\vee \in \a;\\
1 &\textup{if }b \in A_{\bar{1}}, \mu(b)=0;\\
0 &\textup{if }b \in A_{\bar{1}}, \mu(b)=1.
\end{cases}
\end{equation}
Now, for \(b \in \textup{supp}(\mu)\), we write \(\mu \backslash b\) for the restricted \({}_j \BasisB_i\)-composition given by setting \((\mu \backslash b)(c) = \mu(c) - \delta_{c,b}\). Then after reindexing we write the above sum:
\begin{equation}\label{teleporter456}
x!
\hspace{-3mm}
\sum_{\substack{\mu \in {}_{j} \mathcal{B}^{x+1}_{i} \\ b \in \textup{supp}(\mu)}}
\frac{\{\mu \backslash b\}^{!}_{\a}\{(\mu \backslash b)^{\vee} \}^{!}_{\a}}{\{(\mu \backslash b)\}^{!}}
\xi(\mu \backslash b, b)
\hackcenter{
\begin{tikzpicture}[scale=0.8]
  \draw[ultra thick, blue] (0,0)--(0,0.9);
    \draw[ultra thick, red]  (0,0.9)--(0,1.8); 
      \draw[ultra thick, red] (0+1.8,0)--(0+1.8,0.9);
    \draw[ultra thick, blue]  (0+1.8,0.9)--(0+1.8,1.8); 
       \node[above] at (0,1.8) {$\scriptstyle j^{\scriptstyle(x+1)}$};
         \node[below] at (0,0) {$\scriptstyle i^{\scriptstyle(x+1)}$};
                %%%%%
       \node[above] at (0+1.8,1.8) {$\scriptstyle \nakayamai^{\scriptstyle(x+1)}$};
         \node[below] at (0+1.8,0) {$\scriptstyle j^{\scriptstyle(x+1)}$};
                %%%%
                  \draw[draw=black, rounded corners, thick, fill=lime] (0-0.3,0.9-0.3) rectangle ++(0+0.3+0.3,+0.3+0.3);
                   \node at (0,0.9) {$\scriptstyle \mu $};
                    \draw[draw=black, rounded corners, thick, fill=lime] (1.8-0.3,0.9-0.3) rectangle ++(0+0.3+0.3,+0.3+0.3);
                      \node at (1.8,0.9) {$\scriptstyle \mu^{\vee} $};
                      %%%
\end{tikzpicture}}
\end{equation}
Finally, note that by consideration of \cref{xicases}, we have
\begin{equation*}
\frac{\{\mu \backslash b\}^{!}_{\a}\{(\mu\backslash b)^{\vee} \}^{!}_{\a}}{\{(\mu \backslash b)\}^{!}}
\xi(\mu \backslash b, b) = \frac{\{\mu\}^{!}_{\a}\{\mu^{\vee} \}^{!}_{\a}}{\{\mu\}^{!}} \mu(b)
\end{equation*}
for all \(\mu \in {}_j \mathcal{B}_{i}^{x+1}\) and \(b \in \textup{supp}(\mu)\). Moreover \(\sum_{b \in \textup{supp}(\mu)} \mu(b) = x+1\) for all \(\mu \in {}_{j} \mathcal{B}_{i}^{x+1}\). Thus it follows that \cref{teleporter456} is equal to 
\begin{equation*}
(x+1)!\sum_{\mu \in {}_j \mathcal{B}^{x+1}_{i} }
\frac{\{\mu \}^{!}_{\a}\{\mu^{\vee} \}^{!}_{\a}}{\{\mu\}^{!}}
\hackcenter{
\begin{tikzpicture}[scale=0.8]
  \draw[ultra thick, blue] (0,0)--(0,0.9);
    \draw[ultra thick, red]  (0,0.9)--(0,1.8); 
      \draw[ultra thick, red] (0+1.8,0)--(0+1.8,0.9);
    \draw[ultra thick, blue]  (0+1.8,0.9)--(0+1.8,1.8); 
       \node[above] at (0,1.8) {$\scriptstyle j^{\scriptstyle(x+1)}$};
         \node[below] at (0,0) {$\scriptstyle i^{\scriptstyle(x+1)}$};
                %%%%%
       \node[above] at (0+1.8,1.8) {$\scriptstyle \nakayamai^{\scriptstyle(x+1)}$};
         \node[below] at (0+1.8,0) {$\scriptstyle j^{\scriptstyle(x+1)}$};
                %%%%
                  \draw[draw=black, rounded corners, thick, fill=lime] (0-0.3,0.9-0.3) rectangle ++(0+0.3+0.3,+0.3+0.3);
                   \node at (0,0.9) {$\scriptstyle \mu$};
                    \draw[draw=black, rounded corners, thick, fill=lime] (1.8-0.3,0.9-0.3) rectangle ++(0+0.3+0.3,+0.3+0.3);
                      \node at (1.8,0.9) {$\scriptstyle \mu^\vee$};
                      %%%
\end{tikzpicture}}
=
\hackcenter{
\begin{tikzpicture}[scale=0.8]
  \draw[ultra thick,blue] (-0.9,0)--(-0.9,0.9);
   \draw[ultra thick,red] (0.9,0)--(0.9,0.9);
  \draw[ultra thick,blue] (0.9,0.9)--(0.9,1.8);
    \draw[ultra thick,red] (-0.9,0.9)--(-0.9,1.8);
      \node[above] at (-0.9,1.8) {$ \scriptstyle j^{ \scriptstyle (x)}$};
      \node[above] at (0.9,1.8) {$ \scriptstyle \nakayamai^{ \scriptstyle (x)}$};
       \node[below] at (-0.9,0) {$ \scriptstyle i^{ \scriptstyle (x)}$};
      \node[below] at (0.9,0) {$ \scriptstyle j^{ \scriptstyle (x)}$};
        \draw[thick, black, fill=black]  (-0.9,0.9) circle (6pt);
        \draw[thick,black, fill=black]  (0.9,0.9) circle (6pt);
                   \draw[decorate, decoration={snake,segment length=5pt, amplitude=1pt}, line width=1mm, black] (-0.9,0.9)--(0.9,0.9);
      \draw[thick, lime, fill=lime]  (-0.9,0.9) circle (5pt);
        \draw[thick, lime, fill=lime]  (0.9,0.9) circle (5pt);
      \draw[decorate, decoration={snake,segment length=5pt, amplitude=1pt}, line width=0.5mm, lime] (-0.9,0.9)--(0.9,0.9);
      %%%
\end{tikzpicture}},
\end{equation*}
as desired.
\end{proof}

The following is immediate from an application of \cref{blowupteleporter}:

\begin{corollary}\label{miniblowupteleporter}
For all \(x,y \in \Z_{\geq 0}\) and \(i,j \in I\), in $\WebAaI$ we have:
\begin{equation*}
\hackcenter{}
(x+y)!
\hspace{-3mm}
\hackcenter{
\begin{tikzpicture}[scale=0.8]
  \draw[ultra thick,blue] (-0.9,0)--(-0.9,0.9);
   \draw[ultra thick,red] (0.9,0)--(0.9,0.9);
  \draw[ultra thick,blue] (0.9,0.9)--(0.9,1.8);
    \draw[ultra thick,red] (-0.9,0.9)--(-0.9,1.8);
      \node[above] at (-0.9,1.8) {$ \scriptstyle j^{ \scriptstyle (x+y)}$};
      \node[above] at (0.9,1.8) {$ \scriptstyle \nakayamai^{ \scriptstyle (x+y)}$};
       \node[below] at (-0.9,0) {$ \scriptstyle i^{ \scriptstyle (x+y)}$};
      \node[below] at (0.9,0) {$ \scriptstyle j^{ \scriptstyle (x+y)}$};
        \draw[thick, black, fill=black]  (-0.9,0.9) circle (6pt);
        \draw[thick,black, fill=black]  (0.9,0.9) circle (6pt);
                   \draw[decorate, decoration={snake,segment length=5pt, amplitude=1pt}, line width=1mm, black] (-0.9,0.9)--(0.9,0.9);
      \draw[thick, lime, fill=lime]  (-0.9,0.9) circle (5pt);
        \draw[thick, lime, fill=lime]  (0.9,0.9) circle (5pt);
      \draw[decorate, decoration={snake,segment length=5pt, amplitude=1pt}, line width=0.5mm, lime] (-0.9,0.9)--(0.9,0.9);
      %%%
\end{tikzpicture}}
\hspace{-3mm}
=
x! y!
\hackcenter{
\begin{tikzpicture}[scale=0.8]
  \draw[ultra thick, blue] (0,0)--(0,0.1) .. controls ++(0,0.35) and ++(0,-0.35) .. (-0.6,0.6)--(-0.6,0.9);
    \draw[ultra thick, red]  (-0.6,0.9)--(-0.6,1.2) 
  .. controls ++(0,0.35) and ++(0,-0.35) .. (0,1.7)--(0,1.8);
  \draw[ultra thick, blue] (0,0)--(0,0.1) .. controls ++(0,0.35) and ++(0,-0.35) .. (0.6,0.6)--(0.6,0.9);
    \draw[ultra thick, red]  (0.6,0.9)--(0.6,1.2) 
  .. controls ++(0,0.35) and ++(0,-0.35) .. (0,1.7)--(0,1.8);
  .. controls ++(0,0.35) and ++(0,-0.35) .. (0,1.7)--(0,1.8);
       \node[above] at (0,1.8) {$\scriptstyle j^{\scriptstyle(x+y)}$};
         \node[below] at (0,0) {$\scriptstyle i^{\scriptstyle(x+y)}$};
             \node[right] at (0.1,0.1) {$\scriptstyle i^{\scriptstyle(x)}$};
               \node[left] at (-0.1,0.1) {$\scriptstyle i^{\scriptstyle(y)}$};
                %%%%%
  \draw[ultra thick, red] (0+1.8,0)--(0+1.8,0.1) .. controls ++(0,0.35) and ++(0,-0.35) .. (-0.6+1.8,0.6)--(-0.6+1.8,0.9);
    \draw[ultra thick, blue]  (-0.6+1.8,0.9)--(-0.6+1.8,1.2) 
  .. controls ++(0,0.35) and ++(0,-0.35) .. (0+1.8,1.7)--(0+1.8,1.8);
  \draw[ultra thick, red] (0+1.8,0)--(0+1.8,0.1) .. controls ++(0,0.35) and ++(0,-0.35) .. (0.6+1.8,0.6)--(0.6+1.8,0.9);
    \draw[ultra thick, blue]  (0.6+1.8,0.9)--(0.6+1.8,1.2) 
  .. controls ++(0,0.35) and ++(0,-0.35) .. (0+1.8,1.7)--(0+1.8,1.8);
  .. controls ++(0,0.35) and ++(0,-0.35) .. (0+1.8,1.7)--(0+1.8,1.8);
       \node[above] at (0+1.8,1.8) {$\scriptstyle \nakayamai^{\scriptstyle(x+y)}$};
         \node[below] at (0+1.8,0) {$\scriptstyle j^{\scriptstyle(x+y)}$};
           \node[right] at (0.1+1.8,0.1) {$\scriptstyle j^{\scriptstyle(x)}$};
               \node[left] at (-0.1+1.8,0.1) {$\scriptstyle j^{\scriptstyle(y)}$};          
                %%%%
                        \draw[thick, black, fill=black]  (-0.6,1.1) circle (5pt);
        \draw[thick,black, fill=black]  (1.2,1.1) circle (5pt);
                   \draw[decorate, decoration={snake,segment length=5pt, amplitude=1pt}, line width=1mm, black] (-0.6,1.1)--(1.2,1.1);
      \draw[thick, lime, fill=lime]  (-0.6,1.1) circle (4pt);
        \draw[thick, lime, fill=lime]  (1.2,1.1) circle (4pt);
      \draw[decorate, decoration={snake,segment length=5pt, amplitude=1pt}, line width=0.5mm, lime] (-0.6,1.1)--(1.2,1.1);
      %%%%%%%%%
                       \draw[thick, black, fill=black]  (-0.6+1.2,0.7) circle (5pt);
        \draw[thick,black, fill=black]  (1.2+1.2,0.7) circle (5pt);
                   \draw[decorate, decoration={snake,segment length=5pt, amplitude=1pt}, line width=1mm, black] (-0.6+1.2,0.7)--(1.2+1.2,0.7);
      \draw[thick, lime, fill=lime]  (-0.6+1.2,0.7) circle (4pt);
        \draw[thick, lime, fill=lime]  (1.2+1.2,0.7) circle (4pt);
      \draw[decorate, decoration={snake,segment length=5pt, amplitude=1pt}, line width=0.5mm, lime] (-0.6+1.2,0.7)--(1.2+1.2,0.7);
\end{tikzpicture}}
\quad
\textup{and}
\quad
(x+y)!
\hspace{-3mm}
\hackcenter{
\begin{tikzpicture}[scale=0.8]
  \draw[ultra thick,blue] (-0.9,0)--(-0.9,0.9);
   \draw[ultra thick,red] (0.9,0)--(0.9,0.9);
  \draw[ultra thick,blue] (0.9,0.9)--(0.9,1.8);
    \draw[ultra thick,red] (-0.9,0.9)--(-0.9,1.8);
      \node[above] at (-0.9,1.8) {$ \scriptstyle \nakayamaj ^{ \scriptstyle (x+y)}$};
      \node[above] at (0.9,1.8) {$ \scriptstyle i^{ \scriptstyle (x+y)}$};
       \node[below] at (-0.9,0) {$ \scriptstyle i^{ \scriptstyle (x+y)}$};
      \node[below] at (0.9,0) {$ \scriptstyle j^{ \scriptstyle (x+y)}$};
        \draw[thick, black, fill=black]  (-0.9,0.9) circle (6pt);
        \draw[thick,black, fill=black]  (0.9,0.9) circle (6pt);
                   \draw[decorate, decoration={snake,segment length=5pt, amplitude=1pt}, line width=1mm, black] (-0.9,0.9)--(0.9,0.9);
      \draw[thick, lime, fill=lime]  (-0.9,0.9) circle (5pt);
        \draw[thick, lime, fill=lime]  (0.9,0.9) circle (5pt);
      \draw[decorate, decoration={snake,segment length=5pt, amplitude=1pt}, line width=0.5mm, lime] (-0.9,0.9)--(0.9,0.9);
      %%%
         \node[] at (-0.9,0.9) {$\scriptstyle 1$};
\end{tikzpicture}}
\hspace{-3mm}
=
x! y!
\hackcenter{
\begin{tikzpicture}[scale=0.8]
  \draw[ultra thick, blue] (0,0)--(0,0.1) .. controls ++(0,0.35) and ++(0,-0.35) .. (-0.6,0.6)--(-0.6,0.9);
    \draw[ultra thick, red]  (-0.6,0.9)--(-0.6,1.2) 
  .. controls ++(0,0.35) and ++(0,-0.35) .. (0,1.7)--(0,1.8);
  \draw[ultra thick, blue] (0,0)--(0,0.1) .. controls ++(0,0.35) and ++(0,-0.35) .. (0.6,0.6)--(0.6,0.9);
    \draw[ultra thick, red]  (0.6,0.9)--(0.6,1.2) 
  .. controls ++(0,0.35) and ++(0,-0.35) .. (0,1.7)--(0,1.8);
  .. controls ++(0,0.35) and ++(0,-0.35) .. (0,1.7)--(0,1.8);
       \node[above] at (0,1.8) {$\scriptstyle \nakayamaj ^{\scriptstyle(x+y)}$};
         \node[below] at (0,0) {$\scriptstyle i^{\scriptstyle(x+y)}$};
             \node[right] at (0.1,0.1) {$\scriptstyle i^{\scriptstyle(x)}$};
               \node[left] at (-0.1,0.1) {$\scriptstyle i^{\scriptstyle(y)}$};
                %%%%%
  \draw[ultra thick, red] (0+1.8,0)--(0+1.8,0.1) .. controls ++(0,0.35) and ++(0,-0.35) .. (-0.6+1.8,0.6)--(-0.6+1.8,0.9);
    \draw[ultra thick, blue]  (-0.6+1.8,0.9)--(-0.6+1.8,1.2) 
  .. controls ++(0,0.35) and ++(0,-0.35) .. (0+1.8,1.7)--(0+1.8,1.8);
  \draw[ultra thick, red] (0+1.8,0)--(0+1.8,0.1) .. controls ++(0,0.35) and ++(0,-0.35) .. (0.6+1.8,0.6)--(0.6+1.8,0.9);
    \draw[ultra thick, blue]  (0.6+1.8,0.9)--(0.6+1.8,1.2) 
  .. controls ++(0,0.35) and ++(0,-0.35) .. (0+1.8,1.7)--(0+1.8,1.8);
  .. controls ++(0,0.35) and ++(0,-0.35) .. (0+1.8,1.7)--(0+1.8,1.8);
       \node[above] at (0+1.8,1.8) {$\scriptstyle i^{\scriptstyle(x+y)}$};
         \node[below] at (0+1.8,0) {$\scriptstyle j^{\scriptstyle(x+y)}$};
           \node[right] at (0.1+1.8,0.1) {$\scriptstyle j^{\scriptstyle(x)}$};
               \node[left] at (-0.1+1.8,0.1) {$\scriptstyle j^{\scriptstyle(y)}$};      
                %%%%
                        \draw[thick, black, fill=black]  (-0.6,1.1) circle (5pt);
        \draw[thick,black, fill=black]  (1.2,1.1) circle (5pt);
                   \draw[decorate, decoration={snake,segment length=5pt, amplitude=1pt}, line width=1mm, black] (-0.6,1.1)--(1.2,1.1);
      \draw[thick, lime, fill=lime]  (-0.6,1.1) circle (4pt);
        \draw[thick, lime, fill=lime]  (1.2,1.1) circle (4pt);
      \draw[decorate, decoration={snake,segment length=5pt, amplitude=1pt}, line width=0.5mm, lime] (-0.6,1.1)--(1.2,1.1);
      %%%%%%%%%
                       \draw[thick, black, fill=black]  (-0.6+1.2,0.7) circle (5pt);
        \draw[thick,black, fill=black]  (1.2+1.2,0.7) circle (5pt);
                   \draw[decorate, decoration={snake,segment length=5pt, amplitude=1pt}, line width=1mm, black] (-0.6+1.2,0.7)--(1.2+1.2,0.7);
      \draw[thick, lime, fill=lime]  (-0.6+1.2,0.7) circle (4pt);
        \draw[thick, lime, fill=lime]  (1.2+1.2,0.7) circle (4pt);
      \draw[decorate, decoration={snake,segment length=5pt, amplitude=1pt}, line width=0.5mm, lime] (-0.6+1.2,0.7)--(1.2+1.2,0.7);
      %%%%%%
        \node[] at (-0.6,1.1) {$\scriptstyle 1$};
          \node[] at (0.6,0.7) {$\scriptstyle 1$};
\end{tikzpicture}}
\end{equation*}
\end{corollary}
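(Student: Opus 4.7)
The plan is to prove both identities by directly applying \cref{blowupteleporter} to each side and verifying that the resulting fully exploded diagrams coincide; no new relations beyond those used in \cref{blowupteleporter} will be needed.

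For the first identity, I will apply \cref{blowupteleporter} to the left-hand side with $x$ replaced by $x+y$. This expresses $(x+y)!$ times the $(x+y)$-thick teleporter as the fully exploded diagram in which the $i^{(x+y)}$ and $j^{(x+y)}$ strands each split into $x+y$ thin strands, decorated by $x+y$ thin teleporters stacked at decreasing heights that pair up the strands from outermost to innermost. For the right-hand side, I will apply \cref{blowupteleporter} independently to each of the $x$-thick and $y$-thick teleporters appearing in the diagram; the coefficient $x!\,y!$ is precisely what is needed to absorb the two applications. This yields a diagram in which the $i^{(x+y)}$ strand at the bottom first splits as $i^{(x+y)} \to i^{(x)} \otimes i^{(y)}$ and then each factor explodes further into thin strands (and symmetrically for the top merges and the $j$-side), with $y$ thin teleporters stacked on the outer pairs and $x$ thin teleporters stacked on the inner pairs.

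Next I will invoke web-associativity \cref{AssocRel} to replace the iterated split $i^{(x+y)} \to i^{(x)}\otimes i^{(y)} \to (i^{(1)})^{\otimes (x+y)}$ by the direct explosion into $x+y$ thin strands, and similarly for the iterated merges at the top on both sides. After this rewriting, both exploded diagrams have identical split/merge skeletons, so it only remains to verify that the teleporter stacks coincide. But the outer-to-inner stacking produced by \cref{blowupteleporter} is precisely such that in the right-hand side the $y$ outer teleporters sit above the $x$ inner teleporters, agreeing strand-for-strand with the arrangement produced on the left-hand side when \cref{blowupteleporter} is applied with total thickness $x+y$.

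The second identity follows by exactly the same argument, replacing every appeal to the first equation of \cref{blowupteleporter} by its second, twisted version; the Nakayama subscript $1$ on the left teleporter tracks through the explosion in the obvious way and the right teleporter is unchanged. I do not anticipate any real obstacle: the only care required is in matching the explicit heights at which the thin teleporters appear after each application of \cref{blowupteleporter}, and this matching is essentially bookkeeping already encoded in the statement of that lemma.
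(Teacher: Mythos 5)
Your proof is correct and follows the same route as the paper, which simply records this as an immediate consequence of \cref{blowupteleporter}. The approach of exploding both sides, applying \cref{blowupteleporter} once on the left with total thickness $x+y$ and twice on the right (with the prefactor $x!\,y!$ absorbing the two constants), and then invoking associativity of splits and merges to align the skeletons is exactly what the terse "immediate" in the paper amounts to. Two small remarks, neither of which affects correctness: first, the split in the displayed diagram is $i^{(x+y)} \to i^{(y)} \otimes i^{(x)}$ (with $i^{(y)}$ on the \emph{left}), not $i^{(x)} \otimes i^{(y)}$ as you wrote; second, your final matching "strand-for-strand" of the teleporter heights is more than you need, since thin teleporters that decorate disjoint pairs of strands commute by the interchange law, so any reordering of heights among them yields the same morphism — this also sidesteps any worry about the intermediate-strand positions of the two thick teleporters in the right-hand diagram.
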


Finally, we show that the thick teleporter decorations are worthy of the name; they `teleport' a thick coupon from one corner of the teleporter to the other.

\begin{lemma}\label{teleporterrules}
For all \(x \in \Z_{\geq 0}\), \(i,j \in I\), \(f \in {}_iA^{(x)}_k\), \(g \in {}_jA^{(x)}_\ell\), 
\begin{equation*}
\hackcenter{
\begin{tikzpicture}[scale=0.8]
  \draw[ultra thick,blue] (-0.6,0)--(-0.6,1.2);
   \draw[ultra thick,red] (0.6,0)--(0.6,1.2);
  \draw[ultra thick,blue] (0.6,1.2)--(0.6,2.4);
    \draw[ultra thick,red] (-0.6,1.2)--(-0.6,2.4);
    %%%
     \draw[ultra thick,purple] (-0.6,0)--(-0.6,0.5);
       \draw[ultra thick,cyan] (0.6,0)--(0.6,0.5);
     %%%%
      \node[above] at (-0.6,2.4) {$ \scriptstyle j^{ \scriptstyle (x)}$};
      \node[above] at (0.6,2.4) {$ \scriptstyle \nakayamai^{ \scriptstyle (x)}$};
       \node[below] at (-0.6,0) {$ \scriptstyle k^{ \scriptstyle (x)}$};
      \node[below] at (0.6,0) {$ \scriptstyle \ell^{ \scriptstyle (x)}$};
        \node[left] at (-0.7,0.85) {$ \scriptstyle i^{ \scriptstyle (x)}$};
         \node[right] at (0.7,0.9) {$ \scriptstyle j^{ \scriptstyle (x)}$};
        \draw[thick, black, fill=black]  (-0.6,1.2) circle (6pt);
        \draw[thick,black, fill=black]  (0.6,1.2) circle (6pt);
                   \draw[decorate, decoration={snake,segment length=5pt, amplitude=1pt}, line width=1mm, black] (-0.6,1.2)--(0.6,1.2);
      \draw[thick, lime, fill=lime]  (-0.6,1.2) circle (5pt);
        \draw[thick, lime, fill=lime]  (0.6,1.2) circle (5pt);
      \draw[decorate, decoration={snake,segment length=5pt, amplitude=1pt}, line width=0.5mm, lime] (-0.6,1.2)--(0.6,1.2);
      %%%
           \draw[thick, fill=yellow]  (-0.6,0.5) circle (7pt);
    \node at (-0.6,0.5) {$ \scriptstyle f$};
              \draw[thick, fill=yellow]  (0.6,0.5) circle (7pt);
    \node at (0.6,0.5) {$ \scriptstyle g$};
      \end{tikzpicture}}
      =
      (-1)^{\bar f \bar g}
      \hackcenter{
\begin{tikzpicture}[scale=0.8]
  \draw[ultra thick,purple] (-0.6,0)--(-0.6,1.2);
   \draw[ultra thick,cyan] (0.6,0)--(0.6,1.2);
  \draw[ultra thick,purple] (0.6,1.2)--(0.6,2.4);
    \draw[ultra thick,cyan] (-0.6,1.2)--(-0.6,2.4);
    %%%
     \draw[ultra thick,blue] (-0.6,1.9)--(-0.6,2.4);
       \draw[ultra thick,red] (0.6,1.9)--(0.6,2.4);
     %%%%
      \node[above] at (-0.6,2.4) {$ \scriptstyle j^{ \scriptstyle (x)}$};
      \node[above] at (0.6,2.4) {$ \scriptstyle \nakayamai^{ \scriptstyle (x)}$};
       \node[below] at (-0.6,0) {$ \scriptstyle k^{ \scriptstyle (x)}$};
      \node[below] at (0.6,0) {$ \scriptstyle \ell^{ \scriptstyle (x)}$};
        \node[left] at (-0.7,1.5) {$ \scriptstyle \ell^{ \scriptstyle (x)}$};
         \node[right] at (0.7,1.6) {$ \scriptstyle \nakayamak^{ \scriptstyle (x)}$};   
        \draw[thick, black, fill=black]  (-0.6,1.2) circle (6pt);
        \draw[thick,black, fill=black]  (0.6,1.2) circle (6pt);
                   \draw[decorate, decoration={snake,segment length=5pt, amplitude=1pt}, line width=1mm, black] (-0.6,1.2)--(0.6,1.2);
      \draw[thick, lime, fill=lime]  (-0.6,1.2) circle (5pt);
        \draw[thick, lime, fill=lime]  (0.6,1.2) circle (5pt);
      \draw[decorate, decoration={snake,segment length=5pt, amplitude=1pt}, line width=0.5mm, lime] (-0.6,1.2)--(0.6,1.2);
      %%%
           \draw[thick, fill=yellow]  (-0.6,1.9) circle (7pt);
    \node at (-0.6,1.9) {$ \scriptstyle g$};
              \draw[thick, fill=yellow]  (0.6,1.9) circle (7pt);
    \node at (0.6,1.9) {$ \scriptstyle \nakayamaf$};
      \end{tikzpicture}}
      \qquad
      \textup{and}
      \qquad
      \hackcenter{
\begin{tikzpicture}[scale=0.8]
  \draw[ultra thick,blue] (-0.6,0)--(-0.6,1.2);
   \draw[ultra thick,red] (0.6,0)--(0.6,1.2);
  \draw[ultra thick,blue] (0.6,1.2)--(0.6,2.4);
    \draw[ultra thick,red] (-0.6,1.2)--(-0.6,2.4);
    %%%
     \draw[ultra thick,purple] (-0.6,0)--(-0.6,0.5);
       \draw[ultra thick,cyan] (0.6,0)--(0.6,0.5);
     %%%%
      \node[above] at (-0.6,2.4) {$ \scriptstyle \nakayamaj^{ \scriptstyle (x)}$};
      \node[above] at (0.6,2.4) {$ \scriptstyle i^{ \scriptstyle (x)}$};
       \node[below] at (-0.6,0) {$ \scriptstyle k^{ \scriptstyle (x)}$};
      \node[below] at (0.6,0) {$ \scriptstyle \ell^{ \scriptstyle (x)}$};
        \node[left] at (-0.7,0.85) {$ \scriptstyle i^{ \scriptstyle (x)}$};
         \node[right] at (0.7,0.9) {$ \scriptstyle j^{ \scriptstyle (x)}$};
        \draw[thick, black, fill=black]  (-0.6,1.2) circle (6pt);
        \draw[thick,black, fill=black]  (0.6,1.2) circle (6pt);
                   \draw[decorate, decoration={snake,segment length=5pt, amplitude=1pt}, line width=1mm, black] (-0.6,1.2)--(0.6,1.2);
      \draw[thick, lime, fill=lime]  (-0.6,1.2) circle (5pt);
        \draw[thick, lime, fill=lime]  (0.6,1.2) circle (5pt);
      \draw[decorate, decoration={snake,segment length=5pt, amplitude=1pt}, line width=0.5mm, lime] (-0.6,1.2)--(0.6,1.2);
                \node[] at (-0.6,1.2) {$ \scriptstyle 1$};
      %%%
           \draw[thick, fill=yellow]  (-0.6,0.5) circle (7pt);
    \node at (-0.6,0.5) {$ \scriptstyle f$};
              \draw[thick, fill=yellow]  (0.6,0.5) circle (7pt);
    \node at (0.6,0.5) {$ \scriptstyle g$};
      \end{tikzpicture}}
      =
      (-1)^{\bar f \bar g}
      \hackcenter{
\begin{tikzpicture}[scale=0.8]
  \draw[ultra thick,purple] (-0.6,0)--(-0.6,1.2);
   \draw[ultra thick,cyan] (0.6,0)--(0.6,1.2);
  \draw[ultra thick,purple] (0.6,1.2)--(0.6,2.4);
    \draw[ultra thick,cyan] (-0.6,1.2)--(-0.6,2.4);
    %%%
     \draw[ultra thick,blue] (-0.6,1.9)--(-0.6,2.4);
       \draw[ultra thick,red] (0.6,1.9)--(0.6,2.4);
     %%%%
      \node[above] at (-0.6,2.4) {$ \scriptstyle \nakayamaj ^{ \scriptstyle (x)}$};
      \node[above] at (0.6,2.4) {$ \scriptstyle i^{ \scriptstyle (x)}$};
       \node[below] at (-0.6,0) {$ \scriptstyle k^{ \scriptstyle (x)}$};
      \node[below] at (0.6,0) {$ \scriptstyle \ell^{ \scriptstyle (x)}$};
        \node[left] at (-0.7,1.5) {$ \scriptstyle \nakayamal^{ \scriptstyle (x)}$};  
         \node[right] at (0.7,1.6) {$ \scriptstyle k^{ \scriptstyle (x)}$};
        \draw[thick, black, fill=black]  (-0.6,1.2) circle (6pt);
        \draw[thick,black, fill=black]  (0.6,1.2) circle (6pt);
                   \draw[decorate, decoration={snake,segment length=5pt, amplitude=1pt}, line width=1mm, black] (-0.6,1.2)--(0.6,1.2);
      \draw[thick, lime, fill=lime]  (-0.6,1.2) circle (5pt);
        \draw[thick, lime, fill=lime]  (0.6,1.2) circle (5pt);
      \draw[decorate, decoration={snake,segment length=5pt, amplitude=1pt}, line width=0.5mm, lime] (-0.6,1.2)--(0.6,1.2);
                \node[] at (-0.6,1.2) {$ \scriptstyle 1$};
      %%%
           \draw[thick, fill=yellow]  (-0.6,1.9) circle (7pt);
    \node at (-0.6,1.9) {$ \scriptstyle \nakayamag$};  
              \draw[thick, fill=yellow]  (0.6,1.9) circle (7pt);
    \node at (0.6,1.9) {$ \scriptstyle f$};
      \end{tikzpicture}}
\end{equation*}
\end{lemma}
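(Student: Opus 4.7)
The plan is to reduce to the thin case ($x = 1$), where the identity becomes a direct translation of \cref{translem} from the algebra $A \otimes A$ into morphisms of $\WebAaI$.

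For the thin case, unpacking \cref{thicktransporterdef} shows that on thin strands all combinatorial coefficients $\{\mu\}^!_\a \{\mu^\vee\}^!_\a / \{\mu\}^!$ equal $1$ and $[b] = b$, so the teleporter is literally the Casimir $\sum_{b \in {}_j\BasisB_i} b \otimes b^\vee$, viewed as a morphism $i \otimes j \to j \otimes i$ in $\WebAaI$. I would then apply \cref{transport} to the coupons $g \in jA\ell$ and $\nakayamaf \in iAk$; using $\psi(\nakayamaf) = f$ and $\bar{\nakayamaf} = \bar f$, a rearrangement yields exactly the thin case of the first equation. The second equation follows analogously from \cref{transportrev}: a direct unpacking shows that the $1$-decorated thin teleporter equals $\sum_{b \in {}_j\BasisB_i} \nakayamab \otimes b^\vee$, matching the Casimir-like expression on the left of \cref{transportrev}.

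For the thick case ($x > 1$), first note that $f, g \in \a$ are necessarily even, so $(-1)^{\bar f \bar g} = 1$ and no signs need be tracked. The strategy is to explode, teleport thinly, and contract. Apply \cref{blowupteleporter} to rewrite the $x$-thick teleporter as $\tfrac{1}{x!}$ times a stack of $x$ independent thin teleporters between $Y$- and $Z$-type explosion/contraction morphisms; simultaneously, use \cref{TAaSMRel} to propagate the thick coupons $f$ and $g$ through the explosions, distributing $x$ copies each of $f$ and $g$ across the $x$ thin strand pairs. Apply the thin case to each of the $x$ thin teleporters---living at distinct heights on disjoint strand pairs, they commute as $\WebAaI$ morphisms---to move all $f$'s and $g$'s above the teleporters, converting them into $g$'s and $\nakayamaf$'s. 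Finally, reverse the explosion (invoking \cref{L:ExplCon} to cancel factorial coefficients) and use \cref{TAaSMRel} once more to aggregate the thin coupons into thick $g$ and $\nakayamaf$ decorations above the thick teleporter.

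The main obstacle will be the combinatorial verification that the explosion argument respects the teleporter's decoration by restricted compositions $\mu$---in particular, that the coefficient $\{\mu\}^!_\a \{\mu^\vee\}^!_\a / \{\mu\}^!$ reassembles correctly after all $x$ thin teleports are performed. The Frobenius greatness hypothesis enters here to guarantee integrality throughout the explosion-contraction process. The second equation will additionally require checking that the $1$-label propagates consistently through the explosion, which follows from the parallel structures of \cref{transport} and \cref{transportrev}.
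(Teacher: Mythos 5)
Your thin case and the structure of your thick-case reduction (explode via \cref{blowupteleporter}, propagate coupons with \cref{TAaSMRel}, apply the thin case to each thin teleporter, contract) match the paper's argument in substance. However, there is a genuine gap at the cancellation step. You cannot "rewrite the $x$-thick teleporter as $\tfrac{1}{x!}$ times a stack of thin teleporters," because $\WebAaI$ is a $\k$-linear category over a general characteristic-zero domain and division by $x!$ is not available. What \cref{blowupteleporter} actually gives you is that $x!$ times the thick teleporter equals the exploded thin stack. Likewise, \cref{L:ExplCon} does not "cancel" factorials; it introduces them. So the argument as you describe it only establishes $x!\,(\text{LHS}) = x!\,(\text{RHS})$, and to conclude the lemma one must invoke the fact that the morphism spaces of $\WebAaI$ are free (hence torsion-free) $\k$-modules, as shown in \cite[Corollary 6.6.3]{DKMZ}. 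The paper cites this explicitly; your proposal never does, and that is the essential missing step.

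A secondary, minor point: your worry about "reassembling" the coefficients $\{\mu\}^!_\a\{\mu^\vee\}^!_\a/\{\mu\}^!$ after the $x$ thin teleports is unnecessary labor. \cref{blowupteleporter} is already proven and packages that combinatorics for you; once you multiply by $x!$, you never need to touch the $\mu$-decorations again. The Frobenius-great hypothesis enters only where it enters in defining the teleporter (integrality of those coefficients), not anywhere new in this proof.
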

\begin{proof}
When \(x=1\), the claims follow from \cref{translem}. For \(x>1\), we note that if we multiply both sides of the equalities above by \(x!\), then this equality follows from the \(x=1\) case together with \cref{blowupteleporter,TAaSMRel}. But morphism spaces in \(\WebAaI\) are \(\k\)-free by \cite[Corollary 6.6.3]{DKMZ}, which implies the result.
\end{proof}

\section{The affine Frobenius web category \(\AffWebAaI\)}

\subsection{The category \texorpdfstring{$\AffWebAaI$}{AffWebAaI}}\label{defweb}  Let \(\calA = ((A, a)_I, \tr , \psi)\) be a Frobenius great pair. The affine web category \(\AffWebAaI\) is defined by adding additional ``polynomial'' generating morphisms to the finite web category \(\WebAaI\) that are drawn as a black dot.

\begin{definition}\label{defwebaa}
 Let \(\AffWebAaI\) be the strict monoidal \(\k\)-linear supercategory defined as follows.  
The objects and the monoidal structure on objects are the same as in \(\WebAaI\), given by \(\Ob(\AffWebAaI)=\widehat{ \Omega}_I\). 
The generating morphisms of \(\AffWebAaI\) are given by the diagrams:
\begin{align}\label{WebAaGens}
\hackcenter{
{}
}
\hackcenter{
\begin{tikzpicture}[scale=.8]
  \draw[ultra thick,blue] (0,0)--(0,0.2) .. controls ++(0,0.35) and ++(0,-0.35) .. (-0.4,0.9)--(-0.4,1);
  \draw[ultra thick,blue] (0,0)--(0,0.2) .. controls ++(0,0.35) and ++(0,-0.35) .. (0.4,0.9)--(0.4,1);
      \node[above] at (-0.4,1) {$ \scriptstyle i^{\scriptstyle (x)}$};
      \node[above] at (0.4,1) {$ \scriptstyle i^{\scriptstyle (y)}$};
      \node[below] at (0,0) {$ \scriptstyle i^{\scriptstyle (x+y)} $};
\end{tikzpicture}}
\qquad
\qquad
\hackcenter{
\begin{tikzpicture}[scale=.8]
  \draw[ultra thick,blue ] (-0.4,0)--(-0.4,0.1) .. controls ++(0,0.35) and ++(0,-0.35) .. (0,0.8)--(0,1);
\draw[ultra thick, blue] (0.4,0)--(0.4,0.1) .. controls ++(0,0.35) and ++(0,-0.35) .. (0,0.8)--(0,1);
      \node[below] at (-0.4,0) {$ \scriptstyle i^{ \scriptstyle (x)}$};
      \node[below] at (0.4,0) {$ \scriptstyle i^{ \scriptstyle (y)}$};
      \node[above] at (0,1) {$ \scriptstyle i^{ \scriptstyle (x+y)}$};
\end{tikzpicture}}
\qquad
\qquad
\hackcenter{
\begin{tikzpicture}[scale=.8]
  \draw[ultra thick,red] (0.4,0)--(0.4,0.1) .. controls ++(0,0.35) and ++(0,-0.35) .. (-0.4,0.9)--(-0.4,1);
  \draw[ultra thick,blue] (-0.4,0)--(-0.4,0.1) .. controls ++(0,0.35) and ++(0,-0.35) .. (0.4,0.9)--(0.4,1);
      \node[above] at (-0.4,1) {$ \scriptstyle j^{ \scriptstyle (y)}$};
      \node[above] at (0.4,1) {$ \scriptstyle i^{ \scriptstyle (x)}$};
       \node[below] at (-0.4,0) {$ \scriptstyle i^{ \scriptstyle (x)}$};
      \node[below] at (0.4,0) {$ \scriptstyle j^{ \scriptstyle (y)}$};
\end{tikzpicture}}
\qquad
\qquad
\hackcenter{
\begin{tikzpicture}[scale=.8]
  \draw[ultra thick, blue] (0,0)--(0,0.5);
   \draw[ultra thick, red] (0,0.5)--(0,1);
   \draw[thick, fill=yellow]  (0,0.5) circle (7pt);
    \node at (0,0.5) {$ \scriptstyle f$};
     \node[below] at (0,0) {$ \scriptstyle i^{ \scriptstyle (z)}$};
      \node[above] at (0,1) {$ \scriptstyle j^{ \scriptstyle (z)}$};
\end{tikzpicture}}
\qquad
\qquad
\hackcenter{
\begin{tikzpicture}[scale=.8]
  \draw[ultra thick, blue] (0,0)--(0,0.5);
   \draw[ultra thick, blue] (0,0.5)--(0,1);
   \draw[thick, fill=black]  (0,0.5) circle (5pt);
     \node[below] at (0,0) {$ \scriptstyle i^{ \scriptstyle (z)}$};
      \node[above] at (0,1) {$ \scriptstyle \nakayamai^{ \scriptstyle (z)}$};
\end{tikzpicture}}
\end{align}
for \(i,j \in I\), \(x,y \in \Z_{\geq 0}\), \(z \in \Z_{>0}\), \(f \in jA^{(z)}i\). 
The $\Z_{2}$-grading is given by declaring that splits, merges, and crossings  have parity \(\bar 0\), and the parity of the \(f\) coupon is \(\bar{f}\). The new generators are called \emph{affine dots} and are declared to be even. The relations between morphisms in \(\AffWebAaI\) are given as follows. The ``non-dotted'' relations (\ref{AssocRel})--(\ref{AaIntertwine}) hold as in \(\WebAaI\), together with the `dotted relations' below.
\\

%%%%%%%%%%%%%%%%%%%%%BEGIN AFFINE DOT 
{\em Affine dot relations.} For all \(i,j,k \in I, x,y \in \Z_{\geq 0}, f \in jA^{(x)}i\):
\begin{align}\label{AffDotRel1}
\hackcenter{
\begin{tikzpicture}[scale=.8]
  \draw[ultra thick, blue] (0,0)--(0,0.5);
    \draw[ultra thick, red] (0,0.5)--(0,1.3);
      \draw[ultra thick, red] (0,1.3)--(0,1.8);
     \draw[thick, fill=yellow]  (0,0.5) circle (7pt);
    \node at (0,0.5) {$ \scriptstyle f$};
   \draw[thick, fill=black]  (0,1.3) circle (5pt);
    \node at (0,1.3) {$\scriptstyle h$};
     \node[below] at (0,0) {$ \scriptstyle i^{ \scriptstyle (x)}$};
      \node[above] at (0,1.8) {$ \scriptstyle \nakayamaj^{\scriptstyle (x)}$};
         \node[left] at (-0.1,0.9) {$ \scriptstyle j^{ \scriptstyle (x)}$};
\end{tikzpicture}}
\;=\;
\hackcenter{
\begin{tikzpicture}[scale=.8]
  \draw[ultra thick, blue] (0,0)--(0,0.5);
    \draw[ultra thick, blue] (0,0.5)--(0,1.3);
      \draw[ultra thick, red] (0,1.3)--(0,1.8);
     \draw[thick, fill=black]  (0,0.5) circle (5pt);
    \node at (0,0.5) {$ \scriptstyle f$};
   \draw[thick, fill=yellow]  (0,1.3) circle (7pt);
    \node at (0,1.3) {$\scriptstyle \nakayamaf$}; 
     \node[below] at (0,0) {$ \scriptstyle i^{ \scriptstyle (x)}$};
      \node[above] at (0,1.8) {$ \scriptstyle \nakayamaj^{\scriptstyle (x)}$};
         \node[left] at (-0.1,0.9) {$ \scriptstyle \nakayamai^{ \scriptstyle (x)}$};
\end{tikzpicture}}
\qquad
\qquad
\hackcenter{}
\hackcenter{
\begin{tikzpicture}[scale=.8]
  \draw[ultra thick, blue] (0,0.5)--(0,1) .. controls ++(0,0.35) and ++(0,-0.35) .. (-0.4,1.7)--(-0.4,1.8);
  \draw[ultra thick, blue] (0,0.5)--(0,1) .. controls ++(0,0.35) and ++(0,-0.35) .. (0.4,1.7)--(0.4,1.8);
  \draw[ultra thick, blue] (0,0)--(0,0.5);
     \draw[thick, fill=black]  (0,0.5) circle (5pt);
         \node[below] at (0,0) {$ \scriptstyle i^{ \scriptstyle (x+y)}$};
      \node[above] at (-0.4,1.8) {$ \scriptstyle \nakayamai^{ \scriptstyle (x)}$};
            \node[above] at (0.4,1.8) {$ \scriptstyle \nakayamai^{ \scriptstyle (y)}$};
\end{tikzpicture}}
\;
=
\;
\hackcenter{
\begin{tikzpicture}[scale=.8]
  \draw[ultra thick, blue] (0,0)--(0,0.2) .. controls ++(0,0.35) and ++(0,-0.35) .. (-0.4,0.9)--(-0.4,1)--(-0.4,1.3);
  \draw[ultra thick, blue] (0,0)--(0,0.2) .. controls ++(0,0.35) and ++(0,-0.35) .. (0.4,0.9)--(0.4,1)--(0.4,1.3);
  \draw[ultra thick, blue] (-0.4,1.3)--(-0.4,1.8);
    \draw[ultra thick, blue] (0.4,1.3)--(0.4,1.8);
     \draw[thick, fill=black]  (-0.4,1.3) circle (5pt);
     \draw[thick, fill=black]  (0.4,1.3) circle (5pt);
         \node[below] at (0,0) {$ \scriptstyle i^{ \scriptstyle (x+y)}$};
      \node[above] at (-0.4,1.8) {$ \scriptstyle \nakayamai^{ \scriptstyle (x)}$};
            \node[above] at (0.4,1.8) {$ \scriptstyle \nakayamai^{ \scriptstyle (y)}$};
\end{tikzpicture}}
\qquad
\qquad
\hackcenter{
\begin{tikzpicture}[scale=.8]
  \draw[ultra thick, blue] (0,-0.5)--(0,-1) .. controls ++(0,-0.35) and ++(0,0.35) .. (-0.4,-1.7)--(-0.4,-1.8);
  \draw[ultra thick, blue] (0,-0.5)--(0,-1) .. controls ++(0,-0.35) and ++(0,0.35) .. (0.4,-1.7)--(0.4,-1.8);
  \draw[ultra thick, blue] (0,0)--(0,-0.5);
     \draw[thick, fill=black]  (0,-0.5) circle (5pt);
         \node[above] at (0,0) {$ \scriptstyle  \nakayamai^{ \scriptstyle (x+y)}$};
      \node[below] at (-0.4,-1.8) {$ \scriptstyle i^{ \scriptstyle (x)}$};
            \node[below] at (0.4,-1.8) {$ \scriptstyle i^{ \scriptstyle (y)}$};
\end{tikzpicture}}
\;
=
\;
\hackcenter{
\begin{tikzpicture}[scale=.8]
  \draw[ultra thick, blue] (0,0)--(0,-0.2) .. controls ++(0,-0.35) and ++(0,0.35) .. (-0.4,-0.9)--(-0.4,-1)--(-0.4,-1.3);
  \draw[ultra thick, blue] (0,0)--(0,-0.2) .. controls ++(0,-0.35) and ++(0,0.35) .. (0.4,-0.9)--(0.4,-1)--(0.4,-1.3);
  \draw[ultra thick, blue] (-0.4,-1.3)--(-0.4,-1.8);
    \draw[ultra thick, blue] (0.4,-1.3)--(0.4,-1.8);
     \draw[thick, fill=black]  (-0.4,-1.3) circle (5pt);
     \draw[thick, fill=black]  (0.4,-1.3) circle (5pt);
    \node at (-0.4,-1.3) {$ \scriptstyle f$};
      \node at (0.4,-1.3) {$ \scriptstyle f$};
         \node[above] at (0,0) {$ \scriptstyle \nakayamai^{\scriptstyle (x+y)}$};
      \node[below] at (-0.4,-1.8) {$ \scriptstyle i^{ \scriptstyle (x)}$};
            \node[below] at (0.4,-1.8) {$ \scriptstyle i^{ \scriptstyle (y)}$};
\end{tikzpicture}}
\end{align}

\begin{align}\label{DotCrossRel}
\hackcenter{
\begin{tikzpicture}[scale=0.8]
  \draw[ultra thick, blue] (0,0)--(0,0.3) .. controls ++(0,0.35) and ++(0,-0.35)  .. (1.8,2.3)--(1.8,2.6); 
    \draw[ultra thick, blue] (0,0)--(0,0.3) .. controls ++(0,0.35) and ++(0,-0.35)  .. (1.8,2.3)--(1.8,2.6);
    \draw[ultra thick, red] (1.8,0)--(1.8,0.3) .. controls ++(0,0.35) and ++(0,-0.35)  .. (0,2.3)-- (0,2.6); 
    %\draw[thick] (0,0)--(0,-0.2) .. controls ++(0,-0.35) and ++(0,0.35) .. (0.4,-0.9)--(0.4,-1) 
  %.. controls ++(0,-0.35) and ++(0,0.35) .. (0.8,-1.7)--(0.8,-1.8); 
  %\draw[thick] (0,0)--(0,-0.2) .. controls ++(0,-0.5) and ++(0,0.5) .. (-0.8,-1.5)--(-0.8,-1.8);
     \node[below] at (0,0) {$\scriptstyle i^{\scriptstyle (x)}$};
     \node[below] at (1.8,0) {$\scriptstyle j^{\scriptstyle (y)}$};
      \node[above] at (0,2.6) {$\scriptstyle j^{\scriptstyle (y)}$};
      \node[above] at (1.8,2.6) {$\scriptstyle \nakayamai^{\scriptstyle (x)}$};
      %\node[below] at (0.4,0.5) {$1$};
      %\node[above] at (0.4,1.5) {$1$};
         \draw[thick, fill=black]  (0.4,0.8) circle (5pt);
\end{tikzpicture}}
\;
=
\;
\sum_{t \in \Z_{\geq 0}}
(-1)^t\;\;
\hackcenter{
\begin{tikzpicture}[scale=0.8]
\draw[ultra thick, blue] (0,0)--(0,2.6);
\draw[ultra thick, blue] (1.8,0)--(1.8,2.6);
\draw[ultra thick, red] (1.8,0)--(1.8,1);
\draw[ultra thick, red] (0,1)--(0,2.6);
  \draw[ultra thick, blue] (0,0)--(0,0.3) .. controls ++(0,0.35) and ++(0,-0.35)  .. (1.8,2.3); 
    \draw[ultra thick, red] (1.8,0)--(1.8,0.3) .. controls ++(0,0.35) and ++(0,-0.35)  .. (0,2.3); 
    %\draw[thick] (0,0)--(0,-0.2) .. controls ++(0,-0.35) and ++(0,0.35) .. (0.4,-0.9)--(0.4,-1) 
  %.. controls ++(0,-0.35) and ++(0,0.35) .. (0.8,-1.7)--(0.8,-1.8); 
  %\draw[thick] (0,0)--(0,-0.2) .. controls ++(0,-0.5) and ++(0,0.5) .. (-0.8,-1.5)--(-0.8,-1.8);
     \node[below] at (0,0) {$\scriptstyle i^{\scriptstyle (x)}$};
     \node[below] at (1.8,0) {$\scriptstyle j^{\scriptstyle (y)}$};
      \node[above] at (0,2.6) {$\scriptstyle j^{\scriptstyle (y)}$};
      \node[above] at (1.8,2.6) {$\scriptstyle \nakayamai^{\scriptstyle (x)}$};
            %\node[above] at (0.4,1.4) {$\scriptstyle c-a+t$};
      \node[below] at (0.7,0.8) {$\scriptstyle i^{\scriptstyle (x-t)}$};
            \node[left] at (0,1.6) {$\scriptstyle j^{\scriptstyle (t)}$};
      \node[right] at (1.8,1.6) {$\scriptstyle \nakayamai^{\scriptstyle (t)}$};
      %\node[below] at (0.4,0.5) {$1$};
      %\node[above] at (0.4,1.5) {$1$};
        %\draw[draw=black, rounded corners, thick, fill=lime] (-0.3,1.2) rectangle ++(0.6,0.6);
    %\node at (0,1.5) {$\scriptstyle \mu$};
           % \draw[draw=black, rounded corners, thick, fill=lime] (1.5,0.8) rectangle ++(0.6,0.6);
    %\node at (1.8,1.1) {$\scriptstyle \mu^\vee$};
          \draw[thick, black, fill=black]  (0,1) circle (6pt);
        \draw[thick,black, fill=black]  (1.8,1) circle (6pt);
                   \draw[decorate, decoration={snake,segment length=5pt, amplitude=1pt}, line width=1mm, black] (0,1)--(1.8,1);
      \draw[thick, lime, fill=lime]  (0,1) circle (5pt);
        \draw[thick, lime, fill=lime]  (1.8,1) circle (5pt);
      \draw[decorate, decoration={snake,segment length=5pt, amplitude=1pt}, line width=0.5mm, lime] (0,1)--(1.8,1);
         \draw[thick, fill=black]  (1.35,1.7) circle (5pt);
\end{tikzpicture}}
\end{align}
\end{definition}

\subsection{Affine degree}
We will say an affine dot on a strand of thickness \(z\) has \emph{affine degree} \(z\). More generally, we define the affine degree of a diagram representing a morphism in \(\AffWebAaI\) to be the sum of the affine degrees of the affine dot morphisms in the diagram.  Note that relation \cref{DotCrossRel} means a given morphism in $\AffWebAaI$ could be drawn using linear combinations of diagrams with differing affine degrees. For this reasons, when we use the notion of affine degree, it will be in reference to specific diagrams and linear combinations of diagrams.

\subsection{Some implied relations in \texorpdfstring{$\AffWebAaI$}{AffWebAaI}} 

There is an obvious $\k$-linear monoidal functor from \(\WebAaI\) to $\AffWebAaI$ which takes objects and generating morphisms to their mates in $\AffWebAaI$. This implies the relations established for \(\WebAaI\) in \cref{someimplied1,SSS:Additional-shorthand-coupons} may be freely transferred to \(\AffWebAaI\), and we will use these implicitly. 
We now highlight some additional implied relations in \(\AffWebAaI\).

\begin{lemma}\label{righttoleftthin}
For \(i,j \in I\), we have:
\begin{align*}
\hackcenter{
\begin{tikzpicture}[scale=1]
  \draw[ultra thick,red] (0.4,0)--(0.4,0.1) .. controls ++(0,0.35) and ++(0,-0.35) .. (-0.4,0.9)--(-0.4,1);
  \draw[ultra thick,blue] (-0.4,0)--(-0.4,0.1) .. controls ++(0,0.35) and ++(0,-0.35) .. (0.4,0.9)--(0.4,1);
      \node[above] at (-0.4,1) {$ \scriptstyle \nakayamaj^{ \scriptstyle (1)}$};
      \node[above] at (0.4,1) {$ \scriptstyle i^{ \scriptstyle (1)}$};
       \node[below] at (-0.4,0) {$ \scriptstyle i^{ \scriptstyle (1)}$};
      \node[below] at (0.4,0) {$ \scriptstyle j^{ \scriptstyle (1)}$};
                  \draw[thick, fill=black]  (0.3,0.3) circle (4pt);
\end{tikzpicture}}
-
\hackcenter{
\begin{tikzpicture}[scale=1]
  \draw[ultra thick,red] (0.4,0)--(0.4,0.1) .. controls ++(0,0.35) and ++(0,-0.35) .. (-0.4,0.9)--(-0.4,1);
  \draw[ultra thick,blue] (-0.4,0)--(-0.4,0.1) .. controls ++(0,0.35) and ++(0,-0.35) .. (0.4,0.9)--(0.4,1);
      \node[above] at (-0.4,1) {$ \scriptstyle \nakayamaj^{ \scriptstyle (1)}$};
      \node[above] at (0.4,1) {$ \scriptstyle i^{ \scriptstyle (1)}$};
       \node[below] at (-0.4,0) {$ \scriptstyle i^{ \scriptstyle (1)}$};
      \node[below] at (0.4,0) {$ \scriptstyle j^{ \scriptstyle (1)}$};
            \draw[thick, fill=black]  (-0.3,0.7) circle (4pt);
\end{tikzpicture}}
=
\hackcenter{
\begin{tikzpicture}[scale=1]
  \draw[ultra thick,blue] (-0.4,0)--(-0.4,0.5);
   \draw[ultra thick,red] (0.4,0)--(0.4,0.5);
  \draw[ultra thick,blue] (0.4,0.5)--(0.4,1);
    \draw[ultra thick,red] (-0.4,0.5)--(-0.4,1);
      \node[above] at (-0.4,1) {$ \scriptstyle \nakayamaj^{ \scriptstyle (1)}$};
      \node[above] at (0.4,1) {$ \scriptstyle i^{ \scriptstyle (1)}$};
       \node[below] at (-0.4,0) {$ \scriptstyle i^{ \scriptstyle (1)}$};
      \node[below] at (0.4,0) {$ \scriptstyle j^{ \scriptstyle (1)}$};
        \draw[thick, black, fill=black]  (-0.4,0.5) circle (5pt);
        \draw[thick,black, fill=black]  (0.4,0.5) circle (5pt);
                   \draw[decorate, decoration={snake,segment length=5pt, amplitude=1pt}, line width=1mm, black] (-0.4,0.5)--(0.4,0.5);
      \draw[thick, lime, fill=lime]  (-0.4,0.5) circle (4pt);
        \draw[thick, lime, fill=lime]  (0.4,0.5) circle (4pt);
      \draw[decorate, decoration={snake,segment length=5pt, amplitude=1pt}, line width=0.5mm, lime] (-0.4,0.5)--(0.4,0.5);
      %%%
      \node[] at (-0.4,0.5) {$ \scriptstyle 1$};
\end{tikzpicture}}
\qquad
\textup{and}
\qquad
\hackcenter{
\begin{tikzpicture}[scale=1]
  \draw[ultra thick,red] (0.4,0)--(0.4,0.1) .. controls ++(0,0.35) and ++(0,-0.35) .. (-0.4,0.9)--(-0.4,1);
  \draw[ultra thick,blue] (-0.4,0)--(-0.4,0.1) .. controls ++(0,0.35) and ++(0,-0.35) .. (0.4,0.9)--(0.4,1);
      \node[above] at (-0.4,1) {$ \scriptstyle \nakayamaj ^{ \scriptstyle (1)}$};
      \node[above] at (0.4,1) {$ \scriptstyle \nakayamai ^{ \scriptstyle (1)}$};
       \node[below] at (-0.4,0) {$ \scriptstyle i^{ \scriptstyle (1)}$};
      \node[below] at (0.4,0) {$ \scriptstyle j^{ \scriptstyle (1)}$};
            \draw[thick, fill=black]  (-0.3,0.3) circle (4pt);
                  \draw[thick, fill=black]  (0.3,0.3) circle (4pt);
\end{tikzpicture}}
=
\hackcenter{
\begin{tikzpicture}[scale=1]
  \draw[ultra thick,red] (0.4,0)--(0.4,0.1) .. controls ++(0,0.35) and ++(0,-0.35) .. (-0.4,0.9)--(-0.4,1);
  \draw[ultra thick,blue] (-0.4,0)--(-0.4,0.1) .. controls ++(0,0.35) and ++(0,-0.35) .. (0.4,0.9)--(0.4,1);
      \node[above] at (-0.4,1) {$ \scriptstyle \nakayamaj^{ \scriptstyle (1)}$};
      \node[above] at (0.4,1) {$ \scriptstyle \nakayamai^{ \scriptstyle (1)}$};
       \node[below] at (-0.4,0) {$ \scriptstyle i^{ \scriptstyle (1)}$};
      \node[below] at (0.4,0) {$ \scriptstyle j^{ \scriptstyle (1)}$};
            \draw[thick, fill=black]  (-0.3,0.7) circle (4pt);
                  \draw[thick, fill=black]  (0.3,0.7) circle (4pt);
\end{tikzpicture}}.
\end{align*}
\end{lemma}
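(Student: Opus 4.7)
Both identities will be obtained by specializing the main dot-crossing relation \cref{DotCrossRel} to the case \(x = y = 1\), combined with the Coxeter relation \cref{Cox} and the teleporter manipulation rules of \cref{teleporterrules}.

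For the first identity, I would first specialize \cref{DotCrossRel} with \(x = y = 1\). Since the middle strand has thickness \(x - t\), the sum truncates to the two terms \(t = 0\) and \(t = 1\). The \(t=0\) term is the crossing carrying a dot on the top-right \(\nakayamai^{(1)}\)-strand, while the \(t = 1\) term is just the non-twisted teleporter of thickness one (the middle crossing strand, together with its dot, disappears). This yields the \(i\)-strand dot-slide
\[
\hackcenter{\text{crossing with dot on bot-left } i^{(1)}} \;-\; \hackcenter{\text{crossing with dot on top-right } \nakayamai^{(1)}} \;=\; -\,T,
\]
where \(T\) is the non-twisted teleporter of thickness one between colors \(i\) and \(j\). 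To pass from this \(i\)-strand version to the desired \(j\)-strand version (the first identity), I would insert \(1 = X_{ji} \circ X_{ij}\) (from \cref{Cox}) in a suitable place and apply the specialized \cref{DotCrossRel} to the inner crossing. The effect is to transfer the dot-slide problem from the \(i\)-strand of \(X_{ij}\) to the \(j\)-strand of \(X_{i,\nakayamaj}\); the teleporter correction that appears is the image of \(T\) after this composition, and by \cref{taulem} and \cref{translem} this image is exactly the twisted \((1)\)-teleporter \(T_1\) with the appropriate sign. This is the content of the first identity.

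For the second identity, I would apply the \(i\)-strand dot-slide (the \(x = y = 1\) case of \cref{DotCrossRel}) to move the dot from bot-left to top-right, and apply the first identity (already proven) to move the dot from bot-right to top-left. When both moves are performed simultaneously on the diagram with dots on both bottom strands, the two teleporter corrections—one non-twisted from \cref{DotCrossRel} with coefficient \(-1\), and one twisted from the first identity with coefficient \(+1\)—combine. Using \cref{teleporterrules} to slide one teleporter past the other (equivalently, using the Frobenius identity \cref{taulem} that relates the Casimir element to its flipped/twisted version), these two contributions cancel, leaving precisely the claimed equality of the diagram with dots at the bottom and the diagram with dots at the top.

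The main obstacle will be verifying that the teleporter appearing in the derivation of the first identity is indeed the twisted \((1)\)-teleporter \(T_1\) and not the non-twisted \(T\); this requires a careful application of \cref{taulem} to track the Nakayama twist produced by the inserted crossings. Once this is in hand, the cancellation in the second identity is a direct consequence of this same twist bookkeeping together with \cref{teleporterrules}.
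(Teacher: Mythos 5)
Your proposal matches the paper's proof: specialize \cref{DotCrossRel} at $x=y=1$ to obtain a thin dot-slide with a single teleporter correction, conjugate by crossings (your insertion of $1=X_{ji}\circ X_{ij}$ via \cref{Cox} is precisely the paper's ``adding crossing morphisms to the top and bottom of each diagram''), and then use \cref{taulem} to recognize the resulting Casimir sum as the twisted $(1)$-teleporter; the second identity follows by combining the two dot-slides and observing the Frobenius/Nakayama cancellation, exactly as you describe. The only minor discrepancy is that the paper justifies the final cancellation via the basis-independence of the Casimir element together with \cref{AffDotRel1} (sliding the affine dot past a coupon) rather than \cref{teleporterrules}, but this is the same twist-tracking bookkeeping in a slightly different guise.
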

\begin{proof}
By \cref{DotCrossRel} we have:
\begin{equation}\label{thindotcross1}
\hackcenter{
\begin{tikzpicture}[scale=1]
  \draw[ultra thick,blue] (0.4,0)--(0.4,0.1) .. controls ++(0,0.35) and ++(0,-0.35) .. (-0.4,0.9)--(-0.4,1);
  \draw[ultra thick,red] (-0.4,0)--(-0.4,0.1) .. controls ++(0,0.35) and ++(0,-0.35) .. (0.4,0.9)--(0.4,1);
      \node[above] at (-0.4,1) {$ \scriptstyle i^{ \scriptstyle (1)}$};
      \node[above] at (0.4,1) {$ \scriptstyle \nakayamaj^{ \scriptstyle (1)}$};
       \node[below] at (-0.4,0) {$ \scriptstyle j^{ \scriptstyle (1)}$};
      \node[below] at (0.4,0) {$ \scriptstyle i^{ \scriptstyle (1)}$};
                  \draw[thick, fill=black]  (-0.3,0.3) circle (4pt);
\end{tikzpicture}}
-
\hackcenter{
\begin{tikzpicture}[scale=1]
  \draw[ultra thick,blue] (0.4,0)--(0.4,0.1) .. controls ++(0,0.35) and ++(0,-0.35) .. (-0.4,0.9)--(-0.4,1);
  \draw[ultra thick,red] (-0.4,0)--(-0.4,0.1) .. controls ++(0,0.35) and ++(0,-0.35) .. (0.4,0.9)--(0.4,1);
   \node[above] at (-0.4,1) {$ \scriptstyle i^{ \scriptstyle (1)}$};
      \node[above] at (0.4,1) {$ \scriptstyle \nakayamaj^{ \scriptstyle (1)}$};
       \node[below] at (-0.4,0) {$ \scriptstyle j^{ \scriptstyle (1)}$};
      \node[below] at (0.4,0) {$ \scriptstyle i^{ \scriptstyle (1)}$};
            \draw[thick, fill=black]  (0.3,0.7) circle (4pt);
\end{tikzpicture}}
=-
\hackcenter{
\begin{tikzpicture}[scale=1]
  \draw[ultra thick,red] (-0.4,0)--(-0.4,0.5);
   \draw[ultra thick,blue] (0.4,0)--(0.4,0.5);
  \draw[ultra thick,red] (0.4,0.5)--(0.4,1);
    \draw[ultra thick,blue] (-0.4,0.5)--(-0.4,1);
   \node[above] at (-0.4,1) {$ \scriptstyle i^{ \scriptstyle (1)}$};
      \node[above] at (0.4,1) {$ \scriptstyle \nakayamaj^{ \scriptstyle (1)}$};
       \node[below] at (-0.4,0) {$ \scriptstyle j^{ \scriptstyle (1)}$};
      \node[below] at (0.4,0) {$ \scriptstyle i^{ \scriptstyle (1)}$};
        \draw[thick, black, fill=black]  (-0.4,0.5) circle (5pt);
        \draw[thick,black, fill=black]  (0.4,0.5) circle (5pt);
                   \draw[decorate, decoration={snake,segment length=5pt, amplitude=1pt}, line width=1mm, black] (-0.4,0.5)--(0.4,0.5);
      \draw[thick, lime, fill=lime]  (-0.4,0.5) circle (4pt);
        \draw[thick, lime, fill=lime]  (0.4,0.5) circle (4pt);
      \draw[decorate, decoration={snake,segment length=5pt, amplitude=1pt}, line width=0.5mm, lime] (-0.4,0.5)--(0.4,0.5);
      %%%
      %\node[] at (-0.4,0.5) {$ \scriptstyle 1$};
\end{tikzpicture}}
=
-
\sum_{b \in {}_iB_j}
\hackcenter{
\begin{tikzpicture}[scale=1]
  \draw[ultra thick,red] (-0.4,0)--(-0.4,0.5);
   \draw[ultra thick,blue] (0.4,0)--(0.4,0.5);
  \draw[ultra thick,red] (0.4,0.5)--(0.4,1);
    \draw[ultra thick,blue] (-0.4,0.5)--(-0.4,1);
   \node[above] at (-0.4,1) {$ \scriptstyle i^{ \scriptstyle (1)}$};
      \node[above] at (0.4,1) {$ \scriptstyle \nakayamaj^{ \scriptstyle (1)}$};
       \node[below] at (-0.4,0) {$ \scriptstyle j^{ \scriptstyle (1)}$};
      \node[below] at (0.4,0) {$ \scriptstyle i^{ \scriptstyle (1)}$};
        \draw[thick, black, fill=yellow]  (-0.4,0.5) circle (6pt);
        \draw[thick,black, fill=yellow]  (0.4,0.5) circle (6pt);
           \node[] at (-0.4,0.5) {$ \scriptstyle b$};
              \node[] at (0.4,0.5) {$ \scriptstyle b^\vee$};
      %%%
      %\node[] at (-0.4,0.5) {$ \scriptstyle 1$};
\end{tikzpicture}}.
\end{equation}
Adding crossing morphisms to the top and bottom of each diagram, and simplifying yields:
\begin{align}\label{thindotcross2}
{}
\hackcenter{
\begin{tikzpicture}[scale=1]
  \draw[ultra thick,red] (0.4,0)--(0.4,0.1) .. controls ++(0,0.35) and ++(0,-0.35) .. (-0.4,0.9)--(-0.4,1);
  \draw[ultra thick,blue] (-0.4,0)--(-0.4,0.1) .. controls ++(0,0.35) and ++(0,-0.35) .. (0.4,0.9)--(0.4,1);
      \node[above] at (-0.4,1) {$ \scriptstyle j^{ \scriptstyle (1)}$};
      \node[above] at (0.4,1) {$ \scriptstyle i^{ \scriptstyle (1)}$};
       \node[below] at (-0.4,0) {$ \scriptstyle i^{ \scriptstyle (1)}$};
      \node[below] at (0.4,0) {$ \scriptstyle j^{ \scriptstyle (1)}$};
                  \draw[thick, fill=black]  (-0.3,0.7) circle (4pt);
\end{tikzpicture}}
\hspace{-2mm}
-
\hspace{-2mm}
\hackcenter{
\begin{tikzpicture}[scale=1]
  \draw[ultra thick,red] (0.4,0)--(0.4,0.1) .. controls ++(0,0.35) and ++(0,-0.35) .. (-0.4,0.9)--(-0.4,1);
  \draw[ultra thick,blue] (-0.4,0)--(-0.4,0.1) .. controls ++(0,0.35) and ++(0,-0.35) .. (0.4,0.9)--(0.4,1);
   \node[above] at (-0.4,1) {$ \scriptstyle \nakayamaj ^{ \scriptstyle (1)}$};
      \node[above] at (0.4,1) {$ \scriptstyle i^{ \scriptstyle (1)}$};
       \node[below] at (-0.4,0) {$ \scriptstyle i^{ \scriptstyle (1)}$};
      \node[below] at (0.4,0) {$ \scriptstyle j^{ \scriptstyle (1)}$};
            \draw[thick, fill=black]  (0.3,0.3) circle (4pt);
\end{tikzpicture}}
=
-
\sum_{b \in {}_iB_j}
(-1)^{\bar b}
\hackcenter{
\begin{tikzpicture}[scale=1]
  \draw[ultra thick,blue] (-0.4,0)--(-0.4,0.5);
   \draw[ultra thick,red] (0.4,0)--(0.4,0.5);
  \draw[ultra thick,blue] (0.4,0.5)--(0.4,1);
    \draw[ultra thick,red] (-0.4,0.5)--(-0.4,1);
   \node[above] at (-0.4,1) {$ \scriptstyle \nakayamaj^{ \scriptstyle (1)}$};
      \node[above] at (0.4,1) {$ \scriptstyle i^{ \scriptstyle (1)}$};
       \node[below] at (-0.4,0) {$ \scriptstyle i^{ \scriptstyle (1)}$};
      \node[below] at (0.4,0) {$ \scriptstyle j^{ \scriptstyle (1)}$};
        \draw[thick, black, fill=yellow]  (-0.4,0.5) circle (6pt);
        \draw[thick,black, fill=yellow]  (0.4,0.5) circle (6pt);
           \node[] at (-0.4,0.5) {$ \scriptstyle b^\vee$};
              \node[] at (0.4,0.5) {$ \scriptstyle b$};
      %%%
      %\node[] at (-0.4,0.5) {$ \scriptstyle 1$};
\end{tikzpicture}}
\substack{ (\textup{L}.\ref{taulem}) \\ \textstyle = \\ }
-
\hspace{-2mm}
\sum_{b \in {}_jB_{i}}
\hspace{-2mm}
\hackcenter{
\begin{tikzpicture}[scale=1]
  \draw[ultra thick,blue] (-0.4,0)--(-0.4,0.5);
   \draw[ultra thick,red] (0.4,0)--(0.4,0.5);
  \draw[ultra thick,blue] (0.4,0.5)--(0.4,1);
    \draw[ultra thick,red] (-0.4,0.5)--(-0.4,1);
   \node[above] at (-0.4,1) {$ \scriptstyle \nakayamaj ^{ \scriptstyle (1)}$};
      \node[above] at (0.4,1) {$ \scriptstyle i^{ \scriptstyle (1)}$};
       \node[below] at (-0.4,0) {$ \scriptstyle i^{ \scriptstyle (1)}$};
      \node[below] at (0.4,0) {$ \scriptstyle j^{ \scriptstyle (1)}$};
        \draw[thick, black, fill=yellow]  (-0.4,0.5) circle (6pt);
        \draw[thick,black, fill=yellow]  (0.4,0.5) circle (6pt);
           \node[] at (-0.4,0.5) {$ \scriptstyle \nakayamab $};
              \node[] at (0.4,0.5) {$ \scriptstyle b^\vee$};
      %%%
      %\node[] at (-0.4,0.5) {$ \scriptstyle 1$};
\end{tikzpicture}}
=
-
\hackcenter{
\begin{tikzpicture}[scale=1]
  \draw[ultra thick,blue] (-0.4,0)--(-0.4,0.5);
   \draw[ultra thick,red] (0.4,0)--(0.4,0.5);
  \draw[ultra thick,blue] (0.4,0.5)--(0.4,1);
    \draw[ultra thick,red] (-0.4,0.5)--(-0.4,1);
      \node[above] at (-0.4,1) {$ \scriptstyle \nakayamaj ^{ \scriptstyle (1)}$};
      \node[above] at (0.4,1) {$ \scriptstyle i^{ \scriptstyle (1)}$};
       \node[below] at (-0.4,0) {$ \scriptstyle i^{ \scriptstyle (1)}$};
      \node[below] at (0.4,0) {$ \scriptstyle j^{ \scriptstyle (1)}$};
        \draw[thick, black, fill=black]  (-0.4,0.5) circle (5pt);
        \draw[thick,black, fill=black]  (0.4,0.5) circle (5pt);
                   \draw[decorate, decoration={snake,segment length=5pt, amplitude=1pt}, line width=1mm, black] (-0.4,0.5)--(0.4,0.5);
      \draw[thick, lime, fill=lime]  (-0.4,0.5) circle (4pt);
        \draw[thick, lime, fill=lime]  (0.4,0.5) circle (4pt);
      \draw[decorate, decoration={snake,segment length=5pt, amplitude=1pt}, line width=0.5mm, lime] (-0.4,0.5)--(0.4,0.5);
      %%%
      \node[] at (-0.4,0.5) {$ \scriptstyle 1$};
\end{tikzpicture}},
\end{align}
proving the first claim. 

For the second claim, applying the dot-crossing relations (\ref{thindotcross1}, \ref{thindotcross2}) in order yields
\begin{align}\label{dotcross3}
{}
\hackcenter{
\begin{tikzpicture}[scale=1]
  \draw[ultra thick,red] (0.4,0)--(0.4,0.1) .. controls ++(0,0.35) and ++(0,-0.35) .. (-0.4,0.9)--(-0.4,1);
  \draw[ultra thick,blue] (-0.4,0)--(-0.4,0.1) .. controls ++(0,0.35) and ++(0,-0.35) .. (0.4,0.9)--(0.4,1);
      \node[above] at (-0.4,1) {$ \scriptstyle \nakayamaj^{ \scriptstyle (1)}$};
      \node[above] at (0.4,1) {$ \scriptstyle \nakayamai^{ \scriptstyle (1)}$};
       \node[below] at (-0.4,0) {$ \scriptstyle i^{ \scriptstyle (1)}$};
      \node[below] at (0.4,0) {$ \scriptstyle j^{ \scriptstyle (1)}$};
            \draw[thick, fill=black]  (-0.3,0.3) circle (4pt);
                  \draw[thick, fill=black]  (0.3,0.3) circle (4pt);
\end{tikzpicture}}
-
\hackcenter{
\begin{tikzpicture}[scale=1]
  \draw[ultra thick,red] (0.4,0)--(0.4,0.1) .. controls ++(0,0.35) and ++(0,-0.35) .. (-0.4,0.9)--(-0.4,1);
  \draw[ultra thick,blue] (-0.4,0)--(-0.4,0.1) .. controls ++(0,0.35) and ++(0,-0.35) .. (0.4,0.9)--(0.4,1);
      \node[above] at (-0.4,1) {$ \scriptstyle \nakayamaj ^{ \scriptstyle (1)}$};
      \node[above] at (0.4,1) {$ \scriptstyle \nakayamai ^{ \scriptstyle (1)}$};
       \node[below] at (-0.4,0) {$ \scriptstyle i^{ \scriptstyle (1)}$};
      \node[below] at (0.4,0) {$ \scriptstyle j^{ \scriptstyle (1)}$};
            \draw[thick, fill=black]  (-0.3,0.7) circle (4pt);
                  \draw[thick, fill=black]  (0.3,0.7) circle (4pt);
\end{tikzpicture}}
&=
-
\sum_{b \in {}_{\nakayamaj}B_i}
\hackcenter{
\begin{tikzpicture}[scale=1]
  \draw[ultra thick,blue] (-0.4,0)--(-0.4,0.65);
   \draw[ultra thick,red] (0.4,0)--(0.4,0.65);
  \draw[ultra thick,blue] (0.4,0.65)--(0.4,1);
    \draw[ultra thick,red] (-0.4,0.65)--(-0.4,1);
   \node[above] at (-0.4,1) {$ \scriptstyle \nakayamaj^{ \scriptstyle (1)}$};
      \node[above] at (0.4,1) {$ \scriptstyle \nakayamai ^{ \scriptstyle (1)}$};
       \node[below] at (-0.4,0) {$ \scriptstyle i^{ \scriptstyle (1)}$};
      \node[below] at (0.4,0) {$ \scriptstyle j^{ \scriptstyle (1)}$};
        \draw[thick, black, fill=yellow]  (-0.4,0.65) circle (6pt);
        \draw[thick,black, fill=yellow]  (0.4,0.65) circle (6pt);
           \node[] at (-0.4,0.65) {$ \scriptstyle b$};
              \node[] at (0.4,0.65) {$ \scriptstyle b^\vee$};
      %%%
      %\node[] at (-0.4,0.5) {$ \scriptstyle 1$};
      \draw[thick, fill=black]  (0.4,0.25) circle (4pt);
\end{tikzpicture}}
+
\sum_{c \in {}_{j}B_{\psi(i)}}
\hackcenter{
\begin{tikzpicture}[scale=1]
  \draw[ultra thick,blue] (-0.4,0)--(-0.4,0.35);
   \draw[ultra thick,red] (0.4,0)--(0.4,0.35);
  \draw[ultra thick,blue] (0.4,0.35)--(0.4,1);
    \draw[ultra thick,red] (-0.4,0.35)--(-0.4,1);
   \node[above] at (-0.4,1) {$ \scriptstyle \nakayamaj ^{ \scriptstyle (1)}$};
      \node[above] at (0.4,1) {$ \scriptstyle \nakayamai^{ \scriptstyle (1)}$};
       \node[below] at (-0.4,0) {$ \scriptstyle i^{ \scriptstyle (1)}$};
      \node[below] at (0.4,0) {$ \scriptstyle j^{ \scriptstyle (1)}$};
        \draw[thick, black, fill=yellow]  (-0.4,0.35) circle (6pt);
        \draw[thick,black, fill=yellow]  (0.4,0.35) circle (6pt);
           \node[] at (-0.4,0.35) {$ \scriptstyle \nakayamac$}; 
              \node[] at (0.4,0.35) {$ \scriptstyle c^\vee$};
      %%%
      %\node[] at (-0.4,0.5) {$ \scriptstyle 1$};
      \draw[thick, fill=black]  (0.4,0.75) circle (4pt);
\end{tikzpicture}}
\end{align}
As noted in \cref{SS:Casimir-elements-in-A}, the element \(\sum_{b \in {}_{\nakayamaj}B_{i}} b \otimes b^\vee\) is independent of choice of basis for \({}_{\nakayamaj} A_{i}\). As \(\{ \psi^{-1}(c) \mid c \in {}_j B_{i} \}\) is an alternate choice of basis for \(\nakayamaj Ai\), we have
\begin{align*}
\sum_{b \in {}_{\nakayamaj} B_i} b \otimes b^\vee = \sum_{c \in {}_j B_{i}} \psi^{-1}(c) \otimes (\psi^{-1}(c))^\vee = \sum_{c \in {}_j B_{i}} \psi^{-1}(c) \otimes \psi^{-1}(c^\vee).
\end{align*}
Therefore we may rewrite \cref{dotcross3} as
\begin{equation*}
-
\sum_{c \in {}_{j}B_{i}}
\hackcenter{
\begin{tikzpicture}[scale=1]
  \draw[ultra thick,blue] (-0.4,0)--(-0.4,0.65);
   \draw[ultra thick,red] (0.4,0)--(0.4,0.65);
  \draw[ultra thick,blue] (0.4,0.65)--(0.4,1);
    \draw[ultra thick,red] (-0.4,0.65)--(-0.4,1);
   \node[above] at (-0.4,1) {$ \scriptstyle \nakayamaj^{ \scriptstyle (1)}$};
      \node[above] at (0.4,1) {$ \scriptstyle \nakayamai^{ \scriptstyle (1)}$};
       \node[below] at (-0.4,0) {$ \scriptstyle i^{ \scriptstyle (1)}$};
      \node[below] at (0.4,0) {$ \scriptstyle j^{ \scriptstyle (1)}$};
        \draw[thick, black, fill=yellow]  (-0.4,0.65) circle (6pt);
        \draw[thick,black, fill=yellow]  (0.4,0.65) circle (6pt);
           \node[] at (-0.4,0.65) {$ \scriptstyle \nakayamac$};
              \node[] at (0.4,0.65) {$ \scriptstyle \nakayamac^{\hspace{-0.5mm}\vee}$};
      %%%
      %\node[] at (-0.4,0.5) {$ \scriptstyle 1$};
      \draw[thick, fill=black]  (0.4,0.25) circle (4pt);
\end{tikzpicture}}
+
\sum_{c \in {}_{j}B_{i}}
\hackcenter{
\begin{tikzpicture}[scale=1]
  \draw[ultra thick,blue] (-0.4,0)--(-0.4,0.35);
   \draw[ultra thick,red] (0.4,0)--(0.4,0.35);
  \draw[ultra thick,blue] (0.4,0.35)--(0.4,1);
    \draw[ultra thick,red] (-0.4,0.35)--(-0.4,1);
   \node[above] at (-0.4,1) {$ \scriptstyle \nakayamaj^{ \scriptstyle (1)}$};
      \node[above] at (0.4,1) {$ \scriptstyle \nakayamai^{ \scriptstyle (1)}$};
       \node[below] at (-0.4,0) {$ \scriptstyle i^{ \scriptstyle (1)}$};
      \node[below] at (0.4,0) {$ \scriptstyle j^{ \scriptstyle (1)}$};
        \draw[thick, black, fill=yellow]  (-0.4,0.35) circle (6pt);
        \draw[thick,black, fill=yellow]  (0.4,0.35) circle (6pt);
           \node[] at (-0.4,0.35) {$ \scriptstyle \nakayamac$};
              \node[] at (0.4,0.35) {$ \scriptstyle c^\vee$};
      %%%
      %\node[] at (-0.4,0.5) {$ \scriptstyle 1$};
      \draw[thick, fill=black]  (0.4,0.75) circle (4pt);
\end{tikzpicture}}
\substack{ (\ref{AffDotRel1}) \\ \textstyle = \\ \;}
-
\sum_{c \in {}_{j}B_{i}}
\hackcenter{
\begin{tikzpicture}[scale=1]
  \draw[ultra thick,blue] (-0.4,0)--(-0.4,0.35);
   \draw[ultra thick,red] (0.4,0)--(0.4,0.35);
  \draw[ultra thick,blue] (0.4,0.35)--(0.4,1);
    \draw[ultra thick,red] (-0.4,0.35)--(-0.4,1);
   \node[above] at (-0.4,1) {$ \scriptstyle \nakayamaj^{ \scriptstyle (1)}$};
      \node[above] at (0.4,1) {$ \scriptstyle \nakayamai^{ \scriptstyle (1)}$};
       \node[below] at (-0.4,0) {$ \scriptstyle i^{ \scriptstyle (1)}$};
      \node[below] at (0.4,0) {$ \scriptstyle j^{ \scriptstyle (1)}$};
        \draw[thick, black, fill=yellow]  (-0.4,0.35) circle (6pt);
        \draw[thick,black, fill=yellow]  (0.4,0.35) circle (6pt);
           \node[] at (-0.4,0.35) {$ \scriptstyle \nakayamac$};
              \node[] at (0.4,0.35) {$ \scriptstyle c^\vee$};
      %%%
      %\node[] at (-0.4,0.5) {$ \scriptstyle 1$};
      \draw[thick, fill=black]  (0.4,0.75) circle (4pt);
\end{tikzpicture}}
+
\sum_{c \in {}_{j}B_{i}}
\hackcenter{
\begin{tikzpicture}[scale=1]
  \draw[ultra thick,blue] (-0.4,0)--(-0.4,0.35);
   \draw[ultra thick,red] (0.4,0)--(0.4,0.35);
  \draw[ultra thick,blue] (0.4,0.35)--(0.4,1);
    \draw[ultra thick,red] (-0.4,0.35)--(-0.4,1);
   \node[above] at (-0.4,1) {$ \scriptstyle \nakayamaj^{ \scriptstyle (1)}$};
      \node[above] at (0.4,1) {$ \scriptstyle \nakayamai^{ \scriptstyle (1)}$};
       \node[below] at (-0.4,0) {$ \scriptstyle i^{ \scriptstyle (1)}$};
      \node[below] at (0.4,0) {$ \scriptstyle j^{ \scriptstyle (1)}$};
        \draw[thick, black, fill=yellow]  (-0.4,0.35) circle (6pt);
        \draw[thick,black, fill=yellow]  (0.4,0.35) circle (6pt);
           \node[] at (-0.4,0.35) {$ \scriptstyle \nakayamac$};
              \node[] at (0.4,0.35) {$ \scriptstyle c^\vee$};
      %%%
      %\node[] at (-0.4,0.5) {$ \scriptstyle 1$};
      \draw[thick, fill=black]  (0.4,0.75) circle (4pt);
\end{tikzpicture}}
=0,
\end{equation*}
concluding the proof of the second statement.
\end{proof}

\section{Spanning results}\label{spansec}

\subsection{Additional shorthand coupons}

Let \(x, t \in \Z_{\geq 0}\), \(i,j \in I\). We indicate repeated affine dots with a label:
\begin{align*}
{}
\hackcenter{
\begin{tikzpicture}[scale=.8]
  \draw[ultra thick, blue] (0,-0.1)--(0,1.1);
   \draw[thick, fill=black]  (0,0.5) circle (5pt);
     \node[right] at  (0.15,0.5)  {$\scriptstyle t$};
     \node[below] at (0,-0.1) {$ \scriptstyle i^{ \scriptstyle (x)}$};
      \node[above] at (0,1.1) {$ \scriptstyle i^{ \scriptstyle (x)}$};
\end{tikzpicture}}
:=
\hackcenter{
\begin{tikzpicture}[scale=.8]
  \draw[ultra thick, blue] (0,-0.1)--(0,1.1);
   \draw[thick, fill=black]  (0,0.2) circle (5pt);
      \draw[thick, fill=black]  (0,0.8) circle (5pt);
     \node[below] at (0,-0.1) {$ \scriptstyle i^{ \scriptstyle (x)}$};
      \node[above] at (0,1.1) {$ \scriptstyle i^{ \scriptstyle (x)}$};
      \node[right] at (0.1,0.5) {$ \bigg  \} $};
          \node[right] at  (0.5,0.5)  {$\scriptstyle t \textup{\;times}$};
\end{tikzpicture}}
\end{align*}
For \(f \in jA i\), we recall the notation of \cref{greendotdef} and define morphisms \([t,f], [t,f]^\diamond \in \AffWebAaI(i^{(x)}, j^{(x)})\) by setting:

\begin{align*}
{}
\hackcenter{
\begin{tikzpicture}[scale=0.8]
  \draw[ultra thick, blue] (0,0)--(0,0.9);
  \draw[ultra thick, red] (0,0.9)--(0,1.8);
  %\draw[thick, fill=yellow]  (0,0.9) circle (9pt);
  \draw[draw=black, rounded corners, thick, fill=lime] (-0.5,0.6) rectangle ++(1,0.6);
    \node at (0,0.9) {$\scriptstyle [t,f]$};
     \node[below] at (0,0) {$\scriptstyle i^{\scriptstyle(x)}$};
      \node[above] at (0,1.8) {$\scriptstyle j^{\scriptstyle(x)}$};
\end{tikzpicture}}
:=
\hackcenter{
\begin{tikzpicture}[scale=0.8]
  \draw[ultra thick, blue] (0,0)--(0,0.9);
  \draw[ultra thick, red] (0,0.9)--(0,1.8);
  %\draw[thick, fill=yellow]  (0,0.9) circle (9pt);
  \draw[draw=black, rounded corners, thick, fill=lime] (-0.4,0.9) rectangle ++(0.8,0.6);
    \node at (0,1.2) {$\scriptstyle [f]$};
     \node[below] at (0,0) {$\scriptstyle i^{\scriptstyle(x)}$};
      \node[above] at (0,1.8) {$\scriptstyle j^{\scriptstyle(x)}$};
            \draw[thick, fill=black]  (0,0.5) circle (5pt);
             \node[right] at (0.1,0.5) {$\scriptstyle t$};
\end{tikzpicture}}
\qquad
\textup{and}
\qquad
\hackcenter{
\begin{tikzpicture}[scale=0.8]
  \draw[ultra thick, blue] (0,0)--(0,0.9);
  \draw[ultra thick, red] (0,0.9)--(0,1.8);
  %\draw[thick, fill=yellow]  (0,0.9) circle (9pt);
  \draw[draw=black, rounded corners, thick, fill=lime] (-0.5,0.6) rectangle ++(1,0.6);
    \node at (0,0.9) {$\scriptstyle [t,f]^{\scriptstyle \diamond}$};
     \node[below] at (0,0) {$\scriptstyle i^{\scriptstyle(x)}$};
      \node[above] at (0,1.8) {$\scriptstyle j^{\scriptstyle(x)}$};
\end{tikzpicture}}
:=
\hackcenter{
\begin{tikzpicture}[scale=0.8]
  \draw[ultra thick, blue] (0,0)--(0,0.9);
  \draw[ultra thick, red] (0,0.9)--(0,1.8);
  %\draw[thick, fill=yellow]  (0,0.9) circle (9pt);
  \draw[draw=black, rounded corners, thick, fill=lime] (-0.4,0.9) rectangle ++(0.8,0.6);
    \node at (0,1.2) {$\scriptstyle [f]^{\scriptstyle \diamond}$};
     \node[below] at (0,0) {$\scriptstyle i^{\scriptstyle(x)}$};
      \node[above] at (0,1.8) {$\scriptstyle j^{\scriptstyle(x)}$};
            \draw[thick, fill=black]  (0,0.5) circle (5pt);
             \node[right] at (0.1,0.5) {$\scriptstyle t$};
\end{tikzpicture}}.
\end{align*}

\subsection{Restricted \({}_j \BasisP_i\)-compositions}\label{resPcomp}
For \(i,j \in I\), define the set:
\begin{align*}
{}_j \BasisP_i &:= \{(t,b) \mid t \in \Z_{\geq 0}, b \in {}_j \BasisB_{i}\},
\end{align*}
and set \(\BasisP := \bigsqcup_{i,j \in I} {}_j \BasisP_i\). Fix a total order \(>\) on $\BasisB$.  We extend the total order on \(\BasisB\) to a total order on \(\BasisP\) by setting \([t,b]>[t',b']\) whenever \(t>t'\) or \(t=t'\) and \(b>b'\).
We say a finitely supported function \(\mu: {}_j \BasisP _i \to \Z_{\geq 0}\) is a {\em restricted \({}_j \BasisP_i\)-composition} provided that \(\mu(t,b) \leq 1\) whenever \(\bar b = \bar 1\). We write \(|\mu| = \sum_{(t,b) \in {}_j \BasisP_i} \mu(t,b) \in \Z_{\geq 0}\). For such \(\mu\), we enumerate
\begin{align*}
\textup{supp}(\mu) = \{(t_\mu^{(1)},b_\mu^{(1)}) > (t_\mu^{(2)}, b_\mu^{(2)}) > \cdots > (t_\mu^{(d_\mu)}, b_\mu^{(d_\mu)})\},
\end{align*}
where \(d_\mu = |\textup{supp}(\mu)|\). We write \({}_j\mathcal{P}_i\) for the set of restricted \({}_j \BasisP_i\)-compositions. We will freely associate \({}_j \mathcal{B}_i\) with the subset
\begin{align*}
\{\mu \in {}_j \mathcal{P}_i \mid \mu(t,b) = 0 \textup{ for all } t>0\} \subseteq {}_j\mathcal{P}_i. 
\end{align*}
We now generalize the notation of \cref{resmu} to any \(\mu \in {}_j\mathcal{P}_i\), by defining the associated morphisms in \(\AffWebAaI(i^{(|\mu|)}, j^{(|\mu|)})\):
\begin{align}\label{greenmudef}
{}
\hackcenter{
\begin{tikzpicture}[scale=0.8]
  \draw[ultra thick, blue] (0,0)--(0,0.9);
  \draw[ultra thick, red] (0,0.9)--(0,1.8);
  %\draw[thick, fill=yellow]  (0,0.9) circle (9pt);
  \draw[draw=black, rounded corners, thick, fill=lime] (-0.3,0.6) rectangle ++(0.6,0.6);
    \node at (0,0.9) {$\scriptstyle \mu^{\scriptstyle \diamond}$};
     \node[below] at (0,0) {$\scriptstyle i^{\scriptstyle(|\mu|)}$};
      \node[above] at (0,1.8) {$\scriptstyle j^{\scriptstyle(|\mu|)}$};
\end{tikzpicture}}
:=
\hackcenter{
\begin{tikzpicture}[scale=0.8]
  \draw[ultra thick, blue] (0,0)--(0,0.05) .. controls ++(0,0.35) and ++(0,-0.35) .. (-1.2,0.45)--(-1.2,1.2);
    \draw[ultra thick, red]  (-1.2,1.2)--(-1.2,1.35) 
  .. controls ++(0,0.35) and ++(0,-0.35) .. (0,1.75)--(0,1.8);
  \draw[ultra thick, blue] (0,0)--(0,0.05) .. controls ++(0,0.35) and ++(0,-0.35) .. (1.2,0.45)--(1.2,1.2);
    \draw[ultra thick, red]  (1.2,1.2)--(1.2,1.35) 
  .. controls ++(0,0.35) and ++(0,-0.35) .. (0,1.75)--(0,1.8);
  .. controls ++(0,0.35) and ++(0,-0.35) .. (0,1.75)--(0,1.8);
    \draw[draw=black, rounded corners, thick, fill=lime] (-2.4,0.6) rectangle ++(2.2,0.6);
        \draw[draw=black, rounded corners, thick, fill=lime] (0.2,0.6) rectangle ++(2.2,0.6);
  \node at (-1.3,0.9) {$\scriptstyle [t_{\mu}^{(1)},b_{\mu}^{(1)}]^{\scriptstyle \diamond}$};
    \node at (1.3,0.9) {$\scriptstyle [t_{\mu}^{(d_\mu)},b_{\mu}^{(d_\mu)}]^{\scriptstyle \diamond}$};
       \node[above] at (0,1.8) {$\scriptstyle j^{\scriptstyle(|\mu|)}$};
         \node[below] at (0,0) {$\scriptstyle i^{\scriptstyle(|\mu|)}$};
             \node[left] at (-0.6,0.05) {$\scriptstyle i^{\scriptstyle(\mu(t_{\mu}^{(1)},b_{\mu}^{(1)}))}$};
               \node[right] at (0.6,0.05) {$\scriptstyle i^{\scriptstyle(\mu(t_{\mu}^{(d_\mu)},b_{\mu}^{(d_\mu)}))}$};
                \node at (0.04,0.9) {$\scriptstyle \cdots $};
\end{tikzpicture}}
\qquad
\textup{and}
\qquad
\hackcenter{
\begin{tikzpicture}[scale=0.8]
  \draw[ultra thick, blue] (0,0)--(0,0.9);
  \draw[ultra thick, red] (0,0.9)--(0,1.8);
  %\draw[thick, fill=yellow]  (0,0.9) circle (9pt);
  \draw[draw=black, rounded corners, thick, fill=lime] (-0.3,0.6) rectangle ++(0.6,0.6);
    \node at (0,0.9) {$\scriptstyle \mu$};
     \node[below] at (0,0) {$\scriptstyle i^{\scriptstyle(|\mu|)}$};
      \node[above] at (0,1.8) {$\scriptstyle j^{\scriptstyle(|\mu|)}$};
\end{tikzpicture}}
:=
\hackcenter{
\begin{tikzpicture}[scale=0.8]
  \draw[ultra thick, blue] (0,0)--(0,0.05) .. controls ++(0,0.35) and ++(0,-0.35) .. (-1.2,0.45)--(-1.2,1.2);
    \draw[ultra thick, red]  (-1.2,1.2)--(-1.2,1.35) 
  .. controls ++(0,0.35) and ++(0,-0.35) .. (0,1.75)--(0,1.8);
  \draw[ultra thick, blue] (0,0)--(0,0.05) .. controls ++(0,0.35) and ++(0,-0.35) .. (1.2,0.45)--(1.2,1.2);
    \draw[ultra thick, red]  (1.2,1.2)--(1.2,1.35) 
  .. controls ++(0,0.35) and ++(0,-0.35) .. (0,1.75)--(0,1.8);
  .. controls ++(0,0.35) and ++(0,-0.35) .. (0,1.75)--(0,1.8);
    \draw[draw=black, rounded corners, thick, fill=lime] (-2.4,0.6) rectangle ++(2.2,0.6);
        \draw[draw=black, rounded corners, thick, fill=lime] (0.2,0.6) rectangle ++(2.2,0.6);
  \node at (-1.3,0.9) {$\scriptstyle [t_{\mu}^{(1)},b_{\mu}^{(1)}]$};
    \node at (1.3,0.9) {$\scriptstyle [t_{\mu}^{(d_\mu)},b_{\mu}^{(d_\mu)}]$};
       \node[above] at (0,1.8) {$\scriptstyle j^{\scriptstyle(|\mu|)}$};
         \node[below] at (0,0) {$\scriptstyle i^{\scriptstyle(|\mu|)}$};
             \node[left] at (-0.6,0.05) {$\scriptstyle i^{\scriptstyle(\mu(t_{\mu}^{(1)},b_{\mu}^{(1)}))}$};
               \node[right] at (0.6,0.05) {$\scriptstyle i^{\scriptstyle(\mu(t_{\mu}^{(d_\mu)},b_{\mu}^{(d_\mu)}))}$};
                \node at (0.04,0.9) {$\scriptstyle \cdots $};
\end{tikzpicture}}
\end{align}
It will also be useful to define the `cabled' morphism
\begin{align}\label{greenhatmudef}
\hackcenter{}
\hackcenter{
\begin{tikzpicture}[scale=0.8]
  \draw[ultra thick, blue] (0,0)--(0,0.9);
  \draw[ultra thick, red] (0,0.9)--(0,1.8);
    \draw[ultra thick, blue] (0.7,0)--(0.7,0.9);
  \draw[ultra thick, red] (0.7,0.9)--(0.7,1.8);
  %\draw[thick, fill=yellow]  (0,0.9) circle (9pt);
  \draw[draw=black, rounded corners, thick, fill=lime] (-0.3,0.6) rectangle ++(1.3,0.6);
    \node at (0.35,0.9) {$\scriptstyle \hat{\mu}$};
     \node[below] at (0,0) {$\scriptstyle i^{(1)}$};
      \node[above] at (0,1.8) {$\scriptstyle j^{(1)}$};
           \node[below] at (0.7,0) {$\scriptstyle i^{(1)}$};
      \node[above] at (0.7,1.8) {$\scriptstyle j^{(1)}$};
       \node[] at (0.4,1.5) {$\scriptstyle \cdots$};
        \node[] at (0.4,0.3) {$\scriptstyle \cdots$};
        %%%
           \draw [decoration={brace,mirror},decorate] (-0.2,-0.7)--(0.9,-0.7) node[midway,below=2pt]{\tiny $|\mu|$ times} ;
            \node[white] at (0,3) {$\scriptstyle .$};
\end{tikzpicture}}
:=
\hackcenter{
\begin{tikzpicture}[scale=0.8]
  \draw[ultra thick, blue] (-1.2,0)--(-1.2,1.2);
    \draw[ultra thick, red]  (-1.2,1.2)--(-1.2,1.8);
  \draw[ultra thick, blue] (1.2,0)--(1.2,1.2);
    \draw[ultra thick, red]  (1.2,1.2)--(1.2,1.8);
    \draw[draw=black, rounded corners, thick, fill=lime] (-2.4,0.6) rectangle ++(2.2,0.6);
        \draw[draw=black, rounded corners, thick, fill=lime] (0.2,0.6) rectangle ++(2.2,0.6);
  \node at (-1.3,0.9) {$\scriptstyle [t_{\mu}^{(1)},b_{\mu}^{(1)}]$};
    \node at (1.3,0.9) {$\scriptstyle [t_{\mu}^{(1)},b_{\mu}^{(1)}]$};
     \node[below] at (-1.2,0) {$\scriptstyle i^{(1)}$};
      \node[above] at (-1.2,1.8) {$\scriptstyle j^{(1)}$};
           \node[below] at (1.2,0) {$\scriptstyle i^{(1)}$};
      \node[above] at (1.2,1.8) {$\scriptstyle j^{(1)}$};
       \node[] at (0,1.5) {$\scriptstyle \cdots$};
        \node[] at (0,0.3) {$\scriptstyle \cdots$};
                \node at (0.04,0.9) {$\scriptstyle \cdots $};
                %
                %%%%%%%%%%%%%%%%
                  \node at (2.8,0.9) {$\scriptstyle \cdots $};
                   \node at (2.8,1.5) {$\scriptstyle \cdots $};
                    \node at (2.8,0.3) {$\scriptstyle \cdots $};
                     \node at (2.8,-0.7) {$\scriptstyle \cdots $};
                    %%%
                    \draw [decoration={brace,mirror},decorate] (-1.8,-0.7)--(1.8,-0.7) node[midway,below=2pt]{\tiny $\mu(t_\mu^{(1)}, b_{\mu}^{(1)})$ times} ;
            \node[white] at (0,3.1) {$\scriptstyle .$};
\end{tikzpicture}}
\hackcenter{
\begin{tikzpicture}[scale=0.8]
  \draw[ultra thick, blue] (-1.2,0)--(-1.2,1.2);
    \draw[ultra thick, red]  (-1.2,1.2)--(-1.2,1.8);
  \draw[ultra thick, blue] (1.2,0)--(1.2,1.2);
    \draw[ultra thick, red]  (1.2,1.2)--(1.2,1.8);
    \draw[draw=black, rounded corners, thick, fill=lime] (-2.4,0.6) rectangle ++(2.2,0.6);
        \draw[draw=black, rounded corners, thick, fill=lime] (0.2,0.6) rectangle ++(2.2,0.6);
  \node at (-1.3,0.9) {$\scriptstyle [t_{\mu}^{(d_\mu)},b_{\mu}^{(d_\mu)}]$};
    \node at (1.3,0.9) {$\scriptstyle [t_{\mu}^{(d_\mu)},b_{\mu}^{(d_\mu)}]$};
     \node[below] at (-1.2,0) {$\scriptstyle i^{(1)}$};
      \node[above] at (-1.2,1.8) {$\scriptstyle j^{(1)}$};
           \node[below] at (1.2,0) {$\scriptstyle i^{(1)}$};
      \node[above] at (1.2,1.8) {$\scriptstyle j^{(1)}$};
       \node[] at (0,1.5) {$\scriptstyle \cdots$};
        \node[] at (0,0.3) {$\scriptstyle \cdots$};
                \node at (0.04,0.9) {$\scriptstyle \cdots $};
                %
                %%%%%%%%%%%%%%%%
                 \draw [decoration={brace,mirror},decorate] (-1.8,-0.7)--(1.8,-0.7) node[midway,below=2pt]{\tiny $\mu(t_\mu^{(d_\mu)}, b_{\mu}^{(d_\mu)})$ times} ;
            \node[white] at (0,3.1) {$\scriptstyle .$};
\end{tikzpicture}}
\in \AffWebAaI(i^{|\mu|}, j^{|\mu|}).
\end{align}

\subsection{\((\bi^{(\bx)}, \bj^{(\by)})\)-matrices}\label{ijmatsec}
We say \(\bmu = (\mu_{r,s})_{r \in [1,|\bx|], s \in [1,|\by|]}\) is an {\em \((\bi^{(\bx)}, \bj^{(\by)})\)-matrix} provided
\begin{enumerate}
\item  \(\mu_{r,s} \in {}_{j_s}\mathcal{P}_{i_r}\) for \(r \in [1,|\bx|]\), \(s \in [1,|\by|]\);
\item \(\sum_{t =1}^{|x|} |\mu_{t,s}| = y_s\) for \(s \in [1,|\by|]\);
\item \(\sum_{u = 1}^{|y|} |\mu_{r,u}| = x_r\) for \(r \in [1,|\bx|]\).
\end{enumerate}
We write  \(\mathcal{M}(\bi^{(\bx)}, \bj^{(\by)})\) for the set of all \((\bi^{(\bx)}, \bj^{(\by)})\)-matrices. For \(\bmu \in \mathcal{M}(\bi^{(\bx)}, \bj^{(\by)})\) we define the associated morphism which we call a `decorated double coset diagram':
\begin{align*}
\\
\eta_{\bmu} = 
\hackcenter{}
\hackcenter{
\begin{overpic}[height=35mm]{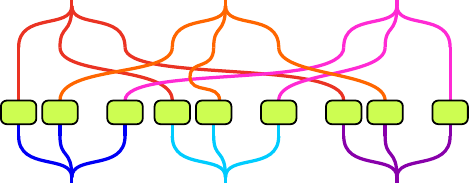}
  \put(15,-1){\makebox(0,0)[t]{$\scriptstyle i_1^{(x_1)}$}}    
    \put(48,-1){\makebox(0,0)[t]{$\scriptstyle i_2^{(x_2)}$}}    
     \put(67,-3){\makebox(0,0)[]{$\scriptstyle \cdots$}}
      \put(85,-1){\makebox(0,0)[t]{$\scriptstyle i_r^{(x_r)}$}}    
   \put(15,40){\makebox(0,0)[b]{$\scriptstyle j_1^{(x_1)}$}}    
    \put(48,40){\makebox(0,0)[b]{$\scriptstyle j_2^{(x_2)}$}}    
     \put(67,41){\makebox(0,0)[]{$\scriptstyle \cdots$}}
      \put(85,40){\makebox(0,0)[b]{$\scriptstyle j_s^{(x_s)}$}}   
      %
      %%%%%%
       \put(4,15){\makebox(0,0)[]{$\scriptstyle \mu_{1,1}$}}
       \put(13,15){\makebox(0,0)[]{$\scriptstyle \mu_{1,2}$}} 
       \put(27,15){\makebox(0,0)[]{$\scriptstyle \mu_{1,s}$}}
        \put(37,15){\makebox(0,0)[]{$\scriptstyle \mu_{2,1}$}}
       \put(45.6,15){\makebox(0,0)[]{$\scriptstyle \mu_{2,2}$}} 
       \put(59.5,15){\makebox(0,0)[]{$\scriptstyle \mu_{2,s}$}}          
         \put(73.2,15){\makebox(0,0)[]{$\scriptstyle \mu_{r,1}$}}
       \put(82,15){\makebox(0,0)[]{$\scriptstyle \mu_{r,2}$}} 
       \put(96,15){\makebox(0,0)[]{$\scriptstyle \mu_{r,s}$}}   
       %%%%%%
       %
         \put(7,3.5){\makebox(0,0)[t]{$\scriptstyle i_1^{(|\mu_{1,1}|)}$}}
          \put(13.7,9.5){\makebox(0,0)[l]{$\scriptstyle i_1^{(|\mu_{1\hspace{-.3mm},\hspace{-.2mm}2}|)}$}}
          \put(20,7){\makebox(0,0)[]{$\scriptstyle \cdots$}}
          \put(24,3.5){\makebox(0,0)[t]{$\scriptstyle i_1^{(|\mu_{1,s}|)}$}}
          \put(40,3.5){\makebox(0,0)[t]{$\scriptstyle i_2^{(|\mu_{2,1}|)}$}}
          \put(46.7,9.5){\makebox(0,0)[l]{$\scriptstyle  i_2^{(|\mu_{2\hspace{-.3mm},\hspace{-.2mm}2}|)}$}}
          \put(53,7){\makebox(0,0)[]{$\scriptstyle \cdots$}}
          \put(57,3.5){\makebox(0,0)[t]{$\scriptstyle i_2^{(|\mu_{2,s}|)}$}}
          \put(77,3.5){\makebox(0,0)[t]{$\scriptstyle i_r^{(|\mu_{r,1}|)}$}}
          \put(83,9.5){\makebox(0,0)[l]{$\scriptstyle  i_r^{(|\mu_{r\hspace{-.3mm},\hspace{-.2mm}2}|)}$}}
          \put(90,7){\makebox(0,0)[]{$\scriptstyle \cdots$}}
          \put(94,3.5){\makebox(0,0)[t]{$\scriptstyle i_r^{(|\mu_{r,s}|)}$}}
          %%%%
          \put(20,31){\makebox(0,0)[]{$\scriptstyle \cdots$}}
          \put(53,31){\makebox(0,0)[]{$\scriptstyle \cdots$}}
          \put(90,31){\makebox(0,0)[]{$\scriptstyle \cdots$}}
\end{overpic}
}
\in \AffWebAaI(\bi^{(\bx)}, \bj^{(\by)}).
\\
\end{align*}
Intuitively, this diagram can be thought of as follows. Let \(k \in [1,r]\), \(\ell \in [1,s]\), \(t \in \Z_{\geq 0}\), \(b \in {}_{j_\ell} \BasisB_{\psi^{-t}(i_k)}\), and assume  \(\mu_{k,\ell}(t,b) = m\). If \(b \in \a \), then \(\eta_{\bmu}\) has a strand of thickness \(m\) which emanates from the \(k\)th component at the bottom of the diagram, and carries \(t\) affine dots and the coupon \(b\) into the \(\ell\)th component at the top of the diagram. On the other hand, if \(b \notin \a \), then \(\eta_{\bmu}\) has \(m\) many thin strands which emanate from the \(k\)th component at the bottom of the diagram, each carrying \(t\) affine dots and the coupon \(b\) to the \(\ell\)th component at the top of the diagram. See \cref{cablex} for  an exemplar of the morphism \(\eta_{\bmu}\).

Given \(d \in \Z_{\geq 0}\), we set 
\begin{align*}
\AffWebAaI(\bi^{(\bx)}, \bj^{(\by)})_{\leq d} := \k\{ \eta_{\bmu} \mid \bmu \in \mathcal{M}(\bi^{(\bx)}, \bj^{(\by)}), \eta_{\bmu} \textup{ has affine degree}\leq d\}
\end{align*}

Recall that 
we write \(i^x:= (i^{(1)})^x\), and 
if \(\bi^{(\bx)} = i_1^{(x_1)} \cdots i_{r}^{(x_r)}\), then we write \(\bi^{\bx} = i_1^{x_1} \cdots i_r^{x_r}\).
Let \(\bmu \in \mathcal{M}(\bi^{(\bx)}, \bj^{(\by)})\). 
Recalling \cref{greenhatmudef}, we define \(\hat{\bmu} \in \mathcal{M}(\bi^{\bx}, \bj^{\bx})\) to be the `cabling' of \(\bmu\), so that

\begin{align*}
\\
\\
\eta_{\hat{\bmu}} = 
\hackcenter{}
\hackcenter{
\begin{overpic}[height=35mm]{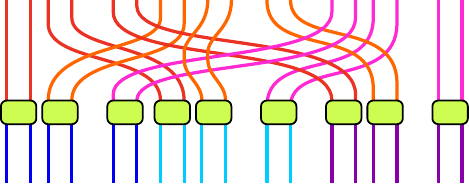}
            \put(15,-5){\makebox(0,0)[t]{$\scriptstyle  \underbrace{\scriptstyle{\color{white}xxxxxxxxxxxxxxxx\color{black}}}_{x_1 \textup{times}}$}}
               \put(48,-5){\makebox(0,0)[t]{$\scriptstyle  \underbrace{\scriptstyle{\color{white}xxxxxxxxxxxxxxxx\color{black}}}_{x_2 \textup{times}}$}}
                  \put(85,-5){\makebox(0,0)[t]{$\scriptstyle  \underbrace{\scriptstyle{\color{white}xxxxxxxxxxxxxxxx\color{black}}}_{x_m \textup{times}}$}}
                  %%%
                  \put(15,43){\makebox(0,0)[b]{$\scriptstyle  \overbrace{\scriptstyle{\color{white}xxxxxxxxxxxxxxxx\color{black}}}^{y_1 \textup{times}}$}}
               \put(48,43){\makebox(0,0)[b]{$\scriptstyle  \overbrace{\scriptstyle{\color{white}xxxxxxxxxxxxxxxx\color{black}}}^{y_2 \textup{times}}$}}
                  \put(85,43){\makebox(0,0)[b]{$\scriptstyle  \overbrace{\scriptstyle{\color{white}xxxxxxxxxxxxxxxx\color{black}}}^{y_n \textup{times}}$}}
                  %%%%
                  %%%%
  \put(2,-1){\makebox(0,0)[t]{$\scriptstyle i_1^{(1)}$}} 
   \put(6,-1){\makebox(0,0)[t]{$\scriptstyle i_1^{(1)}$}}    
    \put(10,-1){\makebox(0,0)[t]{$\scriptstyle i_1^{(1)}$}}    
     \put(15,-1){\makebox(0,0)[t]{$\scriptstyle i_1^{(1)}$}}       
       \put(24,-1){\makebox(0,0)[t]{$\scriptstyle i_1^{(1)}$}}     
         \put(29,-1){\makebox(0,0)[t]{$\scriptstyle i_1^{(1)}$}}     
        \put(34,-1){\makebox(0,0)[t]{$\scriptstyle i_2^{(1)}$}} 
   \put(39,-1){\makebox(0,0)[t]{$\scriptstyle i_2^{(1)}$}}    
    \put(43,-1){\makebox(0,0)[t]{$\scriptstyle i_2^{(1)}$}}    
     \put(48,-1){\makebox(0,0)[t]{$\scriptstyle i_2^{(1)}$}}       
       \put(57,-1){\makebox(0,0)[t]{$\scriptstyle i_2^{(1)}$}}     
         \put(62,-1){\makebox(0,0)[t]{$\scriptstyle i_2^{(1)}$}}     
                 \put(71,-1){\makebox(0,0)[t]{$\scriptstyle i_m^{(1)}$}} 
   \put(76,-1){\makebox(0,0)[t]{$\scriptstyle i_m^{(1)}$}}    
    \put(80,-1){\makebox(0,0)[t]{$\scriptstyle i_m^{(1)}$}}    
     \put(85,-1){\makebox(0,0)[t]{$\scriptstyle i_m^{(1)}$}}       
       \put(93,-1){\makebox(0,0)[t]{$\scriptstyle i_m^{(1)}$}}     
         \put(98,-1){\makebox(0,0)[t]{$\scriptstyle i_m^{(1)}$}}     
           \put(2,40){\makebox(0,0)[b]{$\scriptstyle j_1^{(1)}$}} 
   \put(6,40){\makebox(0,0)[b]{$\scriptstyle j_1^{(1)}$}}    
    \put(10,40){\makebox(0,0)[b]{$\scriptstyle j_1^{(1)}$}}    
     \put(15,40){\makebox(0,0)[b]{$\scriptstyle j_1^{(1)}$}}       
       \put(24,40){\makebox(0,0)[b]{$\scriptstyle j_1^{(1)}$}}     
         \put(29,40){\makebox(0,0)[b]{$\scriptstyle j_1^{(1)}$}}     
        \put(34,40){\makebox(0,0)[b]{$\scriptstyle j_2^{(1)}$}} 
   \put(39,40){\makebox(0,0)[b]{$\scriptstyle j_2^{(1)}$}}    
    \put(43,40){\makebox(0,0)[b]{$\scriptstyle j_2^{(1)}$}}    
     \put(48,40){\makebox(0,0)[b]{$\scriptstyle j_2^{(1)}$}}       
       \put(57,40){\makebox(0,0)[b]{$\scriptstyle j_2^{(1)}$}}     
         \put(62,40){\makebox(0,0)[b]{$\scriptstyle j_2^{(1)}$}}     
                 \put(71,40){\makebox(0,0)[b]{$\scriptstyle j_n^{(1)}$}} 
   \put(76,40){\makebox(0,0)[b]{$\scriptstyle j_n^{(1)}$}}    
    \put(80,40){\makebox(0,0)[b]{$\scriptstyle j_n^{(1)}$}}    
     \put(85,40){\makebox(0,0)[b]{$\scriptstyle j_n^{(1)}$}}       
       \put(93,40){\makebox(0,0)[b]{$\scriptstyle j_n^{(1)}$}}     
         \put(98,40){\makebox(0,0)[b]{$\scriptstyle j_n^{(1)}$}}     
         %
         %
      %
      %%%%%%
             \put(4,15){\makebox(0,0)[]{$\scriptstyle \hat{\mu}_{1,1}$}}
       \put(13,15){\makebox(0,0)[]{$\scriptstyle \hat{\mu}_{1,2}$}} 
       \put(27,15){\makebox(0,0)[]{$\scriptstyle \hat{\mu}_{1,s}$}}
        \put(37,15){\makebox(0,0)[]{$\scriptstyle \hat{\mu}_{2,1}$}}
       \put(45.6,15){\makebox(0,0)[]{$\scriptstyle \hat{\mu}_{2,2}$}} 
       \put(59.5,15){\makebox(0,0)[]{$\scriptstyle \hat{\mu}_{2,s}$}}          
         \put(73.2,15){\makebox(0,0)[]{$\scriptstyle \hat{\mu}_{r,1}$}}
       \put(82,15){\makebox(0,0)[]{$\scriptstyle \hat{\mu}_{r,2}$}} 
       \put(96,15){\makebox(0,0)[]{$\scriptstyle \hat{\mu}_{r,s}$}}   
       %%%%%%
       %
          \put(20,35){\makebox(0,0)[]{$\scriptstyle \cdots$}}
          \put(53,35){\makebox(0,0)[]{$\scriptstyle \cdots$}}
            \put(67,35){\makebox(0,0)[]{$\scriptstyle \cdots$}}
          \put(89,35){\makebox(0,0)[]{$\scriptstyle \cdots$}}
        \put(20,5){\makebox(0,0)[]{$\scriptstyle \cdots$}}
          \put(53,5){\makebox(0,0)[]{$\scriptstyle \cdots$}}
                   \put(67,5){\makebox(0,0)[]{$\scriptstyle \cdots$}}
          \put(89,5){\makebox(0,0)[]{$\scriptstyle \cdots$}} 
        \put(20,15){\makebox(0,0)[]{$\scriptstyle \cdots$}}
          \put(53,15){\makebox(0,0)[]{$\scriptstyle \cdots$}}
                   \put(67,15){\makebox(0,0)[]{$\scriptstyle \cdots$}}
          \put(89,15){\makebox(0,0)[]{$\scriptstyle \cdots$}}
          %%%%
\end{overpic}
}
\in \AffWebAaI(\bi^{\bx}, \bj^{\by}).
\\
\\
\end{align*}
It is straightforward to check that the function \(\mathcal{M}(\bi^{(\bx)}, \bj^{(\by)}) \to \mathcal{M}(\bi^{\bx}, \bj^{\by})\), \(\bmu \to \hat \bmu\) is injective.

\begin{example}\label{cablex}
Assume we have \(i,j \in I\), \(x \in {}_j\BasisB_i\), \(y \in {}_i \BasisB_j\), \(z \in {}_j \BasisB_j\), with \(x \in A_{\bar 0} \backslash a\), \(y \in a\), \(z \in A_{\bar 1}\), and \(i>j>x>y>z\) in the ordering on \(\BasisB\). Assume that \(\bmu\) is the \((i^{(4)}j^{(8)}, j^{(3)}i^{(5)}j^{(4)})\)-matrix with
\begin{align*}
\mu_{11}([1,x])=2; \qquad
\mu_{12}([2,i])=2; \qquad
\mu_{21}([0,j])=1; \qquad
\mu_{22}([0,y])=3;
\end{align*}
\begin{align*}
\mu_{23}([2,z])=1; \qquad
\mu_{23}([1,j])=2; \qquad
\mu_{23}([1,z])=1,
\end{align*}
and \(\mu_{r,s}([t,b])=0\) otherwise. Then we have:

\begin{align*}
\\
\hackcenter{}
\eta_{\bmu} = 
\hackcenter{
\begin{overpic}[height=35mm]{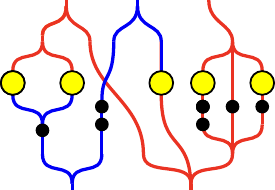}
  \put(26,-1){\makebox(0,0)[t]{$\scriptstyle i^{(4)}$}}    
    \put(69,-1){\makebox(0,0)[t]{$\scriptstyle j^{(8)}$}}    
        \put(24,69.5){\makebox(0,0)[b]{$\scriptstyle j^{(3)}$}}    
    \put(50,69.5){\makebox(0,0)[b]{$\scriptstyle i^{(5)}$}}    
        \put(76,69.5){\makebox(0,0)[b]{$\scriptstyle j^{(4)}$}}    
         \put(4.5,38.5){\makebox(0,0)[]{$\scriptstyle x$}}
          \put(26.5,38.5){\makebox(0,0)[]{$\scriptstyle x$}}       
           \put(58.5,38.5){\makebox(0,0)[]{$\scriptstyle y$}}       
            \put(73.5,38.5){\makebox(0,0)[]{$\scriptstyle z$}}       
             \put(95.5,38.5){\makebox(0,0)[]{$\scriptstyle z$}}     
 \put(10.5,10){\makebox(0,0)[b]{$\scriptstyle i^{(2)}$}}   
  \put(32.5,10){\makebox(0,0)[b]{$\scriptstyle i^{(2)}$}}   
    \put(47.5,10){\makebox(0,0)[b]{$\scriptstyle j^{(1)}$}}
    \put(60.8,10){\makebox(0,0)[b]{$\scriptstyle j^{(3)}$}}      
     \put(88,5){\makebox(0,0)[b]{$\scriptstyle j^{(4)}$}} 
     \put(8,28){\makebox(0,0)[t]{$\scriptstyle i^{(1)}$}}         
        \put(23,28){\makebox(0,0)[t]{$\scriptstyle i^{(1)}$}}
         \put(77,17){\makebox(0,0)[t]{$\scriptstyle j^{(1)}$}}      
         \put(90,26){\makebox(0,0)[t]{$\scriptstyle j^{(\hspace{-0.2mm}2\hspace{-0.2mm})}$}}   
          \put(101.5,30){\makebox(0,0)[t]{$\scriptstyle j^{(1)}$}} 
     \put(8,54){\makebox(0,0)[t]{$\scriptstyle j^{(1)}$}}         
        \put(23,56){\makebox(0,0)[t]{$\scriptstyle j^{(1)}$}}
          \put(17,65){\makebox(0,0)[t]{$\scriptstyle j^{(2)}$}}
         \put(65,50){\makebox(0,0)[]{$\scriptstyle i^{(3)}$}}   
\end{overpic}
}
\qquad
\textup{and}
\qquad
\hackcenter{}
\eta_{\hat{\bmu}}=
\hackcenter{
\begin{overpic}[height=35mm]{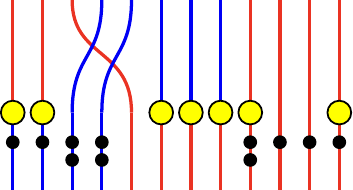}
  \put(3.5,-1){\makebox(0,0)[t]{$\scriptstyle i^{(1)}$}}    
    \put(12,-1){\makebox(0,0)[t]{$\scriptstyle i^{(1)}$}}    
       \put(20,-1){\makebox(0,0)[t]{$\scriptstyle i^{(1)}$}}   
          \put(28.5,-1){\makebox(0,0)[t]{$\scriptstyle i^{(1)}$}}   
        \put(37,-1){\makebox(0,0)[t]{$\scriptstyle j^{(1)}$}}    
    \put(45.5,-1){\makebox(0,0)[t]{$\scriptstyle j^{(1)}$}}    
       \put(54,-1){\makebox(0,0)[t]{$\scriptstyle j^{(1)}$}}   
          \put(62.5,-1){\makebox(0,0)[t]{$\scriptstyle j^{(1)}$}}   
                 \put(71,-1){\makebox(0,0)[t]{$\scriptstyle j^{(1)}$}}   
                        \put(79,-1){\makebox(0,0)[t]{$\scriptstyle j^{(1)}$}}   
                               \put(88,-1){\makebox(0,0)[t]{$\scriptstyle j^{(1)}$}}   
                                \put(96,-1){\makebox(0,0)[t]{$\scriptstyle j^{(1)}$}}   
        \put(3.5,55){\makebox(0,0)[b]{$\scriptstyle j^{(1)}$}}    
    \put(12,55){\makebox(0,0)[b]{$\scriptstyle j^{(1)}$}}    
       \put(20,55){\makebox(0,0)[b]{$\scriptstyle j^{(1)}$}}   
          \put(28.5,55){\makebox(0,0)[b]{$\scriptstyle i^{(1)}$}}   
        \put(37,55){\makebox(0,0)[b]{$\scriptstyle i^{(1)}$}}    
    \put(45.5,55){\makebox(0,0)[b]{$\scriptstyle i^{(1)}$}}    
       \put(54,55){\makebox(0,0)[b]{$\scriptstyle i^{(1)}$}}   
          \put(62.5,55){\makebox(0,0)[b]{$\scriptstyle j^{(1)}$}}   
                 \put(71,55){\makebox(0,0)[b]{$\scriptstyle j^{(1)}$}}   
                        \put(79,55){\makebox(0,0)[b]{$\scriptstyle j^{(1)}$}}   
                               \put(88,55){\makebox(0,0)[b]{$\scriptstyle j^{(1)}$}}   
                               \put(96,55){\makebox(0,0)[b]{$\scriptstyle j^{(1)}$}}   
        \put(3.5,22){\makebox(0,0)[]{$\scriptstyle x$}}    
            \put(12,22){\makebox(0,0)[]{$\scriptstyle x$}}    
             \put(45.5,22){\makebox(0,0)[]{$\scriptstyle y$}}   
              \put(54,22){\makebox(0,0)[]{$\scriptstyle y$}}   
               \put(62.5,22){\makebox(0,0)[]{$\scriptstyle y$}}   
                \put(71,22){\makebox(0,0)[]{$\scriptstyle z$}}   
                \put(96.3,22){\makebox(0,0)[]{$\scriptstyle z$}}   
\end{overpic}
}
\\
\end{align*}
\end{example}

\subsection{A spanning set for \(\AffWebAaI\)}
\begin{lemma}\label{LemAffLower}
Let \(D\in \AffWebAaI(\bi^{(\bx)}, \bj^{(\by)})\) be a diagram of affine degree \(d\). Then \(D \in \AffWebAaI(\bi^{(\bx)}, \bj^{(\by)})_{\leq d}\).
\end{lemma}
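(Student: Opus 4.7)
I would argue by double induction: an outer induction on the affine degree $d$ of the diagram $D$, and an inner induction on the number of generating morphisms appearing in $D$ (say, on the triple (crossings, affine dots, coupons) in some lexicographic order).

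\emph{Base case $d=0$.} When $D$ contains no affine dots, it is the image of a morphism in $\WebAaI$ under the obvious inclusion. The finite web spanning result of \cite{DKMZ} then expresses $D$ as a $\k$-linear combination of undecorated double coset diagrams, each of which is of the form $\eta_\bmu$ with $\mu_{r,s}(t,b)=0$ for all $t>0$. These have affine degree $0$, placing $D$ in $\AffWebAaI(\bi^{(\bx)},\bj^{(\by)})_{\leq 0}$.

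\emph{Inductive step.} For $d>0$, the strategy is to normalize the positions of the affine dots using the new relations: the affine-dot/coupon commutation and affine-dot/merge-split compatibility of \cref{AffDotRel1}, together with the dot-crossing relation \cref{DotCrossRel}. Relation \cref{AffDotRel1} lets an affine dot commute past a coupon (at the cost of a Nakayama twist ${}_1f$) and split or merge at a trivalent vertex without changing affine degree. Relation \cref{DotCrossRel} is the crux: moving an affine dot past a crossing produces a sum $\sum_{t\geq 0}(-1)^t(\cdots)$ in which the $t=0$ term preserves the affine degree but moves the dot across the crossing (decreasing inner complexity), while each $t>0$ term has affine degree $x-t<d$ and hence lies in $\AffWebAaI(\bi^{(\bx)}, \bj^{(\by)})_{\leq d-1}$ by the outer inductive hypothesis. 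After finitely many such moves, $D$ is rewritten as a $\k$-linear combination of diagrams in which every remaining affine dot has been pushed into a position that no longer interacts with crossings.

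\emph{Assembly.} At this point each term is of the form $D' \circ (\text{stacked affine dots and coupons})$ where $D'$ is a morphism in $\WebAaI$. Applying the base case to $D'$ yields a combination of dot-free shapes $\eta_\bnu$; the residual affine dots and coupons attach segment-by-segment to the thick strands of each $\eta_\bnu$ and, after combining them into the teleporter-decorated shorthand of \cref{greenmudef}, produce exactly a decorated double coset diagram $\eta_\bmu$ of affine degree $\leq d$, as required.

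\emph{Expected main obstacle.} The principal difficulty is simultaneous bookkeeping: pushing an affine dot past a crossing via \cref{DotCrossRel} introduces teleporter barbells that alter the web shape of the surrounding diagram, so one must argue that the resulting picture can still be re-assembled into a valid $\eta_\bmu$. This requires verifying that the cumulative coupon/dot data on each thick strand of the final shape really packages into a single restricted $\BasisP$-composition $\mu_{r,s}$ (using the coupon-sum rule \cref{AaRel2} and the cabling identification of $\eta_\bmu$ with $\eta_{\hat\bmu}$), and that the Frobenius great pair hypothesis ensures all coefficients produced by teleporter manipulations are genuine elements of $\k$ so that the entire reduction is valid integrally.
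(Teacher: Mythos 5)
Your argument is correct and matches the paper's proof in essence: both induct on affine degree $d$, both ground the base case $d=0$ in \cite[Proposition 6.5.1]{DKMZ}, and both observe that \cref{DotCrossRel} lets an affine dot be slid past a crossing modulo a $\k$-linear combination of strictly lower affine degree (absorbed by the inductive hypothesis), so that the finite-web normal-form argument of \cite{DKMZ} applies with affine dots treated as one more species of coupon. Your inner induction and your assembly paragraph simply unpack what the paper compresses into the phrase ``follow the proof of \cite[Proposition 6.5.1]{DKMZ}, treating affine dots as coupons''; one cosmetic caveat is that the phrase ``$D'\circ(\text{stacked affine dots and coupons})$'' is slightly misleading, since the residual decorations attach to the individual sub-strands between the splits and merges of the double-coset shape rather than sitting below the whole diagram, but your subsequent sentence makes clear this is what you mean.
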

\begin{proof}
We proceed by induction on \(d\). When \(d=0\), this result follows as in the proof of \cite[Proposition 6.5.1]{DKMZ}. For \(d>0\), note that thanks to \cref{DotCrossRel}, crossings intertwine affine dots modulo \(\k\)-linear combinations of diagrams of lower affine degree. Hence by the induction assumption we may similarly follow the proof of \cite[Proposition 6.5.1]{DKMZ}, treating affine dots as coupons in that setting, arriving at the result. \end{proof}

\begin{corollary}\label{spancor}
The diagrams \(\{ \eta_{\bmu} \mid \bmu \in \mathcal{M}(\bi^{(\bx)}, \bj^{(\by)})\}\) form a \(\k\)-spanning set for \(\AffWebAaI(\bi^{(\bx)}, \bj^{(\by)})\).
\end{corollary}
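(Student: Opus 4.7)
The statement is essentially a direct consequence of \cref{LemAffLower}, so my plan is to unwind the definitions and observe that the lemma already does all the work.

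First, I would recall that by the definition of $\AffWebAaI$ as a strict monoidal $\k$-linear supercategory given by generators and relations, the morphism space $\AffWebAaI(\bi^{(\bx)}, \bj^{(\by)})$ is $\k$-linearly spanned by diagrams obtained by horizontal and vertical composition of the generating morphisms in \cref{WebAaGens}. Each such diagram $D$ contains only finitely many affine dot generators, so it has a well-defined (finite) affine degree $d = d(D) \in \Z_{\geq 0}$.

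Next, I would apply \cref{LemAffLower} to each such diagram $D$: it tells us that $D \in \AffWebAaI(\bi^{(\bx)}, \bj^{(\by)})_{\leq d(D)}$, which by definition is the $\k$-span of diagrams of the form $\eta_{\bmu}$ with $\bmu \in \mathcal{M}(\bi^{(\bx)}, \bj^{(\by)})$ (and whose affine degree is at most $d(D)$, though this extra constraint is not needed for the spanning statement). Taking a $\k$-linear combination of such diagrams $D$, we obtain any morphism of $\AffWebAaI(\bi^{(\bx)}, \bj^{(\by)})$ as a $\k$-linear combination of $\eta_{\bmu}$'s, which proves the corollary.

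Since the substantive content is hidden inside \cref{LemAffLower}, there is no real obstacle in the corollary itself — the role of the corollary is to repackage the inductive spanning statement of the lemma as an unconditional spanning result without the affine-degree filtration, which it does in a single step. The total proof is just a couple of lines; the only care needed is to note that every diagram in the generators-and-relations presentation has a finite affine degree so that \cref{LemAffLower} genuinely applies.
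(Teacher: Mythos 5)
Your proof is correct and matches the paper's intent exactly: the paper states \cref{spancor} without proof, implicitly as the immediate consequence of \cref{LemAffLower} that you spell out. The only step you add beyond the lemma is the observation that morphisms in a generators-and-relations category are $\k$-linear combinations of diagrams, each of finite affine degree, which is precisely what is needed to discharge the filtration parameter and obtain the unconditional spanning claim.
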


\section{The thin dot web category \(\ThinAffWebAaI\)}

\subsection{The category \texorpdfstring{$\ThinAffWebAaI$}{ThinWebAaIthin}}

We next introduce the following simplified version of \(\AffWebAaI\) where the only allowed affine dot generators are those on strands of thickness one.   The defining relations are exactly those of \(\AffWebAaI\) applied to this restricted set of generators.    It will be helpful in certain upcoming arguments and can be viewed as an alternate presentation of $\AffWebAaI$ when $\k$ is a field (see \cref{T:Thin-dot-is-equivalent}).

\begin{definition}
 Let \(\ThinAffWebAaI\) be the strict monoidal \(\k\)-linear supercategory defined as follows.  
The objects and the monoidal structure on objects are the same as in \(\WebAaI\), given by \(\Ob(\ThinAffWebAaI)=\widehat{ \Omega}_I\). 
The generating morphisms of \(\ThinAffWebAaI\) are given by the diagrams:
\begin{align}\label{ThinWebAaGens}
\hackcenter{
{}
}
\hackcenter{
\begin{tikzpicture}[scale=.8]
  \draw[ultra thick,blue] (0,0)--(0,0.2) .. controls ++(0,0.35) and ++(0,-0.35) .. (-0.4,0.9)--(-0.4,1);
  \draw[ultra thick,blue] (0,0)--(0,0.2) .. controls ++(0,0.35) and ++(0,-0.35) .. (0.4,0.9)--(0.4,1);
      \node[above] at (-0.4,1) {$ \scriptstyle i^{\scriptstyle (x)}$};
      \node[above] at (0.4,1) {$ \scriptstyle i^{\scriptstyle (y)}$};
      \node[below] at (0,0) {$ \scriptstyle i^{\scriptstyle (x+y)} $};
\end{tikzpicture}}
\qquad
\qquad
\hackcenter{
\begin{tikzpicture}[scale=.8]
  \draw[ultra thick,blue ] (-0.4,0)--(-0.4,0.1) .. controls ++(0,0.35) and ++(0,-0.35) .. (0,0.8)--(0,1);
\draw[ultra thick, blue] (0.4,0)--(0.4,0.1) .. controls ++(0,0.35) and ++(0,-0.35) .. (0,0.8)--(0,1);
      \node[below] at (-0.4,0) {$ \scriptstyle i^{ \scriptstyle (x)}$};
      \node[below] at (0.4,0) {$ \scriptstyle i^{ \scriptstyle (y)}$};
      \node[above] at (0,1) {$ \scriptstyle i^{ \scriptstyle (x+y)}$};
\end{tikzpicture}}
\qquad
\qquad
\hackcenter{
\begin{tikzpicture}[scale=.8]
  \draw[ultra thick,red] (0.4,0)--(0.4,0.1) .. controls ++(0,0.35) and ++(0,-0.35) .. (-0.4,0.9)--(-0.4,1);
  \draw[ultra thick,blue] (-0.4,0)--(-0.4,0.1) .. controls ++(0,0.35) and ++(0,-0.35) .. (0.4,0.9)--(0.4,1);
      \node[above] at (-0.4,1) {$ \scriptstyle j^{ \scriptstyle (y)}$};
      \node[above] at (0.4,1) {$ \scriptstyle i^{ \scriptstyle (x)}$};
       \node[below] at (-0.4,0) {$ \scriptstyle i^{ \scriptstyle (x)}$};
      \node[below] at (0.4,0) {$ \scriptstyle j^{ \scriptstyle (y)}$};
\end{tikzpicture}}
\qquad
\qquad
\hackcenter{
\begin{tikzpicture}[scale=.8]
  \draw[ultra thick, blue] (0,0)--(0,0.5);
   \draw[ultra thick, red] (0,0.5)--(0,1);
   \draw[thick, fill=yellow]  (0,0.5) circle (7pt);
    \node at (0,0.5) {$ \scriptstyle f$};
     \node[below] at (0,0) {$ \scriptstyle i^{ \scriptstyle (z)}$};
      \node[above] at (0,1) {$ \scriptstyle j^{ \scriptstyle (z)}$};
\end{tikzpicture}}
\qquad
\qquad
\hackcenter{
\begin{tikzpicture}[scale=.8]
  \draw[ultra thick, blue] (0,0)--(0,0.5);
   \draw[ultra thick, blue] (0,0.5)--(0,1);
   \draw[thick, fill=black]  (0,0.5) circle (5pt);
     \node[below] at (0,0) {$ \scriptstyle i^{ \scriptstyle (1)}$};
      \node[above] at (0,1) {$ \scriptstyle \nakayamai ^{ \scriptstyle (1)}$};
\end{tikzpicture}},
\end{align}
where \(i,j \in I\), \(x,y \in \Z_{\geq 0}\), \(z \in \Z_{>0}\), \(f \in jA^{(z)}i\). 
The relations between morphisms in \(\ThinAffWebAaI\) are given as follows. The merge, split, crossings, and coupons satisfy \cref{AssocRel} through \cref{AaIntertwine} as in \(\WebAaI\). The relations involving the affine dot generators are listed below.

{\em Thin affine dot relations.} For all \(i,j \in I, f \in jAi\):
\begin{align}\label{ThinAffDotRel1}
\hackcenter{
\begin{tikzpicture}[scale=.8]
  \draw[ultra thick, blue] (0,0)--(0,0.5);
    \draw[ultra thick, red] (0,0.5)--(0,1.3);
      \draw[ultra thick, red] (0,1.3)--(0,1.8);
     \draw[thick, fill=yellow]  (0,0.5) circle (7pt);
    \node at (0,0.5) {$ \scriptstyle f$};
   \draw[thick, fill=black]  (0,1.3) circle (5pt);
    \node at (0,1.3) {$\scriptstyle h$};
     \node[below] at (0,0) {$ \scriptstyle i^{ \scriptstyle (1)}$};
      \node[above] at (0,1.8) {$ \scriptstyle \nakayamaj^{\scriptstyle (1)}$};
         \node[left] at (-0.1,0.9) {$ \scriptstyle j^{ \scriptstyle (1)}$};
\end{tikzpicture}}
\;=\;
\hackcenter{
\begin{tikzpicture}[scale=.8]
  \draw[ultra thick, blue] (0,0)--(0,0.5);
    \draw[ultra thick, blue] (0,0.5)--(0,1.3);
      \draw[ultra thick, red] (0,1.3)--(0,1.8);
     \draw[thick, fill=black]  (0,0.5) circle (5pt);
    \node at (0,0.5) {$ \scriptstyle f$};
   \draw[thick, fill=yellow]  (0,1.3) circle (7pt);
    \node at (0,1.3) {$\scriptstyle \nakayamaf$};
     \node[below] at (0,0) {$ \scriptstyle i^{ \scriptstyle (1)}$};
      \node[above] at (0,1.8) {$ \scriptstyle \nakayamaj ^{\scriptstyle (1)}$};
         \node[left] at (-0.1,0.9) {$ \scriptstyle \nakayamai ^{ \scriptstyle (1)}$};
\end{tikzpicture}},
\qquad
\qquad
{}
\hackcenter{
\begin{tikzpicture}[scale=0.8]
  \draw[ultra thick,red] (0.4,-0.4)--(0.4,0.1) .. controls ++(0,0.35) and ++(0,-0.35) .. (-0.4,0.9)--(-0.4,1.4);
  \draw[ultra thick,blue] (-0.4,-0.4)--(-0.4,0.1) .. controls ++(0,0.35) and ++(0,-0.35) .. (0.4,0.9)--(0.4,1.4);
      \node[above] at (-0.4,1.4) {$ \scriptstyle j^{ \scriptstyle (1)}$};
      \node[above] at (0.4,1.4) {$ \scriptstyle \nakayamai ^{ \scriptstyle (1)}$};
       \node[below] at (-0.4,-0.4) {$ \scriptstyle i^{ \scriptstyle (1)}$};
      \node[below] at (0.4,-0.4) {$ \scriptstyle j^{ \scriptstyle (1)}$};
                  \draw[thick, fill=black]  (-0.4,0) circle (5pt);
\end{tikzpicture}}
=
\hackcenter{
\begin{tikzpicture}[scale=0.8]
  \draw[ultra thick,red] (0.4,-0.4)--(0.4,0.1) .. controls ++(0,0.35) and ++(0,-0.35) .. (-0.4,0.9)--(-0.4,1.4);
  \draw[ultra thick,blue] (-0.4,-0.4)--(-0.4,0.1) .. controls ++(0,0.35) and ++(0,-0.35) .. (0.4,0.9)--(0.4,1.4);
   \node[above] at (-0.4,1.4) {$ \scriptstyle j^{ \scriptstyle (1)}$};
      \node[above] at (0.4,1.4) {$ \scriptstyle \nakayamai ^{ \scriptstyle (1)}$};
       \node[below] at (-0.4,-0.4) {$ \scriptstyle i^{ \scriptstyle (1)}$};
      \node[below] at (0.4,-0.4) {$ \scriptstyle j^{ \scriptstyle (1)}$};
            \draw[thick, fill=black]  (0.4,1) circle (5pt);
\end{tikzpicture}}
-
\hackcenter{
\begin{tikzpicture}[scale=0.8]
  \draw[ultra thick,blue] (-0.4,-0.4)--(-0.4,0.5);
   \draw[ultra thick,red] (0.4,-0.4)--(0.4,0.5);
  \draw[ultra thick,blue] (0.4,0.5)--(0.4,1.4);
    \draw[ultra thick,red] (-0.4,0.5)--(-0.4,1.4);
   \node[above] at (-0.4,1.4) {$ \scriptstyle j^{ \scriptstyle (1)}$};
      \node[above] at (0.4,1.4) {$ \scriptstyle \nakayamai ^{ \scriptstyle (1)}$};
       \node[below] at (-0.4,-0.4) {$ \scriptstyle i^{ \scriptstyle (1)}$};
      \node[below] at (0.4,-0.4) {$ \scriptstyle j^{ \scriptstyle (1)}$};
        \draw[thick, black, fill=black]  (-0.4,0.5) circle (6pt);
        \draw[thick,black, fill=black]  (0.4,0.5) circle (6pt);
                   \draw[decorate, decoration={snake,segment length=5pt, amplitude=1pt}, line width=1mm, black] (-0.4,0.5)--(0.4,0.5);
      \draw[thick, lime, fill=lime]  (-0.4,0.5) circle (5pt);
        \draw[thick, lime, fill=lime]  (0.4,0.5) circle (5pt);
      \draw[decorate, decoration={snake,segment length=5pt, amplitude=1pt}, line width=0.5mm, lime] (-0.4,0.5)--(0.4,0.5);
      %%%
      %\node[] at (-0.4,0.5) {$ \scriptstyle 1$};
\end{tikzpicture}}.
\end{align}
\end{definition}

It is immediate from the definition that there is a monoidal superfunctor 
\begin{equation}\label{E:ThinAff-into-Aff-functor}
\iota=\iota_{\calA}: \ThinAffWebAaI \to \AffWebAaI
\end{equation}
which is the identity on objects and sends every web diagram in $\ThinAffWebAaI$ to its mate in $\AffWebAaI$.

\subsection{Implied relations in \texorpdfstring{$\ThinAffWebAaI$}{ThinAffWebAaI} Over a Field} Let $\KK$ be a characteristic zero field that contains $\k$.  Throughout this section we work over $\KK$.  Let $\calA  = \left((A,\a )_{I}, \tr , \psi \right)$ be a Frobenius great pair defined over $\KK$.   For example, $\calA$ could have been obtained from a Frobenius great pair defined over $\k$ by extending scalars to $\KK$.  While in general there are no thick affine dots in $\ThinAffWebAaI$, replacement morphisms can be defined over $\KK$ for $i \in I$ and $x \geq 1$ by
\begin{align}\label{whitedotdef}
\hackcenter{
% [inline block 1: 36 envs, 72880 chars -> data_tex | \begin{tikzpicture}[scale=0.8]   \draw[ultra thick, blue] (0,0)--(0,0.9);...]
}.
\end{align*}
This completes the proof.
\end{proof}

\subsection{Equivalence of \texorpdfstring{$\AffWebAaI$}{AffWebAaI} and \texorpdfstring{$\ThinAffWebAaI$}{ThinAffWebAaI} Over a Field}

 If $\KK$ is a field which contains $\k$ and $\calA$ is a Frobenius great pair defined over $\k$, then let $\KK \otimes \calA$ denote the Frobenius great pair obtained from $\calA$ by extending scalars to $\KK$.

\begin{theorem}\label{T:Affine-to-thin-dot-functor}  Let $\calA$ be a Frobenius great pair defined over $\k$.  Then there exists a unique functor of $\k$-linear monoidal categories, 
\[
\kappa = \kappa_{\calA}: \mathbf{Web}^{\textup{aff}}_{\calA} \to  \mathbf{Web}^{\textup{aff},1}_{\KK \otimes \calA},
\] which is the identity on objects and on merges, splits, coupons, and crossings; on affine dots the functor is given by
\[
\kappa\left(
\hackcenter{\begin{tikzpicture}[scale=0.8]
  \draw[ultra thick, blue] (0,0)--(0,0.9);
  \draw[ultra thick, blue] (0,0.9)--(0,1.8);
 \draw[thick, fill=black] (0,0.9) circle (5pt);
     \node[below] at (0,0) {$\scriptstyle i^{\scriptstyle(x)}$};
      \node[above] at (0,1.8) {$\scriptstyle \nakayamai^{\scriptstyle(x)}$};
\end{tikzpicture}} \right)
\; = \; 
\hackcenter{
\begin{tikzpicture}[scale=0.8]
  \draw[ultra thick, blue] (0,0)--(0,0.9);
  \draw[ultra thick, blue] (0,0.9)--(0,1.8);
 \draw[thick, fill=white] (0,0.9) circle (5pt);
     \node[below] at (0,0) {$\scriptstyle i^{\scriptstyle(x)}$};
      \node[above] at (0,1.8) {$\scriptstyle \nakayamai ^{\scriptstyle(x)}$};
\end{tikzpicture}}
\; =\;
\frac{1}{x!}
\hackcenter{
\begin{tikzpicture}[scale=0.8]
  \draw[ultra thick, blue] (0,0)--(0,0.1) .. controls ++(0,0.35) and ++(0,-0.35) .. (-0.6,0.6)--(-0.6,0.9);
    \draw[ultra thick, blue]  (-0.6,0.9)--(-0.6,1.2) 
  .. controls ++(0,0.35) and ++(0,-0.35) .. (0,1.7)--(0,1.8);
  \draw[ultra thick, blue] (0,0)--(0,0.1) .. controls ++(0,0.35) and ++(0,-0.35) .. (0.6,0.6)--(0.6,0.9);
    \draw[ultra thick, blue]  (0.6,0.9)--(0.6,1.2) 
  .. controls ++(0,0.35) and ++(0,-0.35) .. (0,1.7)--(0,1.8);
  .. controls ++(0,0.35) and ++(0,-0.35) .. (0,1.7)--(0,1.8);
   \draw[thick, fill=black] (-0.6,0.9) circle (5pt);
     \draw[thick, fill=black] (0.6,0.9) circle (5pt);
       \node[above] at (0,1.8) {$\scriptstyle \nakayamai ^{\scriptstyle(x)}$};
         \node[below] at (0,0) {$\scriptstyle i^{\scriptstyle(x)}$};
           \node at (-0.6,0.9) {$\scriptstyle f$};
             \node at (0.6,0.9) {$\scriptstyle f$};
             \node[right] at (0.5,0.3) {$\scriptstyle i^{\scriptstyle(1)}$};
               \node[left] at (-0.5,0.3) {$\scriptstyle i^{\scriptstyle(1)}$};
                \node at (0,0.9) {$\scriptstyle \cdots $};
\end{tikzpicture}}.
\]
\end{theorem}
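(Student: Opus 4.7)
The plan is to define $\kappa$ on generating objects and generating morphisms as in the statement, then invoke the universal property of $\AffWebAaI$ as a monoidal $\k$-linear category given by generators and relations. Uniqueness is immediate from the fact that $\AffWebAaI$ is generated (as a monoidal $\k$-linear category) by the morphisms in \cref{WebAaGens}, so any monoidal $\k$-linear functor is determined by its values there. The content is thus to verify well-definedness, i.e.\ that the images of the generators satisfy the defining relations of $\AffWebAaI$.

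The relations \cref{AssocRel}--\cref{AaIntertwine} involve only splits, merges, crossings and coupons, and on these generators $\kappa$ is literally the identity (landing inside the subcategory of $\KThinAffWebAaI$ whose affine dots are all thin). These relations also hold in $\KThinAffWebAaI$ by definition, so they are automatic. The only work is to check the affine dot relations \cref{AffDotRel1} and \cref{DotCrossRel}. For \cref{AffDotRel1}, observe that each of the three equalities involves either a thick affine dot composed with a coupon, or a thick affine dot sitting beneath a split/merge (with thick affine dots on both outgoing strands). Under $\kappa$ the thick affine dots on strands of thickness $x$ are replaced by the white dots of \cref{whitedotdef}. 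The three identities between white dots and coupons/splits/merges that must be verified are exactly the content of the lemma immediately following \cref{whitedotdef}, which we apply directly.

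For the dot-crossing relation \cref{DotCrossRel}, we must verify the same identity with the thick affine dot replaced by the white dot. This is precisely the content of \cref{thintothickdotcrossing}. Because both sides of \cref{DotCrossRel} involve only a single thick affine dot, and on the right-hand side the dot sits on a strand of thickness $x-t$ which can have $x-t=0$ (handled by the convention that zero-thickness strands are deleted), no further case analysis is needed: the equality of images of the two sides under $\kappa$ is exactly the equality established in \cref{thintothickdotcrossing}.

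The main potential obstacle would have been verifying \cref{DotCrossRel}, since the transporter/Casimir shorthand makes it nontrivial to push the white dot across a crossing; but this has been isolated as a standalone lemma (\cref{thintothickdotcrossing}), so once that lemma is in hand the verification here reduces to a direct comparison. With all the defining relations verified, the universal property of $\AffWebAaI$ as a monoidal $\k$-linear supercategory presented by generators and relations produces the desired functor $\kappa$, completing the proof.
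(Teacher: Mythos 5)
Your proof is correct and follows the same route as the paper: define $\kappa$ on generators, note that the non-dotted relations hold automatically, and reduce the affine dot relations to the white-dot lemmas (the unnamed lemma after \cref{whitedotdef} handles \cref{AffDotRel1}, and \cref{thintothickdotcrossing} handles \cref{DotCrossRel}). The paper's own proof is terser—it simply observes that the white-dot assignment is sensible in the $\KK$-linear target and that well-definedness is "immediate from the relations proven in this section"—so your version just makes the bookkeeping explicit.
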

\begin{proof}   Since $\mathbf{Web}^{\textup{aff},1}_{\KK \otimes \calA}$ is defined as a $\KK$-linear monoidal category, the white dots are sensible morphisms in this category and, hence, the given assignment of generating morphisms is allowed.  The claim is proven by verifying that the defining relations of $\AffWebAaI$ are satisfied.  This is immediate from the relations proven in this section.
\end{proof}

\begin{corollary}\label{T:Thin-dot-is-equivalent}  When $\k$ is a field of characteristic zero and $\calA$ is a Frobenius great pair defined over $\k$, then the functor from  \cref{E:ThinAff-into-Aff-functor},
\[
\iota_{\calA}:\ThinAffWebAaI \to \AffWebAaI,
\]  is an isomorphism of $\k$-linear monoidal categories
\end{corollary}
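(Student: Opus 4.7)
Since $\k$ is already a field of characteristic zero, its field of fractions $\KK$ equals $\k$, so applying \cref{T:Affine-to-thin-dot-functor} to $\calA$ yields a $\k$-linear monoidal functor
\[
\kappa = \kappa_{\calA} : \AffWebAaI \to \ThinAffWebAaI
\]
which is the identity on objects and on the merge, split, crossing, and coupon generators, and which sends the affine dot of thickness $x$ on color $i$ to $\tfrac{1}{x!}$ times the ``exploded'' diagram consisting of a complete split from $i^{(x)}$ to $i^{(1)\cdots (1)}$, thin affine dots on each of the resulting $x$ thin strands, followed by a complete merge back to $\nakayamai^{(x)}$. The plan is to show that $\iota$ and $\kappa$ are mutually inverse; since both are strict monoidal and identity on objects, it suffices to check equality on generating morphisms.

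The composition $\kappa \circ \iota$ is immediately the identity on $\ThinAffWebAaI$. Indeed, $\kappa$ and $\iota$ both act as the identity on splits, merges, crossings, and coupons. For the remaining generator, the thin affine dot (thickness $z = 1$), we have $\iota$ sends it to the same diagram in $\AffWebAaI$, and then $\kappa$ applied to a thickness-$1$ affine dot recovers $\tfrac{1}{1!}$ times the diagram with no split/merge and a single thin dot, which is the thin affine dot itself.

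For the composition $\iota \circ \kappa$, the only nontrivial verification is that $\iota(\kappa(d)) = d$ when $d$ is a thick affine dot of thickness $x \geq 1$ on color $i$ in $\AffWebAaI$. By definition,
\[
\iota(\kappa(d)) = \tfrac{1}{x!} \cdot E,
\]
where $E \in \AffWebAaI(i^{(x)}, \nakayamai^{(x)})$ is the split–thin-dots–merge diagram viewed in $\AffWebAaI$. The key step is to show $E = x! \cdot d$ in $\AffWebAaI$. This follows from the merge relation in \cref{AffDotRel1}: pushing the $x$ thin affine dots (all on thickness-$1$ strands $i^{(1)}$) through the terminal complete merge converts them, by iterated application, into a single thick affine dot of thickness $x$ above the merge, so that
\[
E \;=\; d \circ \bigl(\text{complete merge } i^{(1)\cdots (1)} \to i^{(x)}\bigr) \circ \bigl(\text{complete split } i^{(x)} \to i^{(1)\cdots (1)}\bigr).
\]
Iterated application of the knothole relation \cref{KnotholeRel} then shows that the composite merge$\circ$split of an $x$-fold explosion equals $x! \cdot \Id_{i^{(x)}}$. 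Therefore $E = x! \cdot d$, and $\iota(\kappa(d)) = d$ as required.

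Combining the two directions, $\iota$ and $\kappa$ are mutually inverse $\k$-linear monoidal functors, so $\iota_{\calA}$ is an isomorphism. The only substantive computation is the iterated application of \cref{AffDotRel1} together with \cref{KnotholeRel} in the second direction; everything else is formal from the constructions of $\iota$ and $\kappa$.
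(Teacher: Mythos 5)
Your proof is correct and takes the same route as the paper: the paper's proof is the one-line observation that \cref{AffDotRel1,KnotholeRel} show $\iota_{\calA}$ and the $\kappa_{\calA}$ of \cref{T:Affine-to-thin-dot-functor} are mutual inverses, and you have simply unpacked that computation (pushing the $x$ thin affine dots up through the merge via the merge--dot relation in \cref{AffDotRel1}, then using iterated \cref{KnotholeRel} to evaluate $\textup{merge}\circ\textup{split} = x!\,\Id$).
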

\begin{proof} An easy application of \cref{AffDotRel1,KnotholeRel} shows that $\iota_{\calA}$ and the functor $\kappa_{\calA}$ from \cref{T:Affine-to-thin-dot-functor} are mutual inverses.
\end{proof}

\begin{remark}  The functor $\iota_{\calA}$ will always be essentially surjective and faithful.  Essential surjectivity is obvious and faithfulness is an immediate consequence of Corollary~\ref{spancor}.
\end{remark}

\section{The defining representation of \(\WebAaI\)}

In this section we recall some of the main results from \cite{DKMZ} about the defining representation of \(\WebAaI\) on the supercategory of \(\gl_n(A)\)-supermodules. We upgrade this to a defining representation of \(\AffWebAaI\) in \cref{CasSec}.

\subsection{ The supercategory of \texorpdfstring{$\gl_n(A)$}{gln(A)}-supermodules}

For $n \geq 1$, let \(\gl_{n}(A)\) be the free $\k$-module of $n \times n$ matrices with entries in $A$.  For $r,s \in [1,n]$ and $f \in A$, write  \(E_{r,s}^{f}\) for the $n \times n$ matrix with $(r,s)$-entry equal to $f$ and all other entries equal to zero.  The $\Z _{2}$-grading on $\gl_{n}(A)$ is given by declaring $\overline{E_{r,s}^{f}} = \overline{f}$ for all homogeneous $f \in A$.  The set of matrices $\{E_{r,s}^{b} \mid r,s \in [1,n] \text{ and } b \in \BasisB\}$ form a homogeneous $\k$-basis for $\gl_{n}(A)$.

We view $\gl_{n}(A)$ as a Lie superalgebra over $\k$ via the graded commutator bracket,
\begin{align*}
[E_{r,s}^f, E_{r',s'}^{f'}] = \delta_{s, r'} E_{r,s'}^{ff'} - (-1)^{\bar f \bar f'} \delta_{s', r} E_{r',s}^{f'f},
\end{align*}
for all \(r,s, r', s' \in [1,n]\) and homogeneous \(f, f' \in A\).  Write \(U(\gl_{n}(A))\) for the universal enveloping superalgebra of \(\gl_n(A)\).

We will be interested in the category of right $\gl_{n}(A)$-supermodules, $\modgl$.  Our convention is to write $\gl_{n}(A)$-supermodule homomorphisms on the left and to allow for all (not just parity preserving) homomorphisms.  That is, a $\k$-linear map $f : M \to N$ between right $\gl_{n}(A)$-supermodules is a homomorphism if $f(mu) = f(m)u$ for all $m \in M$ and $u \in \gl_n(A)$.  The space of supermodule homomorphisms $\Hom_{\gl_n(A)}(M,N)$ is a $\k$-supermodule where the grading is given by declaring the parity of $f$ to be $s \in \Z_{2}$ if $f(M_{r}) \subset N_{r+s}$ for $r \in \Z_2$.

The coproduct on \(U(\gl_n(A))\) is given by \(\Delta(u) = 1 \otimes u + u \otimes 1\) for $u \in \gl_{n}(A)$.  Explicitly, if $M$ and $N$ are $\gl_{n}(A)$-supermodules, then  $M\otimes N$ is a right $\gl_n(A)$-supermodule via:
\begin{align*}
(m \otimes n) \cdot u = (-1)^{\overline u \,\overline n}mu \otimes n + m \otimes nu
\end{align*} for homogenous $m \in M$, $n \in N$, and $u \in \gl_{n}(A)$. The trivial $\gl_{n}(A)$-supermodule is $\k$ (concentrated in degree $\bar 0$) with $\gl_{n}(A)$ acting as zero.   This data makes $\modgl$ into a monoidal supercategory. The twist map given below makes it a symmetric monoidal supercategory.

\subsection{Symmetric powers}
For $n \geq 1$, let $V=V_{n} := A^{\oplus n}$ denote the set of length $n$ row vectors with entries in $A$.   For $t \in [1,n]$ and $a \in A$, we write $v_t^{a} \in V$ for the row vector which has $a$ in position $t$ and is zero elsewhere.  The set $\{v_t^{b} \mid t \in [1,n], b \in \BasisB\}$ is a homogeneous $\k$-basis for $V$.  Observe that $V_{n}$ is naturally a right $\gl_n(A)$-supermodule with action given by right multiplication:
$$
v_t^{a} \cdot E_{r,s}^f = \delta_{r,t} v_s^{a f},
$$  
for $r,s,t \in [1,n]$ and $a,f \in A$.
For each \(i \in I\) there is a $\gl_{n}(A)$-submodule ${}_i V={}_i V_{n} = (iA)^{\oplus n} \subseteq V_n$ with homogeneous basis $\{v_r^{b} \mid r \in [1,n], b \in  {}_i \BasisB \}$.

Via the coproduct, the tensor powers $({}_i V)^{\otimes x}$ for $x \geq 0$ are $\gl_n(A)$-supermodules, as is the tensor algebra $T^\bullet({}_i V) = \bigoplus_{x \geq  0} ({}_i V)^{\otimes x}$.  By convention, $({}_i V)^{\otimes 0}=\k$ for all $i \in I$.
 
Let $J$ denote the two-sided ideal in $T^\bullet({}_i V)$ generated by
\[
\left\{ v \otimes w - (-1)^{\bar v \bar w} w \otimes v \mid \text{ homogeneous } v,w \in {}_i V   \right\}  \cup \left\{v \otimes v \mid v \in \left({}_i V \right)_{\1}  \right\}.
\]
Let $S^{\bullet}({}_i V)$ denote the symmetric superalgebra $T^\bullet({}_i V) / J$ and let $p:T^\bullet({}_i V) \to S^\bullet({}_i V)$ be the canonical map.  Since $J$ is a homogeneous ideal in the $\Z$-grading on $T^{\bullet}({}_{i}V)$, we have $S^{\bullet}({}_i V) = \oplus_{x \geq 0} S^{x}_i V$, where $S^{x}_i V = S^{x}({}_i V)$ is the image of $({}_i V)^{\otimes x}$ under $p$.   If we write  $v_{i_1}^{\alpha_1}\cdots v_{i_x}^{\alpha_x}$ for $p(v_{i_1}^{\alpha_1} \otimes \cdots \otimes v_{i_x}^{\alpha_x})$, then for each \(x \in \Z_{\geq 0}\) and \(i \in I\), \(S^x_iV\) has a \(\k\)-basis given by the set 
\begin{equation*}
\left\{ (v_{1}^{b_{1}})^{r_{1}}\dotsb (v_{n}^{b_{n}})^{r_{n}} \left| \; b_{1}, \dotsc , b_{n} \in \BasisB, r_{k} \in \Z_{\geq 0}, \sum_{k=1}^{n} r_{k}=x, \text{ and } r_{k} \in \{0,1 \} \text{ whenever } \overline{b}_{k} = \1 \right. \right\}.
\end{equation*}  Note that for all $i \in I$, $S^0_i V \cong  \k$ and $S^1_i V \cong {}_{i}V$.  
 
 Since $J$ is a $\gl_n(A)$-submodule of $T^\bullet({}_i V)$, the symmetric superalgebra $S^{\bullet}({}_i V)$ and the symmetric powers $S^x_i V$ are also $\gl_n(A)$-supermodules with the action given on homogeneous elements by
\begin{align*}
v_{r_1}^{f_1} \cdots v_{r_x}^{f_x} \cdot E_{s,t}^{g}
=
\sum_{u =1}^x 
(-1)^{\bar g ( \bar f_{u+1} + \cdots + \bar f_x)} \delta_{r_u s}v_{r_1}^{f_1} \cdots v_{r_{u-1}}^{f_{u-1}} v_{t}^{f_u g} v_{u+1}^{f_{u+1}} \cdots v_x^{f_x}.
\end{align*}

\subsection{Distinguished morphisms}\label{S:DistinguishedMorphisms}

Having defined the symmetric powers $S_i^x V$ for $ i \in I$ and $x \geq 0$, we next define certain homomorphisms between tensor products of these  $\gl_{n}(A)$-modules.  These are discussed in greater detail in \cite[Section 5]{DKMZ}.

\subsubsection{The twist map} For any two $\gl_n(A)$-supermodules $M$ and $N$, there is an even ``twist'' supermodule homomorphism:
$$
\tau_{M,N} : M \otimes N \to N \otimes M, \qquad m \otimes n \mapsto (-1)^{\bar m \bar n} n \otimes m.
$$
In particular, taking $i,j \in I$ and $x,y \in \Z_{\geq 0}$, we have the special case of $M = S_i^x V$ and $N = S_j^y V$ and the associated $\gl_n(A)$-supermodule homomorphism
$$
\tau_{i^x,j^y} := \tau_{S_i^x V, S_j^y V} : S_i^x V \otimes S_j^y V \to  S_j^y V \otimes S_i^x V.
$$

\subsubsection{The split map}
For any $i \in I$, we have the \emph{split map} \(\textup{spl}_{i^x,i^y}: S^{x+y}_i V \to S^x_i V \otimes S^y_i V\) given by  
\begin{align}\label{ExpSpl}
\textup{spl}_{i^x,i^y}(w_1 w_2 \cdots w_{x+y})= \sum_{\substack{T = \{t_1< \cdots< t_x\} \\ U = \{u_1< \cdots<u_y\} \\ T \cup U = \{1,\ldots, x+y\} }} (-1)^{\varepsilon(T,U)}w_{t_1} \cdots w_{t_x} \otimes w_{u_1} \cdots w_{u_y},
\end{align}
for all homogeneous \(w_1, \ldots, w_{x+y} \in {}_i V\), where \(\varepsilon(T,U) \in \Z_2\) is defined by
\begin{align*}
\varepsilon(T,U)=\#\{(t,u) \in T \times U \mid t>u, \,\overline{w}_t = \overline{w}_u = \bar 1\}.
\end{align*}  The split map is an even supermodule homomorphism.

\subsubsection{The merge map}
Similarly, for $i \in I$ and $x,y \in \Z_{\geq 0}$, define the \emph{merge map}
\(
\textup{mer}_{i^x,i^y} : S^x_iV \otimes S^x_iV \to S^{x+y}_i V
\)
by 
\begin{align}\label{ExpMer}
\textup{mer}_{i^x,i^y}(w_1 \cdots w_x \otimes w_1' \cdots w_y') = w_1 \cdots w_x w_1' \cdots w_y',
\end{align}
for all \(w_1, \ldots, w_x, w_1', \ldots, w_y' \in {}_i V\).  Again, this is an even supermodule homomomorphism.

\subsubsection{The left multiplication map}
Fix \(i,j \in I\) and \(x \in \Z_{\geq 0}\), and suppose that \(f \in jA^{(x)}i\).  Left multiplication by \(f\) defines a $\k$-linear map ${}_i A \to {}_j A$.  With this in mind,  there is a $\gl_n(A)$-supermodule homomorphism with parity $\bar f$,
\(
L_x^f: S^x_iV \to S^x_jV,
\)
given by
\begin{align}\label{ExpL}
L_x^f(v_{r_{1}}^{g_1} \cdots v_{r_{x}}^{g_x}) = v_{r_{1}}^{fg_1} \cdots v_{r_{x}}^{fg_x}
\end{align}
for all $r_{k} \in [1,n]$ and homogenous \(g_{k} \in {}_iV\). Note that when \(x \geq 2\), \(\bar{f} = \bar 0\), so there is no need to worry about signs.

\subsection{The defining representation of \texorpdfstring{$\WebAaI$}{WebAaI}}\label{S:DefiningRepofFiniteWebs}

The following theorem is \cite[Theorem 5.5.1]{DKMZ}.  We call this functor the defining representation of $\WebAaI$.

\begin{theorem} \label{Gthm} 
Fix $n \geq 1$.  Then there is a monoidal superfunctor
\begin{align*}
G_n: \WebAaI \to \modgl
\end{align*}
given on generating objects by \(G_n(i^{(x)}) = S^x_iV \) and on generating morphisms by
\begin{align*}
\hackcenter{
{}
}
\hackcenter{
\begin{tikzpicture}[scale=.8]
  \draw[ultra thick,blue] (0,0)--(0,0.2) .. controls ++(0,0.35) and ++(0,-0.35) .. (-0.4,0.9)--(-0.4,1);
  \draw[ultra thick,blue] (0,0)--(0,0.2) .. controls ++(0,0.35) and ++(0,-0.35) .. (0.4,0.9)--(0.4,1);
      \node[above] at (-0.4,1) {$ \scriptstyle i^{\scriptstyle (x)}$};
      \node[above] at (0.4,1) {$ \scriptstyle i^{\scriptstyle (y)}$};
      \node[below] at (0,0) {$ \scriptstyle i^{\scriptstyle (x+y)} $};
\end{tikzpicture}}
\mapsto
 \textup{spl}_{i^x,i^y}
\qquad
\hackcenter{
\begin{tikzpicture}[scale=.8]
  \draw[ultra thick,blue ] (-0.4,0)--(-0.4,0.1) .. controls ++(0,0.35) and ++(0,-0.35) .. (0,0.8)--(0,1);
\draw[ultra thick, blue] (0.4,0)--(0.4,0.1) .. controls ++(0,0.35) and ++(0,-0.35) .. (0,0.8)--(0,1);
      \node[below] at (-0.4,0) {$ \scriptstyle i^{ \scriptstyle (x)}$};
      \node[below] at (0.4,0) {$ \scriptstyle i^{ \scriptstyle (y)}$};
      \node[above] at (0,1) {$ \scriptstyle i^{ \scriptstyle (x+y)}$};
\end{tikzpicture}}
\mapsto
 \textup{mer}_{i^x,i^y}
\qquad
\hackcenter{
\begin{tikzpicture}[scale=.8]
  \draw[ultra thick,red] (0.4,0)--(0.4,0.1) .. controls ++(0,0.35) and ++(0,-0.35) .. (-0.4,0.9)--(-0.4,1);
  \draw[ultra thick,blue] (-0.4,0)--(-0.4,0.1) .. controls ++(0,0.35) and ++(0,-0.35) .. (0.4,0.9)--(0.4,1);
      \node[above] at (-0.4,1) {$ \scriptstyle j^{ \scriptstyle (y)}$};
      \node[above] at (0.4,1) {$ \scriptstyle i^{ \scriptstyle (x)}$};
       \node[below] at (-0.4,0) {$ \scriptstyle i^{ \scriptstyle (x)}$};
      \node[below] at (0.4,0) {$ \scriptstyle j^{ \scriptstyle (y)}$};
\end{tikzpicture}}
\mapsto
\tau_{i^x,j^y}
\qquad
\hackcenter{
\begin{tikzpicture}[scale=.8]
  \draw[ultra thick, blue] (0,0)--(0,0.5);
   \draw[ultra thick, red] (0,0.5)--(0,1);
   \draw[thick, fill=yellow]  (0,0.5) circle (7pt);
    \node at (0,0.5) {$ \scriptstyle f$};
     \node[below] at (0,0) {$ \scriptstyle i^{ \scriptstyle (x)}$};
      \node[above] at (0,1) {$ \scriptstyle j^{ \scriptstyle (x)}$};
\end{tikzpicture}}
\mapsto L_x^f \; ,
\end{align*}
for all \(x,y \in \Z_{\geq 0}\), \(i,j \in I\), and \(f \in jA^{(x)}i\).
\end{theorem}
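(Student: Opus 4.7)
The plan is to invoke the universal property of $\WebAaI$ as the strict monoidal $\k$-linear supercategory presented by the generators and relations of \cref{defwebaa}. It suffices to (i) extend the object assignment $i^{(x)} \mapsto S^x_i V$ monoidally to all of $\widehat{\Omega}_I$, with the empty word going to the trivial module $\k$; (ii) define $G_n$ on generating morphisms by the formulas listed, and verify that each of $\textup{spl}_{i^x,i^y}$, $\textup{mer}_{i^x,i^y}$, $\tau_{i^x,j^y}$, and $L_x^f$ is an honest $\gl_n(A)$-supermodule homomorphism of the stated parity; and (iii) check that every defining relation \cref{AssocRel}--\cref{AaIntertwine} is respected after applying $G_n$.

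Step (i) is immediate from the monoidal structure on $\modgl$. For step (ii), evenness of split, merge, and twist together with the $\gl_n(A)$-equivariance of $L_x^f$ (of parity $\bar f$) follow from the explicit formulas \cref{ExpSpl}--\cref{ExpL} and the coproduct $\Delta(u) = 1\otimes u + u \otimes 1$. The shuffle sign $\varepsilon(T,U)$ in the definition of $\textup{spl}$ is exactly what is needed to land in the symmetric power and to make the map $\gl_n(A)$-linear.

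For step (iii), I would organize the relations into two groups. The \emph{geometric} relations (web-associativity \cref{AssocRel}, merge-split \cref{MSrel}, knothole \cref{KnotholeRel}, Coxeter \cref{Cox}, split/merge intertwining \cref{SplitIntertwineRel,MergeIntertwineRel}, and odd knothole \cref{OddKnotholeRel}) all reduce to standard identities in the commutative (super) Hopf algebra $S^\bullet({}_iV)$: split is the coproduct and merge is the product, so (co)associativity, the merge-split formula, and the knothole factorial are inherited from this bialgebra structure; the Coxeter and intertwining relations follow because $\tau$ is the symmetric braiding on $\modgl$; and odd knothole encodes $v^2=0$ in $S^2_iV$ for $v \in ({}_iV)_{\bar 1}$. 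The \emph{coupon-flavored} relations \cref{AaRel1,AaRel2,TAaSMRel,AaIntertwine} reduce in turn to: $L_x^{i}=\id$; linearity and multiplicativity of $L_x$ in its label (with the sum rule following from the merge-split relation applied to two left multiplications stacked horizontally); and the commutation of $L_x^f$ with split, merge, and twist, which are evident from \cref{ExpSpl}--\cref{ExpL}.

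The main obstacle will be the scaling identity in the second equation of \cref{AaRel1}, namely $L_x^{\alpha f} = \alpha^x L_x^f$ for thick strands $x \geq 2$: this is why coupons of thickness $\geq 2$ must be labeled by elements of $\a \subseteq A_{\bar 0}$, so that the scalar $\alpha$ factors out evenly once per tensor slot and no sign accrues. A closely related subtlety is the compatibility of $L_x^f$ with the shuffle signs $\varepsilon(T,U)$ in $\textup{spl}$ when verifying \cref{TAaSMRel}; once one checks that applying $f$ coordinatewise commutes with the shuffle (since $\bar f = \bar 0$ when $x \geq 2$), the identity $\textup{mer}_{j^x,j^y} \circ (L_x^f \otimes L_y^f) = L_{x+y}^f \circ \textup{mer}_{i^x,i^y}$ and its split counterpart are straightforward. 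With these scalar and sign issues handled, all remaining relation checks are routine sign bookkeeping, and the universal property produces the desired functor $G_n$.
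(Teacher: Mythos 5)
The paper itself does not prove this statement: it is recalled verbatim from \cite[Theorem 5.5.1]{DKMZ} with no argument given, so there is no internal proof to compare against. Your proposal takes the canonical approach that the cited reference uses: invoke the universal property of the presented monoidal supercategory, check that the images of the generators are well-defined $\gl_n(A)$-supermodule homomorphisms of the correct parities, and verify the defining relations one by one. Your organization into geometric relations (which follow from the super-bialgebra structure on $S^\bullet({}_iV)$ and the symmetric braiding of $\modgl$) and coupon relations (which follow from functoriality of $L_x^{-}$ in its label) is the right decomposition, and the two subtleties you flag are exactly the ones that matter: the scaling identity $L_x^{\alpha f}=\alpha^x L_x^f$ is precisely why thick coupons are confined to $\a\subseteq A_{\bar 0}$, and the sign-compatibility of $L_x^f$ with the shuffle coefficients $\varepsilon(T,U)$ is what makes \cref{TAaSMRel} close. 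One small imprecision worth noting: the odd knothole relation \cref{OddKnotholeRel} is not quite ``$v^2=0$ for odd $v$''; it is a consequence of the fact that left multiplication by an odd $f$ flips parities, so that after splitting, decorating, and remerging, the two shuffle terms acquire opposite signs in $S^2_jV$ and cancel. The conclusion is the same but the mechanism involves the parity flip induced by $f$ rather than nilpotence of odd vectors. With that adjustment your outline is a faithful sketch of the argument.
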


\subsection{The category \texorpdfstring{$\catEnd(\modgl)$}{End(modgln(A))} }

Let $\catEnd(\modgl)$ denote the monoidal supercategory of endofunctors of $\modgl$. That is, the objects are functors $F : \modgl \to \modgl$ and the morphisms are natural transformations between functors.  Our convention is that the monoidal product $\odot$ in $\catEnd(\modgl)$ is the left-to-right composition of functors.  That is, if $F, G : \modgl \to \modgl$ are objects in $\catEnd(\modgl)$, then their monoidal product $F \odot G$ is the endofunctor given by first applying $F$ and then applying $G$.   We refer to \cite[Example 1.5]{BE} for details on these notions in the graded setting, noting their convention is for the monoidal product to be given by the right-to-left composition of functors.
%\todo[inline]{As I mentioned in the previous bit, I think we need to compose our endofunctors left-to-right.  This is because, for example, if $i,j \in I$, then we want $H_n(i \otimes j)$ to be the functor $- \otimes {}_i V \otimes {}_j V$, which is the composition  obtained by first applying $H_n(i)$ and then $H_n(j)$.    -ND}

For each $n\geq 1$ there is a monoidal functor $\Phi : \modgl \to \catEnd(\modgl)$ defined by ``tensoring on the right'' as follows.  For a right $\gl_{n}(A)$-supermoduile $N$, $\Phi(N) = \Phi_N$  is the functor $- \otimes N : \modgl \to \modgl$ On morphisms, a $\gl_n(A)$-module homomorphism $f : N \to N'$ goes to the natural transformation given by $\Phi(f)_M := \id_M \otimes f : M \otimes N \to M \otimes N'$ for every right $\gl_n(A)$-module $M$.   Wanting $\Phi$ to be monoidal explains our convention for composing endofuctors left-to-right; namely, given our choices, we have
$$
(- \otimes M) \odot (- \otimes N) = - \otimes \left(  M \otimes N\right).
$$
Composing the monoidal functor $\Phi$ with the functor $G_n : \WebAaI \to \modgl$ from \cref{Gthm}  yields the following result.
 
  \begin{lemma} \label{L:finitewebfunctor} For every $n \geq 1$ there is a monoidal superfunctor
\begin{align*}
\widehat{G}_n := \Phi \circ G_{n}: \WebAaI \to \catEnd(\modgl)
\end{align*}
given on generating objects by \(\widehat{G}_n\left(i ^{(x)} \right) = - \otimes S^x_iV_n\), and on generating morphisms by
\begin{multicols}{2}
\begin{align*}
\hackcenter{
\begin{tikzpicture}[scale=.8]
  \draw[ultra thick,blue] (0,0)--(0,0.2) .. controls ++(0,0.35) and ++(0,-0.35) .. (-0.4,0.9)--(-0.4,1);
  \draw[ultra thick,blue] (0,0)--(0,0.2) .. controls ++(0,0.35) and ++(0,-0.35) .. (0.4,0.9)--(0.4,1);
      \node[above] at (-0.4,1) {$ \scriptstyle i^{\scriptstyle (x)}$};
      \node[above] at (0.4,1) {$ \scriptstyle i^{\scriptstyle (y)}$};
      \node[below] at (0,0) {$ \scriptstyle i^{\scriptstyle (x+y)} $};
\end{tikzpicture}}
&\mapsto - \otimes
\textup{spl}_{i^x, i^y} \\
\hackcenter{
\begin{tikzpicture}[scale=.8]
  \draw[ultra thick,red] (0.4,0)--(0.4,0.1) .. controls ++(0,0.35) and ++(0,-0.35) .. (-0.4,0.9)--(-0.4,1);
  \draw[ultra thick,blue] (-0.4,0)--(-0.4,0.1) .. controls ++(0,0.35) and ++(0,-0.35) .. (0.4,0.9)--(0.4,1);
      \node[above] at (-0.4,1) {$ \scriptstyle j^{ \scriptstyle (y)}$};
      \node[above] at (0.4,1) {$ \scriptstyle i^{ \scriptstyle (x)}$};
       \node[below] at (-0.4,0) {$ \scriptstyle i^{ \scriptstyle (x)}$};
      \node[below] at (0.4,0) {$ \scriptstyle j^{ \scriptstyle (y)}$};
\end{tikzpicture}}
&\mapsto
- \otimes \tau_{i^x, j^y},
\end{align*}
\columnbreak
\begin{align*}
\hackcenter{
\begin{tikzpicture}[scale=.8]
  \draw[ultra thick,blue ] (-0.4,0)--(-0.4,0.1) .. controls ++(0,0.35) and ++(0,-0.35) .. (0,0.8)--(0,1);
\draw[ultra thick, blue] (0.4,0)--(0.4,0.1) .. controls ++(0,0.35) and ++(0,-0.35) .. (0,0.8)--(0,1);
      \node[below] at (-0.4,0) {$ \scriptstyle i^{ \scriptstyle (x)}$};
      \node[below] at (0.4,0) {$ \scriptstyle i^{ \scriptstyle (y)}$};
      \node[above] at (0,1) {$ \scriptstyle i^{ \scriptstyle (x+y)}$};
\end{tikzpicture}}
&\mapsto
- \otimes \textup{mer}_{i^x,i^y},\\
\hackcenter{
\begin{tikzpicture}[scale=.8]
  \draw[ultra thick, blue] (0,0)--(0,0.5);
   \draw[ultra thick, red] (0,0.5)--(0,1);
   \draw[thick, fill=yellow]  (0,0.5) circle (7pt);
    \node at (0,0.5) {$ \scriptstyle f$};
     \node[below] at (0,0) {$ \scriptstyle i^{ \scriptstyle (x)}$};
      \node[above] at (0,1) {$ \scriptstyle j^{ \scriptstyle (x)}$};
\end{tikzpicture}}
&\mapsto - \otimes L_x^f,
\end{align*}
\end{multicols}
\noindent for all \(x,y \in \Z_{\geq 0}\), \(i,j \in I\), and \(f \in jA^{(x)}i\).
\end{lemma}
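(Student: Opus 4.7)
The plan is essentially to observe that this lemma is a formal consequence of composing two monoidal superfunctors whose existence has already been established: the defining representation $G_n \colon \WebAaI \to \modgl$ from \cref{Gthm}, and the ``tensor on the right'' functor $\Phi \colon \modgl \to \catEnd(\modgl)$ described immediately before the lemma. Since the composition of monoidal superfunctors is again a monoidal superfunctor, the only real content is to verify that $\widehat{G}_n = \Phi \circ G_n$ evaluates on generating objects and generating morphisms as advertised.

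First I would handle the objects. By \cref{Gthm} we have $G_n(i^{(x)}) = S^x_i V_n$, and by definition of $\Phi$ we have $\Phi(N) = -\otimes N$ for any $\gl_n(A)$-supermodule $N$. Composing, $\widehat{G}_n(i^{(x)}) = -\otimes S^x_i V_n$, which is exactly the claimed formula. Since the monoidal structure in $\WebAaI$ is concatenation of colored compositions, $\WebAaI$'s monoidal structure is transported by $G_n$ to tensor product in $\modgl$, and then by $\Phi$ to $\odot$ in $\catEnd(\modgl)$; our left-to-right convention for $\odot$ was chosen precisely so that $(-\otimes M)\odot(-\otimes N) = -\otimes(M\otimes N)$, which guarantees compatibility with the monoidal structures.

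Next I would do the generating morphisms. For any morphism $f \colon X \to Y$ in $\modgl$, $\Phi(f)$ is the natural transformation with components $\id_M \otimes f$. Applying this to the four images prescribed by \cref{Gthm}: the split generator is sent by $G_n$ to $\textup{spl}_{i^x,i^y}$ and then by $\Phi$ to $-\otimes \textup{spl}_{i^x,i^y}$; the merge to $-\otimes \textup{mer}_{i^x,i^y}$; the crossing to $-\otimes \tau_{i^x,j^y}$; and the coupon labeled $f \in jA^{(z)}i$ to $-\otimes L_z^f$. Parities are preserved throughout since $\Phi$ is a superfunctor (it does not alter the parity of a morphism) and the parities of the generators match those recorded in \cref{Gthm}.

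There is no main obstacle: every defining relation of $\WebAaI$ already holds under $G_n$ by \cref{Gthm}, and $\Phi$ being a functor turns those equalities of $\gl_n(A)$-homomorphisms into equalities of natural transformations. Consequently no separate relation check is required. The only potential pitfall worth flagging is the bookkeeping of the monoidal-product convention in $\catEnd(\modgl)$ versus $\modgl$; once this is in place as above, the lemma follows immediately by composing \cref{Gthm} with $\Phi$.
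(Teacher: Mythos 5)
Your proof is correct and is exactly the argument the paper intends: the lemma is stated as the result of composing the monoidal superfunctors $\Phi$ and $G_n$, and the paper itself offers no further justification beyond that observation. Your spelled-out verification of the object and morphism formulas and the remark about the left-to-right $\odot$ convention are consistent with the surrounding text.
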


\section{Casimir natural transformations and the defining representation of \(\AffWebAaI\)}\label{CasSec}

The goal of this section is to construct a representation of the affine web category $\AffWebAaI$ defined over the integral domain $\k$ to the supercategory of endofunctors for the general linear Lie superalgebra. 

\subsection{The thin Casimir natural transformation}  \label{SS:ThinCasimir}
Fix $n \geq 1$.  For $i \in I$,  $\Phi({}_iV) = - \otimes {}_iV$ defines an endofunctor of $\modgl$.  The goal of this section is to define an (even) natural transformation $C^{i} :  - \otimes {}_i V  \to  - \otimes {}_{\nakayamai}V_{n}$ for every $i \in I$ using the Casimir elements from \cref{SS:Casimir-elements-in-A}.  

Let \(M\) be a right \(\gl_n(A)\)-module.  Define a $\k$-linear map $C_{M}: M \otimes V \to M \otimes V$ by
\begin{equation}\label{E:thincasimirformula}
C_M(m \otimes v^\alpha_t) = \sum_{\substack{b \in \BasisB \\ r \in [1,n]}} (-1)^{\bar{\alpha} \bar{b}}mE_{r,t}^{b \alpha} \otimes v_r^{b^{\vee}}
\end{equation}
for homogeneous \(m \in M\) and $\alpha \in A$, and $t \in [1,n]$. While \(\BasisB\) and the sum are potentially infinite, local finiteness ensures all but finitely many terms in the sum are zero.

\begin{remark} In contemplating the above formula, readers may find it helpful to note that, after applying a teleporter-like result as discussed in \cref{transrem} below, the linear map $C_M$ factors as the composition $$C_M = (1 \otimes \psi^{-1}) \circ \Omega : M \otimes V \to M \otimes V,$$
where:
\begin{itemize}
\item the $\k$-linear map $\psi^{-1} : V \to V$ is given by applying the inverse Nakayama automorphism to the entries of the row vectors. That is,  $\psi^{-1}(v_r^\alpha) = v_r^{\psi^{-1}(\alpha)}$, and 
\item the $\k$-linear map $\Omega: M \otimes V \to M \otimes V$ is defined as right multiplication by the element
$$\sum_{b \in \BasisB , r,s \in [1,n]}  E_{r,s}^{b} \otimes E_{s,r}^{\psi(b)^{\vee}},$$
where $\psi(b)^{\vee} \in A$ is the dual to $\psi(b)$ with respect to the basis $\psi(\BasisB) = \{\psi(c) \mid c \in \BasisB\}$.
\end{itemize}
In particular, when $\psi$ is the identity map, the linear map $C_M$ is equal to right multiplication by $\sum_{b \in \BasisB, r,s \in [1,n]} E_{r,s}^{b} \otimes E_{s,r}^{{b}^{\vee}}$.
\end{remark}
\begin{remark}\label{transrem}
In what follows, we will freely (and without further remark) apply teleporter results like Lemma~\ref{translem} more widely and involving other algebraic objects, whenever their use is justified via a map from \(A \otimes A\) into the object. For instance, if \(M, N\) are \(\gl_n(A)\)-modules, with \(f \in {}_j A_ i \), then we have
\begin{align*}
\sum_{b \in {}_i\BasisB_k} (-1)^{\bar b \bar n + \bar f \bar n}mE_{r,s}^{fb} \otimes nE_{t,u}^{b^\vee} = \sum_{b \in {}_i\BasisB_k} (-1)^{\bar b \bar n}mE_{r,s}^{b} \otimes nE_{t,u}^{b^\vee f},
\end{align*}
and
\begin{align*}
\sum_{b \in {}_k\BasisB_{j}} (-1)^{\bar b \bar n + \bar f \bar n}mE_{r,s}^{bf} \otimes nE_{t,u}^{b^\vee} = \sum_{b \in {}_k\BasisB_{j}} (-1)^{\bar b \bar n}mE_{r,s}^{b} \otimes nE_{t,u}^{\psi^{-1}(f) b^\vee }.
\end{align*}
Indeed, note that there is a linear map
\begin{align*}
h: A \otimes A \to M  \otimes N, \qquad g \otimes h \mapsto (-1)^{\bar g \bar n}mE_{r,s}^g \otimes nE_{t,u}^h.
\end{align*}
By \cref{translem} it follows that
\begin{align*}
h\left(  \sum_{b \in {}_i\BasisB_k} fb \otimes b^\vee\right) = h\left( \sum_{b \in {}_i\BasisB_k} b \otimes b^\vee f \right), 
\end{align*}
for all \(k \in I\), which yields the claimed result.
\end{remark}

\begin{lemma}
For all $\gl_{n}(A)$-modules $M$ the map \(C_M\) is a homomorphism of right \(\gl_n(A)\)-modules. 
\end{lemma}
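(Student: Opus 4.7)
The plan is to verify the homomorphism property by checking it against the algebra generators $E_{s,s'}^g \in \gl_n(A)$ for all $s,s' \in [1,n]$ and homogeneous $g \in A$; that is, we aim to show
\[
C_M\bigl((m \otimes v_t^\alpha) \cdot E_{s,s'}^g\bigr) = C_M(m \otimes v_t^\alpha) \cdot E_{s,s'}^g
\]
for homogeneous $m \in M$, $\alpha \in A$, and $t \in [1,n]$. First I would expand the left-hand side via the coproduct on $U(\gl_n(A))$, obtaining one term $(-1)^{\bar g \bar \alpha} m E_{s,s'}^g \otimes v_t^\alpha$ where the action hits $M$, and a second term $\delta_{t,s} m \otimes v_{s'}^{\alpha g}$ where it hits $V$. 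Applying the definition \eqref{E:thincasimirformula} and simplifying via $E_{s,s'}^g E_{r,t}^{b\alpha} = \delta_{s',r} E_{s,t}^{gb\alpha}$ produces
\[
(-1)^{\bar g \bar\alpha}\!\!\sum_{b}(-1)^{\bar\alpha\bar b} m E_{s,t}^{gb\alpha} \otimes v_{s'}^{b^{\vee}} \;+\; \delta_{t,s}\!\!\sum_{b,r}(-1)^{(\bar\alpha+\bar g)\bar b} m E_{r,s'}^{b\alpha g}\otimes v_r^{b^{\vee}}.
\]

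Next I would expand the right-hand side by applying $C_M$ first and then right-multiplying each summand by $E_{s,s'}^g$, again using the coproduct. Since each summand has $V$-factor $v_r^{b^\vee}$ of parity $\bar b$, the resulting two terms are
\[
\delta_{t,s}\!\!\sum_{b,r}(-1)^{(\bar\alpha+\bar g)\bar b} m E_{r,s'}^{b\alpha g}\otimes v_r^{b^{\vee}} \;+\; \sum_{b}(-1)^{\bar\alpha\bar b} m E_{s,t}^{b\alpha}\otimes v_{s'}^{b^{\vee} g}.
\]
The $\delta_{t,s}$ terms match on both sides (this step uses only $\overline{b^{\vee}} = \bar b$), so the problem reduces to establishing the identity
\[
(-1)^{\bar g\bar\alpha}\sum_{b}(-1)^{\bar\alpha\bar b} m E_{s,t}^{gb\alpha}\otimes v_{s'}^{b^{\vee}} \;=\; \sum_{b}(-1)^{\bar\alpha\bar b} m E_{s,t}^{b\alpha}\otimes v_{s'}^{b^{\vee} g}.
\]

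The key step, and the main obstacle, is proving this identity. It is a consequence of the bimodule identity $\sum_b gb \otimes b^\vee = \sum_b b \otimes b^\vee g$ in $A \otimes A$, which follows from \eqref{leftpast} (equivalently, Lemma~\ref{translem} specialised to $f = g$, $g = 1$), expressing that $\Delta_i$ is an $A$-bimodule map. Consider the $\k$-linear map
\[
\Psi: A \otimes A \longrightarrow M \otimes V, \qquad x \otimes y \longmapsto (-1)^{\bar\alpha\bar x} m E_{s,t}^{x\alpha}\otimes v_{s'}^{y};
\]
applying $\Psi$ to the bimodule identity yields exactly the displayed equation, since the parity rule gives $(-1)^{\bar\alpha(\bar g+\bar b)} = (-1)^{\bar g\bar\alpha}(-1)^{\bar\alpha\bar b}$ on the left and $(-1)^{\bar\alpha\bar b}$ on the right. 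The principal difficulty throughout is the careful bookkeeping of the interacting sign contributions — from the supercoproduct on $U(\gl_n(A))$, from the $(-1)^{\bar\alpha\bar b}$ in the definition of $C_M$, and from the basis-independence (really, the Nakayama-twisted bimodule structure) of the Casimir tensor — but once the bilinear $\Psi$ is isolated, the verification is immediate.
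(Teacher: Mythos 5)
Your overall strategy — verify on generators $E^g_{s,s'}$, expand both sides via the coproduct, and reduce to a bimodule identity that you establish via the Casimir/teleporter relation — is exactly the paper's, and your final reduced identity and the argument for it via the map $\Psi$ are correct. However, there is a genuine error in how you arrive at that identity.

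The step ``simplifying via $E_{s,s'}^g E_{r,t}^{b\alpha} = \delta_{s',r} E_{s,t}^{gb\alpha}$'' is not valid. The element $(mE_{s,s'}^g)E_{r,t}^{b\alpha}$ appearing after you apply $C_M$ means $m$ acted on by the product $E_{s,s'}^g E_{r,t}^{b\alpha}$ in the universal enveloping superalgebra $U(\gl_n(A))$, and there is no simplification of this product to a single matrix element: $\gl_n(A)$ is a Lie superalgebra, not an associative algebra, and $M$ is a $U(\gl_n(A))$-module, not an $M_n(A)$-module. The same issue appears in your right-hand side, where you replace $(mE_{r,t}^{b\alpha})E_{s,s'}^g$ by $\delta_{t,s}\,mE_{r,s'}^{b\alpha g}$. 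The correct move (which the paper makes) is to keep these iterated actions symbolic and use the graded commutator
\begin{equation*}
E_{r,t}^{b\alpha}E_{s,s'}^g = (-1)^{(\bar b + \bar\alpha)\bar g}E_{s,s'}^g E_{r,t}^{b\alpha} + \delta_{t,s}E_{r,s'}^{b\alpha g} - (-1)^{(\bar b + \bar\alpha)\bar g}\delta_{s',r}E_{s,t}^{gb\alpha}
\end{equation*}
to rewrite the right-hand side. One of the four resulting terms then cancels the first (unsimplified) term on the left, a second cancels the $\delta_{t,s}$ term on the left, and the remaining two terms form exactly the identity you isolated. Your computation happens to reach the right endpoint because the terms you discarded on the two sides are equal; but you never show this, and a reader following your steps literally would infer the false claim that $M$ carries an associative $M_n(A)$-action. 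Replace the matrix-product ``simplification'' with the commutator expansion and your proof becomes the paper's.
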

\begin{proof}
Given homogeneous $m \in M$, $\alpha, f \in A$, and $u,w \in [1,n]$, one has
\begin{align*}
C_M((m \otimes v_t^\alpha) \cdot E^f_{u,w}) &= C_M((-1)^{\bar \alpha \bar f}mE_{u,w}^f  \otimes v_t^\alpha + \delta_{t,u}m \otimes v^{\alpha f}_w)\\
&=
\sum_{ \substack{ b \in \BasisB \\ r \in [1,n]}} 
(-1)^{\bar \alpha \bar f + \bar \alpha \bar b}
mE^f_{u,w}E^{b\alpha}_{r,t} \otimes v_r^{b^\vee}
+
\sum_{ \substack{ b \in \BasisB \\ r \in [1,n]}} \delta_{t,u}
(-1)^{\bar \alpha \bar b + \bar f \bar b}
 m E^{b\alpha f}_{r,w} \otimes v_r^{b^\vee}.
\end{align*}
On the other hand,
\begin{align*}
C_M(m \otimes v_t^\alpha) \cdot E_{u,w}^f
&=
\left( \sum_{\substack{b \in \BasisB \\ r \in [1,n]}} (-1)^{\bar \alpha \bar b}mE_{r,t}^{b \alpha} \otimes v_r^{b^\vee}\right) \cdot E_{u,w}^f\\
&=\sum_{\substack{b \in \BasisB \\ r \in [1,n]}} (-1)^{\bar \alpha \bar b + \bar b \bar f}mE_{r,t}^{b \alpha}E_{u,w}^f \otimes v_r^{b^\vee}
+
\sum_{\substack{b \in \BasisB \\ r \in [1,n]}} \delta_{r,u} (-1)^{\bar \alpha \bar b}mE_{u,t}^{b \alpha} \otimes v_w^{b^\vee f}\\
&=
\sum_{ \substack{ b \in \BasisB \\ r \in [1,n]}} 
(-1)^{ \bar \alpha \bar f + \bar \alpha \bar b}
mE^f_{u,w}E^{b\alpha}_{r,t} \otimes v_r^{b^\vee}
+
\sum_{ \substack{ b \in \BasisB \\ r \in [1,n]}} \delta_{t,u}
(-1)^{\bar \alpha \bar b + \bar f \bar b}
 m E^{b\alpha f}_{r,w} \otimes v_r^{b^\vee}\\
&\hspace{5mm}+\sum_{\substack{b \in \BasisB \\ r \in [1,n]}} \delta_{r,w} (-1)^{\bar \alpha \bar b   +\bar \alpha \bar f +  \color{black} 1} mE_{u,t}^{fb \alpha} \otimes v_r^{b^\vee} +
\sum_{\substack{b \in \BasisB \\ r \in [1,n]}} \delta_{r,u} (-1)^{\bar \alpha \bar b}mE_{u,t}^{b \alpha} \otimes v_w^{b^\vee f}
\end{align*}
where  the last equality follows from the commutator formula for $\gl_{n}(A)$.  From the above displayed equation, we see that $C_M(m \otimes v_t^\alpha) \cdot E_{u,w}^f$ is the sum of four expressions.  The first two expressions precisely match the calculation of $C_M(m \otimes v_t^\alpha \cdot E_{u,w}^f)$ and, hence, the difference \(C_M(m \otimes v_t^\alpha) \cdot E_{u,w}^f - C_M((m \otimes v_t^\alpha) \cdot E^f_{u,w}) \) equals the sum of the last two expressions.  By removing terms where the Kronecker delta function evaluates to zero this difference reduces to
\begin{equation}\label{leftover1}
\sum_{b \in \BasisB} (-1)^{\bar \alpha \bar b   +\bar \alpha \bar f + 1} mE_{u,t}^{fb \alpha} \otimes v_w^{b^\vee} +
\sum_{b \in \BasisB} (-1)^{\bar \alpha \bar b} mE_{u,t}^{b \alpha} \otimes v_w^{b^\vee f}.
\end{equation}
By a teleporter argument along the lines of \cref{transrem}, we have
\begin{equation*}
\sum_{b \in \BasisB} (-1)^{\bar \alpha \bar b   +\bar \alpha \bar f + 1} mE_{u,t}^{fb \alpha} \otimes v_w^{b^\vee}  = 
\sum_{b \in \BasisB} (-1)^{\bar \alpha \bar b   + 1} mE_{u,t}^{b \alpha} \otimes v_w^{b^\vee f}.
\end{equation*}
Therefore, \cref{leftover1} equals zero and this implies the result.
\end{proof}

\begin{lemma} \label{L:coloredthincasimir} Suppose $M$ is a $\gl_n(A)$-supermodule.  For any homogeneous $f \in A$, let $L_M^{f} : M \otimes V \to M \otimes V$ be the $\gl_n(A)$-supermodule homomorphism given by:
$$
L_M^f(m \otimes v_t^{\alpha} ) = (-1)^{\bar m \bar f} m \otimes v_t^{f \alpha},
$$
for $m \in M$, $1 \leq t \leq n$, and $\alpha \in A$.  Then,
\[
 C_M \circ L_M^f  =  L_M^{{}_1 f} \circ C_m.
\]

In particular, for every $i \in I$ the restriction of $C_M$ defines a $\gl_n(A)$-homomorphism
\[
C_M^i : M \otimes {}_i V \to M \otimes {}_{\nakayamai} V.
\]
\end{lemma}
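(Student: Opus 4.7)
\medskip
\noindent\textit{Proof plan for \cref{L:coloredthincasimir}.}

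The plan is to verify the intertwining identity $C_M \circ L_M^f = L_M^{{}_1 f} \circ C_M$ by direct evaluation on a simple tensor $m \otimes v_t^\alpha$ (homogeneous $m \in M$, $\alpha \in A$, $t \in [1,n]$), reducing the claim to a Casimir translation identity of the form used to prove \cref{translem}. Once this identity is in hand, the colored restriction follows immediately by inspecting which homogeneous terms of the Casimir element can contribute to $C_M(m \otimes v_t^\alpha)$ when $\alpha \in iA$.

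For the intertwining relation, I would expand both sides using the definitions \eqref{E:thincasimirformula} and the formula for $L_M^f$. Computing the left-hand side gives
\begin{equation*}
(C_M \circ L_M^f)(m \otimes v_t^\alpha) = (-1)^{\bar m \bar f}\sum_{b \in \BasisB,\, r \in [1,n]}(-1)^{(\bar f + \bar\alpha)\bar b}\, mE_{r,t}^{bf\alpha} \otimes v_r^{b^\vee},
\end{equation*}
while expanding the right-hand side and tracking how $\psi^{-1}(f)$ must be commuted past the factor $mE_{r,t}^{b\alpha}$ (which has parity $\bar m + \bar b + \bar\alpha$) yields
\begin{equation*}
(L_M^{{}_1 f} \circ C_M)(m \otimes v_t^\alpha) = (-1)^{\bar m \bar f + \bar\alpha \bar f}\sum_{b,r}(-1)^{(\bar f + \bar\alpha)\bar b}\, mE_{r,t}^{b\alpha} \otimes v_r^{\psi^{-1}(f)\,b^\vee}.
\end{equation*}
Cancelling the common sign $(-1)^{\bar m \bar f + (\bar f + \bar\alpha)\bar b}$, the identity reduces to the claim
\begin{equation*}
\sum_{b \in \BasisB} bf\alpha \otimes b^\vee \;=\; (-1)^{\bar f \bar\alpha}\sum_{b \in \BasisB} b\alpha \otimes \psi^{-1}(f)\, b^\vee
\end{equation*}
in $A \otimes A$, applied through the $\k$-linear map $A \otimes A \to M \otimes V$, $\; x \otimes y \mapsto \sum_r m E_{r,t}^{x} \otimes v_r^{y}$. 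This Casimir translation identity is derived by multiplying \cref{transport} (specialised at $f=1$) on the right by $\alpha \otimes 1$, which transports $f$ from the left tensorand into a $\psi^{-1}(f)$ on the right tensorand at the cost of a single sign $(-1)^{\bar f \bar\alpha}$; tracking that sign carefully is the one genuinely fiddly step and is the main obstacle, since the usual teleporter manipulations involve several competing signs from the supercommutativity of $\otimes$ and from the factor $(-1)^{\bar{b^\vee}\bar\alpha}$ that appears when one right-multiplies $\sum_b b \otimes b^\vee$ by $\alpha \otimes 1$.

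For the final assertion, I would argue that if $\alpha \in iA$, then only those $b$ satisfying $b\alpha \neq 0$ contribute to $C_M(m \otimes v_t^\alpha)$, and for such $b$ the dual $b^\vee$ lies in $\psi^{-1}(i)\,A$ by \cref{DelC} (recall $\Delta_i$ takes values in $Ai \otimes \nakayamai A$). Consequently $v_r^{b^\vee} \in {}_{\nakayamai}V$ for every $r$, so that $C_M$ restricts to the claimed supermodule homomorphism $C_M^i \colon M \otimes {}_i V \to M \otimes {}_{\nakayamai}V$, and the $\gl_n(A)$-equivariance is inherited from the $\gl_n(A)$-equivariance of $C_M$ established in the preceding lemma.
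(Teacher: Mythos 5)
Your computations of both sides of $C_M \circ L_M^f$ and $L_M^{{}_1 f} \circ C_M$ are correct and match the paper, and the argument for the colored restriction is sound. The genuine gap is in the reduction to the ``clean'' $A\otimes A$ identity
\[
\sum_b bf\alpha\otimes b^\vee \stackrel{?}{=} (-1)^{\bar f\bar\alpha}\sum_b b\alpha\otimes\psi^{-1}(f)b^\vee.
\]
You arrive at this by ``cancelling the common sign $(-1)^{\bar m\bar f + (\bar f + \bar\alpha)\bar b}$,'' but the factor $(-1)^{(\bar f + \bar\alpha)\bar b}$ depends on the summation index $b$, and a term-by-term cancellation of a $b$-dependent sign from the two sides of a \emph{summed} identity is not a valid operation ($\sum_b S(b)L(b)=\sum_b S(b)R(b)$ does not give $\sum_b L(b)=\sum_b R(b)$). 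In fact the clean identity is simply false: take $A=\Cliff_1$ with basis $\{1,c\}$, $\tr(1)=1$, $\tr(c)=0$, $\psi(c)=-c$, and $f=c$, $\alpha=1$. Then the left side is $c\otimes 1 + 1\otimes c$ while the right side is $-c\otimes 1 - 1\otimes c$.

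The fix is to keep the $b$-dependent signs: multiplying the $f=1$ specialization of \cref{transport} on the right by $\alpha\otimes 1$ and carefully tracking the super-Koszul signs yields the true identity
\[
\sum_b(-1)^{(\bar f + \bar\alpha)\bar b}\,bf\alpha\otimes b^\vee = (-1)^{\bar f\bar\alpha}\sum_b(-1)^{(\bar f + \bar\alpha)\bar b}\,b\alpha\otimes\psi^{-1}(f)b^\vee,
\]
and those internal $(-1)^{(\bar f + \bar\alpha)\bar b}$ factors are exactly the ones that already appear in your computed expressions for both sides of the $M\otimes V$ identity, so the argument closes without any further cancellation being needed. This is, in effect, what the paper's invocation of the transporter trick in \cref{transrem} is doing: that formulation carries the $b$-dependent signs from the outset, so no illegitimate cancellation is ever attempted.
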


\begin{proof}   For homogeneous $m \in M$, $v_t^\alpha \in V$, and $f \in A$, we have:
\begin{align*}
(L^{{}_1 f}_M \circ C_M )(m \otimes v_t^{\alpha}) &=L^{{}_1 f}_M\left( \sum_{\substack{b \in \BasisB \\ r \in [1,n]}} (-1)^{\bar \alpha \bar b}mE_{r,t}^{b \alpha} \otimes v_r^{b^\vee}\right) \\
& = \sum_{\substack{b \in \BasisB \\ r \in [1,n]}} (-1)^{\bar \alpha \bar b + \bar f \bar m + \bar f \, \bar b + \bar \alpha \bar f}mE_{r,t}^{b \alpha} \otimes v_r^{ {{}_1 f} b^\vee} \\
&= \sum_{\substack{b \in \BasisB \\ r \in [1,n]}} (-1)^{\bar \alpha \bar b + \bar f \bar m + \bar f \,  \bar b + \bar \alpha \bar f + \bar \alpha \bar f}mE_{r,t}^{b f \alpha} \otimes v_r^{b^\vee} \\
&= \sum_{\substack{b \in \BasisB \\ r \in [1,n]}} (-1)^{(\bar \alpha + \bar f )  \bar b + \bar f \bar m }mE_{r,t}^{b f \alpha} \otimes v_r^{b^\vee} \\
&= C_M((-1)^{\overline{f}  \bar m} m \otimes v_t^{f \alpha}) \\
&= (C_M \circ L^{f}_M)(m \otimes v_t^\alpha).
\end{align*}
The second equality follows from an application of the transporter trick as described in \cref{transrem}.   This proves the main claim of the lemma.

For the next claim, fix $ i \in I$.  Since the Nakayama automorphism fixes the elements of $I$, the previous calculation shows that
$$
C_{M} \circ L^{i}_{M}  = L^{\nakayamai}_{M} \circ C_{M}.
$$
However, since $M \otimes {}_i V$ is the image of $ L^i_M$ this equality implies the restriction of the supermodule homomorphism $C_M$ defines an endomorphism of $M \otimes {}_i V$.  
\end{proof}

\begin{proposition} Fix $ i \in I$.  As $M$ varies over all $\gl_n(A)$-supermodules, the collection of maps $C^i_M : M \otimes {}_i V \to M \otimes {}_{\nakayamai}V $ defines the data of a natural transformation 
\[
C^i : - \otimes {}_i V \to - \otimes {}_{\nakayamai }V.
\]
\end{proposition}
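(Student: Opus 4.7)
The plan is to verify naturality by a direct computation on elementary tensors, leveraging the explicit formula \cref{E:thincasimirformula} together with the fact that the Casimir map is built purely out of right multiplication by elements of $\gl_n(A)$, an operation which any $\gl_n(A)$-module homomorphism intertwines tautologically.

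Concretely, let $\phi : M \to M'$ be a homomorphism of right $\gl_n(A)$-supermodules. I must show that the square
\[
\begin{array}{ccc}
M \otimes {}_i V & \xrightarrow{\;C^i_M\;} & M \otimes {}_{\nakayamai}V \\
\phi \otimes \id \downarrow & & \downarrow \phi \otimes \id \\
M' \otimes {}_i V & \xrightarrow{\;C^i_{M'}\;} & M' \otimes {}_{\nakayamai}V
\end{array}
\]
commutes. Because $C^i_M$ is the restriction of $C_M$ to $M \otimes {}_iV$ (by \cref{L:coloredthincasimir}), it suffices to verify $(\phi \otimes \id_V) \circ C_M = C_{M'} \circ (\phi \otimes \id_V)$ as maps $M \otimes V \to M' \otimes V$. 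Applying the left-hand side to a homogeneous elementary tensor $m \otimes v_t^\alpha$ and inserting \cref{E:thincasimirformula} gives
\[
(\phi \otimes \id_V)\!\left(\sum_{\substack{b \in \BasisB \\ r \in [1,n]}} (-1)^{\bar\alpha\bar b}\, mE^{b\alpha}_{r,t} \otimes v_r^{b^\vee}\right)
= \sum_{\substack{b \in \BasisB \\ r \in [1,n]}} (-1)^{\bar\alpha\bar b}\, \phi(mE^{b\alpha}_{r,t}) \otimes v_r^{b^\vee},
\]
where the sign from the Koszul rule vanishes since $\id_V$ is even. The convention recorded in \cref{S:DefiningRepofFiniteWebs} that $\gl_n(A)$-supermodule homomorphisms satisfy $\phi(mu) = \phi(m)u$ yields $\phi(mE^{b\alpha}_{r,t}) = \phi(m)E^{b\alpha}_{r,t}$, so this sum equals $C_{M'}(\phi(m) \otimes v_t^\alpha) = (C_{M'} \circ (\phi \otimes \id_V))(m \otimes v_t^\alpha)$, as required.

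There is no real obstacle here: the only subtlety is ensuring the parity bookkeeping in the super tensor product is handled correctly when $\phi$ is odd, but since $\id_V$ is even the Koszul sign is trivial, and the supermodule-homomorphism identity $\phi(mu) = \phi(m)u$ already encodes any signs arising from the action. Local finiteness of $\BasisB$ guarantees the sum in \cref{E:thincasimirformula} is actually finite on each elementary tensor, so there are no convergence issues to address.
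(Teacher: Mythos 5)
Your proof is correct and follows essentially the same route as the paper: both verify naturality by direct computation on homogeneous elementary tensors via the explicit formula for $C_M$, using that $\phi(mE^{b\alpha}_{r,t}) = \phi(m)E^{b\alpha}_{r,t}$ since $\phi$ is a $\gl_n(A)$-homomorphism. Your remark on the trivial Koszul sign (since $\id_V$ is even) is a small extra courtesy but not a new idea.
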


\begin{proof}

Assume that $M,N$ are right $\mathfrak{gl}_n(A)$-supermodules and that $\phi : M \to N$ is a $\gl_n(A)$-homomorphism.  We must verify that the following diagram commutes:
\[
\begin{tikzcd}
M \otimes {}_iV \arrow{r}{C^i_M}  \arrow[swap]{d}{\phi \otimes \Id_{{}_{i}V}} & M \otimes {}_{\nakayamai} V \arrow{d}{\phi \otimes \Id_{{}_{i}V}}  \\ N \otimes {}_i V \arrow{r}{C^i_N} & N \otimes {}_{\nakayamai} V
\end{tikzcd} \; .
\]
Given homogeneous $m \in M$ and  $v^{\alpha}_{t} \in  {}_iV$, 
\begin{align*}
\left( \phi \otimes \Id_{{}_{i}V} \circ C^i_M \right)(m \otimes v^{i \alpha}_t) &= \sum_{\substack{b \in \BasisB \\ r \in [1,n]}} (-1)^{\bar \alpha \bar b} \phi(mE_{r,t}^{b  \alpha})  \otimes v_r^{b^\vee}  \\
&= \sum_{\substack{b \in \BasisB \\ r \in [1,n]}} (-1)^{\bar \alpha \bar b}\phi(m)E_{r,t}^{b  \alpha} \otimes v_r^{b^\vee} \\
&= \left( C^i_N \circ \phi \otimes \Id_{{}_{i}V}\right)(m \otimes v_t^{\alpha}).
\end{align*}
\end{proof}

\subsection{A representation of the thin dot category \texorpdfstring{$\ThinAffWebAaI$}{ThinAffWebAaI}}

The following lemma establishes a representation for the thin dot category $\ThinAffWebAaI$ into the endofunctor category $\catEnd(\modgl)$.  The remainder of this section will be devoted to extending this representation to one of the full affine web category $\AffWebAaI$.

\begin{proposition} \label{L:hatH} For every $n \geq 1$, there is a unique monoidal superfunctor
\[
\widehat{H}_{n} : \ThinAffWebAaI \to \catEnd(\modgl)
\] given on generating objects by \(\widehat{H}_{n}\left( i^{(x)}\right) = - \otimes S^x_iV_n\) and on generating morphisms by
\begin{equation*}
\begin{gathered}
\hackcenter{
\begin{tikzpicture}[scale=.8]
  \draw[ultra thick,blue] (0,0)--(0,0.2) .. controls ++(0,0.35) and ++(0,-0.35) .. (-0.4,0.9)--(-0.4,1);
  \draw[ultra thick,blue] (0,0)--(0,0.2) .. controls ++(0,0.35) and ++(0,-0.35) .. (0.4,0.9)--(0.4,1);
      \node[above] at (-0.4,1) {$ \scriptstyle i^{\scriptstyle (x)}$};
      \node[above] at (0.4,1) {$ \scriptstyle i^{\scriptstyle (y)}$};
      \node[below] at (0,0) {$ \scriptstyle i^{\scriptstyle (x+y)} $};
\end{tikzpicture}}
\mapsto - \otimes
 \textup{spl}_{i^x,i^y},
\qquad
\hackcenter{
\begin{tikzpicture}[scale=.8]
  \draw[ultra thick,blue ] (-0.4,0)--(-0.4,0.1) .. controls ++(0,0.35) and ++(0,-0.35) .. (0,0.8)--(0,1);
\draw[ultra thick, blue] (0.4,0)--(0.4,0.1) .. controls ++(0,0.35) and ++(0,-0.35) .. (0,0.8)--(0,1);
      \node[below] at (-0.4,0) {$ \scriptstyle i^{ \scriptstyle (x)}$};
      \node[below] at (0.4,0) {$ \scriptstyle i^{ \scriptstyle (y)}$};
      \node[above] at (0,1) {$ \scriptstyle i^{ \scriptstyle (x+y)}$};
\end{tikzpicture}}
\mapsto
- \otimes \textup{mer}_{i^x,i^y}
\qquad
\hackcenter{
\begin{tikzpicture}[scale=.8]
  \draw[ultra thick,red] (0.4,0)--(0.4,0.1) .. controls ++(0,0.35) and ++(0,-0.35) .. (-0.4,0.9)--(-0.4,1);
  \draw[ultra thick,blue] (-0.4,0)--(-0.4,0.1) .. controls ++(0,0.35) and ++(0,-0.35) .. (0.4,0.9)--(0.4,1);
      \node[above] at (-0.4,1) {$ \scriptstyle j^{ \scriptstyle (y)}$};
      \node[above] at (0.4,1) {$ \scriptstyle i^{ \scriptstyle (x)}$};
       \node[below] at (-0.4,0) {$ \scriptstyle i^{ \scriptstyle (x)}$};
      \node[below] at (0.4,0) {$ \scriptstyle j^{ \scriptstyle (y)}$};
\end{tikzpicture}}
\mapsto
- \otimes \tau_{i^x, i^y},
\\
\hackcenter{
\begin{tikzpicture}[scale=.8]
  \draw[ultra thick, blue] (0,0)--(0,0.5);
   \draw[ultra thick, red] (0,0.5)--(0,1);
   \draw[thick, fill=yellow]  (0,0.5) circle (7pt);
    \node at (0,0.5) {$ \scriptstyle f$};
     \node[below] at (0,0) {$ \scriptstyle i^{ \scriptstyle (x)}$};
      \node[above] at (0,1) {$ \scriptstyle j^{ \scriptstyle (x)}$};
\end{tikzpicture}}
\mapsto - \otimes L_x^f,
\qquad
\hackcenter{
\begin{tikzpicture}[scale=.8]
  \draw[ultra thick, blue] (0,0)--(0,0.5);
   \draw[ultra thick, blue] (0,0.5)--(0,1);
   \draw[thick, fill=black]  (0,0.5) circle (5pt);
     \node[below] at (0,0) {$ \scriptstyle i^{ \scriptstyle (1)}$};
      \node[above] at (0,1) {$ \scriptstyle \nakayamai^{ \scriptstyle (1)}$};
\end{tikzpicture}}
\mapsto  C^{i},
\end{gathered}
\end{equation*}
for all \(x,y \in \Z_{\geq 0}\), \(i,j \in I\), and \(f \in jA^{(x)}i\).
\end{proposition}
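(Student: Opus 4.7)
The plan is to define the functor on generating objects and morphisms exactly as in the statement, and then to verify that the defining relations of $\ThinAffWebAaI$ are preserved. Since the non-affine generators (splits, merges, crossings, coupons) are assigned in the same way as in the finite-web functor $\widehat{G}_n$ from \cref{L:finitewebfunctor}, the relations \cref{AssocRel}--\cref{AaIntertwine} inherited from $\WebAaI$ are automatically satisfied. The substantive content of the proof is thus the verification of the two thin affine dot relations displayed in \cref{ThinAffDotRel1}.

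For the first (coupon-dot intertwining) relation, I would observe that under $\widehat{H}_n$ it translates to the identity of natural transformations $(- \otimes L_1^{\nakayamaf}) \circ C^i = C^j \circ (- \otimes L_1^f)$ for $f \in jAi$. This is essentially the content of \cref{L:coloredthincasimir}, which gives $L_M^{\nakayamaf} \circ C_M = C_M \circ L_M^f$ on $M \otimes V$; restricting this identity to the colored summands ${}_iV, {}_jV \subseteq V$ yields the desired equality.

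For the second (dot-crossing) relation, the proof will be by direct calculation on a general simple tensor $m \otimes v_t^\alpha \otimes v_u^\beta \in M \otimes {}_iV \otimes {}_jV$. Expanding the LHS by applying the Casimir formula \cref{E:thincasimirformula} on the first strand and then applying the twist produces a sum indexed by $b \in \BasisB_i$. Expanding the first term of the RHS (the twist followed by the Casimir on the right strand) and invoking the super Leibniz rule for the $\gl_n(A)$-action on a tensor product splits the result into two pieces. A sign comparison shows that one piece coincides with the LHS, while the remaining piece takes the form $\sum_b (\pm)\, m \otimes v_t^{\beta b\alpha} \otimes v_u^{b^\vee}$. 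Meanwhile, under $\widehat{H}_n$ the subtracted teleporter term produces an expression of the shape $\sum_b (\pm)\, m \otimes v_t^{b\alpha} \otimes v_u^{b^\vee \beta}$, and the equality of these two expressions is precisely an instance of the super Casimir transport identity of \cref{translem}, which says the Casimir element $\sum_b b \otimes b^\vee \in A \otimes A$ is $A$-bimodule-intertwining up to the Nakayama twist.

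The hard part will be the careful management of Koszul signs, which arise from three distinct sources: the super-horizontal composition in $\catEnd(\modgl)$ (so that $\widehat{H}_n$ is strictly monoidal on generators), the super Leibniz rule for the $\gl_n(A)$-action on a tensor product, and the signed multiplication in the superalgebra $A \otimes A$. Once the signs are all tracked cleanly, the desired identity reduces to an application of \cref{translem} and the functor is shown to be well-defined.
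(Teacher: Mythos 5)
Your proposal is correct and follows essentially the same route as the paper's proof: non-affine relations inherited from $\widehat{G}_n$, the coupon–dot intertwining relation from \cref{L:coloredthincasimir}, and the dot–crossing relation by direct expansion on a simple tensor using the Leibniz rule for the $\gl_n(A)$-coproduct followed by the Casimir transport identity (\cref{translem}/\cref{transrem}). The only minor imprecision is that the initial Casimir expansion is indexed over both $b \in \BasisB$ and the row index $r \in [1,n]$, with the $r$-sum collapsing only after the Leibniz split — but this does not affect the validity of the argument.
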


\begin{proof}
It suffices to check that the images of the generating morphisms of $\ThinAffWebAaI$ satisfy the defining relations, namely the  webs relations given in  \cref{AssocRel} through \cref{AaIntertwine}, and the thin affine dot relations given in \cref{ThinAffDotRel1}.   The fact that the non-affine web relations hold follows immediately from \cref{L:finitewebfunctor}.   That the relation on the left of \cref{ThinAffDotRel1} holds follows from \cref{L:coloredthincasimir}.

It remains to check the relation on the right of \cref{ThinAffDotRel1}.  For a right $\mathfrak{gl}_n(A)$-supermodule, $M$, the supernatural transformation
\[
\widehat{H}_n \left( \hackcenter{
\begin{tikzpicture}[scale=0.8]
  \draw[ultra thick,red] (0.4,-0.4)--(0.4,0.1) .. controls ++(0,0.35) and ++(0,-0.35) .. (-0.4,0.9)--(-0.4,1.4);
  \draw[ultra thick,blue] (-0.4,-0.4)--(-0.4,0.1) .. controls ++(0,0.35) and ++(0,-0.35) .. (0.4,0.9)--(0.4,1.4);
      \node[above] at (-0.4,1.4) {$ \scriptstyle j^{ \scriptstyle (1)}$};
      \node[above] at (0.4,1.4) {$ \scriptstyle \nakayamai^{ \scriptstyle (1)}$};
       \node[below] at (-0.4,-0.4) {$ \scriptstyle i^{ \scriptstyle (1)}$};
      \node[below] at (0.4,-0.4) {$ \scriptstyle j^{ \scriptstyle (1)}$};
                  \draw[thick, fill=black]  (-0.4,0) circle (5pt);
\end{tikzpicture}}
\right)
\]
assigns the supermodule homomorphism
\[
\id_M \otimes \tau_{{}_{\nakayamai}V, {}_{j}V} \circ C^i \otimes \id_{{}_j V} : M \otimes {}_i V \otimes {}_j V \to M \otimes {}_{j} V \otimes {}_{\nakayamai} V
\]
For $s,t \in [1,n]$ and homogeneous $m \in M$, $\alpha \in iA$, and $\beta \in jA$ applying this homomorphism yields
\begin{align*}
\left( \id_M  \otimes \tau_{{}_{\nakayamai}V, {}_{j}V} \circ C^i \otimes \id_{{}_j V}\right)\left(  m \otimes v_s^\alpha \otimes v_t^\beta\right)
&= \id_M  \otimes \tau_{{}_{\nakayamai}V, {}_{j}V}  \left( \sum_{\substack{b \in \BasisB \\ r \in [1,n]}} (-1)^{\bar \alpha \bar b}mE_{r,s}^{b \alpha} \otimes v_r^{b^\vee} \otimes v_t^{\beta} \right)\\
&=  \sum_{\substack{b \in \BasisB \\ r \in [1,n]}} (-1)^{\bar \alpha \bar b + \bar b \bar \beta}mE_{r,s}^{b \alpha} \otimes  v_t^{\beta} \otimes v_r^{b^\vee}.
\end{align*}

On the other hand, the supernatural transformation
\[
\widehat{H}_n \left(
\hackcenter{
\begin{tikzpicture}[scale=0.8]
  \draw[ultra thick,red] (0.4,-0.4)--(0.4,0.1) .. controls ++(0,0.35) and ++(0,-0.35) .. (-0.4,0.9)--(-0.4,1.4);
  \draw[ultra thick,blue] (-0.4,-0.4)--(-0.4,0.1) .. controls ++(0,0.35) and ++(0,-0.35) .. (0.4,0.9)--(0.4,1.4);
   \node[above] at (-0.4,1.4) {$ \scriptstyle j^{ \scriptstyle (1)}$};
      \node[above] at (0.4,1.4) {$ \scriptstyle \nakayamai^{ \scriptstyle (1)}$};
       \node[below] at (-0.4,-0.4) {$ \scriptstyle i^{ \scriptstyle (1)}$};
      \node[below] at (0.4,-0.4) {$ \scriptstyle j^{ \scriptstyle (1)}$};
            \draw[thick, fill=black]  (0.4,1) circle (5pt);
\end{tikzpicture}} \right)
\]
assigns to $M$ the supermodule homomorphism
\(
C^{i}_{M \otimes \, {}_j V} \circ \Id_{M} \otimes  \tau_{{}_{\nakayamai}V, {}_{j}V} : M \otimes {}_i V \otimes {}_j V \to M \otimes {}_{j} V \otimes {}_{\nakayamai} V.
\)
Computing this on the same input yields:
\begin{align*}
C^{i}_{M \otimes {}_j V} \circ \tau_{i^1,j^1} \left( m \otimes v_s^\alpha \otimes v_t^\beta\right) &= (-1)^{\bar \alpha \bar \beta} C^i_{M \otimes {}_j V} \left( m \otimes v_t^\beta \otimes v_s^{\alpha}\right) \\
&=  \sum_{\substack{b \in B \\ r \in [1,n]}} (-1)^{\bar \alpha \bar b + \bar \alpha \bar \beta}(m \otimes v_t^{\beta}) E_{rs}^{b \alpha} \otimes v_r^{b ^\vee} \\
&= \sum_{\substack{b \in B \\ r \in [1,n]}} (-1)^{\bar \alpha \bar b + \bar b \bar \beta} mE_{rs}^{b \alpha}  \otimes v_t^{\beta} \otimes v_r^{b ^\vee}
 +  \sum_{\substack{b \in B \\ r \in [1,n]}} (-1)^{\bar \alpha \bar b  + \bar \alpha \bar \beta}m \otimes v_t^{\beta} E_{rs}^{b \alpha} \otimes v_r^{b ^\vee}
\end{align*}
Taking the difference yields:
\begin{align*}
- \sum_{\substack{b \in B \\ r \in [1,n]}} (-1)^{\bar \alpha \bar b  + \bar \alpha \bar \beta}m \otimes v_t^{\beta} E_{rs}^{b \alpha} \otimes v_r^{b ^\vee} 
&= - \sum_{b \in B } (-1)^{\bar \alpha \bar b  + \bar \alpha \bar \beta}m \otimes v_s^{\beta b \alpha} \otimes v_t^{b ^\vee} \\
&= - \sum_{b \in B } (-1)^{\bar \alpha \bar b }m \otimes v_s^{ b \alpha} \otimes v_t^{b^\vee \beta}.
\end{align*}
Another direct calculation shows this equals the result of evaluating the homomorphism assigned to $M$ by the supernatural transformation
\[
\widehat{H}_n \left(
-
\hackcenter{
\begin{tikzpicture}[scale=0.8]
  \draw[ultra thick,blue] (-0.4,-0.4)--(-0.4,0.5);
   \draw[ultra thick,red] (0.4,-0.4)--(0.4,0.5);
  \draw[ultra thick,blue] (0.4,0.5)--(0.4,1.4);
    \draw[ultra thick,red] (-0.4,0.5)--(-0.4,1.4);
   \node[above] at (-0.4,1.4) {$ \scriptstyle j^{ \scriptstyle (1)}$};
      \node[above] at (0.4,1.4) {$ \scriptstyle \nakayamai^{ \scriptstyle (1)}$};
       \node[below] at (-0.4,-0.4) {$ \scriptstyle i^{ \scriptstyle (1)}$};
      \node[below] at (0.4,-0.4) {$ \scriptstyle j^{ \scriptstyle (1)}$};
        \draw[thick, black, fill=black]  (-0.4,0.5) circle (6pt);
        \draw[thick,black, fill=black]  (0.4,0.5) circle (6pt);
                   \draw[decorate, decoration={snake,segment length=5pt, amplitude=1pt}, line width=1mm, black] (-0.4,0.5)--(0.4,0.5);
      \draw[thick, lime, fill=lime]  (-0.4,0.5) circle (5pt);
        \draw[thick, lime, fill=lime]  (0.4,0.5) circle (5pt);
      \draw[decorate, decoration={snake,segment length=5pt, amplitude=1pt}, line width=0.5mm, lime] (-0.4,0.5)--(0.4,0.5);
      %%%
      %\node[] at (-0.4,0.5) {$ \scriptstyle 1$};
\end{tikzpicture}} \right)
\]
on $m \otimes v_s^\alpha \otimes v_t^{\beta}$.   This verifies the relation, and the lemma follows.
 \end{proof}

\subsection{Extension of scalars and torsion}\label{SS:Extension-of-scalars}
Let $\KK$ be the field of fractions for $\k$.  For a $\k$-linear supercategory $\catC$ write ${}_{\mathbb{K}} \catC$ for the $\mathbb{K}$-linear category which has the same objects as $\catC$ and whose morphisms  are given by
\[
\Hom_{{}_\mathbb{K} \catC}(X,Y) := \KK \otimes_{\k} \Hom_{\catC}(X,Y).
\]
Composition of morphisms in $\catC$ is extended linearly to define composition in ${}_\mathbb{K} \catC$.  It is straightforward to verify that ${}_\mathbb{K} \catC$ is a $\mathbb{K}$-linear supercategory. 
There is a canonical functor 
\begin{equation}\label{E:Extension-of-scalars-functor}
\xi = \xi_{\catC}: \catC \to {}_{\mathbb{K}} \catC
\end{equation}
which is the identity on objects and  $ \alpha \mapsto \Id_{\KK} \otimes \alpha$ on morphisms. If $\catD$ is a $\KK$-linear category, then every functor of $\k$-linear categories $\catC \to \catD$ extends uniquely to a functor of $\KK$-linear categories ${}_{\mathbb{K}} \catC \to \catD$.  Similarly, a functor of $\k$-linear categories $F: \catC \to \catD$ may be extended linearly to a $\KK$-linear functor ${}_{\KK}F:{}_{\KK}\catC \to {}_{\KK}\catD$.

\begin{remark}  Given a Frobenius great pair $\calA$ defined over $\k$, write $\KK \otimes \mathcal{A}$ for the Frobenius great pair defined over $\KK$ by extending scalars. There are obvious functors between ${}_{\KK}\ThinAffWebAaI$ and  $\mathbf{Web}^{\textup{aff},1}_{\KK \otimes \calA}$, and between ${}_{\KK}\AffWebAaI$ and $\mathbf{Web}^{\textup{aff}}_{\KK \otimes \calA}$. Each pair of functors define mutually inverse isomorphisms of monoidal $\KK$-linear categories.  We identify these categories via these functors.
\end{remark}

Recall that if $M$ is a $\k$-module, then $m \in M$ is a \emph{torsion element} if there is some non-zero $c \in \k$ with $c m = 0$.  The set of torsion elements of $M$ is the \emph{torsion submodule} of $M$, denoted by $T(M)$.  One says that $M$ is \emph{torsion-free} if $T(M) = 0$.  For example, every free $\k$-module is torsion-free.

\begin{lemma} \label{L:homintofree} Suppose $M$ and $F$ are $\k$-supermodules and assume $F$ is torsion-free.  Then, the $\k$-supermodule of $\k$-linear maps
$
\Hom_{\k}(M,F)
$
is torsion-free.
\end{lemma}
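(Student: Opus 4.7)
The plan is to argue directly from the definition of torsion. Suppose $\phi \in \Hom_{\k}(M,F)$ is a torsion element, so there exists some nonzero $c \in \k$ with $c \phi = 0$ in $\Hom_{\k}(M,F)$. Then for every $m \in M$, we have $(c\phi)(m) = c \cdot \phi(m) = 0$ in $F$. Since $c \neq 0$ and $F$ is torsion-free by hypothesis, this forces $\phi(m) = 0$ for every $m \in M$, so $\phi = 0$. Hence $T(\Hom_{\k}(M,F)) = 0$, as claimed.

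The only subtlety worth flagging is that this argument implicitly uses that $\k$ is a domain, so that $c \phi(m) = 0$ with $c \neq 0$ really does entail $\phi(m) = 0$ via the torsion-free hypothesis on $F$; this is ensured by the paper's standing assumption that $\k$ is a characteristic zero domain. There is no real obstacle: the lemma is essentially a one-line check that a pointwise-torsion-free target makes the Hom module torsion-free, and the $\Z_2$-grading plays no role beyond the observation that if $\phi$ is killed by $c$ then so are its homogeneous components.
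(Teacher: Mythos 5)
Your proof is correct and is exactly the standard pointwise argument; the paper in fact states this lemma without proof, treating it as immediate, so your reasoning matches the intended (implicit) one.
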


The following lemma is immediate from \cite[Exercise 3.12.iv]{AtiyahMacdonald}.

\begin{lemma} \label{L:torsion} For objects $X$ and $Y$ in a supercategory $\catC$, let 
\[
\xi_{X,Y} : \Hom_{\catC}(X,Y) \to \mathbb{K} \otimes \Hom_{\mathcal C}(X,Y).
\] denote the $\k$-linear map given by the canonical functor $\xi : \mathcal C \to {}_{\mathbb{K}} \mathcal{C}$.
Then the kernel of $\xi_{X,Y}$ is the torsion submodule $T( \Hom_{\catC}(X,Y))$.
%In particular, the canonical functor is faithful if and only if every morphism space in $\catC$ is torsion-free.
\end{lemma}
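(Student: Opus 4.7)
\medskip

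\noindent\textbf{Proof plan.} The plan is to reduce this to the standard fact about localization of modules over an integral domain, namely that the kernel of the canonical map $M \to S^{-1}M$ is precisely the $S$-torsion of $M$ when $S = \k\setminus\{0\}$.

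First I would set $M := \Hom_{\catC}(X,Y)$, which is a $\k$-supermodule, and identify $\KK \otimes_{\k} M$ with the localization $S^{-1}M$ in the usual way, under which the map $\xi_{X,Y}$ corresponds to $m \mapsto m/1$. With this identification in hand, the containment $T(M) \subseteq \ker \xi_{X,Y}$ is immediate: if $c \in \k \setminus \{0\}$ satisfies $c\alpha = 0$, then
\[
\xi_{X,Y}(\alpha) \;=\; 1 \otimes \alpha \;=\; c^{-1} \otimes c\alpha \;=\; 0.
\]

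For the reverse containment, suppose $\alpha \in \ker \xi_{X,Y}$, so that $1 \otimes \alpha = 0$ in $\KK \otimes_{\k} M$. Using the localization description, there must exist some $c \in \k \setminus \{0\}$ with $c\alpha = 0$ in $M$, so that $\alpha \in T(M)$. This is precisely the content of \cite[Exercise 3.12.iv]{AtiyahMacdonald}, which the statement cites directly, so I would simply invoke that result rather than reprove it. No step here is substantive; the only thing worth emphasizing is that $\k$ being an integral domain (so that $\KK$ is its field of fractions and $S = \k\setminus\{0\}$ is a multiplicative set) is what makes the identification $\KK \otimes_\k M \cong S^{-1}M$ valid, and what guarantees the torsion characterization of the kernel. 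Given that the problem is entirely standard commutative algebra applied to a single $\k$-module, there is no real obstacle to the proof.
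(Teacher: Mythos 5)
Your proof is correct and takes exactly the approach the paper does: the paper offers no independent argument, stating only that the lemma is ``immediate from \cite[Exercise 3.12.iv]{AtiyahMacdonald},'' which is precisely the identification $\KK \otimes_\k M \cong S^{-1}M$ and the torsion characterization of the kernel that you spell out. Your write-up is a slightly more explicit unpacking of the same citation, and both directions of your containment argument are sound.
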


%%%Looks like this lemma isn't used anymore.  
%\color{black}
\begin{lemma} \label{L:torsionfreenattransforms} Suppose $\Psi$ and $\Phi$ are functors in $\catEnd(\modgl)$ where $\Psi$ is exact and $\Phi$ preserves the subcategory of $\gl_n(A)$-supermodules which are free over $\k$.  Then, the space of natural transformations $\Hom_{\catEnd(\modgl)}(\Psi,\Phi)$ is torsion-free as a $\k$-module.
\end{lemma}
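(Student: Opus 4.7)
The plan is to show that any natural transformation $\eta \colon \Psi \to \Phi$ annihilated by a nonzero $c \in \k$ must be zero, by reducing to components $\eta_F$ indexed by a sufficiently large collection of $\gl_n(A)$-supermodules $F$ that are free as $\k$-supermodules.

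First I would observe that every right $\gl_n(A)$-supermodule $M$ admits a surjection $\pi \colon F \twoheadrightarrow M$ in $\modgl$ with $F$ free as a $\k$-supermodule: choosing any free $\k$-supermodule $F_0$ surjecting onto $M$ as a $\k$-supermodule and setting $F := F_0 \otimes_{\k} U(\gl_n(A))$ with right $\gl_n(A)$-action on the second factor, the PBW theorem for Lie superalgebras makes $U(\gl_n(A))$ free over $\k$, so $F$ is free over $\k$, and the map $f \otimes u \mapsto \pi_0(f) \cdot u$ is the desired surjection.

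Next, fix such an $F$. By hypothesis $\Phi(F)$ is free (hence torsion-free) over $\k$, so by Lemma~\ref{L:homintofree} the $\k$-supermodule $\Hom_{\k}(\Psi(F), \Phi(F))$ is torsion-free. Since $\Hom_{\gl_n(A)}(\Psi(F), \Phi(F))$ is a $\k$-submodule of $\Hom_{\k}(\Psi(F), \Phi(F))$, it is torsion-free as well. The assumption $c\eta = 0$ gives $c \eta_F = 0$, forcing $\eta_F = 0$.

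Finally, for an arbitrary $M$, pick $\pi \colon F \twoheadrightarrow M$ as above. Since $\Psi$ is exact, $\Psi(\pi)$ is surjective. Naturality of $\eta$ gives the commutative square
\begin{equation*}
\begin{tikzcd}
\Psi(F) \arrow[r,"\Psi(\pi)"] \arrow[d,"\eta_F"'] & \Psi(M) \arrow[d,"\eta_M"] \\
\Phi(F) \arrow[r,"\Phi(\pi)"'] & \Phi(M)
\end{tikzcd}
\end{equation*}
and since $\eta_F = 0$ and $\Psi(\pi)$ is surjective, we conclude $\eta_M = 0$. As $M$ was arbitrary, $\eta = 0$, establishing that $\Hom_{\catEnd(\modgl)}(\Psi,\Phi)$ is torsion-free. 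No step in this plan presents a serious obstacle; the only subtlety is confirming that the construction of $F$ genuinely produces a $\k$-free object in $\modgl$, which PBW handles cleanly.
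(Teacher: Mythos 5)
Your proof is correct and takes essentially the same route as the paper: both reduce to a surjection $F \twoheadrightarrow M$ from a $\gl_n(A)$-supermodule $F$ that is $\k$-free (produced via PBW), invoke the preservation hypothesis on $\Phi$ together with Lemma~\ref{L:homintofree} to see that $\Hom_{\k}(\Psi(F),\Phi(F))$ is torsion-free, and use exactness of $\Psi$ plus naturality to pass between $\eta_F$ and $\eta_M$. The only differences are presentational: the paper argues in the direction "$\eta \neq 0$ implies $c\eta \neq 0$" by picking a single $M$ with $\eta_M \neq 0$ and showing $c\eta_F \neq 0$, whereas you argue the contrapositive "$c\eta = 0$ implies $\eta = 0$" uniformly over all $M$; and you give a more explicit construction of $F$ as $F_0 \otimes_{\k} U(\gl_n(A))$ where the paper simply cites a free cover as a $U(\gl_n(A))$-module. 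Your added remark that $\Hom_{\gl_n(A)}(\Psi(F),\Phi(F))$ is a $\k$-submodule of $\Hom_{\k}(\Psi(F),\Phi(F))$ cleanly justifies why it inherits torsion-freeness, which the paper leaves slightly more implicit. No gaps.
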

%\todo[inline]{Since we compose endofunctors right to left, should we write the argument on the left?  I think it causes no confusion go do it the way I did below, though.  -ND  It seems fine to me. --JK}
\begin{proof} Suppose that $\eta : \Psi \to \Phi$ is a non-zero natural transformation, and that $c \in \k$ is non-zero.  We need to verify that the natural transformation $c \eta$ is non-zero.  

Because $\eta$ is non-zero by assumption, there exists some $\gl_n(A)$-supermodule $M$ for which the associated homomorphism $\eta_M:\Psi(M) \to \Phi (M)$ is not the zero map.  Fix a $\gl_n(A)$-supermodule $F$ which is free over $\k$ and admits an exact sequence:
\[
\begin{tikzcd}
F \arrow{r} & M \arrow{r} & 0.
\end{tikzcd}
\]  That such an $F$ exists follows by taking $F$ to be a free cover of $M$ as a right $U(\gl_n(A))$-supermodule. The PBW theorem guarantees that such an $F$ is free over $\k$.  

Given the assumptions, there is the following commutative diagram with an exact top row:
\[
\begin{tikzcd} 
\Psi(F) \arrow{r} \arrow{d}{\eta_F} & \Psi(M) \arrow{d}{\eta_M} \arrow{r} & 0 \\
\Phi(F) \arrow{r} & \Phi(M).
\end{tikzcd}
\]
This diagram and the fact that $\eta_M$ is non-zero immediately implies $\eta_F$ is also non-zero.

The $\gl_n(A)$-homomorphism $\eta_F$ is $\k$-linear and may be regarded as an element of $\Hom_{\k}(\Psi(F), \Phi(F))$.  However, $F$ is a free $\k$-module which implies $\Phi(F)$ is also $\k$-free and, hence, torsion-free. This, in turn, implies $\Hom_{\k}(\Psi(F), \Phi(F))$ is torsion-free by \cref{L:homintofree}.  Since $\eta_{F}$ is non-zero, this implies $c \eta_F$ is also nonzero and, hence, the natural transformation $c\eta$ is nonzero.\end{proof}
\color{black}

\subsection{The thick Casimir}  The goal of this section is to define a Casimir element for the thick affine dots.

Given a finite subset of positive integers, \(S \subset \Z_{\geq 1}\), and \(t  \leq |S|\), we call an ordered sequence \(\bu = u_1 \cdots u_t\) a \emph{\(t\)-permutation of \(S\)} (or simply \emph{permutation of $S$} if the $t$ is left implicit) if \(u_1, \ldots, u_t\) are distinct elements of \(S\). We write \(\ell(\bu):= t\) for the length of \(\bu\). We call a permutation \(\bu\) \emph{least-last} provided that \(u_j > u_{\ell(\bu)}\) for \(j \in [1, \ell(\bu)]\).
An \emph{ordered \(t\)-partition of \(S\)} is a sequence \(\bu^\bullet = \bu^{(1)} | \bu^{(2)} | \cdots | \bu^{(d)}\) of permutations \(\bu^{(1)}, \ldots, \bu^{(d)}\) of \(S\) such that \(\bu=\bu^{(1)}  \bu^{(2)}  \cdots  \bu^{(d)}\) is a \(t\)-permutation of \(S\). In other words, an ordered \(t\)-partition of \(S\) is a \(t\)-permutation of \(S\) equipped with a choice of grouping. We write \(\ell(\bu^{\bullet}) :=d\) for the number of permutations in \(\bu^\bullet\). Write \(\ord_t(S)\) for the set of ordered \(t\)-partitions of \(S\).  For short we write $\ord_{k}$ for $\ord_{k}([1,k])$ for all $k \geq 1$.

For \(\bu^\bullet \in \ord_t(S)\), we say that \(\bu\) is {\em good} provided that:
\begin{itemize}
\item \(\bu^{(1)}, \ldots, \bu^{(\ell(\bu^{\bullet}))}\) are least-last permutations, and;
\item \(u^{(1)}_{\ell(\bu^{(1)})} < \cdots < u^{(\ell(\bu^\bullet))}_{\ell(\bu^{(\ell(\bu^\bullet))})}\).
\end{itemize}
We write \(\ord_t(S)^\textup{g}\) for the set of good ordered \(t\)-partitions of \(S\) and write \(\ord_{k}^\textup{g}\) for \(\ord_k([1,k])^\textup{g}\). For a given \(t\)-permutation \(\bu\) of \(S\), there is a unique way to group the entries of \(\bu\) to form a good ordered \(t\)-partition of \(S\). Thus there is a bijection between \(\ord_t(S)^\textup{g}\) and \(t\)-permutations of \(S\), and \(|\ord_t(S)^\textup{g}| = |S|!/(|S|-t)!\). 

\begin{example}  Let $S= \left\{1,2,3,5,6,7,11,13,29,74 \right\}$.  Then:
\begin{itemize}
\item $\bu_{1} = 6,29,74$, $\bu_{2} = 5,3,11,7,2,13$, and $\bu_{3} = 29,74,5,6,7,11,2,3,5$ are permutations of $S$ with $\ell(\bu_{1})=3$, $\ell(\bu_{2})=6$, and $\ell(\bu_{3})=9$.  None are least-last.  However, both $29,74,6$ and $74,29,6$ are least-last $3$-partitions of $S$.
\item $\bu_{1}^{\bullet}=2,3|5,7,11|13$, $\bu_{2}^{\bullet}=2|7,5|11,13,3$, and $\bu_{3}^{\bullet}=2|11,13,3|7,5$ are ordered $6$-partitions of $S$ with $\ell(\bu_{k}^{\bullet})=3$ for $k=1,2,3$. $\bu_{1}^{\bullet}$ fails both conditions for being good, $\bu_{2}^{\bullet}$ satisfies the least-last condition but not the second condition, and $\bu_{3}^{\bullet}$ satisfies both.  
\end{itemize}
\end{example}

For fixed $k,n \geq 1$, let \(\bbf = (f_1, \ldots, f_k) \in A^k\), \(\br = (r_1,\ldots, r_k),  \bt = (t_1, \ldots, t_k) \in [1,n]^k\). Let \(\bw=w_{1}\dotsb w_{t}\) be a $t$-partition of $[1,k]$ for some $t \leq k$. Using this data, set
\begin{align*}
E^{\bbf}_{\br, \bt}(\bw) =  \delta_{t_{w_1}, r_{w_2}} \cdots \delta_{t_{w_{\ell(\bw)-1}}, r_{w_{\ell(\bw)}}} E_{r_{w_1}, t_{\ell(\bw)}}^{f_{w_1} \cdots  f_{w_{\ell(\bw)}}}.
\end{align*}
For \(\bu^\bullet = \bu^{(1)} | \cdots | \bu^{(\ell(\bu^\bullet))} \in \ord_{k}\) set \(\bu = u_1 \ldots u_k\) to be the $k$-partition of $[1,k]$ obtained by removing the $|$ symbols. With this data along with $\bbf$, $\br$, and $\bt$ from above, define:
\begin{align*}
E^{\bbf}_{\br, \bt}(\bu^\bullet) := 
(-1)^{\langle \bu ; \bbf \rangle}
\prod_{x = 1}^{\ell(\bu^\bullet)} E^{\bbf}_{\br, \bt}(\bu^{(x)}), 
\end{align*}
where 
\begin{align*}
\langle \bu; \bbf \rangle := \sum_{\substack{1 \leq i< j \leq k \\ u_i > u_j}} \bar f_{u_i} \bar f_{u_j}.
\end{align*}

For \(\bbf = (f_1, \ldots, f_k), \bbg = (g_1, \ldots, g_k) \in A^k\) write \(\bbf \bbg = (f_1g_1, \ldots, f_k g_k) \in A^k\), and set 
\begin{align*}
\langle \bbg, \bbf \rangle = \sum_{1 \leq i \leq j \leq k} \bar f_i \bar g_j.
\end{align*}

\begin{definition}\label{thickcasmap} 
Let \(M\) be a right \(\gl_n(A)\)-supermodule.  For $i \in I$ and $k \geq 1$, define a $\k$-linear map
\begin{align*}
C_M^{i^{(k)}} = M \otimes S^k_i V \to M \otimes S^k_{\nakayamai } V
\end{align*}
by setting
\begin{align*}
m \otimes v_{t_{1}}^{f_{1}}\dotsb v_{t_{k}}^{f_{k}}  \mapsto \sum_{\substack{ \bb \in \BasisB^k \\ \br \in [1,n]^k \\ \bu^{\bullet} \in \ord_k^\textup{g}} }
(-1)^{\langle \bbf, \bb \rangle}m E^{\bb \bbf}_{\br, \bt}(\bu^\bullet) \otimes v_{r_k}^{b_k^\vee} \cdots v_{r_1}^{b_1^\vee}.
\end{align*}
for all homogeneous \(m \in M\) and  \(\bbf = (f_{1}, \dotsc , f_{k}) \in (iA)^k\), and all \(\bt = (t_{1}, \dotsc , t_{k}) \in [1,n]^k\).
\end{definition}

\begin{remark} Observe that $C_M^{i^{(1)}}$ equals $C_M^i$.
\end{remark}

Clearly the definition of the map $C_M^{i^{(k)}}$ is rather complicated and would be difficult to use as given.    The following result will allow us to avoid working with the definition of $C_{M}^{i^{(k)}}$ directly. It can be interpreted as saying that $C_M^{i^{(k)}}$ satisfies a special case of the rightmost relation in \cref{AffDotRel1}.  See \cref{P:Thick-White-Dot-Action}.

\begin{proposition} \label{T:thickcasimirrecursion}
Let \(M\) be a right \(\gl_n(A)\)-supermodule. Then, 
\[
(\id_M \otimes \textup{mer}_{\nakayamai^{k-1},\nakayamai^{1}})\circ C_{M \otimes S_{\nakayamai}^{k-1}V}^{i^{(1)}}\circ (C_M^{i^{(k-1)}} \otimes \id_{S^{1}_{i}V})
= C_M^{i^{(k)}} \circ  (\id_M \otimes \textup{mer}_{i^{k-1}, i^{1}})
\] as maps \(M \otimes S_i^{k-1}V \otimes S_i^1V \to M \otimes S_{\nakayamai}^kV\).
\end{proposition}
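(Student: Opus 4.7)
The plan is to evaluate both sides of the claimed identity on an arbitrary pure tensor $m \otimes v_{t_1}^{f_1} \cdots v_{t_{k-1}}^{f_{k-1}} \otimes v_{t_k}^{f_k}$ with $m \in M$ and $f_1, \ldots, f_k \in A$ homogeneous, and match term by term via a combinatorial bijection on good ordered partitions. Expanding the RHS via Definition~\ref{thickcasmap} produces a sum indexed by triples $(\bb, \br, \bu^\bullet)$ with $\bu^\bullet \in \ord_k^\textup{g}$. The key observation is that $\ord_k^\textup{g}$ splits into two parts: \textbf{Case A}, where $\{k\}$ is a singleton block (necessarily the final block, since the block-minima must strictly increase and $k$ is the maximal element of $[1,k]$), and \textbf{Case B}, where $k$ lies in some block of length $\geq 2$, necessarily as a non-last entry since $k$ cannot be a block's minimum. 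Removing $\{k\}$ (resp.\ removing $k$ from its block) defines a map to $\ord_{k-1}^\textup{g}$; conversely, each $(\bu')^\bullet\in\ord_{k-1}^\textup{g}$ has exactly one Case-A preimage and $\sum_x \ell((\bu')^{(x)}) = k-1$ Case-B preimages, for a total of $k$ preimages, recovering all $k!$ good ordered $k$-partitions.

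For the LHS, the first application $C_M^{i^{(k-1)}}\otimes\id$ yields a sum indexed by triples $(\bb', \br', (\bu')^\bullet)$ with $(\bu')^\bullet \in \ord_{k-1}^\textup{g}$. Applying $C^{i^{(1)}}_{M \otimes S^{k-1}_{\nakayamai}V}$ to the last factor $v_{t_k}^{f_k}$ introduces new summation variables $(b, r)$ and, via the coproduct action formula for $M\otimes S^{k-1}_{\nakayamai}V$, splits each summand into a \textbf{Type A} term where $E^{bf_k}_{r,t_k}$ acts on the $M$-factor and a \textbf{Type B} term where it acts on the $v$-factors. Type A will match Case A: setting $b_k := b$, $r_k := r$ and appending $\{k\}$ to $(\bu')^\bullet$, the new $E^{bf_k}_{r,t_k}$ factor appearing on the right of $E^{\bb'\bbf'}_{\br', \bt'}((\bu')^\bullet)$ is exactly what the singleton block $\{k\}$ contributes to $E^{\bb\bbf}_{\br, \bt}(\bu^\bullet)$. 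Type B will match Case B: the action of $E^{bf_k}_{r,t_k}$ on the factor $v_{r'_p}^{(b'_p)^\vee}$ forces $r = r'_p$ via the Kronecker delta and produces $v_{t_k}^{(b'_p)^\vee bf_k}$ in its place; a teleporter move (Lemma~\ref{translem}, in the form of Remark~\ref{transrem}) rerouting the sum over $b$ via $\sum_b (b'_p)^\vee b \otimes b^\vee = \sum_b b \otimes b^\vee (b'_p)^\vee$ then converts this into a standard $v_{t_k}^{bf_k}$ coupled with an $E$-contribution that slots into the block chain of $\bu^\bullet$ at precisely the position corresponding to inserting $k$ just before the $p$-th entry of $(\bu')^\bullet$ within its block.

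The main obstacle is sign tracking. In each match, one must verify that the product of the following signs equals the corresponding RHS sign $(-1)^{\langle\bbf,\bb\rangle+\langle \bu; \bbf\rangle}$: the $\langle \bbf', \bb'\rangle$ prefactor from the inner Casimir, the $(-1)^{\bar{f_k}\bar b}$ from the outer thin Casimir, the coproduct sign $(-1)^{(\bar b+\bar{f_k})\sum_j \bar{b_j'}}$ from moving $E^{bf_k}_{r,t_k}$ past the $v$-factors, a position-dependent sign $(-1)^{(\bar b+\bar{f_k})\sum_{i<p}\bar{b_i'}}$ (Type B only) from the explicit symmetric-power action formula, and a symmetric-power reordering sign needed to bring the factors into the reversed-subscript convention of Definition~\ref{thickcasmap}. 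Each identity reduces modulo $2$ to a manipulation using $\overline{b^\vee}=\bar b$ together with the fact that the increment $\langle \bu; \bbf\rangle - \langle \bu'; \bbf'\rangle$ counts exactly the inversions newly introduced by the inserted $k$ in the underlying permutation. Assembling these sign agreements across the two matches, and summing over all $(\bu')^\bullet$, completes the proof.
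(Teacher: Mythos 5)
Your combinatorial skeleton matches the paper's proof exactly: your Cases A and B are the paper's maps $\textup{suf}$ and $\textup{bef}_w$, your $k$-to-$1$ count onto $\ord_{k-1}^\textup{g}$ is the paper's bijection, and the split of the outer Casimir action via the coproduct into your Types A/B is exactly what the paper does. So the overall strategy is correct and is the same as the paper's.

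There is, however, a genuine gap in your Case B teleporter step. You pair the \emph{outer} Casimir variable $b$ (living in $v^{(b'_p)^\vee bf_k}_{t_k}$) with its partner $b^\vee$ (living in $v^{b^\vee}_{r}$) and slide $(b'_p)^\vee$ across via $\sum_b (b'_p)^\vee b \otimes b^\vee = \sum_b b \otimes b^\vee (b'_p)^\vee$. The identity is valid, but applying it produces the factors $v^{bf_k}_{t_k}$ and $v^{b^\vee(b'_p)^\vee}_{r'_p}$; neither has the required dual form $v^{c^\vee_j}_{r_j}$ appearing in the output of $C^{i^{(k)}}_M$, and nothing has been adjoined to $E^{\bb'\bbf'}_{\br',\bt'}(\bu^\bullet)$, so there is no ``$E$-contribution'' to slot into the block chain and the computation cannot terminate at $E^{\bb\bbf}_{\br,\bt}(\textup{bef}_p(\bu^\bullet)) \otimes v^{b^\vee_k}_{r_k}\cdots v^{b^\vee_1}_{r_1}$. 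The transport that actually works pairs with the \emph{inner} Casimir variable $b'_p$: since $b'_pf_p$ already sits inside the $E$-chain $E^{\bb'\bbf'}_{\br',\bt'}(\bu^\bullet)$ while $(b'_p)^\vee$ sits in the modified $v$-factor, the identity $\sum_{b'_p} b'_p \otimes (b'_p)^\vee h = \sum_{b'_p} hb'_p \otimes (b'_p)^\vee$ with $h = b_kf_k$ threads $b_kf_k$ into the $E$-chain immediately to the left of $b'_pf_p$ (precisely the $\textup{bef}_p$ insertion), simultaneously restores the $p$-th $v$-factor to the dual form $v^{(b'_p)^\vee}_{t_k}$, and after the reindexing $r_p := t_k$, $r_k := r'_p$ (which is forced by the chain deltas of $\textup{bef}_p(\bu^\bullet)$) reproduces the Case B summand of the right-hand side. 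With the teleporter pairing corrected, the rest of your outline, including the sign-tracking plan, does carry through.
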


\begin{proof}
First we define some combinatorial maps. For \(\bu^\bullet \in \ord_{k-1}^\textup{g}\), and \(w \in [1,k-1]\), let \(\textup{bef}_w(\bu^\bullet) \in \ord_k^\textup{g}\) be defined by inserting \(k\) directly before \(w\) in \(\bu^\bullet\) (in the same subpermutation as \(w\)). Let \(\textup{suf}(\bu^\bullet) = \bu^\bullet | k \in \ord_k^\textup{g}\). It is straightforward to check that
\begin{align}\label{combbij}
\{ \textup{bef}_w(\bu^\bullet) \mid w \in [1,k-1], \bu^\bullet \in \ord_{k-1}^\textup{g}\} \sqcup \{ \textup{suf}(\bu^\bullet) \mid  \bu^\bullet \in \ord_{k-1}^\textup{g}\} = \ord_k^\textup{g}.
\end{align}
Now, let \(m \in M\), \(\bt \in [1,n]^k\), \(\bbf \in (iA)^k\). In what follows we will use primes to indicate the truncation of a \(k\)-tuple to the first \(k-1\) terms, writing \(\bbf' = (f_1, \ldots, f_{k-1})\), and so forth. We have
\begin{align}
&\textup{mer}_M^{\nakayamai^{(k-1)},\psi^{-1}( i)^{(1)}}\circ C_{M \otimes S_{\nakayamai}^{k-1}V}^{i^{(1)}}\circ (C_M^{i^{(k-1)}} \otimes \id) (m \otimes v^{\bbf'}_{\bt'} \otimes v_{t_k}^{f_k})
\notag\\
&\hspace{10mm}=\textup{mer}_M^{\nakayamai^{(k-1)},\psi^{-1}( i)^{(1)}}\circ C_{M \otimes S_{\nakayamai}^{k-1}V}^{i^{(1)}}
\left( 
 \sum_{\substack{ \bb \in \BasisB^{k-1} \\ \br \in [1,n]^{k-1} \\ \bu^\bullet \in \ord_{k-1}^\textup{g}} }
(-1)^{\langle \bbf', \bb \rangle}m E^{\bb \bbf'}_{\br, \bt'}(\bu^\bullet) \otimes v_{r_{k-1}}^{b_{k-1}^\vee} \cdots v_{r_1}^{b_1^\vee} \otimes v_{t_k}^{f_k}
\right)
\notag\\
&\hspace{10mm}=
\textup{mer}_M^{\nakayamai^{(k-1)},\psi^{-1}( i)^{(1)}}
\left( 
 \sum_{\substack{ \bb \in \BasisB^{k} \\ \br \in [1,n]^{k} \\ \bu^\bullet \in \ord_{k-1}^\textup{g}} }
(-1)^{\langle \bbf', \bb' \rangle + \bar b_k \bar f_k} (m E^{\bb' \bbf'}_{\br', \bt'}(\bu^\bullet) \otimes v_{r_{k-1}}^{b_{k-1}^\vee} \cdots v_{r_1}^{b_1^\vee})E_{r_k, t_k}^{b_k f_k}  \otimes v_{r_k}^{b_k^\vee}
\right)
\notag\\
&\hspace{10mm}=
 \sum_{\substack{ \bb \in \BasisB^{k} \\ \br \in [1,n]^{k} \\ \bu^\bullet \in \ord_{k-1}^\textup{g}} }
(-1)^{\langle \bbf, \bb \rangle + \bar b_k (\bar b_1 + \cdots + \bar b_{k-1}) }m E^{\bb' \bbf'}_{\br', \bt'}(\bu^\bullet)E_{r_k, t_k}^{b_k f_k} \otimes v_{r_{k-1}}^{b_{k-1}^\vee} \cdots v_{r_1}^{b_1^\vee}  v_{r_k}^{b_k^\vee}
\notag \\
&\hspace{15mm}+
 \sum_{\substack{ \bb \in \BasisB^{k} \\ \br \in [1,n]^{k} \\ \bu^\bullet \in \ord_{k-1}^\textup{g} \\ w \in [1,k-1]}  }
(-1)^{\langle \bbf', \bb' \rangle + (\bar b_k + \bar f_k)( \bar b_1 + \cdots + \bar b_{w-1}) + \bar b_k \bar f_k}
\delta_{r_w, r_k} m E^{\bb' \bbf'}_{\br', \bt'}(\bu^\bullet) \otimes v_{r_{k-1}}^{b_{k-1}^\vee} 
\cdots    
v_{t_k}^{b_w^\vee b_k f_k}
\cdots
v_{r_1}^{b_1^\vee}  v_{r_k}^{b_k^\vee}
\notag \\
&\hspace{10mm}=
 \sum_{\substack{ \bb \in \BasisB^{k} \\ \br \in [1,n]^{k} \\ \bu^\bullet \in \ord_{k-1}^\textup{g}} }
(-1)^{\langle \bbf, \bb \rangle}m E^{\bb \bbf}_{\br, \bt}(\textup{suf}(\bu^\bullet)) \otimes v_{r_{k}}^{b_{k}^\vee} \cdots v_{r_1}^{b_1^\vee} 
\notag \\
&\hspace{15mm}+
 \sum_{\substack{ \bb \in \BasisB^{k} \\ \br \in [1,n]^{k} \\ \bu^\bullet \in \ord_{k-1}^\textup{g}\\ w \in [1,k-1]}  }
(-1)^{\langle \bbf, \bb \rangle}
m E^{\bb \bbf}_{\br, \bt}(\textup{bef}_w(\bu^\bullet)) \otimes v_{r_{k}}^{b_{k}^\vee} 
\cdots    
v_{r_1}^{b_1^\vee}
\label{transport5}\\
&\hspace{10mm}=
 \sum_{\substack{ \bb \in \BasisB^{k} \\ \br \in [1,n]^{k} \\ \bv^\bullet \in \ord_{k}^\textup{g}} }(-1)^{\langle \bbf, \bb \rangle}
m E^{\bb \bbf}_{\br, \bt}(\bv^\bullet) \otimes v_{r_{k}}^{b_{k}^\vee} 
\cdots    
v_{r_1}^{b_1^\vee}
\label{usecombbij}\\
&\hspace{10mm}=
 C_M^{i^{(k)}} \circ \;\textup{mer}_M^{i^{(k-1)}, i^{(1)}}(m \otimes v^{\bbf'}_{\bt'} \otimes v_{t_k}^{f_k}),
 \notag
\end{align}
where \cref{transport5} follows by an application of the transporter rule, and \cref{usecombbij} follows by \cref{combbij}, completing the proof.
\end{proof}

\begin{theorem} \label{T:thickcasisnaturaltrasform} The maps $C_{M}^{i^{(k)}} : M \otimes S^k_i V \to M \otimes S^k_{\nakayamai} V$ define an even supernatural transformation, 
\[
C^{i^{(k)}} : - \otimes S^k_{i}V \to  - \otimes S^k_{\nakayamai}V,
\] in  $\catEnd(\modgl)$.
\end{theorem}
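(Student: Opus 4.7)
The plan is to induct on $k$, leveraging the recursion from \cref{T:thickcasimirrecursion} to reduce everything to the already-established base case $C^{i^{(1)}} = C^i$. There are two things to verify: (a) each $C_M^{i^{(k)}}$ is actually a $\gl_n(A)$-supermodule homomorphism (not merely a $\k$-linear map), and (b) the family $\{C_M^{i^{(k)}}\}_M$ is natural in $M$. The base case $k=1$ is exactly \cref{L:coloredthincasimir} together with the succeeding proposition.

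For the inductive step, assume $C^{i^{(k-1)}}$ is an even supernatural transformation. \cref{T:thickcasimirrecursion} gives the identity
\begin{equation*}
C_M^{i^{(k)}}\circ(\id_M\otimes \textup{mer}_{i^{k-1},i^1})
=(\id_M\otimes \textup{mer}_{\nakayamai^{k-1},\nakayamai^1})\circ C_{M\otimes S^{k-1}_{\nakayamai}V}^{i^{(1)}}\circ(C_M^{i^{(k-1)}}\otimes\id_{S^1_iV}).
\end{equation*}
Each factor on the right is a $\gl_n(A)$-homomorphism: the merges are $\gl_n(A)$-linear by \cref{S:DistinguishedMorphisms}, $C^{i^{(1)}}$ is $\gl_n(A)$-linear by \cref{L:coloredthincasimir}, and $C_M^{i^{(k-1)}}$ is a $\gl_n(A)$-homomorphism by the inductive hypothesis. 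Hence the entire right-hand side, and therefore $C_M^{i^{(k)}}\circ(\id_M\otimes\textup{mer}_{i^{k-1},i^1})$, is $\gl_n(A)$-linear. Because the merge $\textup{mer}_{i^{k-1},i^1}\colon S^{k-1}_iV\otimes S^1_iV\to S^k_iV$ is surjective (it is just multiplication in the symmetric superalgebra, and $S^k_iV$ is spanned by products of elements of $S^1_iV$), the map $\id_M\otimes\textup{mer}_{i^{k-1},i^1}$ is likewise surjective. Thus $C_M^{i^{(k)}}$ is itself a $\gl_n(A)$-supermodule homomorphism. Evenness is preserved under composition and tensoring with identities, so $C_M^{i^{(k)}}$ is even.

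For naturality, let $\phi\colon M\to N$ be a $\gl_n(A)$-supermodule homomorphism. Composing $(\phi\otimes\id)\circ C_M^{i^{(k)}}$ and $C_N^{i^{(k)}}\circ(\phi\otimes\id)$ with the surjection $\id\otimes\textup{mer}_{i^{k-1},i^1}$ on the right, both sides can be rewritten via \cref{T:thickcasimirrecursion}; we then slide $\phi$ past the merges (which are natural), past $C^{i^{(1)}}$ (natural by \cref{L:coloredthincasimir} and the succeeding proposition), and past $C^{i^{(k-1)}}$ (natural by the inductive hypothesis). Since $\id_M\otimes\textup{mer}_{i^{k-1},i^1}$ is surjective, the resulting equality on the image forces the desired equality of the original maps, completing the induction.

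\textbf{Main obstacle.} The only step that demands care is translating \cref{T:thickcasimirrecursion} into a well-defined homomorphism statement: one must observe that the merge is surjective and that $\k$-linear maps equal on a surjective image are equal, so that the recursion uniquely pins down $C_M^{i^{(k)}}$ as a $\gl_n(A)$-map. Once this surjectivity-plus-recursion argument is in hand, both parts (a) and (b) follow routinely from the $k=1$ case and the inductive hypothesis.
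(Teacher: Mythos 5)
Your proof is correct, but the key step differs from the paper's in a way worth noting. The paper also reduces the homomorphism property to the inductive hypothesis via \cref{T:thickcasimirrecursion}, but it does so by pre- and post-composing with the split-then-merge pair, giving $K\cdot C_M^{i^{(K)}}$ (via the knothole relation). Over the integral domain $\k$ one cannot simply divide by $K$, so the paper must then mount a separate torsion argument: set $\eta_{M,X}(w)=C_M^{i^{(K)}}(wX)-C_M^{i^{(K)}}(w)X$, observe $K\eta_{M,X}=0$ forces $\eta_{M,X}$ torsion, pass to a $\k$-free cover $F\twoheadrightarrow M$, and use naturality plus $\k$-freeness of $F\otimes S^K_iV$ to conclude $\eta_{M,X}=0$. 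This is also why the paper establishes naturality \emph{first}: naturality is an input to the torsion argument. You instead post-compose only with $\id_M\otimes\mathrm{mer}_{i^{k-1},i^1}$ and observe that this map is a surjective $\gl_n(A)$-homomorphism (symmetric algebras are generated in degree one, and $-\otimes_{\k}M$ is right exact), and then use the elementary fact that if $g\circ f$ is $\gl_n(A)$-linear with $f$ surjective $\gl_n(A)$-linear, then $g$ is $\gl_n(A)$-linear. This sidesteps the factor of $K$ entirely and hence the whole torsion-and-free-cover apparatus (\cref{L:homintofree}, \cref{L:torsion}), which is a genuine simplification. Your inductive treatment of naturality is also valid, though the paper's is more direct: since $E^{\bb\bbf}_{\br,\bt}(\bu^\bullet)$ acts via elements of $U(\gl_n(A))$, the formula for $C_M^{i^{(k)}}$ visibly commutes with any $\gl_n(A)$-map $\phi$, no induction required. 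Either route works.
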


\begin{proof}  Since $C_{M}^{i^{(k)}}$ is only known to be $\k$-linear, we must demonstrate that it is a $\gl_n(A)$-supermodule homomorphism and that this family of maps satisfies the appropriate naturality condition.   In fact, it turns out to be easier to first verify naturality.

Let $M, N$ be two right $\gl_n(A)$-supermodules and let $\varphi : M \to N$ be a $\gl_n(A)$-supermodule homomorphism.  Verifying the naturality condition amounts to demonstrating the commutativity of the following the square of $\k$-linear maps:
\begin{equation} \label{naturalitysquare}
\begin{tikzcd}
M \otimes S^k_i V \arrow{r}{\varphi \otimes \id }  \arrow{d}[swap]{C_M^{i^{(k)}}} & N \otimes S^k_i V \arrow{d}{C_{N}^{i^{(k)}}}\\
M \otimes S^k_{\nakayamai}(V) \arrow{r}{\varphi \otimes \id} & N \otimes S^k_{\nakayamai} V
\end{tikzcd}
\end{equation} For homogeneous $m\in M$ and $\bbf = (f_{1}, \dotsc , f_{k}) \in ({}_{i}A)^{k}$, and $\bt = (t_{1}, \dotsc , t_{k}) \in [1,n]^{k}$, write $m \otimes v_{\bt}^{\bbf} = m \otimes  v_{t_{1}}^{f_{1}} \dotsb v_{t_{k}}^{f_{k}}  \in M \otimes S^k_i V$.
For such a vector, traveling along the top and then rightmost edge of the square results in:
$$
 \sum_{\substack{ \bb \in \BasisB^k \\ \br \in [1,n]^k \\ \bu \in \ord_k^\textup{g}} }
(-1)^{\langle \bbf, \bb \rangle} \varphi(m) E^{\bb \bbf}_{\br, \bt}(\bu^\bullet) \otimes v_{r_k}^{b_k^\vee} \cdots v_{r_1}^{b_1^\vee} \in N \otimes S^k_{\nakayamai} V.
$$
Because $\varphi$ is a $\gl_n(A)$-supermodule homomorphism and $E^{\bb \bbf}_{\br, \bt}(\bu^\bullet)$ is the action of an element in $U(\gl_n(A))$, we have: 
$$
\sum_{\substack{ \bb \in \BasisB^k \\ \br \in [1,n]^k \\ \bu \in \ord_k^\textup{g}} }
(-1)^{\langle \bbf, \bb \rangle} \varphi(m) E^{\bb \bbf}_{\br, \bt}(\bu^\bullet) \otimes v_{r_k}^{b_k^\vee} \cdots v_{r_1}^{b_1^\vee} = 
\sum_{\substack{ \bb \in \BasisB^k \\ \br \in [1,n]^k \\ \bu \in \ord_k^\textup{g}} }
(-1)^{\langle \bbf, \bb \rangle} \varphi \left( m E^{\bb \bbf}_{\br, \bt}(\bu^\bullet) \right) \otimes v_{r_k}^{b_k^\vee} \cdots v_{r_1}^{b_1^\vee}.
$$
However, the expression on the right side of the equality is also the result of instead traveling along the leftmost and bottom edges of the square.  The commutativity of the square in \cref{naturalitysquare} follows.

Next, we establish that the $\k$-linear map $C_{M}^{i^{(k)}}$ is a $\gl_n(A)$-supermodule homomorphism for all $\gl_n(A)$-supermodules $M$, all $i \in I$, and all $k \geq 1$.  We prove this by induction on $k$. When $k = 1$, the map $C_{M}^{i^{(1)}}$ is the same as the thin Casimir $C^i_M$ and the base case follows from \cref{L:coloredthincasimir}.  For the inductive step, given $K > 1$ suppose  that $C_{M}^{i^{(k)}}$ is a supermodule homomorphism for all $k < K$. We will show that $C_{M}^{i^{(K)}}$ is also a $\gl_n(A)$-homomorphism. First we establish that the scalar multiple $K C_{M}^{i^{(K)}}$ is a $\gl_n(A)$-supermodule homomorphism.  We have
\begin{align*} K C_{M}^{i^{(K)}} &=  C_{M}^{i^{(K)}} \circ (K \id_{M} \otimes \id_{S_{i}^K V}) \\ 
&= C_{M}^{i^{(K)}} \circ  \widehat{H}_n\left(K \hackcenter{
\begin{tikzpicture}[scale=0.4]
  \draw[ultra thick, blue] (0,-0.1)--(0,0.9);
  \draw[ultra thick, blue] (0,0.9)--(0,1.9);
     \node[below] at (0,-0.1) {$\scriptstyle i^{\scriptstyle(K)}$};
     % \node[above] at (0,1.9) {$\scriptstyle i^{\scriptstyle(N}$};
\end{tikzpicture}}  \right)_M \\
&= C_{M}^{i^{(K)}} \circ  \widehat{H}_n\left(\hackcenter{
\begin{tikzpicture}[scale=0.5]
  \draw[ultra thick, blue] (0,-0.1)--(0,0.1) .. controls ++(0,0.35) and ++(0,-0.35) .. (-0.4,0.6)--(-0.4,0.9);
    \draw[ultra thick, blue]  (-0.4,0.9)--(-0.4,1.2) 
  .. controls ++(0,0.35) and ++(0,-0.35) .. (0,1.7)--(0,1.9);
  \draw[ultra thick, blue] (0,-0.1)--(0,0.1) .. controls ++(0,0.35) and ++(0,-0.35) .. (0.4,0.6)--(0.4,0.9);
    \draw[ultra thick, blue]  (0.4,0.9)--(0.4,1.2) 
  .. controls ++(0,0.35) and ++(0,-0.35) .. (0,1.7)--(0,1.9);
  .. controls ++(0,0.35) and ++(0,-0.35) .. (0,1.7)--(0,1.9);
       \node[above] at (0,1.8) {$\scriptstyle i^{\scriptstyle(K)}$};
         \node[below] at (0,-0.1) {$\scriptstyle i^{\scriptstyle(K)}$};
             \node[right] at (0.4,0.9) {$\scriptstyle i^{\scriptstyle(1)}$};
               \node[left] at (-0.4,0.9) {$\scriptstyle i^{\scriptstyle(K-1)}$};
                %\node at (0,0.9) {$\scriptstyle \cdots $};
\end{tikzpicture}}
  \right)_M \\
&=  C_M^{i^{(K)}} \circ \;(\id_M \otimes \textup{mer}_{i^{(K-1)}, i^{(1)}}) \circ \; (\id_M \otimes \textup{spl}_{i^{(K-1)}, i^{(1)}}) \\ 
&= (\id_M \otimes \textup{mer}_{\nakayamai^{(K-1)},\nakayamai^{(1)}})\circ C_{M \otimes S_{\nakayamai}^{K-1}V}^{i^{(1)}}\circ (C_M^{i^{(K-1)}} \otimes \id_{ {}_i \! V}) \circ \; (\id_M \otimes \textup{spl}_{i^{(K-1)}, i^{(1)}}),
\end{align*}
where the final equality follows from \cref{T:thickcasimirrecursion}.    Using the induction hypothesis, the expression in the last line is a composite of  $\gl_n(A)$-supermodule homomorphisms.  This proves $K C_M^{i^{(K)}}$ is a $\gl_n(A)$-homomorphism for every right $\gl_n(A)$-module $M$.

We use this to show that $C_M^{i^{(K)}}$ itself is a supermodule homomorphism.  For any fixed $X \in \gl_n(A)$ define a $\k$-linear map
\[
\eta_{M,X} : M \otimes S_i^K V \to M \otimes S_{\nakayamai}^K V
\] by
\[
\eta_{M,X}(w) = C_{M}^{i^{(K)}}(wX) - C_{M}^{i^{(K)}}(w)X, 
\] where $w \in  M \otimes S_i^K V$. Then $C_M^{i^{(K)}}$ is a $\gl_n(A)$-homomorphism if and only if $\eta_{M,X}$ is identically zero for every $X \in \gl_n(A)$.  Since $K C^{i^{(K)}}_M$ is a $\gl_n(A)$-homomorphism for every $M$, it follows that $K\eta_{M,X}$ is identically zero for all $X \in \gl_n(A)$ and, hence, $\eta_{M,X}$ is a torsion element in $\End_{\k}\left( M \otimes S_i^K V \right)$.

Find a $\gl_n(A)$-supermodule $F$ which is free over $\k$ and admits an exact sequence
\[
F \xrightarrow{\phi} M \to 0.
\]  That such an $F$ exists follows by taking a free cover of $M$ as a right $U(\gl_n(A))$-supermodule. The PBW theorem guarantees $F$ is free over $\k$. 

Because $S_i^K V$ is free over $\k$, the functor $- \otimes_{\k} S_{i}^{K}(V)$  is exact and the following diagram has exact rows:
\begin{equation}\label{D:commutingdiagramhomomorphism}
\begin{tikzcd}
F \otimes S^K_i V \arrow{r}{\phi \otimes \id_{S_{i}^{K}V}} \arrow{d}[swap]{\eta_{F,X}} & M \otimes S^K_i V \arrow{r} \arrow{d}{\eta_{M,X}} & 0 \\
F \otimes S^K_i V \arrow{r}{\phi \otimes \id_{S_{i}^{K}V}}  & M \otimes S^K_i V \arrow{r}  & 0 
\end{tikzcd}
\end{equation}
The fact that this diagram commutes follows from the definition of the map $\eta$, the naturality of $C^{i^{(K)}}$ established above, and the fact that $\phi \otimes \id_{S_{i}^{K}V}$ is a $\gl_n(A)$-supermodule homomorphism. 

From the previous paragraph, $\eta_{F,X} \in \End_{\k}\left(  F \otimes S^K_i V \right)$ is a torsion element. By \cref{L:homintofree} this implies $\eta_{F,X} = 0$.  A diagram chase using \cref{D:commutingdiagramhomomorphism}  implies $\eta_{M,X} = 0$, as well.  Since this is true for all $X \in \gl_{n}(A)$, we conclude that $C_{M}^{i^{(K)}}$ is a $\gl_n(A)$-supermodule homomorphism.
\end{proof}

\begin{proposition}\label{P:Thick-White-Dot-Action}  Let $\k$ be a field, let $n \geq 1$, and let $\widehat{H}_{n}: \ThinAffWebAaI \to \catEnd (\modgl)$ be the functor given in \cref{L:hatH}. Then, for all $k \geq 1$,
\[
\widehat{H}_{n}\left( \hackcenter{
\begin{tikzpicture}[scale=0.6]
  \draw[ultra thick, blue] (0,-0.1)--(0,0.9);
  \draw[ultra thick, blue] (0,0.9)--(0,1.9);
  \draw[thick, fill=white] (0,0.9) circle (5pt);
     \node[below] at (0,-0.1) {$\scriptstyle i^{\scriptstyle(k)}$};
      \node[above] at (0,1.9) {$\scriptstyle i^{\scriptstyle(k)}$};
\end{tikzpicture}} \right) = C^{i^{(k)}}.
\]
\end{proposition}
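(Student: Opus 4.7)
The plan is to argue by induction on $k$, with Proposition~\ref{T:thickcasimirrecursion} providing the engine of the inductive step. The base case $k=1$ is immediate: by definition~\eqref{whitedotdef}, the white dot of thickness one equals $\tfrac{1}{1!}$ times the thin black dot, and $\widehat{H}_n$ sends the thin black dot to $C^{i} = C^{i^{(1)}}$ by Proposition~\ref{L:hatH}.

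For the inductive step, I will first establish, purely at the diagrammatic level in $\KThinAffWebAaI$, the factorization
\[
k \cdot w_k \;=\; \mathrm{mer}_{\nakayamai^{k-1},\nakayamai^{1}} \circ (w_{k-1} \otimes d_1) \circ \mathrm{spl}_{i^{k-1},i^{1}},
\]
where $w_m$ denotes the white dot on a strand of thickness $m$ and $d_1$ the thin black dot. This follows from the definition~\eqref{whitedotdef}: using associativity~\eqref{AssocRel}, one factors the full explosion $Y$ of $i^{(k)}$ into $k$ thin strands as a two-step process $(Y_{k-1} \otimes \id) \circ \mathrm{spl}_{i^{k-1},i^{1}}$ routed through $i^{(k-1)} \otimes i^{(1)}$, and similarly for the contraction $Z$. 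The sub-sandwich on the $(k-1)$-strand side (its split, thin dots, and merge) is then recognized via~\eqref{whitedotdef} as $(k-1)!\,w_{k-1}$; combined with the prefactor $1/k!$ from the definition of $w_k$, this produces the $1/k$ on the right and hence the $k$ on the left.

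Applying the monoidal superfunctor $\widehat{H}_n$ and invoking the inductive hypothesis $\widehat{H}_n(w_{k-1})= C^{i^{(k-1)}}$, along with the standard formula for horizontal composition of (even) natural transformations in $\catEnd(\modgl)$, the $M$-component of the right-hand side becomes
\[
(\id_M \otimes \mathrm{mer}_{\nakayamai^{k-1},\nakayamai^{1}}) \circ C^{i^{(1)}}_{M \otimes S^{k-1}_{\nakayamai}V} \circ (C^{i^{(k-1)}}_M \otimes \id_{S^1_i V}) \circ (\id_M \otimes \mathrm{spl}_{i^{k-1},i^{1}}).
\]
Proposition~\ref{T:thickcasimirrecursion} collapses the three leftmost factors to $C^{i^{(k)}}_M \circ (\id_M \otimes \mathrm{mer}_{i^{k-1},i^1})$, and the knothole relation~\eqref{KnotholeRel} (transported via $G_n$ per Theorem~\ref{Gthm}) gives $\mathrm{mer}_{i^{k-1},i^1} \circ \mathrm{spl}_{i^{k-1},i^1} = k\cdot \id_{S^k_i V}$. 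Thus the right-hand side equals $k\cdot C^{i^{(k)}}_M$, and dividing by $k$---valid since $\k$ is a field of characteristic zero---yields $\widehat{H}_n(w_k)_M = C^{i^{(k)}}_M$, closing the induction.

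The only real obstacle is the bookkeeping for horizontal composition in the endofunctor supercategory $\catEnd(\modgl)$: one must check that $\widehat{H}_n(w_{k-1} \otimes d_1)_M$ is exactly $C^{i^{(1)}}_{M \otimes S^{k-1}_{\nakayamai}V} \circ (C^{i^{(k-1)}}_M \otimes \id_{S^1_iV})$, in the correct order to feed into the hypothesis of Proposition~\ref{T:thickcasimirrecursion}. Since both $C^{i^{(k-1)}}$ and $C^{i^{(1)}}$ are even supernatural transformations (Theorem~\ref{T:thickcasisnaturaltrasform}), no Koszul signs intrude. Everything else is direct assembly of the recursion and the knothole relation.
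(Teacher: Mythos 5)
Your proof is correct and follows essentially the same route as the paper's: use the knothole relation to split $w_k$ through $i^{(k-1)}\otimes i^{(1)}$, commute the white dot past the split using the implied white-dot relations, identify the resulting subdiagrams as $C^{i^{(k-1)}}$ and $C^{i^{(1)}}$, collapse the middle via Proposition~\ref{T:thickcasimirrecursion}, and close with the knothole relation again. The paper's proof is presented as a single chain of equalities and does not explicitly announce induction on $k$, but the step where the white dot on the $i^{(k-1)}$ strand is replaced by $C_M^{i^{(k-1)}}$ silently invokes exactly the induction hypothesis you state; your version is therefore the cleaner writeup of the same argument, and your bookkeeping for the horizontal composite $\widehat{H}_n(w_{k-1}\otimes d_1)_M$ agrees with the paper's left-to-right convention for $\odot$.
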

\begin{proof} The claim follows because the white dots satisfy the defining relations of $\AffWebAaI$ along with \cref{T:thickcasimirrecursion}.  Namely, for any $\gl_{n}(A)$-module $M$ we have:
\begin{align*}
\widehat{H}_{n} \left( 
\hackcenter{
\begin{tikzpicture}[scale=0.6]
  \draw[ultra thick, blue] (0,-0.3)--(0,0.9);
  \draw[ultra thick, blue] (0,0.9)--(0,2.1);
  \draw[thick, fill=white] (0,0.9) circle (5pt);
     \node[below] at (0,-0.3) {$\scriptstyle i^{\scriptstyle(k)}$};
      \node[above] at (0,2.1) {$\scriptstyle i^{\scriptstyle(k)}$};
\end{tikzpicture}}\right)_{M}
\;
& =
\;
\widehat{H}_{n}\left( \frac{1}{k}
\hackcenter{
\begin{tikzpicture}[scale=0.6]
  \draw[ultra thick, blue] (0,-0.3)--(0,0.1) .. controls ++(0,0.35) and ++(0,-0.35) .. (-0.4,0.6)--(-0.4,0.9);
    \draw[ultra thick, blue]  (-0.4,0.9)--(-0.4,1.2) 
  .. controls ++(0,0.35) and ++(0,-0.35) .. (0,1.7)--(0,1.9);
  \draw[ultra thick, blue] (0,-0.1)--(0,0.1) .. controls ++(0,0.35) and ++(0,-0.35) .. (0.4,0.6)--(0.4,0.9);
    \draw[ultra thick, blue]  (0.4,0.9)--(0.4,1.2) 
  .. controls ++(0,0.35) and ++(0,-0.35) .. (0,1.7)--(0,2.1);
  .. controls ++(0,0.35) and ++(0,-0.35) .. (0,1.7)--(0,2.1);
       \draw[thick, fill=white] (0,1.8) circle (5pt);
       \node[above] at (0,2) {$\scriptstyle i^{\scriptstyle(k)}$};
         \node[below] at (0,-0.3) {$\scriptstyle i^{\scriptstyle(k)}$};
             \node[right] at (0.4,0.9) {$\scriptstyle i^{\scriptstyle(k)}$};
               \node[left] at (-0.4,0.9) {$\scriptstyle i^{\scriptstyle(k-1)}$};
                %\node at (0,0.9) {$\scriptstyle \cdots $};
\end{tikzpicture}}\right)_{M} \\
& =
\;
\widehat{H}_{n}\left( \frac{1}{k}
\hackcenter{
\begin{tikzpicture}[scale=0.6]
  \draw[ultra thick, blue] (0,-0.3)--(0,0.1) .. controls ++(0,0.35) and ++(0,-0.35) .. (-0.4,0.6)--(-0.4,0.9);
    \draw[ultra thick, blue]  (-0.4,0.9)--(-0.4,1.2) 
  .. controls ++(0,0.35) and ++(0,-0.35) .. (0,1.7)--(0,1.9);
  \draw[ultra thick, blue] (0,-0.1)--(0,0.1) .. controls ++(0,0.35) and ++(0,-0.35) .. (0.4,0.6)--(0.4,0.9);
    \draw[ultra thick, blue]  (0.4,0.9)--(0.4,1.2) 
  .. controls ++(0,0.35) and ++(0,-0.35) .. (0,1.7)--(0,2.1);
  .. controls ++(0,0.35) and ++(0,-0.35) .. (0,1.7)--(0,2.1);
       \draw[thick, fill=white] (0.4,0.9) circle (5pt);
       \draw[thick, fill=white] (-0.4,0.9) circle (5pt);
       \node[above] at (0,2) {$\scriptstyle i^{\scriptstyle(k)}$};
         \node[below] at (0,-0.3) {$\scriptstyle i^{\scriptstyle(k)}$};
             \node[right] at (0.4,0.9) {$\scriptstyle i^{\scriptstyle(1)}$};
               \node[left] at (-0.4,0.9) {$\scriptstyle i^{\scriptstyle(k-1)}$};
                %\node at (0,0.9) {$\scriptstyle \cdots $};
\end{tikzpicture}}\right)_{M} \\
&= \frac{1}{k}(\id_M \otimes \textup{mer}_{\nakayamai^{k-1},\nakayamai^{1}})\circ C_{M \otimes S_{\nakayamai}^{k-1}V}^{i^{(1)}}\circ (C_M^{i^{(k-1)}} \otimes \id_{S^{1}_{i}V}) \circ (\id_M \otimes \textup{spl}_{\nakayamai^{k-1},\nakayamai^{1}}) \\
& = \frac{1}{k} C_M^{i^{(k)}} \circ  (\id_M \otimes \textup{mer}_{i^{k-1}, i^{1}})\circ (\id_M \otimes \textup{spl}_{\nakayamai^{k-1},\nakayamai^{1}}) \\
&= \frac{1}{k} C_M^{i^{(k)}} \circ \widehat{H}_{n}\left(
\hackcenter{
\begin{tikzpicture}[scale=0.6]
  \draw[ultra thick, blue] (0,-0.3)--(0,0.1) .. controls ++(0,0.35) and ++(0,-0.35) .. (-0.4,0.6)--(-0.4,0.9);
    \draw[ultra thick, blue]  (-0.4,0.9)--(-0.4,1.2) 
  .. controls ++(0,0.35) and ++(0,-0.35) .. (0,1.7)--(0,1.9);
  \draw[ultra thick, blue] (0,-0.1)--(0,0.1) .. controls ++(0,0.35) and ++(0,-0.35) .. (0.4,0.6)--(0.4,0.9);
    \draw[ultra thick, blue]  (0.4,0.9)--(0.4,1.2) 
  .. controls ++(0,0.35) and ++(0,-0.35) .. (0,1.7)--(0,2.1);
  .. controls ++(0,0.35) and ++(0,-0.35) .. (0,1.7)--(0,2.1);
       \node[above] at (0,2) {$\scriptstyle i^{\scriptstyle(k)}$};
         \node[below] at (0,-0.3) {$\scriptstyle i^{\scriptstyle(k)}$};
             \node[right] at (0.4,0.9) {$\scriptstyle i^{\scriptstyle(k)}$};
               \node[left] at (-0.4,0.9) {$\scriptstyle i^{\scriptstyle(k-1)}$};
                %\node at (0,0.9) {$\scriptstyle \cdots $};
\end{tikzpicture}}\right)_{M} \\
&=
C_M^{i^{(k)}} \circ
\widehat{H}_{n}\left( \hackcenter{
\begin{tikzpicture}[scale=0.6]
  \draw[ultra thick, blue] (0,-0.3)--(0,0.9);
  \draw[ultra thick, blue] (0,0.9)--(0,2.1);
     \node[below] at (0,-0.3) {$\scriptstyle i^{\scriptstyle(k)}$};
      \node[above] at (0,2.1) {$\scriptstyle i^{\scriptstyle(k)}$};
\end{tikzpicture}}\right)_{M}\\
&=
C_M^{i^{(k)}} \circ \Id_{M \otimes S_{i}^{k}V} = C_M^{i^{(k)}}.
\end{align*}
\end{proof}

\subsection{The defining representation of \texorpdfstring{$\AffWebAaI$}{AffWebAaI}}  

The goal of this section is to establish the existence of what we call the \emph{defining representation} of $\AffWebAaI$ over $\k$.

\begin{theorem}\label{Hthm} For every $n \geq 1$, there is a monoidal superfunctor
\begin{align*}
H_n: \AffWebAaI \to \catEnd(\modgl)
\end{align*}
given on generating objects by \(H_n (i ^{(x)}) = - \otimes S^x_iV_n\), and on generating morphisms by
\begin{equation*}
\begin{gathered}
\hackcenter{
{}
}
\hackcenter{
\begin{tikzpicture}[scale=.8]
  \draw[ultra thick,blue] (0,0)--(0,0.2) .. controls ++(0,0.35) and ++(0,-0.35) .. (-0.4,0.9)--(-0.4,1);
  \draw[ultra thick,blue] (0,0)--(0,0.2) .. controls ++(0,0.35) and ++(0,-0.35) .. (0.4,0.9)--(0.4,1);
      \node[above] at (-0.4,1) {$ \scriptstyle i^{\scriptstyle (x)}$};
      \node[above] at (0.4,1) {$ \scriptstyle i^{\scriptstyle (y)}$};
      \node[below] at (0,0) {$ \scriptstyle i^{\scriptstyle (x+y)} $};
\end{tikzpicture}}
\mapsto - \otimes
 \textup{spl}_{i^x,i^y},
\qquad
\hackcenter{
\begin{tikzpicture}[scale=.8]
  \draw[ultra thick,blue ] (-0.4,0)--(-0.4,0.1) .. controls ++(0,0.35) and ++(0,-0.35) .. (0,0.8)--(0,1);
\draw[ultra thick, blue] (0.4,0)--(0.4,0.1) .. controls ++(0,0.35) and ++(0,-0.35) .. (0,0.8)--(0,1);
      \node[below] at (-0.4,0) {$ \scriptstyle i^{ \scriptstyle (x)}$};
      \node[below] at (0.4,0) {$ \scriptstyle i^{ \scriptstyle (y)}$};
      \node[above] at (0,1) {$ \scriptstyle i^{ \scriptstyle (x+y)}$};
\end{tikzpicture}}
\mapsto
- \otimes \textup{mer}_{i^x,i^y}
\qquad
\hackcenter{
\begin{tikzpicture}[scale=.8]
  \draw[ultra thick,red] (0.4,0)--(0.4,0.1) .. controls ++(0,0.35) and ++(0,-0.35) .. (-0.4,0.9)--(-0.4,1);
  \draw[ultra thick,blue] (-0.4,0)--(-0.4,0.1) .. controls ++(0,0.35) and ++(0,-0.35) .. (0.4,0.9)--(0.4,1);
      \node[above] at (-0.4,1) {$ \scriptstyle j^{ \scriptstyle (y)}$};
      \node[above] at (0.4,1) {$ \scriptstyle i^{ \scriptstyle (x)}$};
       \node[below] at (-0.4,0) {$ \scriptstyle i^{ \scriptstyle (x)}$};
      \node[below] at (0.4,0) {$ \scriptstyle j^{ \scriptstyle (y)}$};
\end{tikzpicture}}
\mapsto
- \otimes \tau_{i^x, i^y},
\\
\hackcenter{
\begin{tikzpicture}[scale=.8]
  \draw[ultra thick, blue] (0,0)--(0,0.5);
   \draw[ultra thick, red] (0,0.5)--(0,1);
   \draw[thick, fill=yellow]  (0,0.5) circle (7pt);
    \node at (0,0.5) {$ \scriptstyle f$};
     \node[below] at (0,0) {$ \scriptstyle i^{ \scriptstyle (x)}$};
      \node[above] at (0,1) {$ \scriptstyle j^{ \scriptstyle (x)}$};
\end{tikzpicture}}
\mapsto - \otimes L_x^f,
\qquad
\hackcenter{
\begin{tikzpicture}[scale=.8]
  \draw[ultra thick, blue] (0,0)--(0,0.5);
   \draw[ultra thick, blue] (0,0.5)--(0,1);
   \draw[thick, fill=black]  (0,0.5) circle (5pt);
     \node[below] at (0,0) {$ \scriptstyle i^{ \scriptstyle (x)}$};
      \node[above] at (0,1) {$ \scriptstyle \nakayamai^{ \scriptstyle (x)}$};
\end{tikzpicture}}
\mapsto  C^{i^{(x)}},
\end{gathered}
\end{equation*}
for all \(x,y \in \Z_{\geq 0}\), \(i,j \in I\), and \(f \in jA^{(x)}i\).
\end{theorem}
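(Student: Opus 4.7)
The plan is to define $H_n$ on generating objects by $H_n(i^{(x)}) = -\otimes S^x_iV_n$, extending monoidally to all of $\widehat{\Omega}_I$, and on generating morphisms by the prescription given in the statement. To show this assembles into a monoidal superfunctor, I must verify that the defining relations of $\AffWebAaI$ are preserved. The non-affine-dot relations \cref{AssocRel}--\cref{AaIntertwine} involve only splits, merges, crossings, and coupons, on which $H_n$ coincides with the finite web functor $\widehat{G}_n$ of \cref{L:finitewebfunctor}; these relations are therefore automatic.

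The new work is verifying the affine dot relations \cref{AffDotRel1} and \cref{DotCrossRel}. My approach will be to verify them uniformly via a descent argument from the fraction field $\KK$ of $\k$ back to $\k$. The essential input is \cref{T:thickcasisnaturaltrasform}, which guarantees that each $C^{i^{(x)}}$ is a well-defined supernatural transformation in $\catEnd(\modgl)$ over $\k$. Both sides of every affine dot relation are supernatural transformations between endofunctors of the form $\Psi = -\otimes S^{x_1}_{i_1}V\otimes\cdots\otimes S^{x_r}_{i_r}V$. Since these symmetric powers are free as $\k$-modules, each such $\Psi$ is exact and preserves the subcategory of $\k$-free $\gl_n(A)$-supermodules; hence by \cref{L:torsionfreenattransforms} the relevant Hom-spaces in $\catEnd(\modgl)$ are torsion-free over $\k$. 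Consequently, each relation holds over $\k$ if and only if it holds after extension to $\KK$.

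Over $\KK$, \cref{T:Thin-dot-is-equivalent} supplies mutually inverse isomorphisms $\iota_{\KK\otimes\calA}$ and $\kappa_{\KK\otimes\calA}$ identifying $\mathbf{Web}^{\textup{aff}}_{\KK\otimes\calA}$ with $\mathbf{Web}^{\textup{aff},1}_{\KK\otimes\calA}$, and under $\kappa_{\KK\otimes\calA}$ the thick affine dot of thickness $x$ is sent to $\frac{1}{x!}$ times the thin-dot-and-web expression appearing in \cref{whitedotdef}. By \cref{P:Thick-White-Dot-Action}, the $\KK$-linear extension of $\widehat{H}_n$ of \cref{L:hatH} carries that expression to the thick Casimir $C^{i^{(x)}}$. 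Therefore, after extension of scalars to $\KK$, the candidate assignment for $H_n$ factors as the composition of monoidal $\KK$-linear functors $\mathbf{Web}^{\textup{aff}}_{\KK\otimes\calA}\xrightarrow{\kappa_{\KK\otimes\calA}}\mathbf{Web}^{\textup{aff},1}_{\KK\otimes\calA}\xrightarrow{\widehat{H}_n}\catEnd(\modgl)$, and every affine dot relation is satisfied on the nose. By torsion-freeness, these relations descend to $\k$, proving that $H_n$ is a well-defined monoidal superfunctor.

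The main obstacle will be confirming the applicability of \cref{L:torsionfreenattransforms} to each source functor $\Psi$ appearing in the affine dot relations; once one observes that tensoring with the $\k$-free $\gl_n(A)$-supermodule $S^{x_1}_{i_1}V\otimes\cdots\otimes S^{x_r}_{i_r}V$ is both exact and preserves $\k$-free modules, the descent step is essentially formal, and the only genuinely novel content is packaged into \cref{T:thickcasisnaturaltrasform,T:thickcasimirrecursion,P:Thick-White-Dot-Action}.
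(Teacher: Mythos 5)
Your proposal is correct and takes essentially the same approach as the paper: both arguments pass through the thin-dot category over the fraction field $\KK$ via $\kappa$ and $\widehat{H}_n$ (invoking \cref{P:Thick-White-Dot-Action} and \cref{T:thickcasisnaturaltrasform}), and then descend to $\k$ using the torsion-freeness of natural transformation spaces from \cref{L:torsionfreenattransforms}. The only cosmetic difference is that you dispatch the non-affine relations directly via $\widehat{G}_n$ and reserve the descent argument for the dotted relations, whereas the paper applies the descent uniformly to every relation.
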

\begin{proof} Let $\KK$ be the field of fractions for $\k$.  Recall the functor $\kappa: \AffWebAaI \to \KKThinAffWebAaI \cong \KThinAffWebAaI $ given by \cref{T:Affine-to-thin-dot-functor} and the canonical functor $\xi: \catEnd(\modgl) \to {}_{\mathbb{K}}\catEnd(\modgl)$ discussed in \cref{SS:Extension-of-scalars}. Starting with the representation $\widehat{H}_{n}$ given in \cref{L:hatH} and extending scalars linearly yields the representation ${}_{\KK}\widehat{H}_{n}: \KThinAffWebAaI \to {}_{\mathbb{K}}\catEnd(\modgl)$.  These functors give three sides of the following diagram:  
\[
\begin{tikzcd}
\KThinAffWebAaI \arrow{r}{{}_{\KK}\widehat{H}_n}  &  {}_{\mathbb{K}}\catEnd(\modgl)    \\
 \AffWebAaI \arrow[r,dashed]{}{H_{n}}\arrow[u]{}{\kappa}   &  \catEnd(\modgl)  \arrow[u,swap]{}{\xi}
\end{tikzcd}.
\]  The claim is that we can define the functor $H_{n}$ as indicated and complete to a commuting square.
The definition of $H_{n}$ on the generating objects and generating morphisms is clear given what each of the other three functors do to generating objects and morphisms, either by definition or by \cref{P:Thick-White-Dot-Action}. 

 To check that $H_n$ is well-defined, it remains to verify that the defining relations for $\AffWebAaI$ are satisfied.  Given one of the defining relations, move terms to one side of the equality to obtain a $\k$-linear combination of diagrams, $R$, so that the condition $R=0$ is equivalent to the relation being satisfied.   For example, to the relation \cref{KnotholeRel}, we associate the $\k$-linear combination of diagrams:
$$
R = \hackcenter{
\begin{tikzpicture}[scale=0.8]
  \draw[ultra thick, blue] (0,-0.1)--(0,0.1) .. controls ++(0,0.35) and ++(0,-0.35) .. (-0.4,0.6)--(-0.4,0.9);
    \draw[ultra thick, blue]  (-0.4,0.9)--(-0.4,1.2) 
  .. controls ++(0,0.35) and ++(0,-0.35) .. (0,1.7)--(0,1.9);
  \draw[ultra thick, blue] (0,-0.1)--(0,0.1) .. controls ++(0,0.35) and ++(0,-0.35) .. (0.4,0.6)--(0.4,0.9);
    \draw[ultra thick, blue]  (0.4,0.9)--(0.4,1.2) 
  .. controls ++(0,0.35) and ++(0,-0.35) .. (0,1.7)--(0,1.9);
  .. controls ++(0,0.35) and ++(0,-0.35) .. (0,1.7)--(0,1.9);
       \node[above] at (0,1.8) {$\scriptstyle i^{\scriptstyle(x+y)}$};
         \node[below] at (0,-0.1) {$\scriptstyle i^{\scriptstyle(x+y)}$};
             \node[right] at (0.4,0.9) {$\scriptstyle i^{\scriptstyle(y)}$};
               \node[left] at (-0.4,0.9) {$\scriptstyle i^{\scriptstyle(x)}$};
                %\node at (0,0.9) {$\scriptstyle \cdots $};
\end{tikzpicture}}
\;
-
\;
{x+y \choose x}
\hspace{-2mm}
\hackcenter{
\begin{tikzpicture}[scale=0.8]
  \draw[ultra thick, blue] (0,-0.1)--(0,0.9);
  \draw[ultra thick, blue] (0,0.9)--(0,1.9);
     \node[below] at (0,-0.1) {$\scriptstyle i^{\scriptstyle(x+y)}$};
      \node[above] at (0,1.9) {$\scriptstyle i^{\scriptstyle(x+y)}$};
\end{tikzpicture}}.
$$
For a general defining relation, we can express $R$ as a $\k$-linear combination of compositions of generating diagrams.  We write $\tilde{R}$ for the natural transformation in $\catEnd(\modgl)$ given by applying $H_n$ to these generating diagrams, and then forming the corresponding $\k$-linear combination of compositions of natural transformation in the obvious way (i.e., if $H_{n}$ was a well-defined functor we would have $H_{n}(R) = \tilde{R}$).  Checking that $H_n$ is well-defined then amounts to checking that $\tilde{R} = 0$ as a natural transformation in $\catEnd (\modgl)$.  

Say that $R \in \Hom_{\AffWebAaI}\left( {\bf i}^{\bf x}, {\bf j}^{\bf y} \right)$, where ${\bf i}^{\bf x}$ and ${\bf j}^{\bf y}$ are objects in $\AffWebAaI$.     Since the above square commutes on generators, we have $\xi(\tilde{R})= {}_{\KK}\widehat{H}_{n}(\kappa(R)) = 0$ as a morphism in  ${}_{\KK}\catEnd(\modgl)$.  Therefore, the morphism $\tilde{R}$ is in the kernel of the $\k$-linear map induced by $\xi$:
\begin{gather*}
\Hom(-  \otimes S_{\bf i}^{\bf x}, - \otimes S^{\bf y}_{\bf j}) \to
\mathbb K \otimes \Hom(-  \otimes S^{x_{1}}_{i_{1}}V \otimes \dotsb \otimes S^{x_{t}}_{i_{t}}V, - \otimes S^{y_{1}}_{j_{1}}V \otimes \dotsb \otimes S^{y_{t'}}_{j_{t'}}V ), \\
 \eta \mapsto \xi(\eta) = 1_{\mathbb{K}} \otimes \eta.
\end{gather*}
Because the functors in the domain and codomain of $H_n(R)$ are exact and and take $\gl_n(A)$-modules which are free over $\k$ to others of the same kind, \cref{L:torsionfreenattransforms} implies this $\k$-linear map is injective and $\tilde{R} = 0$, as desired.
\color{black}
\end{proof}

\section{The defining representation and the thin subcategory \texorpdfstring{$\WebAaIthin$}{WebAaIthin}}
Throughout this section we continue to let $\k$ be an integral domain.  Our goal is to compute information about the action of $H_{n}(D)$ for certain web diagrams $D$.  This will be used to establish that the family of functors $\left\{H_{n}\right\}_{n\geq 1}$ is asymptotically faithful on those diagrams.

\subsection{PBW bases and filtrations}   Let $\calA = (A, \tr , \psi)$ be a locally unital Frobenius superalgebra with set of distinguished idempotents $I$.  Let $\BasisB$ be a homogeneous basis for $A$ with a fixed total order.

 Let $\fh \subseteq \gl_{n}(A)$ be the Lie subalgebra of diagonal matrices, $\fb \subseteq \gl_{n}(A)$ the subalgebra of upper triangular matricies, $\fu^{+} \subseteq \gl_{n}(A)$ the subalgebra of strictly upper triangular matrices, and $\fu^{-} \subseteq \gl_{n}(A)$ the subalgebra of strictly lower triangular matrices.   There is a triangular decomposition $\gl_{n}(A)  = \fu^{+} \oplus \fh \oplus \fu^{-}$.  Furthermore, $\fu^{+}$ is an ideal of $\fb$ and we have a canonical surjection 
\[
\fb \to \fb /\fu^{+} \cong \fh.
\]  In particular, given a right $\fh$-supermodule we can and will view it as a right $\fb$-supermodule via inflation through this homomorphism.

For short, write $h_{k}^{a} = E_{k,k}^{a}$ for $k=1, \dotsc , n$ and $a \in A$.  Then,
\begin{equation}\label{E:gbasis}
\left\{E_{i,j}^{b} \mid 1 \leq i, j \leq n, b \in \BasisB  \right\}
\end{equation}
and
\begin{equation}\label{E:hbasis}
\left\{h_{k}^{b} \mid 1 \leq k \leq n, b \in \BasisB  \right\}
\end{equation}
are homogeneous bases for $\gl_{n}(A)$ and $\fh$, respectively. The total order on $\BasisB$ and the natural ordering on $[1,n]$ can be used to define a total order on the bases for $\gl_{n}(A)$ by declaring  $E_{p_{1}, q_{1}}^{b_{1}} \leq E_{p_{2},q_{2}}^{b_{2}}$ if and only if $(p_{1}, q_{1}, b_{1})$ is less than or equal to $(p_{2},q_{2},b_{2})$ in the lexicographic ordering.  With this ordering there is a corresponding PBW basis for $U(\gl_{n}(A) )$ and $U(\fh )$ given by the set of all ordered monomials in the above bases for $\gl_{n}(A)$ and $\fh$, respectively.  
Specifically, a PBW basis element of $U(\fh )$ is an ordered product
\begin{equation}\label{E:U(h)-basis}
\prod_{\substack{1 \leq i \leq n \\ b \in \BasisB}} \left(h^{b}_{i} \right)^{t_{i,b}},
\end{equation} where all but finitely many $t_{i,b}$ are equal to zero, and 
where $t_{i,b} \in \Z_{\geq 0}$ if $\bar{b}=\bar{0}$, and $t_{i,b} \in \{0,1 \}$ if $\bar{b}=\bar{1}$. We define the \emph{degree} of this PBW basis element to be $\sum_{1 \leq i \leq n, b \in \BasisB} t_{i,b}$.    For a nonnegative integer $t$, set $ U(\fh )^{\leq t}$ to be the $\k $-span of the PBW basis elements of degree less than or equal to $t$. The commutator formula,
\begin{equation}\label{E:hcommutator}
h_{k}^{b}h_{r}^{a} = (-1)^{\bar{b}\bar{a}} h_{r}^{b}h_{k}^{a} + \delta_{k,r}h_{k}^{[b,a]},
\end{equation}
shows that there is a multiplicative filtration
\[
0 \subseteq U(\fh )^{\leq 0} \subseteq U(\fh )^{\leq 1} \subseteq U(\fh )^{\leq 2} \subseteq (\fh )^{\leq 3} \subseteq \dotsb .
\]  For $t \geq  0$ we write $U(\fh )^{t}$ for the $\k$-span of the PBW basis elements of degree exactly $t$.  Observe that the associated graded superalgebra $\gr  U(\fh )$ is the free supercommutative $\k$-superalgebra on generators $\left\{h_{k}^{b} \mid 1 \leq k \leq  n, b \in \BasisB  \right\}$.  In particular, $U(\fh )^{t}$ is isomorphic as a $\k$-module in an obvious way  to the degree $t$ component of $\gr U(\fh )$ and we will identify them.

We define the \emph{generic Verma module}, $\calM$, to be the  $(U(\fh ), U(\gl_{n}(A) ))$-bimodule 
\[
\calM :=  U(\fh ) \otimes_{U(\fb)} U(\gl_{n}(A) ) .
\] The PBW theorem applied to $\calM $ shows that
\[
\calM  \cong U(\fh ) \otimes_{\k} U(\fu^{-} )
\] as left $U(\fh )$-modules.  In particular, $\calM $ is a free $U(\fh )$-module and is a filtered by setting  $\calM ^{\leq t} = U(\fh )^{\leq t} \otimes U(\fu^{-})$ for $t \geq  0$.  For short, given $t \geq 0$, write $\calM^{t}$ for $U(\fh )^{t} \otimes U(\fu^{-})$.

\subsection{The Casimir map and filtrations}

Recall that $V=V_{n} = A^{\oplus n}$ denotes the natural right $\gl_{n}(A)$-module. Recall from \cref{SS:ThinCasimir} that for each $\gl_{n}(A)$-module $M$ there is a homomorphism
\begin{align*}
C=C_M: M \otimes V \to M \otimes V
\end{align*}
given on homogeneous $m \in M$ and $v_{t}^{\alpha} \in V$ by  
\begin{align}\label{E:CasimirAction}
C_M(m \otimes v^\alpha_t) = \sum_{\substack{b \in \BasisB \\ r \in [1,n]}} (-1)^{\bar \alpha \bar b}mE_{r,t}^{b \alpha} \otimes v_r^{b^\vee} =\sum_{\substack{b \in \BasisB \\ r \in [1,n]}} (-1)^{\bar \alpha \bar b}mE_{r,t}^{b } \otimes v_r^{[{}_{1}\alpha]b^\vee}.
\end{align}
The second sum is obtained from the first by using the teleporter formula given in \cref{transrem}.

\begin{lemma}\label{L:E-on-filtration} Let $\alpha \in A$ and let $1 \leq r,s \leq n$. Then the right action of $E_{r,s}^{\alpha}$ on $\calM$ satisfies $\calM^{t}E_{r,s}^{\alpha} \subseteq  \calM^{t}$ if $r > s$, and $\calM^{t}E_{r,s}^{\alpha} \subseteq  \calM^{\leq (t+1)}$ if $r \leq s$.
\end{lemma}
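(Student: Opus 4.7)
The plan is to decompose arbitrary elements of $\calM^{t}$ via the PBW isomorphism $\calM \cong U(\fh) \otimes_{\k} U(\fu^{-})$, expressing them as finite $\k$-linear combinations of pure tensors $h_{0}\otimes u_{0}$ with $h_{0}\in U(\fh)^{t}$ and $u_{0}\in U(\fu^{-})$, and then analyze the right action of $E_{r,s}^{\alpha}$ on such a tensor according to where $E_{r,s}^{\alpha}$ sits in the triangular decomposition $\gl_{n}(A)=\fu^{+}\oplus\fh\oplus\fu^{-}$. I would split the argument into three cases accordingly. The first case ($r>s$, so $E_{r,s}^{\alpha}\in\fu^{-}$) is immediate: right multiplication touches only the $U(\fu^{-})$ factor, so $(h_{0}\otimes u_{0})\cdot E_{r,s}^{\alpha} = h_{0}\otimes u_{0}E_{r,s}^{\alpha}\in U(\fh)^{t}\otimes U(\fu^{-})=\calM^{t}$.

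For the second case ($r=s$, so $E_{r,s}^{\alpha}=h_{r}^{\alpha}\in\fh$), I would commute $h_{r}^{\alpha}$ leftward through $u_{0}$. Using the commutator $[E_{p,q}^{\beta},h_{r}^{\alpha}] = \delta_{q,r}E_{p,r}^{\beta\alpha} - (-1)^{\bar{\beta}\bar{\alpha}}\delta_{p,r}E_{r,q}^{\alpha\beta}$, a direct inspection shows that whenever $p>q$, each surviving Kronecker term is again strictly lower triangular; in short, $[\fu^{-},\fh]\subseteq\fu^{-}$. Iterating gives $u_{0}h_{r}^{\alpha} = \pm h_{r}^{\alpha}u_{0}+w$ with $w\in U(\fu^{-})$, and moving $h_{r}^{\alpha}$ across the tensor yields $\pm h_{0}h_{r}^{\alpha}\otimes u_{0}+h_{0}\otimes w$; the multiplicativity of the filtration via \cref{E:hcommutator} then places this in $U(\fh)^{\leq t+1}\otimes U(\fu^{-})=\calM^{\leq t+1}$.

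The main obstacle is the third case ($r<s$, so $E_{r,s}^{\alpha}\in\fu^{+}$), which I would attack by induction on the distance $d:=s-r\geq 1$. After commuting $E_{r,s}^{\alpha}$ to the leftmost position past $u_{0}$, the leading term $\pm h_{0}E_{r,s}^{\alpha}\otimes u_{0}$ vanishes because $\fu^{+}$ is killed by the inflation $\fb\to\fh$, leaving only commutator residues. Analyzing $[E_{p,q}^{\beta},E_{r,s}^{\alpha}]$ for $p>q$, one sees that every surviving summand either lies in $\fu^{-}\cup\fh$ or in $\fu^{+}$ with strictly smaller distance: when $q=r$ one obtains $E_{p,s}^{\beta\alpha}$ with $p>r$, which is in $\fu^{-}\cup\fh$ unless $r<p<s$ (in which case the distance drops to $s-p<d$); when $p=s$ one obtains $E_{r,q}^{\alpha\beta}$ with $q<s$, similarly in $\fu^{-}\cup\fh$ unless $r<q<s$ (distance $q-r<d$). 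The $\fu^{-}\cup\fh$ residues are absorbed by appealing to the first two cases, while the $\fu^{+}$ residues of smaller distance are handled by the inductive hypothesis; since $d$ drops strictly at each recursive invocation and is bounded below by $1$, the induction terminates. The remaining bookkeeping, moving a single $\fh$ or $\fu^{\pm}$ factor that sits among $\fu^{-}$ letters into the correct position, is routine: any expression $(h_{0}\otimes u')\cdot c\cdot u''$ with $c\in \fh\cup\fu^{\pm}$ and $u',u''\in U(\fu^{-})$ reduces via the established cases to an element of $\calM^{\leq t+1}$, after which right multiplication by $u''$ preserves that filtration by the first case.
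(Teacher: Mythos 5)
Your argument is correct, and it proves the lemma, but it takes a genuinely different inductive route from the paper's. Both proofs decompose $m \in \calM^{t}$ via the PBW isomorphism $\calM \cong U(\fh) \otimes_{\k} U(\fu^{-})$, handle $r>s$ trivially, and use the commutator formula for $\gl_n(A)$. The divergence is in the inductive parameter for the remaining cases: the paper inducts on the length $u = \sum u_{i,j,b}$ of the $U(\fu^{-})$ monomial, peeling off only the rightmost factor $E_{p,q}^{c}$ at each step and applying the hypothesis to the shorter word $Y$ (for possibly different but still upper-or-diagonal index pairs $(p,s)$, $(r,q)$). You instead split $r=s$ from $r<s$; for $r=s$ you observe the structural fact $[\fu^{-},\fh]\subseteq\fu^{-}$, which lets you commute $h_r^\alpha$ all the way across $u_0$ in one pass without any induction, and for $r<s$ you induct on the "distance" $d = s-r$, again moving $E_{r,s}^\alpha$ all the way left and observing that every commutator residue is either in $\fu^{-}\cup\fh$ or in $\fu^{+}$ with strictly smaller distance. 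Both inductions are well-founded: yours bottoms out at $d=1$ (where all residues lie in $\fu^{-}\cup\fh$), and the paper's at $u=0$. The paper's approach is somewhat more economical because a single induction handles $r\leq s$ uniformly, whereas yours requires $r=s$ to be settled separately; on the other hand, your approach exposes the clean fact $[\fu^-,\fh]\subseteq\fu^-$ and tracks "how far above the diagonal" the operator sits, which is an independently natural quantity. Two minor notational remarks: your "leading term $\pm h_0 E_{r,s}^\alpha \otimes u_0$" is more precisely $\pm h_0 \otimes E_{r,s}^\alpha u_0 = \pm h_0\,\overline{E_{r,s}^\alpha} \otimes u_0$, which vanishes because the image $\overline{E_{r,s}^\alpha}$ in $\fb/\fu^+ \cong \fh$ is zero for $r<s$; and when you appeal to case 1 on $\calM^{\leq(t+1)}$, this is justified by applying the lemma's $\calM^{t'}$-statement term by term for $t'\leq t+1$.
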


\begin{proof}   Let $m \in \calM^{t}$.  By linearity, we may assume without loss that
\[
 m = \prod_{\substack{1 \leq k \leq n \\ b \in \BasisB}} \left(h^{b}_{k} \right)^{t_{k,b}} \otimes \prod_{\substack{1 \leq j < i \leq n \\ b \in \BasisB}} \left(E^{b}_{i,j} \right)^{u_{i,j,b}}
\] where the two products are PBW basis elements and $t = \sum_{1 \leq k \leq n, b \in \BasisB} t_{k,b}$.  If $r > s$, then since $U(\fu^{-})$ is a subalgebra one has $mE_{r,s}^{\alpha} \in \prod_{\substack{1 \leq k \leq n \\ b \in \BasisB}} \left(h^{b}_{k} \right)^{t_{k,b}} \otimes U(\fu^{-}) = \calM^{t}$, as claimed.

For $r \leq s$ we argue by induction on $u:= \sum_{1 \leq j < i \leq n, b \in \BasisB} u_{i,j,b}$.
Consider the base case of $u = 0$.
If $r=s$, then 
\[
mE_{r,s}^{\alpha} = mh_{r}^{\alpha} = \left( \prod_{\substack{1 \leq k \leq n \\ b \in \BasisB}} \left(  h^{b}_{k} \right)^{t_{k,b}}   \otimes 1 \right) h_{r}^{\alpha} = \left( \prod_{\substack{1 \leq k \leq n \\ b \in \BasisB}} \left(  h^{b}_{k} \right)^{t_{k,b}}  \right) h_{r}^{\alpha} \otimes 1
\]  That this lies in $\calM^{\leq (t+1)}$ follows from the multiplicativity of the filtration on $U(\fh )$.  If $r < s$, then 
\[
 mE_{r,s}^{\alpha} = \left(  \prod_{\substack{1 \leq k \leq n \\ b \in \BasisB}} \left( h^{b}_{k} \right)^{t_{k,b}}   \otimes 1\right) E_{r,s}^{\alpha} =  \left(  \prod_{\substack{1 \leq k \leq n \\ b \in \BasisB}} \left( h^{b}_{k} \right)^{t_{k,b}}  \right) E_{r,s}^{\alpha} \otimes 1 = 0.
\]

Now consider the case of $u > 0$. For short, let $X=\prod_{\substack{1 \leq k \leq n \\ b \in \BasisB}} \left(h^{b}_{k} \right)^{t_{k,b}}$.  Let $E_{p,q}^{c}$ be the rightmost term in the monomial $\prod_{\substack{1 \leq j < i \leq n \\ b \in \BasisB}} \left(E^{b}_{i,j}\right)^{u_{i,j,b}}$.  Then 
\[
\prod_{\substack{1 \leq j < i \leq n \\ b \in \BasisB}} \left(E^{b}_{i,j}\right)^{u_{i,j,b}} = YE_{p,q}^{c},
\]
 where $Y=\prod_{\substack{1 \leq j < i \leq n \\ b \in \BasisB}} \left(E^{b}_{i,j}\right)^{u'_{i,j,b}}$ with $u'_{p,q,c}=u_{p,q,c}-1$ and $u'_{i,j,b}=u_{i,j,b}$ for all other triples $(i,j,b)$.  In particular, $Y$ is a PBW basis element and $u':= \sum_{1 \leq j < i \leq n, b \in \BasisB} u'_{i,j,b} = u - 1$.  In this case, applying the commutator formula yields:
\begin{align*}
mE_{r,s}^{\alpha} & = X \otimes YE_{p,q}^{c}E_{r,s}^{\alpha} \\
            & = X \otimes Y\left((-1)^{\bar{c}\bar{\alpha}}E_{r,s}^{\alpha}E_{p,q}^{c} +\delta_{r,q}E_{p,s}^{c\alpha}  - (-1)^{\bar{\alpha}\bar{c}}  \delta_{s,p}E_{r,q}^{\alpha c} \right) \\
            & = (-1)^{\bar{c}\bar{\alpha}} X \otimes  YE_{r,s}^{\alpha}E_{p,q}^{c} 
  +  \delta_{r,q} X \otimes Y  E_{p,s}^{c\alpha} 
  - (-1)^{\bar{\alpha}\bar{c}}\delta_{s,p} X \otimes YE_{r,q}^{\alpha c} . 
\end{align*}  Since $u' < u$, it follows by induction that $X \otimes YE_{r,s}^{\alpha} \in \calM^{\leq (t+1)}$ and, hence, $X \otimes  YE_{r,s}^{\alpha}E_{p,q}^{c} \in  \calM^{\leq (t+1)}$ from observing that $ p > q$ and applying earlier arguments.  For the second term of the sum,  $X \otimes Y  E_{p,s}^{c\alpha} \in \calM^{\leq (t+1)}$ because either $p > s$ and it follows from earlier arguments, or $p \leq  s$ and the inductive assumption applies.  Identical reasoning applies to the third term.  Since all three terms lie in $\calM^{\leq (t+1)}$, the claim follows.
\end{proof}

If $T$ is a right $U(\gl_{n}(A) )$-module, then $\calM  \otimes T$ is naturally a $((U(\fh ), U(\gl_{n}(A) ))$-bimodule where $U(\fh )$ acts on the left on $\calM $ and $U(\gl_{n}(A) )$ acts on the right via its coproduct. Furthermore, $\calM \otimes  T$ is a filtered left $U(\fh )$-module by declaring $\left(\calM \otimes T \right)^{\leq t} = \calM^{\leq t} \otimes T$. The next result describe how the map $C$ interacts with certain filtrations of this kind.

\begin{lemma} If $U$ is a right $U(\gl_{n}(A) )$-module, then  the action of 
\[
C = C_{\calM  \otimes U }: \left( \calM  \otimes U \right) \otimes V \to \left( \calM  \otimes U \right) \otimes V
\] satisfies 
\[
C\left( \left( \calM^{\leq t} \otimes U \right) \otimes V\right) \subseteq  \left( \calM^{\leq (t+1)} \otimes U \right) \otimes V
\]
for all $t \geq 0$.
\end{lemma}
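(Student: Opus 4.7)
The plan is to reduce the statement directly to the previous lemma by unpacking the two ingredients: the defining formula \cref{E:CasimirAction} for the Casimir, and the coproduct on $U(\gl_n(A))$ that governs the right action on the tensor product $\calM \otimes U$.

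First, I would take a homogeneous element of the form $(m_0 \otimes u) \otimes v_s^\alpha$ with $m_0 \in \calM^{\leq t}$, $u \in U$ and $v_s^\alpha \in V$, and apply the formula
\begin{equation*}
C\bigl((m_0 \otimes u) \otimes v_s^\alpha\bigr) = \sum_{\substack{b \in \BasisB \\ r \in [1,n]}} (-1)^{\bar\alpha \bar b}\,(m_0 \otimes u) E_{r,s}^{b\alpha} \otimes v_r^{b^\vee}.
\end{equation*}
The local finiteness of $A$ ensures only finitely many summands are nonzero. Expanding the action of $E_{r,s}^{b\alpha}$ on $\calM \otimes U$ via the coproduct $\Delta(X) = X \otimes 1 + 1 \otimes X$ gives
\begin{equation*}
(m_0 \otimes u)\,E_{r,s}^{b\alpha} = (-1)^{(\bar\alpha+\bar b)\bar u}\, m_0 E_{r,s}^{b\alpha} \otimes u \;+\; m_0 \otimes u\, E_{r,s}^{b\alpha}.
\end{equation*}
So $C\bigl((m_0 \otimes u) \otimes v_s^\alpha\bigr)$ splits as a sum of two types of terms.

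Next, I would handle each piece separately. The second piece keeps the $\calM$-factor equal to $m_0 \in \calM^{\leq t} \subseteq \calM^{\leq (t+1)}$, and so it already lies in $(\calM^{\leq (t+1)} \otimes U) \otimes V$ (indeed, in $(\calM^{\leq t} \otimes U) \otimes V$). For the first piece, the key input is \cref{L:E-on-filtration}: regardless of whether $r > s$ or $r \leq s$, we have $m_0 E_{r,s}^{b\alpha} \in \calM^{\leq(t+1)}$ (with the stronger containment $\calM^{\leq t}$ in the case $r>s$). Therefore every term in the first piece also lives in $(\calM^{\leq (t+1)} \otimes U) \otimes V$, and summing over the (finitely many nonzero) indices $b,r$ preserves this filtration level.

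There is no real obstacle here: the argument is essentially a bookkeeping exercise, invoking \cref{L:E-on-filtration} to handle how the filtration on $\calM$ shifts under the right action by the elements $E_{r,s}^{b\alpha}$ appearing in the Casimir sum. Only the action on the $\calM$-tensorand can raise the filtration degree, and it does so by at most one, which is exactly the bound asserted.
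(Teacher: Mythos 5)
Your proof is correct and follows essentially the same approach as the paper: expand $C$ on a homogeneous simple tensor using the Casimir formula, split the action of $E_{r,s}^{b\alpha}$ on $\calM \otimes U$ via the coproduct into the $\calM$-side and $U$-side pieces, and then apply Lemma~\ref{L:E-on-filtration} to control the $\calM$-side piece while observing the $U$-side piece stays at filtration level $t$.
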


\begin{proof}  Let $1 \leq  k \leq  n$ and $\alpha \in A$. It suffices to consider $ (m \otimes u) \otimes v^{\alpha}_{k} \in \left( \calM^{ t} \otimes U \right) \otimes V  $.  Applying the definition of $C$ along with the action of $\gl_{n}(A)$ on $\calM  \otimes U $ yields:
\begin{align*}
C\left( (m \otimes u) \otimes v^{\alpha}_{k}\right) & = \sum_{\substack{b \in \BasisB \\ r \in [1,n]}} (-1)^{\bar{\alpha} \bar{b}}(m \otimes u)E_{r,k}^{b \alpha} \otimes v_r^{b^\vee}\\
  &=  \sum_{\substack{b \in \BasisB \\ r \in [1,n]}} (-1)^{\overline{\alpha} \overline{b} + (\overline{b}+\overline{\alpha})\overline{u}}(mE_{r,k}^{b \alpha} \otimes u) \otimes v_r^{b^\vee}  + \sum_{\substack{b \in \BasisB \\ r \in [1,n]}} (-1)^{\bar{\alpha} \bar{b}}m \otimes \left( uE_{r,k}^{b \alpha} \right)\otimes v_r^{b^\vee}.
\end{align*}   By \cref{L:E-on-filtration} the terms in the first sum lie in $\left( \calM^{\leq (t+1)} \otimes U \right) \otimes V$ and, self-evidently, the terms in the second sum lie in $\left( \calM^{\leq t} \otimes  U \right) \otimes V$.  The result follows.
\end{proof}

\subsection{On the image of the Casimir operator under the defining representation}\label{imcasop}

Let us establish some useful notation.  Given a filtered $\k$-module $T$, $t \geq 0$, and $x, y \in T$, we write 
\begin{equation}\label{E:equiv-def}
x \equiv_{t+1} y
\end{equation}
if $x,y \in  T^{\leq (t+1)}$ and if $x-y \in  T^{\leq t}$.

As an example of this notation that will be used later, say $c_{1}, \dotsc , c_{t} \in \BasisB$ with $c_{1} \leq c_{2} \leq \dotsb \leq c_{t}$ in our fixed ordering on $\BasisB$, and set $1 \leq  i_{1} \leq \dotsb \leq  i_{t} \leq n$.  The product of $h_{i_{1}}^{c_{1}}, \dotsc , h_{i_{t}}^{c_{t}}$ in the given order is the unique PBW basis element of $U(\fh )$ that can be formed as a product of these elements.  For any other ordering, it follows from \eqref{E:hcommutator} that 
\begin{equation}\label{E:permute-the-h}
h_{i_{\tau(1)}}^{c_{\tau(1)}}\dotsb h_{i_{\tau(t)}}^{c_{\tau(t)}} \equiv_{t} \pm h_{i_{1}}^{c_{1}}\dotsb h_{i_{t}}^{c_{t}},
\end{equation} where $\tau \in S_{t}$ and where the sign is determined by $\tau$ and the parities of $c_{1}, \dotsc , c_{t}$.

\begin{lemma} Let $u \in U(\fh )^{\leq t}$, $1 \leq k \leq n$, and $\alpha \in A$.  The map 
\[
C=C_{\calM} : \calM \otimes V \to \calM \otimes V
\] satisfies
\begin{align*}
C\left( (u \otimes 1) \otimes v_{k}^{\alpha} \right) & \equiv_{t+1} \sum_{\substack{b \in \BasisB}} (-1)^{\bar \alpha \bar b}(uh_{k}^{b \alpha} \otimes 1) \otimes v_{k}^{b^\vee} \\
  & \equiv_{t+1}\sum_{\substack{b \in \BasisB}} (-1)^{\bar \alpha \bar b}(uh_{k}^{b } \otimes 1) \otimes v_{k}^{({}_{1}\alpha) b^\vee}. 
\end{align*}
\end{lemma}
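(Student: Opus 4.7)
The plan is to directly expand $C((u\otimes 1)\otimes v_k^\alpha)$ via the defining formula (\ref{E:CasimirAction}) and then sort the resulting terms by how the matrix unit $E_{r,k}^{b\alpha}$ acts on $u\otimes 1\in\calM$. Since the right action in $\calM = U(\fh)\otimes_{U(\fb)}U(\gl_n(A))$ is $(u\otimes 1)\cdot E_{r,k}^{b\alpha}=u\otimes E_{r,k}^{b\alpha}$, I will split the sum into three cases based on the relative sizes of $r$ and $k$.

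In the case $r<k$, the matrix unit $E_{r,k}^{b\alpha}$ lies in $\fu^+\subseteq\fb$, so its image in $\fh$ under the projection $\fb\to\fh$ is zero; pulling it across the balanced tensor yields $u\otimes E_{r,k}^{b\alpha}=u\cdot 0\otimes 1=0$, so these terms contribute nothing. In the case $r=k$, $E_{k,k}^{b\alpha}=h_k^{b\alpha}\in\fh$, so it slides across to give $u\otimes h_k^{b\alpha}=uh_k^{b\alpha}\otimes 1$, which lies in $\calM^{\leq(t+1)}$ by the multiplicativity of the filtration on $U(\fh)$. In the case $r>k$, the element $E_{r,k}^{b\alpha}$ is strictly lower triangular, so $u\otimes E_{r,k}^{b\alpha}\in U(\fh)^{\leq t}\otimes U(\fu^-)\cdot E_{r,k}^{b\alpha}\subseteq\calM^{\leq t}$ (this is the $r>s$ half of \cref{L:E-on-filtration}). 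Collecting, everything outside the $r=k$ terms sits in $\calM^{\leq t}\otimes V$, which establishes the first equivalence:
\begin{equation*}
C((u\otimes 1)\otimes v_k^\alpha)\equiv_{t+1}\sum_{b\in\BasisB}(-1)^{\bar\alpha\bar b}(uh_k^{b\alpha}\otimes 1)\otimes v_k^{b^\vee}.
\end{equation*}

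For the second equivalence (which is in fact an equality), I will apply a teleporter identity to move $\alpha$ from the $A$-factor inside $h_k^{b\alpha}$ across to the $A$-factor inside $v_k^{b^\vee}$, at the cost of a Nakayama twist. Concretely, viewing $\sum_b b\otimes b^\vee$ as the Casimir element of \cref{SS:Casimir-elements-in-A} and applying (\ref{transport}) (or equivalently, combining \cref{DelC,taulem} to convert right multiplication by $\alpha$ on the left factor into left multiplication by $\psi^{-1}(\alpha)={}_1\alpha$ on the right factor) gives in $A\otimes A$ the identity
\begin{equation*}
\sum_{b\in\BasisB}(-1)^{\bar\alpha\bar b}\, b\alpha\otimes b^\vee\;=\;\sum_{b\in\BasisB}(-1)^{\bar\alpha\bar b}\, b\otimes ({}_1\alpha)\,b^\vee.
\end{equation*}
Pushing this through the $\k$-linear map $A\otimes A\to\calM\otimes V$ sending $x\otimes y\mapsto (uh_k^x\otimes 1)\otimes v_k^y$ converts the displayed right-hand side of the first equivalence into $\sum_b(-1)^{\bar\alpha\bar b}(uh_k^{b}\otimes 1)\otimes v_k^{({}_1\alpha)b^\vee}$, completing the proof.

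The only subtlety is the vanishing in the $r<k$ case, which relies on carefully reading the balanced tensor product defining $\calM$ and the fact that $\fu^+$ acts as zero through $\fb\to\fh$; everything else is bookkeeping with the PBW filtration on $U(\fh)$ and the standard transporter rule. In particular, no new signs arise beyond those already tracked in the teleporter identity, since all manipulations take place in the left $U(\fh)$-factor of $\calM$ with $V$ passively carried along.
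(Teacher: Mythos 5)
Your proof is correct and follows essentially the same route as the paper's: expand $C$ via \cref{E:CasimirAction}, kill the $r<k$ terms (the paper asserts this vanishing without the $\fb\to\fh$ justification you supply), put $r=k$ terms in $\calM^{\leq t+1}$ and $r>k$ terms in $\calM^{\leq t}$, then obtain the second line from the alternate form of the Casimir formula via the teleporter identity. Your observation that the second $\equiv_{t+1}$ is actually an equality is correct and a small sharpening of what is stated.
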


\begin{proof} By definition, 
\[
C((u \otimes 1) \otimes v_{k}^{\alpha}) = \sum_{\substack{b \in \BasisB \\ r \in [1,n]}} (-1)^{\bar \alpha \bar b}(u \otimes 1)E_{r,k}^{b \alpha} \otimes v_r^{b^\vee}.
\]  Since $(u \otimes 1)E_{r,k}^{b \alpha} =0$ whenever $r<k$, we have
\[
C((u \otimes 1) \otimes v_{t}^{\alpha}) = \sum_{\substack{b \in \BasisB \\ k = r}} (-1)^{\bar \alpha \bar b}(uh_{k}^{b \alpha} \otimes 1) \otimes v_{k}^{b^\vee}+\sum_{\substack{b \in \BasisB \\ 1 \leq k < r \leq n }} (-1)^{\bar \alpha \bar b}(u \otimes E_{r,k}^{b \alpha}) \otimes v_r^{b^\vee}.
\]  Since the first sum lies in $\calM^{\leq (t+1)}$ and the second sum lies in $\calM^{\leq t}$, the first claimed $\equiv_{t+1}$ follows.  The second formula is argued in the same fashion using the other formula for $C$ given in \cref{E:CasimirAction}.
\end{proof}

For $1 \leq k \leq d$, let $C_{k}$ denote the operator given by taking the sum of all natural transformations given diagrams on thin strands which are an affine dot on the $k$th strand:
\[
C_{k}= \sum_{(i_{1}, \dotsc , i_{d}) \in I^{d}} H_{n}\left(
\hackcenter{
\begin{tikzpicture}[scale=.8]
  \draw[ultra thick, red] (-1.5,-0.1)--(-1.5,1.1);
     \node at  (-0.85,0.5)  {$\cdots$};
     \node[below] at (-1.5,-0.1) {$ \scriptstyle i_{1}^{ \scriptstyle (1)}$};
      \node[above] at (-1.5,1.1) {$ \scriptstyle i_{1}^{ \scriptstyle (1)}$};
  \draw[ultra thick, blue] (0,-0.1)--(0,1.1);
   \draw[thick, fill=black]  (0,0.5) circle (5pt);
     \node[below] at (0,-0.1) {$ \scriptstyle i_{k}^{ \scriptstyle (1)}$};
      \node[above] at (0,1.1) {$ \scriptstyle i_{k}^{ \scriptstyle (1)}$};
        \draw[ultra thick, violet] (1.5,-0.1)--(1.5,1.1);
     \node at  (0.85,0.5)  {$\cdots$};
     \node[below] at (1.5,-0.1) {$ \scriptstyle i_{d}^{ \scriptstyle (1)}$};
      \node[above] at (1.5,1.1) {$ \scriptstyle i_{d}^{ \scriptstyle (1)}$};
\end{tikzpicture}} \right).
\]
Similarly, given $1 \leq p \neq q \leq d$, let $\sigma_{p,q}$ denote operator given by taking the sum of all natural transformations given by diagrams on thin strands which are the crossing of the $p$th and $q$th strands:
\[
\sigma_{p,q}= \sum_{(i_{1}, \dotsc , i_{d}) \in I^{d}} H_{n}\left(
\hackcenter{
\begin{tikzpicture}[scale=.8]
  \draw[ultra thick, violet] (-1.5,-0.1)--(-1.5,1.1);
     \node at  (-0.95,0.5)  {$\cdots$};
     \node[below] at (-1.5,-0.1) {$ \scriptstyle i_{1}^{ \scriptstyle (1)}$};
      \node[above] at (-1.5,1.1) {$ \scriptstyle i_{1}^{ \scriptstyle (1)}$};
  \draw[ultra thick,red] (0.4,-0.1)--(0.4,0.1) .. controls ++(0,0.35) and ++(0,-0.35) .. (-0.4,0.9)--(-0.4,1.1);
  \draw[ultra thick,blue] (-0.4,-0.1)--(-0.4,0.1) .. controls ++(0,0.35) and ++(0,-0.35) .. (0.4,0.9)--(0.4,1.1);
      \node[above] at (-0.4,1.1) {$ \scriptstyle i_{q}^{ \scriptstyle (1)}$};
      \node[above] at (0.4,1.1) {$ \scriptstyle i_{p}^{ \scriptstyle (1)}$};
       \node[below] at (-0.4,-0.1) {$ \scriptstyle i_{p}^{ \scriptstyle (1)}$};
      \node[below] at (0.4,-0.1) {$ \scriptstyle i_{q}^{ \scriptstyle (1)}$};
     \node at  (0.05,0.9)  {$\cdots$};
     \node at  (0.05,0.1)  {$\cdots$};
        \draw[ultra thick, orange] (1.5,-0.1)--(1.5,1.1);
     \node at  (0.95,0.5)  {$\cdots$};
     \node[below] at (1.5,-0.1) {$ \scriptstyle i_{d}^{ \scriptstyle (1)}$};
      \node[above] at (1.5,1.1) {$ \scriptstyle i_{d}^{ \scriptstyle (1)}$};
\end{tikzpicture}} \right).
\]
By local finiteness, $C_{k}$ and $\sigma_{p,q}$ give well-defined endomorphisms $C_{k,M}$ and $\sigma_{p,q,M}$, respectively, of $M \otimes  V^{\otimes d}$ for any $\gl_{n}(A)$-module $M$.  Moreover, on $\calM \otimes V^{\otimes d}$, the map given by $\sigma_{p,q}$ only acts on the vectors of $V^{\otimes d}$ and, hence, preserves filtration degrees.

\begin{lemma}\label{L:Affine-dot-r-times} Let $u \in  U(\fh )^{t}$, $1 \leq k \leq d$, and $\alpha_{1}, \dotsc , \alpha_{d} \in A$.  Then the map
\[
C_{k} = C_{k,\calM}: \calM \otimes V^{\otimes d} \to \calM \otimes V^{\otimes d}
\]
 satisfies 
\begin{align*}
& C_{k}\left( (u \otimes 1) \otimes v_{t_{1}}^{\alpha_{1}}  \otimes \dotsb \otimes v_{t_{d}}^{\alpha_{d}}\right)\\
& \hspace{1in}  \equiv_{t+1} \sum_{\substack{b \in \BasisB}} (-1)^{\overline{\alpha}_{k} \overline{b} +  \overline{b}(\overline{\alpha}_{1}+\dotsb + \overline{\alpha}_{k-1}) }(uh_{t_{k}}^{b } \otimes 1) \otimes v_{t_{1}}^{\alpha_{1}} \otimes \dotsb \otimes  v_{t_{k}}^{({}_{1}\alpha_{k}) b^\vee}\otimes \dotsb  \otimes v_{t_{d}}^{\alpha_{d}} 
\end{align*}

\end{lemma}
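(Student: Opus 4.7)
The plan is to identify $C_{k,\calM}$ with a single thin Casimir acting in the $k$-th tensor slot, and then bootstrap from the two preceding lemmas together with \cref{L:E-on-filtration}. The monoidal structure of $\catEnd(\modgl)$ and the definition of $H_n$ from \cref{Hthm} give, after the usual reassociation,
\[
C_{k,\calM} \;=\; C_{\calM \otimes V^{\otimes(k-1)}} \otimes \id_{V^{\otimes(d-k)}}.
\]
Setting $U := V^{\otimes(k-1)}$ and $w := v_{t_1}^{\alpha_1} \otimes \cdots \otimes v_{t_{k-1}}^{\alpha_{k-1}}$, the task reduces to computing $C_{\calM \otimes U}\bigl((u \otimes 1 \otimes w) \otimes v_{t_k}^{\alpha_k}\bigr)$ modulo $\calM^{\leq t} \otimes U \otimes V$, after which I tensor the result on the right with $v_{t_{k+1}}^{\alpha_{k+1}} \otimes \cdots \otimes v_{t_d}^{\alpha_d}$.

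I would feed this into the second form of the Casimir formula from \cref{E:CasimirAction},
\[
C_{\calM \otimes U}\bigl((u \otimes 1 \otimes w) \otimes v_{t_k}^{\alpha_k}\bigr)
= \sum_{\substack{b \in \BasisB \\ r \in [1,n]}} (-1)^{\bar\alpha_k \bar b}\, (u \otimes 1 \otimes w) E_{r,t_k}^{b} \otimes v_r^{({}_1\alpha_k) b^\vee},
\]
and then expand the right action of $E_{r,t_k}^{b}$ on $u \otimes 1 \otimes w$ via the iterated coproduct. This yields one summand in which $E_{r,t_k}^{b}$ acts on the $\calM$-factor, carrying the Koszul sign $(-1)^{\bar b(\bar\alpha_1 + \cdots + \bar\alpha_{k-1})}$ from passing through $w$, together with $k-1$ further summands in which $E_{r,t_k}^{b}$ hits one of the vectors $v_{t_j}^{\alpha_j}$. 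The latter summands leave the $\calM$-factor alone and therefore live in $\calM^{\leq t} \otimes U \otimes V$; they may be dropped modulo $\equiv_{t+1}$. (The filtration-preservation lemma proved just before ensures the whole output already lies in $\calM^{\leq (t+1)} \otimes U \otimes V$, so no stray terms escape this analysis.)

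For the surviving summand I would invoke \cref{L:E-on-filtration} applied to $(u \otimes 1) E_{r,t_k}^{b}$: this vanishes when $r < t_k$, lies in $\calM^{\leq t}$ when $r > t_k$, and equals $u h_{t_k}^{b} \otimes 1 \in \calM^{\leq (t+1)}$ when $r = t_k$. Collecting the sole surviving $r = t_k$ contribution and tensoring on the right with the trailing factors produces the asserted formula, with total sign $(-1)^{\bar\alpha_k \bar b + \bar b(\bar\alpha_1 + \cdots + \bar\alpha_{k-1})}$ assembled from the Casimir coefficient and the coproduct Koszul sign. I do not anticipate any substantive obstacle; the only care required is in tracking signs when pushing $E_{r,t_k}^{b}$ past $w$, and this is unambiguous once the super-tensor coproduct convention is in place.
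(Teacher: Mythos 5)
Your proof is correct, and it takes a genuinely different route from the paper's. The paper reduces to the $k=1$ case by conjugating with the crossing $\sigma_{1,k}$, invoking \cref{righttoleftthin} to show that $C_k - \sigma_{1,k}C_1\sigma_{1,k}$ is a $\k$-linear combination of webs with no affine dots (which act only on the $V^{\otimes d}$ factor and hence preserve the $\calM$-filtration degree), and then appealing to the already-proved $C_1$ formula together with signed place permutations. You instead identify $C_{k,\calM}$ directly with $C_{\calM \otimes V^{\otimes(k-1)}} \otimes \id_{V^{\otimes(d-k)}}$ via the monoidal structure of $\catEnd(\modgl)$, then unpack the super-coproduct action of $E_{r,t_k}^b$ on $u \otimes 1 \otimes w$: the terms hitting the vector part of $w$ leave the $\calM$-factor fixed and fall into $\calM^{\leq t}$, while the term hitting $u \otimes 1$ survives with the Koszul sign $(-1)^{\bar b(\bar\alpha_1+\cdots+\bar\alpha_{k-1})}$, and only $r = t_k$ contributes modulo $\calM^{\leq t}$. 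Both arguments are clean; yours is more self-contained computationally and avoids the detour through \cref{righttoleftthin}, while the paper's leverages machinery already in hand and avoids re-expanding the coproduct. One small nitpick: you attribute the vanishing of $(u\otimes 1)E_{r,t_k}^b$ for $r < t_k$ to \cref{L:E-on-filtration}; strictly speaking this vanishing is established inside the proof of that lemma (its base case) and inside the proof of the preceding lemma on $C_\calM$, rather than in the statement of \cref{L:E-on-filtration} itself, which only asserts the filtration bounds. This is cosmetic and does not affect the argument.
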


\begin{proof} Consider $C_{k} - \sigma_{1,k}C_{1}\sigma_{1,k}$.  It follows from repeated applications of \cref{righttoleftthin} that this can be written as a linear combination of webs that have no affine dots.  These webs act only on the vectors in $V^{\otimes d}$ and, hence, preserve filtration degree when they act on $\calM \otimes V^{\otimes d}$.   Therefore, to prove the asserted formulas it suffices to prove them when $C_{k}$ is replaced by $\sigma_{1,k}C_{1}\sigma_{1,k}$.  But this case follows from the fact that $C_{1}= C_{\calM} \otimes \Id_{V}^{\otimes (d-1)}$, the previous lemma, and that $\sigma_{1,k}$ acts on $V^{\otimes d}$ by signed place permutation.
\end{proof}

For $1 \leq  k \leq  d$ and $r \geq 1$, let $C_{k}^{r}$ denote the operator given by taking the sum of all natural transformations given by diagrams on thin strands which have $r$ affine dots on the $k$th strand:
\[
C^{r}_{k}= \sum_{(i_{1}, \dotsc , i_{d}) \in I^{d}} H_{n}\left(
\hackcenter{
\begin{tikzpicture}[scale=.8]
  \draw[ultra thick, red] (-1.5,-0.1)--(-1.5,1.1);
     \node at  (-0.85,0.5)  {$\cdots$};
     \node[below] at (-1.5,-0.1) {$ \scriptstyle i_{1}^{ \scriptstyle (1)}$};
      \node[above] at (-1.5,1.1) {$ \scriptstyle i_{1}^{ \scriptstyle (1)}$};
  \draw[ultra thick, blue] (0,-0.1)--(0,1.1);
   \draw[thick, fill=black]  (0,0.5) circle (5pt);
   \node at (0.25,0.75) {$\scriptstyle r$};
     \node[below] at (0,-0.1) {$ \scriptstyle i_{k}^{ \scriptstyle (1)}$};
      \node[above] at (0,1.1) {$ \scriptstyle i_{k}^{ \scriptstyle (1)}$};
        \draw[ultra thick, violet] (1.5,-0.1)--(1.5,1.1);
     \node at  (0.85,0.5)  {$\cdots$};
     \node[below] at (1.5,-0.1) {$ \scriptstyle i_{d}^{ \scriptstyle (1)}$};
      \node[above] at (1.5,1.1) {$ \scriptstyle i_{d}^{ \scriptstyle (1)}$};
\end{tikzpicture}} \right).
\]

Iterating the previous lemma yields the following result.  Note that we implicitly use that the Nakayama automorphism preserves the $\Z_{2}$-grading in order to simplify certain sign formulas, and use the fact that $({}_{t}a)^{\vee}={}_{t}(a^{\vee})$ for $a \in A$ and $t \geq  0$ by \cref{FrobFacts} to omit certain parentheses. 

\begin{lemma} Let $u \in  U(\fh )^{t}$, $1 \leq k \leq d$, and $\alpha_{1}, \dotsc , \alpha_{d} \in A$.  The map 
\[
C^{r}_{k} : \calM \otimes V^{\otimes d} \to \calM \otimes V^{\otimes d}
\]
satisfies 
\begin{align*}
& C^{r}_{k}\left( u \otimes 1 \otimes v_{t_{1}}^{\alpha_{1}} \otimes \dotsb \otimes v_{t_{d}}^{\alpha_{d}}\right) \\
 & \hspace{0.5in} \equiv_{t+1} \sum_{\substack{(b_{1}, \dotsc , b_{r}) \in \BasisB^{r}}} (-1)^{\heartsuit }\left( uh_{t_{k}}^{b_{1} }\dotsb h_{t_{k}}^{b_{r}}\right) \otimes 1 \otimes v_{t_{1}}^{\alpha_{1}} \otimes \dotsb \otimes v_{t_{k-1}}^{\alpha_{k-1}} \otimes  v_{t_{k}}^{z_{k}}\otimes v_{t_{k+1}}^{\alpha_{k+1}} \otimes  \dotsb  \otimes v_{t_{d}}^{\alpha_{d}},
\end{align*} where
\[
\heartsuit = \heartsuit(\alpha_{1},\dotsc , \alpha_{d}; b_{1}, \dotsc , b_{r}) = \bar{\alpha}_{k} \bar{b}_{1} + (\bar{\alpha}_{k}+\bar{b}_{1})\bar{b}_{2}+\dotsb + (\bar{\alpha}_{k}+\dotsb + \bar{b}_{r-1})\bar{b}_{r} + (\bar{b}_{1} + \dotsb + \bar{b}_{r})(\bar{\alpha}_{1}+\dotsb + \bar{\alpha}_{k-1}), 
\]
and where 
\[
z_{k} = z_{k}(\alpha_{k};b_{1}, \dotsc , b_{r}) :=({}_{r}\alpha_{k}) ({}_{r-1}b_{1}^\vee) ({}_{r-2}b_{2}^\vee) \dotsb ({}_{1}b^{\vee}_{r-1}) (b_{r}^{\vee}).
\]  

\end{lemma}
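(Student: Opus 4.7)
\smallskip
\noindent\textbf{Proof plan.} The statement is asserted to follow by iteration of the previous lemma, so the natural approach is induction on $r$. The base case $r=1$ is precisely the preceding lemma, once one observes that with $r=1$ the sign expression $\heartsuit$ reduces to $\overline{\alpha}_k \overline{b}_1 + \overline{b}_1(\overline{\alpha}_1 + \cdots + \overline{\alpha}_{k-1})$ and the ``twisted tail'' $z_k$ reduces to $({}_1\alpha_k)b_1^\vee$, both of which match the previous lemma exactly. (Note: for $r\geq 2$, the symbol $\equiv_{t+1}$ in the statement should be read as $\equiv_{t+r}$, since the leading $U(\fh)$-factor $u h_{t_k}^{b_1}\cdots h_{t_k}^{b_r}$ lives in degree $t+r$.)

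\smallskip
\noindent For the inductive step, write $C_k^r = C_k \circ C_k^{r-1}$ and apply the inductive hypothesis to the inner operator. This yields, modulo $\calM^{\leq t+r-2} \otimes V^{\otimes d}$, a sum indexed by $(b_1,\ldots,b_{r-1}) \in \BasisB^{r-1}$ whose typical term has $U(\fh)$-factor $u h_{t_k}^{b_1}\cdots h_{t_k}^{b_{r-1}}$ (an element of $U(\fh)^{\leq t+r-1}$ whose degree-$(t+r-1)$ component is, up to $\equiv_{t+r-1}$, a PBW monomial by \eqref{E:permute-the-h}) and whose $k$th tensor factor in $V^{\otimes d}$ is $v_{t_k}^{z_k^{(r-1)}}$, with $z_k^{(r-1)} = ({}_{r-1}\alpha_k)({}_{r-2}b_1^\vee)\cdots(b_{r-1}^\vee)$. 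Now apply the preceding lemma with the role of $u$ played by this PBW monomial (so its filtration degree is $t+r-1$) and the role of $\alpha_k$ played by $z_k^{(r-1)}$. This produces a new summation over $b_r\in\BasisB$: the $U(\fh)$-factor is right-multiplied by $h_{t_k}^{b_r}$, producing $u h_{t_k}^{b_1}\cdots h_{t_k}^{b_r}$, and the $k$th vector is replaced by $v_{t_k}^{({}_1 z_k^{(r-1)}) b_r^\vee}$.

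\smallskip
\noindent A short verification using the fact that $\psi^{-1}$ is a superalgebra automorphism commuting with the duality operation (so ${}_1(a b) = ({}_1 a)({}_1 b)$ and ${}_s({}_t a) = {}_{s+t}a$, and ${}_t(a^\vee) = ({}_t a)^\vee$ by \cref{FrobFacts}(6)) shows
\[
({}_1 z_k^{(r-1)}) b_r^\vee = ({}_r \alpha_k)({}_{r-1} b_1^\vee)\cdots({}_1 b_{r-1}^\vee) b_r^\vee = z_k,
\]
confirming that the $k$th tensor factor is as claimed. The remaining task is sign bookkeeping: the new application of the preceding lemma contributes the factor $(-1)^{\overline{z}_k^{(r-1)} \overline{b}_r + \overline{b}_r(\overline{\alpha}_1 + \cdots + \overline{\alpha}_{k-1})}$. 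Since the Nakayama automorphism preserves parity, $\overline{z}_k^{(r-1)} = \overline{\alpha}_k + \overline{b}_1 + \cdots + \overline{b}_{r-1}$, so this combines with the inductive sign (the $\heartsuit$ with $r$ replaced by $r-1$) to yield precisely $\heartsuit(\alpha_1,\ldots,\alpha_d; b_1,\ldots,b_r)$.

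\smallskip
\noindent Finally, one must verify that the ``error terms'' of filtration degree $\leq t+r-2$ from the inductive hypothesis, when pushed through $C_k$, remain in degree $\leq t+r-1$; this is immediate from the observation that $C_k$ raises $U(\fh)$-filtration by at most $1$ (which is Lemma~7.2's essential content applied in the form $C_k(\calM^{\leq s}\otimes V^{\otimes d}) \subseteq \calM^{\leq s+1}\otimes V^{\otimes d}$, established via the filtration lemma for the thin Casimir on a single strand and the fact that $C_k - \sigma_{1,k}C_1 \sigma_{1,k}$ preserves filtration, as noted in the proof of \cref{L:Affine-dot-r-times}). The main obstacle is therefore purely clerical: correctly aggregating the cascading $\Z_2$-grading signs and the accumulating Nakayama twists across the $r$ iterations. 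No deeper input is required beyond the preceding lemma, the PBW reordering identity \eqref{E:permute-the-h}, and the compatibilities of $\psi^{-1}$ with multiplication and duality.
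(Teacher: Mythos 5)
Your proof is correct and supplies precisely the induction the paper indicates but does not write out ("iterating the previous lemma yields the following result"): the base case matches \cref{L:Affine-dot-r-times}, the composition of Nakayama twists giving $z_k = ({}_1 z_k^{(r-1)})b_r^\vee$ uses only that $\psi^{-1}$ is a superalgebra automorphism compatible with duality, the sign accumulation to $\heartsuit$ is correct since $\overline{z_k^{(r-1)}} = \overline{\alpha}_k + \overline{b}_1 + \cdots + \overline{b}_{r-1}$, and the error terms are absorbed because $C_k$ raises the $U(\fh)$-filtration by at most one. Your observation that the displayed $\equiv_{t+1}$ should read $\equiv_{t+r}$ is also right: for $r\ge 2$ the right-hand side already lies in filtration degree $t+r$, and this is the version actually invoked in \cref{P:Key-basis-result}, where $u=1\in U(\fh)^0$ and the congruence used is $\equiv_r$.
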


\subsection{On the image of thin diagrams under the defining representation}\label{SS:thin-category}

Let $\WebAaIthin$ be the full subcategory of $\Webaff$ consisting of objects which are words from the set $\left\{i^{(1)}  \right\}_{i \in I}$.  For short, for $i \in I$ we will write $i$ for $i^{(1)}$.  By Corollary~\ref{spancor}, $\Hom_{\WebAaIthin}(i_{1}\dotsb i_{d_{1}}, j_{1}\dotsb j_{d_{2}})$ is spanned by diagrams consisting only of thin strands involving affine dots, crossings, and coupons.  In particular, if this morphism space is nonzero, then $d_{1}=d_{2}$.

For short, we will call a diagram $D$ in $\Hom_{\WebAaIthin}(i_{1}\dotsb i_{d}, j_{1}\dotsb j_{d})$ a \emph{normally ordered} diagram if it is an element of our putative basis.  In particular, $D = X \circ C \circ T$, where $X$ is a diagram that consists solely of crossings, $C$ is a diagram that consists solely of coupons (with at most one on each strand), and $T$ is a diagram that consists solely of affine dots.  Given a normally ordered $D$ in $\Hom_{\WebAaIthin}(i_{1}\dotsb i_{d}, j_{1}\dotsb j_{d})$, let $\operatorname{undot}(D)$ denote the diagram obtained from $D$ by deleting the affine dots. For $k=1, \dotsc , d$, let $\beta_{k}(D)$ equal the number of affine dots on the strand labeled by $i_{k}$ on the bottom of $D$.  In particular, if $x_{k} \in \Hom_{\WebAaIthin}(i_{1}\dotsb i_{d}, i_{1}\dotsb i_{d})$ is short for the diagram which has an affine dot on the $k$th strand, then for a normally ordered diagram $D$ we have 
\begin{equation}\label{E:Decomposition-of-D}
D = \operatorname{undot}(D) \circ x_{d}^{\beta_{d}(D)} \circ \dotsb \circ x_{1}^{\beta_{1}(D)}.
\end{equation}

  Given a normally ordered diagram $D$ in $\Hom_{\WebAaIthin}(i_{1}\dotsb i_{d}, j_{1}\dotsb j_{d})$, let 
\[
\BasisB^{\beta(D)} = \BasisB^{\beta_{1}(D)} \times \dotsb \times \BasisB^{\beta_{d}(D)},
\] and write $\mathbf{b} = (\mathbf{b}_{1}, \dotsc , \mathbf{b}_{d})$ for an element of $\BasisB^{\beta(D)}$ where $\mathbf{b}_{k} = (b_{k,1}, \dotsc , b_{k, \beta_{k}(D)}) \in \BasisB^{\beta_{k}(D)}$.

\begin{proposition}\label{P:Key-basis-result}  Let $D \in \Hom_{\WebAaIthin}(i_{1}\dotsb i_{d}, j_{1}\dotsb j_{d})$ be a normally ordered diagram and let $r=\sum_{k=1}^{d} \beta_{k}(D)$ be the number of affine dots in $D$. Fix $n \geq d$.  Consider the module homomorphism 
\[
H_{n}(D)_{\calM }: \calM \otimes {}_{i_{1}}V \otimes  \dotsb \otimes {}_{i_{d}}V \to \calM \otimes {}_{j_{1}}V \otimes  \dotsb \otimes {}_{j_{d}}V.
\]
  Then,
\begin{align*}
& H_{n}(D)_{\calM } \left( 1 \otimes 1 \otimes v_{1}^{i_{1}} \otimes \dotsb \otimes v_{d}^{i_{d}} \right) \\
& \hspace{0in} \equiv_{r} \sum_{\mathbf{b} \in \BasisB^{\beta(D)}} (-1)^{\diamondsuit}h_{D}(\mathbf{b}) \otimes 1 \otimes G_{n}(\operatorname{undot}(D)) \left( v_{1}^{z_{1}} \otimes v_{2}^{z_{2}} \otimes \dotsb \otimes v_{d}^{z_{d}}\right),
\end{align*} where $h_{D}(\mathbf{b}) \in U(\fh )$ is given by
\begin{align*}
h_{D}(\mathbf{b}) & = \left( h_{1}^{b_{1,1}}h_{1}^{b_{1,2}}\dotsb h_{1}^{b_{1,\beta_{1}(D)}}\right)\left( h_{2}^{b_{2,1}}h_{2}^{b_{2,2}}\dotsb h_{2}^{b_{2, \beta_{2}(D)}}\right)\dotsb \left( h_{d}^{b_{d, 1}}h_{d}^{b_{d,2}}\dotsb h_{d}^{b_{d,\beta_{d}(D)}}\right),
\end{align*} and where $\diamondsuit \in  \Z_{2}$ is given by 
\[
\diamondsuit  = \diamondsuit(i_{1}, \dotsc , i_{d}; \mathbf{b}) := \overline{D}\cdot\overline{h_{D}(\mathbf{b})}+ \sum_{k=1}^{d}\heartsuit \left( z_{1},\dotsc, z_{k-1}, i_{k},\dotsc  , i_{d};  b_{k,1}, \dotsc , b_{k, \beta_{k}(D)}\right),
\]
and where $z_{k} \in A$ is given by
\[
z_{k}=z_{k}(i_{k};\mathbf{b}_{k})=z_{k}(i_{k};b_{k, 1}, \dotsc , b_{k,\beta_{k}(D)})  := ({}_{\beta_{k}(D) }i_{k}) ({}_{\beta_{k}(D) -1}b_{k,1}^{\vee}) ({}_{\beta_{k}(D)-2}b_{k, 2}^{\vee}) \dotsb ({}_{1}b_{k, \beta_{k}(D)-1}^{\vee}) (b_{k,\beta_{k}(D) }^{\vee}).
\]
\end{proposition}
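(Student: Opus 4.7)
The plan is to leverage the normal form decomposition \eqref{E:Decomposition-of-D}, namely $D = \operatorname{undot}(D) \circ x_d^{\beta_d(D)} \circ \cdots \circ x_1^{\beta_1(D)}$, together with the functoriality and monoidality of $H_n$ established in \cref{Hthm}. This splits the computation into two clearly separated phases: first, a purely dot-driven phase applying the composition $C_d^{\beta_d(D)} \circ \cdots \circ C_1^{\beta_1(D)}$ of thin Casimir operators to the input vector; second, applying $H_n(\operatorname{undot}(D))_{\calM} = \id_\calM \otimes G_n(\operatorname{undot}(D))$ to the intermediate result, which acts only on the $V^{\otimes d}$-factor since $\operatorname{undot}(D)$ is a dot-free morphism in the finite web subcategory $\WebAaI$.

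For the first phase, I would proceed by induction on $k \in \{0, 1, \ldots, d\}$, with the inductive hypothesis that after applying $C_k^{\beta_k(D)} \circ \cdots \circ C_1^{\beta_1(D)}$ the input vector $1 \otimes 1 \otimes v_1^{i_1} \otimes \cdots \otimes v_d^{i_d}$ is congruent, modulo $\calM^{\leq \sum_{j \leq k} \beta_j(D) - 1} \otimes V^{\otimes d}$, to the partial sum over $(\mathbf{b}_1, \ldots, \mathbf{b}_k)$ of signed tensors of the form
\begin{equation*}
\bigl(h_{1}^{b_{1,1}} \cdots h_{1}^{b_{1, \beta_1(D)}}\bigr) \cdots \bigl(h_{k}^{b_{k,1}} \cdots h_{k}^{b_{k, \beta_k(D)}}\bigr) \otimes 1 \otimes v_1^{z_1} \otimes \cdots \otimes v_k^{z_k} \otimes v_{k+1}^{i_{k+1}} \otimes \cdots \otimes v_d^{i_d},
\end{equation*}
carrying sign $(-1)^{\heartsuit_1 + \cdots + \heartsuit_k}$ with each $\heartsuit_j$ as in the proposition. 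The inductive step is an immediate application of the preceding lemma computing $C_k^{\beta_k(D)}$, taking $u$ to be the $U(\fh)$-product accumulated from the previous $k-1$ steps (which has filtration degree exactly $\sum_{j<k} \beta_j(D)$), $t = \sum_{j<k}\beta_j(D)$, $r = \beta_k(D)$, and the partially updated tuple of vectors $(v_1^{z_1}, \ldots, v_{k-1}^{z_{k-1}}, v_k^{i_k}, \ldots, v_d^{i_d})$: this is exactly the input form required by the lemma, and the resulting product of $h_{t_k}^{b_{k,1}} \cdots h_{t_k}^{b_{k,\beta_k(D)}}$ appended on the right matches precisely the next block of factors in the definition of $h_D(\mathbf{b})$, while the updated $k$-th vector becomes the required $z_k$. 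After $d$ iterations one obtains the sum displayed in the proposition, with the error in $\calM^{\leq r - 1} \otimes V^{\otimes d}$.

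For the second phase, $H_n(\operatorname{undot}(D))_{\calM}$ is the $\gl_n(A)$-homomorphism $\id_\calM \otimes G_n(\operatorname{undot}(D))$. Applied to a homogeneous element $(h_D(\mathbf{b}) \otimes 1) \otimes (v_1^{z_1} \otimes \cdots \otimes v_d^{z_d})$, the Koszul sign rule produces a factor $(-1)^{\overline{\operatorname{undot}(D)} \cdot \overline{h_D(\mathbf{b})}}$, which equals $(-1)^{\overline{D} \cdot \overline{h_D(\mathbf{b})}}$ because affine dots are even by definition. The $V^{\otimes d}$-factor becomes $G_n(\operatorname{undot}(D))(v_1^{z_1} \otimes \cdots \otimes v_d^{z_d})$ while the $\calM$-factor is preserved. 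Combining the two phases assembles into the formula stated in the proposition.

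The main obstacle is the careful bookkeeping of signs: one must verify that the cumulative Koszul signs produced at each inductive step assemble exactly into the stated $\sum_k \heartsuit(z_1, \ldots, z_{k-1}, i_k, \ldots, i_d; \mathbf{b}_k)$ term, including the correct interplay between the already-updated $z_j$ in positions $j<k$ and the still-original $i_j$ in positions $j \geq k$, and that the additional super sign from the final composition with $\operatorname{undot}(D)$ contributes exactly the $\overline{D} \cdot \overline{h_D(\mathbf{b})}$ piece of $\diamondsuit$. A secondary but routine technical point is confirming that error terms omitted at each inductive step remain in filtration degree at most $r-1$ after subsequent operators are applied: this follows because each thin Casimir raises $U(\fh)$-filtration by at most one (cf.~the preceding lemma), and $\id_\calM \otimes G_n(\operatorname{undot}(D))$ does not touch the $\calM$-factor at all.
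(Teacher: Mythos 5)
Your proposal is correct and follows essentially the same approach as the paper's own proof, which is stated very tersely: repeatedly apply the Casimir-action lemmas to the dots (one strand at a time, using the decomposition $D = \operatorname{undot}(D) \circ x_{d}^{\beta_d(D)} \circ \cdots \circ x_1^{\beta_1(D)}$), then finish by applying $\id_\calM \otimes G_n(\operatorname{undot}(D))$. You spell out the induction on $k$, the substitution that feeds each strand's accumulated $(z_1,\ldots,z_{k-1},i_k,\ldots,i_d)$ back into $\heartsuit$, and the Koszul sign $(-1)^{\overline{D}\cdot\overline{h_D(\mathbf{b})}}$ from the final tensoring step, all of which the paper leaves implicit; this is faithful elaboration rather than a different route.
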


\begin{proof} The result follows by repeated applications of \cref{L:Affine-dot-r-times} to compute the effect of the affine dots in the description of $D$ given in \eqref{E:Decomposition-of-D} followed by the fact that the remainder of $D$, which equals $\operatorname{undot}(D)$, is a web without affine dots and so and acts on $V^{\otimes d}$, as indicated.
\end{proof}

\subsection{Decomposition of the associated graded module for \texorpdfstring{$\calM \otimes  {}_{j_{1}}V \otimes \dotsb \otimes {}_{j_{d}}V$}{MVotimesd}}

Observe that the PBW basis, filtrations, and results of the previous sections depend on a fixed choice of an ordered homogenous basis for $A$.  However, these results can be applied to any choice of homogenous basis.  Fix a totally ordered, homogeneous basis $\BasisB$ for $A$ that contains $I$ (e.g., a good basis for $A$). Let $\BasisB^{\vee}$ be the dual basis for $A$ with ordering given by declaring $b_{1}^{\vee} \leq b_{2}^{\vee}$ if and only if $b_{1}\leq b_{2}$.   In what follows the PBW basis, filtrations, and the results of the previous sections will be taken with respect to the basis $\BasisB^{\vee}$.   In particular, replace references to elements of $\BasisB$ by references to elements of $\BasisB^{\vee}$.

For example, using the ordered basis $\BasisB^{\vee}$ there is a PBW basis for $U(\fh )$:
\begin{equation}\label{E:PBW-Basis-for-Uh-Bvee}
\left\{ \left.  \prod_{\substack{1 \leq i \leq n \\ b \in \BasisB}} \left(h^{b^{\vee}}_{i} \right)^{t_{i,b^{\vee}}} \right| 1 \leq i\leq n, b \in B,  \text{ where } t_{i,b^{\vee}} \in  \Z \text{ if } \bar{b}=\bar{0}, \text{ and } t_{i,b^{\vee}} \in \{0,1 \} \text{ if } \bar{b}=\bar{1} \right\}.
\end{equation}

Likewise, the ordered basis $\BasisB^{\vee}$ and arguments in the previous sections define a filtration on $ \calM \otimes {}_{j_{1}}V \otimes \dotsb \otimes {}_{j_{d}}V$. In particular, for any fixed $r \geq 0$ the degree $r$ component of the associated graded $\gr U(\fh )$-module $\gr \left(  \calM \otimes {}_{j_{1}}V \otimes \dotsb \otimes {}_{j_{d}}V \right)$ can be described as a $\k$-module by 
\begin{align}\label{E:Decomp-Equation}
 \left(\calM \otimes  {}_{j_{1}}V \otimes \dotsb \otimes {}_{j_{d}}V \right)^{\leq r} & /\left(\calM \otimes  {}_{j_{1}}V \otimes \dotsb \otimes {}_{j_{d}}V \right)^{\leq (r-1)} \nonumber \\
    & \cong \left(  \calM^{\leq r}/\calM^{\leq r-1}\right) \otimes  {}_{j_{1}}V \otimes \dotsb \otimes {}_{j_{d}}V \nonumber \\
    & \cong \left( U(\fh )^{\leq r}/U(\fh )^{\leq (r-1)} \right)  \otimes U(\fu^{-})  \otimes  {}_{j_{1}}V \otimes \dotsb \otimes {}_{j_{d}}V \nonumber \\
    & \cong \bigoplus_{X} X \otimes U(\fu^{-})  \otimes  {}_{j_{1}}V \otimes \dotsb \otimes {}_{j_{d}}V,
\end{align}
where the direct sum is over the PBW basis elements of $U(\fh )$ of degree $r$ given in \eqref{E:PBW-Basis-for-Uh-Bvee}.  

\subsection{Computation of the action of normally ordered diagrams in \texorpdfstring{$\WebAaIthin$}{WebAaIthin}}

Using the previous results, we are now able to obtain enough information about the action of the homomorphism $H_{n}(D)_{\mathcal{M}}$ to prove the normally ordered diagrams form a basis for the morphism spaces of $\WebAaIthin$.

Fix a normally ordered diagram $D \in \Hom_{\WebAaIthin}(i_{1}\dotsb i_{d}, j_{1}\dotsb j_{d})$.   Reading from bottom-to-top, $D$ consists of some number of affine dots on each strand, then a single coupon from $pAq$ for some $p,q \in I$ on each strand, then a diagram consisting of only crossings.  Recall that $\beta_{k}=\beta_{k}(D)$ denotes the number of affine dots on the $k$th strand of $D$.  For $k=1, \dotsc , d$, let $\alpha_{k} \in j_{\tau^{-1}(k)}Ai_{k}$ be the coupon on the $k$th strand of $D$, where $\tau$ denotes both the subdiagram of the crossings at the top of $D$ and the element of $S_{d}$ that this diagram defines.

With this notation the description of $D$ as $X \circ C \circ T$ given in Section~\ref{SS:thin-category} can be expressed more explicitly as 
\begin{equation}\label{E:D-Decomposition}
D = \tau \circ \alpha_{1} \otimes \dotsb \otimes \alpha_{d} \circ  x_{d}^{\beta_{d}} \circ \dotsb \circ x_{1}^{\beta_{1}}.   
\end{equation}

Recall that we assume $I \subseteq \BasisB$. For $k=1, \dotsc , d$, let $\mathbf{b}_{k}  \in I^{\beta_{k}(D)} \subseteq \BasisB^{\beta_{k}(D)}$ be given by
\[
\mathbf{b}_{k} = (b_{k,1}, \dotsc , b_{k, \beta_{k}(D)})= (i_{k}, \dotsc , i_{k}) \in I^{\beta_{k}(D)} \subseteq \BasisB^{\beta_{k}(D)},
\] and
\[
\mathbf{b}^{\vee}_{k} =  (b^{\vee}_{k,1}, \dotsc , b^{\vee}_{k, \beta_{k}(D)})=(i^{\vee}_{k},\dotsc , i_{k}^{\vee}) \in (I^{\vee})^{\beta_{k}(D)} \subseteq (\BasisB^{\vee})^{\beta_{k}(D)}.
\]
Let $\mathbf{b} \in I^{\beta(D)} \subseteq \BasisB^{\beta(D)}$  be given by 
\begin{equation}\label{E:boldb-def}
\mathbf{b}  = (\mathbf{b}_{1}, \dotsc , \mathbf{b}_{d}),
\end{equation}  and let $\mathbf{b}^{\vee} \in (I^{\vee})^{\beta(D)} \subseteq (\BasisB^{\vee})^{\beta(D)}$ be given by
\begin{equation}\label{E:boldbvee-def}
\mathbf{b}^{\vee}  = (\mathbf{b}^{\vee}_{1}, \dotsc , \mathbf{b}^{\vee}_{d}).
\end{equation}

For any fixed $\mathbf{c} \in (\BasisB^{\vee})^{\beta(D)}$ the discussion before \eqref{E:permute-the-h} applied to $h_{D}(\mathbf{c})$ defined in \ref{P:Key-basis-result} implies that there is a unique PBW basis element $\widetilde{h_{D}(\mathbf{c})}$ of degree $r:=\sum_{k=1}^{d}\beta_{k}(D)$ such that
\begin{equation}\label{E:permute-the-h}
h_{D}(\mathbf{c}) \equiv_{r} \pm\widetilde{h_{D}(\mathbf{c})}.
\end{equation}

Observe that the set $\left\{v_{k}^{\alpha} \mid 1 \leq  k \leq n, \alpha \in {}_{j}\BasisB  \right\}$ is a $\k$-basis for ${}_{j}V$.  More generally, if $\mathbb{Y}$ is the set of PBW basis elements for $U(\fu^{-})$ and $X$ is a fixed PBW basis element of $U(\fh )$, then the set
\begin{equation}\label{E:basis-element}
\left\{ X \otimes Y \otimes v_{t_{1}}^{\alpha_{1}} \otimes \dotsb v_{t_{d}}^{\alpha_{d}} \mid Y \in \mathbb{Y},  t_{k} \in [1,n] \text{ and } \alpha_{k} \in  {}_{j_{k}}\BasisB \text{ for $k=1, \dotsc , d$} \right\}
\end{equation}
is  a $\k$-basis for the summand 
\[
X \otimes U(\fu^{-})  \otimes  {}_{j_{1}}V \otimes \dotsb \otimes {}_{j_{d}}V
\]
in \eqref{E:Decomp-Equation}.

\begin{theorem}\label{P:KeyLinearIndepedenceResult}  Let $D \in \Hom_{\WebAaIthin}(i_{1}\dotsb i_{d}, j_{1}\dotsb j_{d})$ be a normally ordered diagram, let $r=\sum_{k=1}^{d}\beta_{k}(D)$ be the total number of affine dots in $D$, and let $\mathbf{b}^{\vee}$ be given by \cref{E:boldbvee-def}. Fix $n \geq d$. Consider the morphism 
\[
H_{n}(D)_{\calM}: \calM \otimes {}_{i_{1}}V \otimes  \dotsb \otimes {}_{i_{d}}V \to \calM \otimes {}_{j_{1}}V \otimes  \dotsb \otimes {}_{j_{d}}V.
\]

Then, the image of $H_{n}(D)_{\calM}\left( 1 \otimes 1 \otimes v_{1}^{i_{1}} \otimes \dotsb \otimes v_{d}^{i_{d}}\right)$ in the summand of \eqref{E:Decomp-Equation} indexed by $\widetilde{h_{D}(\mathbf{b}^{\vee})}$ equals
\[
K_{D}  \widetilde{h_{D}(\mathbf{b}^{\vee})} \otimes 1 \otimes  \left( v_{\tau^{-1}(1)}^{\alpha_{\tau^{-1}(1)}} \otimes v_{\tau^{-1}(2)}^{\alpha_{\tau^{-1}(2)}} \otimes \dotsb \otimes v_{\tau^{-1}(d)}^{\alpha_{\tau^{-1}(d)}}\right),
\] where $K_{D} \in \k$ is nonzero and $\tau$, $\alpha_{1}, \dotsc , \alpha_{d}$ are as in \cref{E:D-Decomposition}.
\end{theorem}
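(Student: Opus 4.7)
The plan is to apply \cref{P:Key-basis-result} with the ordered basis $\BasisB^\vee$ replacing $\BasisB$ throughout, and then to isolate the coefficient of the specified basis element of the summand $\widetilde{h_D(\mathbf{b}^\vee)} \otimes U(\fu^-) \otimes {}_{j_1}V \otimes \cdots \otimes {}_{j_d}V$ in the decomposition \cref{E:Decomp-Equation}. Once we index the Casimir computation via $\BasisB^\vee$, the proposition yields a sum over $\mathbf{c} \in (\BasisB^\vee)^{\beta(D)}$ whose terms take the shape $(-1)^{\diamondsuit(\mathbf{c})}\, h_D(\mathbf{c}) \otimes 1 \otimes G_n(\operatorname{undot}(D))(v_1^{z_1(\mathbf{c})} \otimes \cdots \otimes v_d^{z_d(\mathbf{c})})$, with each $z_k(\mathbf{c})$ and sign computed using duals with respect to $\BasisB^\vee$.

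Next I would argue that only the single index $\mathbf{c} = \mathbf{b}^\vee$ contributes to the coefficient of $\widetilde{h_D(\mathbf{b}^\vee)}$. Using \cref{E:permute-the-h}, $h_D(\mathbf{c}) \equiv_r \pm \widetilde{h_D(\mathbf{c})}$, and two such PBW basis elements coincide only when the multisets $\{c_{k,j}\}_{j=1}^{\beta_k}$ and $\{b^\vee_{k,j}\}_{j=1}^{\beta_k}$ agree for every $k$. Since $\mathbf{b}^\vee_k = (i_k^\vee,\ldots,i_k^\vee)$, this forces $c_{k,j} = i_k^\vee$ for every $j, k$. (There is no parity obstruction since $i_k \in I \subseteq A_{\bar 0}$.) Hence only the term $\mathbf{c} = \mathbf{b}^\vee$ survives upon projecting onto this summand.

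Then I would evaluate the surviving term. The dual of $i_k^\vee$ with respect to $\BasisB^\vee$ is, by \cref{FrobFacts}(5), $(-1)^{\bar{i}_k}\psi^{-1}(i_k) = i_k$, using that $\psi$ fixes $I$ pointwise and $i_k$ is even. Consequently $z_k(i_k;\mathbf{b}_k^\vee) = ({}_{\beta_k}i_k)({}_{\beta_k-1}i_k)\cdots({}_1 i_k)(i_k) = i_k^{\beta_k+1} = i_k$, so $v_k^{z_k} = v_k^{i_k}$. The undotted factor $\operatorname{undot}(D) = \tau \circ (\alpha_1 \otimes \cdots \otimes \alpha_d)$ acts under $G_n$ by first sending each $v_k^{i_k}$ to $v_k^{\alpha_k i_k} = v_k^{\alpha_k}$ via the left multiplication map $L_1^{\alpha_k}$, and then applying the signed permutation coming from $\tau$ to deliver $\pm v_{\tau^{-1}(1)}^{\alpha_{\tau^{-1}(1)}} \otimes \cdots \otimes v_{\tau^{-1}(d)}^{\alpha_{\tau^{-1}(d)}}$. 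Gathering the coefficient $(-1)^{\diamondsuit(\mathbf{b}^\vee)}$, the sign from \cref{E:permute-the-h}, and the sign from the twist yields a nonzero element $K_D \in \k$, concluding the proof.

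The main obstacle will be keeping the three sources of signs straight and verifying that the duality computation $(i_k^\vee)^{(\vee)} = i_k$ in the basis $\BasisB^\vee$ cleanly simplifies the dot-action formula; once these reductions are in place, the PBW linear independence argument guarantees uniqueness of the contributing index and the explicit form of $K_D$.
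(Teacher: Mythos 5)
Your proposal is correct and follows essentially the same skeleton as the paper: apply \cref{P:Key-basis-result} with $\BasisB^\vee$ in place of $\BasisB$, project onto the summand of \cref{E:Decomp-Equation} indexed by $\widetilde{h_D(\mathbf{b}^\vee)}$, identify the surviving term(s), and evaluate.

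The one place where you take a genuinely different (and in fact cleaner) route is in identifying the surviving index. You cut directly from the PBW normal form: since each factor of $h_D(\mathbf{c})$ carries a strand-fixed subscript, $h_D(\mathbf{c}) \equiv_r \pm \widetilde{h_D(\mathbf{b}^\vee)}$ forces, for every $k$, the multiset $\{c_{k,j}\}_j$ to equal $\{b_{k,j}^\vee\}_j$; since $\mathbf{b}_k^\vee$ is a constant tuple $(i_k^\vee,\dots,i_k^\vee)$, this pins down $\mathbf{c} = \mathbf{b}^\vee$ uniquely. The paper instead first forms a larger candidate set $\mathbb{X}$ keyed to the \emph{total} unordered multiset of entries (across all strands), then prunes to $\mathbb{X}'$ via the requirement $z_k(i_k;\mathbf{c}_k) \neq 0$, which by orthogonality of the idempotents $I$ again forces $c_{k,j} = i_k^\vee$; the paper does not explicitly observe that $\mathbb{X}'$ is a singleton, and instead writes $K_D = |\mathbb{X}'|$. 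Your route makes the uniqueness visible at once, which is both shorter and makes the assertion $h_D(\mathbf{c}) \equiv_r \widetilde{h_D(\mathbf{b}^\vee)}$ for all contributing $\mathbf{c}$ an immediate tautology rather than a claim needing the orthogonality detour. Both approaches are valid; the paper's is perhaps better primed for a hypothetical generalization where $\mathbf{b}^\vee_k$ is not constant, in which case the total-multiset set $\mathbb{X}$ genuinely differs from the per-strand condition, but in the case at hand your argument is the more direct one. The remaining computations (the evaluation $(i_k^\vee)^\vee = i_k$ in the dual basis via \cref{FrobFacts}(5), the reduction $z_k = i_k$, the rewriting $G_n(\operatorname{undot}(D))(v_1^{i_1}\otimes\cdots) = G_n(\tau)(v_1^{\alpha_1}\otimes\cdots)$, and the vanishing of $\diamondsuit$ because all entries are even) coincide with the paper's; like the paper, you leave the sign arising from the signed twist $G_n(\tau)$ implicit inside $K_D$, which is harmless since only nonvanishing of $K_D$ is asserted.
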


\begin{proof} 

First,  note that since $ 1 \otimes 1 \otimes v_{1}^{i_{1}} \otimes \dotsb \otimes v_{d}^{i_{d}}$ lies in $\left(\calM \otimes  {}_{i_{1}}V \otimes \dotsb \otimes {}_{i_{d}}V \right)^{0}$, \cref{P:Key-basis-result} implies $H_{n}(D)\left( 1 \otimes 1 \otimes v_{1}^{i_{1}} \otimes \dotsb \otimes v_{d}^{i_{d}}\right)$ lies in $\left(\calM \otimes  {}_{j_{1}}V \otimes \dotsb \otimes {}_{j_{d}}V \right)^{\leq r}$. Thus we can consider the image of this element in 
\[
\left(\calM \otimes  {}_{j_{1}}V \otimes \dotsb \otimes {}_{j_{d}}V \right)^{\leq r}/\left(\calM \otimes  {}_{j_{1}}V \otimes \dotsb \otimes {}_{j_{d}}V \right)^{\leq (r-1)} \cong  \bigoplus_{X}  X \otimes U(\fu^{-}) \otimes  {}_{j_{1}}V \otimes \dotsb \otimes {}_{j_{d}}V.
\]  This element can further be projected onto the direct summand of \eqref{E:Decomp-Equation} indexed by the PBW basis element $\widetilde{h_{D}(\mathbf{b}^{\vee})}$:
\begin{equation}\label{E:Projection-Summand}
\widetilde{h_{D}(\mathbf{b}^{\vee})} \otimes U(\fu^{-}) \otimes {}_{j_{1}}V \otimes  \dotsb \otimes {}_{j_{d}}V.
\end{equation}
 Let $\mathcal{I}$ be the image of  $H_{n}(D)\left( 1 \otimes 1 \otimes v_{1}^{i_{1}} \otimes \dotsb \otimes v_{d}^{i_{d}}\right)$ in this summand.

We now show this image is as claimed.  Let $\mathbb{X} \subseteq (\BasisB^{\vee})^{\beta(D)}$ be the set of all $\mathbf{c} \in (\BasisB^{\vee})^{\beta(D)}$ for which $c_{1, 1}, \dotsc, c_{d, \beta_{d}(D)}$ equals $b^{\vee}_{1, 1}, \dotsc, b^{\vee}_{d, \beta_{d}(D)}$ as an unordered list of elements from $\BasisB^{\vee}$.  It follows from \eqref{E:permute-the-h} that the only terms in the sum given in Proposition~\ref{P:Key-basis-result} that contribute to summand indexed by $\widetilde{h_{D}(\mathbf{b}^{\vee})}$ are those of the form $h_{D}(\mathbf{c})$ for some $\mathbf{c} \in \mathbb{X}$.  Thus, $\mathcal{I}$ equals the projection of 
\begin{equation}\label{E:another-sum}
\sum_{\mathbf{c} \in \mathbb{X}} (-1)^{\diamondsuit(i_{1}, \dotsc , i_{d};\mathbf{c})}h_{D}(\mathbf{c}) \otimes 1 \otimes G_{n}(\operatorname{undot}(D)) \left( v_{1}^{z_{1}} \otimes v_{2}^{z_{2}} \otimes \dotsb \otimes v_{d}^{z_{d}}\right)
\end{equation} onto \cref{E:Projection-Summand}.

We next study the $z_{k}=z_{k}(i_{k};\mathbf{c}_{k}) \in A$ which appear in \cref{E:another-sum}.  Let $\mathbf{c} \in \mathbb{X}$ and let $c$ be one of the entries of $\mathbf{c}$.  Then $c = b_{p,q}^{\vee} = i_{p}^{\vee}$ for some $p=1,\dotsc ,d$ and, since $(\ell^{\vee})^{\vee}=\ell$ for any $\ell \in I$, we have: 
\[
{}_{t}(c^{\vee})={}_{t}((i_{k}^{\vee})^{\vee}) = i_{k} \in I.
\]  
Therefore, each $z_{k}(i_{k};\mathbf{c}_{k})$ is the product of $i_{k} \in I$ along with potentially other elements of $I$.  But $I$ is a set of orthogonal idempotents.  That is, $z_{k}(i_{k};\mathbf{c}_{k})$ is  either $i_{k}$ or $0$.

Let $\mathbb{X}'$ be the set of all $\mathbf{c} \in \mathbb{X}$ for which $z_{k}(i_{k};\mathbf{c}_{k})=i_{k}$ for all $k=1, \dotsc , d$.  Since a summand in \eqref{E:another-sum} is zero whenever $z_{k}(i_{k};\mathbf{c}_{k})=0$ for some $k=1,\dotsc ,d$, we may further assume that sum runs only over the elements of $\mathbb{X}'$. Therefore, we have that $ \mathcal{I}$ is equal to the projection of:
\begin{align*}
 & = \sum_{\mathbf{c} \in \mathbb{X}'}  (-1)^{\diamondsuit(i_{1}, \dotsc , i_{d};\mathbf{c})}h_{D}(\mathbf{c}) \otimes 1 \otimes G_{n}(\operatorname{undot}(D)) \left( v_{1}^{i_{1}} \otimes v_{2}^{i_{2}} \otimes \dotsb \otimes v_{d}^{i_{d}}\right) \\
    & = \sum_{\mathbf{c} \in \mathbb{X}'}  (-1)^{\diamondsuit(i_{1}, \dotsc , i_{d};\mathbf{c})}h_{D}(\mathbf{c}) \otimes 1 \otimes G_{n}(\tau) \left( v_{1}^{\alpha_{1}} \otimes v_{2}^{\alpha_{2}} \otimes \dotsb \otimes v_{d}^{\alpha_{d}}\right) \\
    &= \sum_{\mathbf{c} \in \mathbb{X}'}  (-1)^{\diamondsuit(i_{1}, \dotsc , i_{d};\mathbf{c})}h_{D}(\mathbf{c}) \otimes 1 \otimes  \left( v_{\tau^{-1}(1)}^{\alpha_{\tau^{-1}(1)}} \otimes v_{\tau^{-1}(2)}^{\alpha_{\tau^{-1}(2)}} \otimes \dotsb \otimes v_{\tau^{-1}(d)}^{\alpha_{\tau^{-1}(d)}}\right)  
\end{align*}  Note that since the elements of $I$ are even, and both the Nakayama automorphism and the taking of the dual of an element preserve parity, it follows that the entries of $\mathbf{c}$ are even for any $\mathbf{c} \in \mathbb{X}'$.  This implies $\diamondsuit(i_{1}, \dotsc , i_{d};\mathbf{c}) = \bar{0}$ for all $\mathbf{c}\in \mathbb{X}'$.  It also implies 
\[
h_{D}(\mathbf{c}) \equiv_{r} \widetilde{h_{D}(\mathbf{b}^{\vee})}
\] for all $\mathbf{c} \in \mathbb{X}'$.  These observations allow further simplification.  Namely, $\mathcal{I}$ equals the projection of:
\begin{align*}
 & = \sum_{\mathbf{c} \in \mathbb{X}'}  h_{D}(\mathbf{c}) \otimes 1 \otimes  \left( v_{\tau^{-1}(1)}^{\alpha_{\tau^{-1}(1)}} \otimes v_{\tau^{-1}(2)}^{\alpha_{\tau^{-1}(2)}} \otimes \dotsb \otimes v_{\tau^{-1}(d)}^{\alpha_{\tau^{-1}(d)}}\right)   \\
          & = \sum_{\mathbf{c} \in \mathbb{X}'}  \widetilde{h_{D}(\mathbf{b}^{\vee})} \otimes 1 \otimes  \left( v_{\tau^{-1}(1)}^{\alpha_{\tau^{-1}(1)}} \otimes v_{\tau^{-1}(2)}^{\alpha_{\tau^{-1}(2)}} \otimes \dotsb \otimes v_{\tau^{-1}(d)}^{\alpha_{\tau^{-1}(d)}}\right) \\
          & = \left| \mathbb{X}' \right| \widetilde{h_{D}(\mathbf{b}^{\vee})} \otimes 1 \otimes  \left( v_{\tau^{-1}(1)}^{\alpha_{\tau^{-1}(1)}} \otimes v_{\tau^{-1}(2)}^{\alpha_{\tau^{-1}(2)}} \otimes \dotsb \otimes v_{\tau^{-1}(d)}^{\alpha_{\tau^{-1}(d)}}\right).
\end{align*} 

The  claim follows once we verify $\mathbf{b}^{\vee} \in \mathbb{X}'$.  Since $\mathbf{b}^{\vee}$ is obviously an element of $\mathbb{X}$, it remains to verify 
\begin{align*}
z_{k}(i_{k};\mathbf{b}^{\vee}_{k})
        &= ({}_{\beta_{k}(D) }i_{k}) ({}_{\beta_{k}(D) -1}(b_{k, 1}^{\vee \vee})) ({}_{\beta_{k}(D) -2}(b_{k, 2}^{\vee \vee}))\dotsb ({}_{1}(b_{k, \beta_{k}(D)-1}^{\vee \vee})) (b_{k, \beta_{k}(D) }^{\vee \vee} ) \\
        &= ({}_{\beta_{k}(D) }i_{k}) ({}_{\beta_{k}(D) -1}(i_{k})) ({}_{\beta_{k}(D) -2}(i_{k}))\dotsb ({}_{1}(i_{k})) ({}_{0}(i_{k})^{\vee \vee}) \\
        &= (i_{k}) (i_{k}) (i_{k}) \dotsb (i_{k}) (i_{k}) \\
        & = i_{k}.
\end{align*}
\end{proof}

\begin{theorem}\label{T:asymptotic-faithfulness}  Let 
\[
\sum_{t=1}^{p} \nu_{t} D_{t} \in \Hom_{\WebAaIthin}(i_{1}\dotsb i_{d}, j_{1}\dotsb j_{d})
\] be a $\k$-linear combination of normally ordered diagrams.  Let $n \geq d$. If $H_{n}\left(\sum_{t=1}^{p} \nu_{t} D_{t} \right)=0$, then $\nu_{t}=0$ for $t=1, \dotsc , p$.
\end{theorem}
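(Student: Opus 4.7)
The plan is to evaluate the homomorphism $H_n\bigl(\sum_{t=1}^p \nu_t D_t\bigr)_{\calM}$ on the distinguished vector $v = 1 \otimes 1 \otimes v_1^{i_1} \otimes \dots \otimes v_d^{i_d}$ in $\calM \otimes {}_{i_1}V \otimes \dots \otimes {}_{i_d}V$ (well-defined since $n \geq d$) and extract information about each $\nu_t$ from the resulting equation $\sum_t \nu_t H_n(D_t)_{\calM}(v) = 0$ using Theorem~\ref{P:KeyLinearIndepedenceResult}. I would proceed by downward induction on the maximal total affine degree $r_{\max} := \max_t \sum_{k=1}^{d} \beta_k(D_t)$. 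Writing $r_t := \sum_k \beta_k(D_t)$, each $H_n(D_t)_{\calM}(v)$ lies in filtration degree $r_t$, so reducing modulo $(\calM \otimes {}_{j_1}V \otimes \dots \otimes {}_{j_d}V)^{\leq r_{\max}-1}$ kills all terms with $r_t < r_{\max}$ and projects the surviving terms into the degree-$r_{\max}$ component of the associated graded, which decomposes via \cref{E:Decomp-Equation} as a direct sum indexed by PBW basis elements of $U(\fh)$.

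By Theorem~\ref{P:KeyLinearIndepedenceResult}, each retained summand contributes $K_{D_t}$ times the basis vector
$$\widetilde{h_{D_t}(\mathbf{b}_t^\vee)} \otimes 1 \otimes v_{\tau_t^{-1}(1)}^{\alpha_{t,\tau_t^{-1}(1)}} \otimes \dots \otimes v_{\tau_t^{-1}(d)}^{\alpha_{t,\tau_t^{-1}(d)}}$$
inside the summand of \cref{E:Decomp-Equation} labelled by $\widetilde{h_{D_t}(\mathbf{b}_t^\vee)}$, with $K_{D_t}$ a nonzero element of the integral domain $\k$. The crucial step is to show that, as $D_t$ ranges over normally ordered diagrams with $r_t = r_{\max}$, these leading vectors are pairwise distinct basis elements of \cref{E:basis-element}, so that linear independence in the associated graded forces $\nu_t = 0$ for all such $t$. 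I would split into two cases. If $\widetilde{h_{D_s}(\mathbf{b}_s^\vee)} \neq \widetilde{h_{D_t}(\mathbf{b}_t^\vee)}$, the leading vectors lie in different direct summands of \cref{E:Decomp-Equation} and we are done. Otherwise, since $\mathbf{b}_t^\vee$ consists of the dual idempotents $i_k^\vee$ (with $k$ repeated $\beta_k(D_t)$ times) and the $i_k$ are fixed by the source of the morphism, the PBW element $\widetilde{h_{D_t}(\mathbf{b}_t^\vee)}$ is, up to a fixed sign, the monomial $\prod_k (h_k^{i_k^\vee})^{\beta_k(D_t)}$; so its coincidence for $D_s$ and $D_t$ forces $\beta_k(D_s) = \beta_k(D_t)$ for every $k$. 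Then $D_s \neq D_t$ must differ in its permutation $\tau$ or some coupon $\alpha_k$, and since $\{v_r^{\alpha} \mid r \in [1,n],\, \alpha \in {}_{j_k}\BasisB\}$ is a $\k$-basis of ${}_{j_k}V$, the leading $v$-tensors differ in at least one basis factor.

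Iterating the resulting vanishing with successively smaller $r_{\max}$ yields $\nu_t = 0$ for every $t$. The main obstacle is the second case above: one must carefully verify that the normal-ordering operation $h_{D_t}(\mathbf{b}_t^\vee) \mapsto \widetilde{h_{D_t}(\mathbf{b}_t^\vee)}$ preserves enough information to recover the multi-index $(\beta_k(D_t))_k$, and then use the normal-ordering conventions on $D_t$ itself (that the coupons come from a fixed basis and the crossings realize a fixed permutation) to translate distinctness of the tuples $(\tau_t,\alpha_{t,\bullet})$ into distinctness of the corresponding leading $v$-tensors.
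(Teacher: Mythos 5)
Your proposal is correct and takes essentially the same approach as the paper's proof: both evaluate at the distinguished vector $1\otimes 1\otimes v_1^{i_1}\otimes\cdots\otimes v_d^{i_d}$, pass to the filtration quotient, and invoke \cref{P:KeyLinearIndepedenceResult} together with the decomposition \cref{E:Decomp-Equation} to deduce linear independence. The only cosmetic difference is procedural — you organize the argument as a downward induction over $r_{\max}$ and compare leading terms across the whole degree-$r_{\max}$ graded piece, while the paper fixes a single $D_1$ of maximal affine degree, projects onto the one summand labelled $\widetilde{h_{D_1}(\mathbf{b}^\vee)}$, and derives a contradiction with $\nu_1\neq 0$ — but the key computation is the same.
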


\begin{proof} Assume $\sum_{t=1}^{p} \nu_{t} D_{t}$ is a $\k$-linear combination of distinct normally ordered diagrams with $\nu_{t} \neq 0$ for all $t=1,\dotsc ,p$. Further assume that $H_{n}\left(\sum_{t=1}^{p} \nu_{t} D_{t} \right)=0$ for some $n \geq d$. Then, 
\[
H_{n}\left(\sum_{t=1}^{p} \nu_{t} D_{t} \right)_{\calM}: \calM \otimes {}_{i_{1}}V \otimes  \dotsb \otimes {}_{i_{d}}V \to \calM \otimes {}_{j_{1}}V \otimes  \dotsb \otimes {}_{j_{d}}V.
\] is the zero map.

Without loss of generality, we may assume $D_{1}$ is the diagram in the sum with the greatest number of affine dots.  Let $r = \sum_{k=1}^{d}\beta_{k}(D_{1})$ be the total number of affine dots in $D_{1}$.  Let $\mathbf{b}^{\vee}$ be defined as in \cref{E:boldbvee-def} using $D=D_{1}$. Let us consider the image of $H_{n}\left(\sum_{t=1}^{p} \nu_{t} D_{t} \right)_{\calM}(1 \otimes 1 \otimes v_{1}^{i_{1}} \otimes \dotsb \otimes v_{d}^{i_{d}})$ in the summand of \cref{E:Decomp-Equation} indexed by $\widetilde{h_{D_{1}}(\mathbf{b}^{\vee})}$.

If $D_{t}$ has fewer than $r$ affine dots, then by \cref{P:Key-basis-result} $H_{n}\left( D_{t} \right)_{\calM}(1 \otimes 1 \otimes v_{1}^{i_{1}} \otimes \dotsb \otimes v_{d}^{i_{d}}) =0$ in $\left(\calM \otimes  {}_{j_{1}}V \otimes \dotsb \otimes {}_{j_{d}}V \right)^{\leq r}  /\left(\calM \otimes  {}_{j_{1}}V \otimes \dotsb \otimes {}_{j_{d}}V \right)^{\leq (r-1)}$ and does not contribute to the summand of interest.  Without loss, we may assume $D_{1}, \dotsc , D_{p}$ each have exactly $r$ affine dots.   

Furthermore, by considering the multiplicity of each $h_{1}, \dotsc , h_{n}$ in \cref{P:Key-basis-result} we see that the projection of $H_{n}( D_{t})_{\calM}\left(1 \otimes  1 \otimes v_{1}^{i_{1}} \otimes \dotsb \otimes v_{d}^{i_{d}} \right)$ into the summand indexed by $\widetilde{h_{D_{1}}(\mathbf{b}^{\vee})}$ is identically zero unless $\beta_{k}(D_{t}) = \beta_{k}(D_{1})$ for all $k=1, \dotsc , d$.   That is, we may further assume that $D_{1}, \dotsc , D_{p}$ have exactly the same number of affine dots on each strand.

For these normally ordered diagrams, the image of  $H_{n}( D_{t})_{\calM }\left(1 \otimes  1 \otimes v_{1}^{i_{1}} \otimes \dotsb \otimes v_{d}^{i_{d}} \right)$ is given by Proposition~\ref{P:KeyLinearIndepedenceResult}.  For $a=1, \dotsc , p$, let $\alpha_{t,1}, \dotsc \alpha_{t,d} \in  \BasisB$ and $\tau_{t} \in S_{d}$ be the notation for the normally ordered diagram $D_{t}$ established in \eqref{E:D-Decomposition}.  Using this notation and  Proposition~\ref{P:KeyLinearIndepedenceResult},  the image in the summand of \eqref{E:Decomp-Equation} indexed by $\widetilde{h_{D_{1}}(\mathbf{b}^{\vee})}$ is given by 
\[
\sum_{t=1}^{p} K_{D_{t}} \nu_{t}  \widetilde{h_{D_{1}}(\mathbf{b}^{\vee})} \otimes 1 \otimes  \left( v_{\tau_{t}^{-1}(1)}^{\alpha_{t,\tau_{t}^{-1}(1)}} \otimes v_{\tau_{t}^{-1}(2)}^{\alpha_{t,\tau_{t}^{-1}(2)}} \otimes \dotsb \otimes v_{\tau_{t}^{-1}(d)}^{\alpha_{t,\tau_{t}^{-1}(d)}}\right),
\] where $K_{D_{t}} \in \k \setminus \{0 \} $  for $t=1, \dotsc , p$.
However, this is a $\k$-linear combination of distinct basis elements of $X \otimes  U(\fu^{-}) \otimes {}_{i_{1}}V \otimes \dotsb \otimes {}_{i_{d}}V$ as described in \eqref{E:basis-element}.  From this we deduce $\nu_{t}=0$ for $t=1, \dotsc , p$ and the claim follows.
\end{proof}

Since the normally ordered diagrams span the morphism spaces of $\WebAaIthin$, the following results immediately follow from the previous theorem.

\begin{corollary}\label{normordind77} The family of functors $\{H_n\}_{n \geq 1}$ is asymptotically faithful on $\WebAaIthin$, meaning that on any fixed morphism in $\WebAaIthin$, the functor $H_n$ is faithful for $n$ sufficiently large.  \end{corollary}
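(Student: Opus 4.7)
The plan is to deduce this corollary as an essentially immediate consequence of Theorem~\ref{T:asymptotic-faithfulness}, so the proof will be short: the real work has already been done in establishing the linear independence result for normally ordered diagrams.

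First I would fix an arbitrary nonzero morphism $\phi$ in $\WebAaIthin$. Since the objects of $\WebAaIthin$ are words of thin letters $i^{(1)}$, non-trivial morphism spaces only exist between words of equal length, so $\phi \in \Hom_{\WebAaIthin}(i_1 \cdots i_d, j_1 \cdots j_d)$ for some $d$ and some $i_k, j_k \in I$. The next step is to rewrite $\phi$ in the putative-basis form introduced in Section~\ref{SS:thin-category}: that is, to express $\phi = \sum_{t=1}^p \nu_t D_t$ as a finite $\k$-linear combination of distinct normally ordered diagrams $D_t = X_t \circ C_t \circ T_t$ with each $\nu_t \neq 0$. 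This is justified by Corollary~\ref{spancor} (applied to thin source and target objects), which tells us that the morphism space is spanned by the decorated double coset diagrams $\eta_{\bmu}$; in the thin setting these reduce to compositions of a layer of affine dots, a layer of coupons, and a layer of crossings, which is precisely the normally ordered form.

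With this decomposition in hand, for any $n \geq d$ I would invoke Theorem~\ref{T:asymptotic-faithfulness} directly: the theorem asserts that $H_n\left(\sum_{t=1}^p \nu_t D_t\right) = 0$ would force $\nu_t = 0$ for all $t$, which contradicts our assumption that $\phi \neq 0$. Hence $H_n(\phi) \neq 0$ for all $n \geq d$, proving the asymptotic faithfulness claim with explicit threshold $N = d$.

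The only step that is not completely formal is the passage from the general spanning set of Corollary~\ref{spancor} to the normally ordered form used in Theorem~\ref{T:asymptotic-faithfulness}; however, this is already discussed in the preamble of Section~\ref{SS:thin-category}, where normally ordered diagrams are defined precisely as the putative basis in the thin setting. There is no genuine obstacle here, as this corollary is really just a bookkeeping reformulation of the theorem that makes explicit the category-theoretic meaning of the linear independence statement.
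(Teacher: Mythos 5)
Your proof is correct and matches the paper's approach: the paper likewise observes that normally ordered diagrams span the morphism spaces of $\WebAaIthin$ (via Corollary~\ref{spancor}, since in the thin setting the decorated double coset diagrams $\eta_{\bmu}$ reduce to normally ordered diagrams) and then deduces the corollary immediately from Theorem~\ref{T:asymptotic-faithfulness}. Your explicit threshold $N = d$ is a minor and accurate refinement.
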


\begin{corollary}\label{normordbasis77} The normally ordered diagrams give bases for the morphism spaces of $\WebAaIthin$.
\end{corollary}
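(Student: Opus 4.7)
The plan is to separately establish spanning and linear independence for the normally ordered diagrams in $\Hom_{\WebAaIthin}(i_1 \cdots i_d, j_1 \cdots j_d)$; the latter is essentially free of charge from Theorem~\ref{T:asymptotic-faithfulness}, while the former requires a short but careful reduction argument using the defining relations.

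For spanning, I would argue that every morphism in $\WebAaIthin$ is a $\k$-linear combination of normally ordered diagrams. Since the generators of $\AffWebAaI$ restricted to thin objects are crossings, coupons, and affine dots, any morphism in $\WebAaIthin$ is a $\k$-linear combination of diagrams built from vertical and horizontal concatenations of such generators (this is a consequence of the specialization of \cref{spancor} to the thin case, since for thin source and target every decorated double coset diagram $\eta_{\bmu}$ lives in $\WebAaIthin$). To bring such a diagram into normal form $X \circ C \circ T$, I would induct on the affine degree and, within a fixed affine degree, on a suitable combinatorial complexity measure. At the bottom of the induction, one uses relation \cref{DotCrossRel} (together with its thin consequence \cref{ThinAffDotRel1}) to push affine dots downward past crossings, at the cost of introducing teleporter-decorated terms of strictly lower affine degree; one uses \cref{AffDotRel1} to push dots past coupons, and \cref{AaRel2} to compose multiple coupons on a single strand into one. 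The error terms involving teleporters can themselves be rewritten, via their definition in terms of coupons on thin strands, as linear combinations of (thin) diagrams with strictly smaller affine degree, closing the induction.

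Linear independence of the normally ordered diagrams then follows immediately from \cref{T:asymptotic-faithfulness}: any nontrivial $\k$-linear relation $\sum_t \nu_t D_t = 0$ among distinct normally ordered diagrams $D_t \in \Hom_{\WebAaIthin}(i_1 \cdots i_d, j_1 \cdots j_d)$ would, upon applying $H_n$ for any $n \geq d$, yield $H_n(\sum_t \nu_t D_t) = 0$, forcing $\nu_t = 0$ for all $t$ by the theorem.

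The main obstacle in this proof is the spanning argument, specifically ensuring that the reduction procedure is genuinely well-founded. The induction on affine degree works because \cref{DotCrossRel} (and the derived thin relation with teleporters) expresses the correction terms in strictly lower affine degree, and teleporters are, by the definitions in \cref{teleportsec}, explicit finite sums of coupons on thin strands. Thus every reduction step either strictly decreases the affine degree or, at fixed affine degree, strictly decreases the number of crossings-between-dots-and-the-bottom plus coupons-between-dots-and-the-bottom, so the process terminates with a linear combination of diagrams in normal form. Once spanning is established, the basis result is immediate.
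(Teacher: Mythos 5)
Your proof is correct and takes essentially the same route as the paper: linear independence is read off from Theorem~\ref{T:asymptotic-faithfulness}, and spanning is already supplied by Corollary~\ref{spancor} restricted to thin objects (for thin source and target, the decorated double coset diagrams $\eta_{\bmu}$ \emph{are} precisely the normally ordered diagrams, so no further normalization is needed). The explicit reduction argument you sketch after citing Corollary~\ref{spancor} is therefore redundant for this corollary, though it is correct in spirit and amounts to re-deriving the thin case of Lemma~\ref{LemAffLower}; one small omission there is that you would also need the intertwining relations \cref{AaIntertwine} to move coupons below crossings, not just the dot-crossing and dot-coupon relations.
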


\subsection{The (degenerate) affine wreath product algebra}\label{degenaffcon}
Savage has defined in \cite{SavageAff}  the {\em affine wreath product algebra} \(\mathcal{H}_d^{\textup{aff}}(A)\) (we use this notation instead of Savage's notation \(\mathcal{A}_d(A)\) to avoid confusion with our use of the symbol \(\mathcal{A}\)). Writing \(\mathcal{H}^{\textup{aff}}(A) := \bigoplus_{\bi, \bj \in I^d} \bj \mathcal{H}^{\textup{aff}}_d(A)\bi\), we may view \(\mathcal{H}^\textup{aff}(A)\) as a monoidal \(\k\)-supercategory with objects \(\{\bi \in I^d \mid d \in \Z_{\geq 0}\}\). In view of \cite[Definition 3.1, Theorem 4.6]{SavageAff}, and Corollary~\ref{normordbasis77} it is straightforward to check the following

\begin{proposition}\label{affwreathisom}
We have an isomorphism of monoidal categories \(\mathcal{H}^\textup{aff}(A) \xrightarrow{\sim} \WebAaIthin\) which sends (in Savage's notation in \cite[Definition 3.1]{SavageAff}) \(f_r, x_r, s_r\) to the coupon \(f\) on the \(r\)th strand, the affine dot on the \(r\)th strand, and the crossing of the \(r, r+1\) strands, respectively.
\end{proposition}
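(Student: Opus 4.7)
The plan is to construct mutually inverse monoidal functors between \(\mathcal{H}^\textup{aff}(A)\) and \(\WebAaIthin\), using the presentation of \(\mathcal{H}_d^{\textup{aff}}(A)\) given in \cite[Definition 3.1]{SavageAff} on one side and the presentation of \(\WebAaIthin\) inherited from \(\AffWebAaI\) on the other. The objects of both categories are tuples \(\bi \in I^d\) (indexing thin strands colored by idempotents), and since monoidal products on both sides are concatenation, the identification on objects is automatic. What requires work is matching generators and defining relations.

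First, I will define a monoidal functor \(\Phi: \mathcal{H}^\textup{aff}(A) \to \WebAaIthin\) on the generators of \cite[Definition 3.1]{SavageAff}: \(f_r \mapsto\) the coupon \(f\) on the \(r\)th strand, \(x_r \mapsto\) the affine dot on the \(r\)th strand, and \(s_r \mapsto\) the crossing of the \(r,r{+}1\) strands. To check this is well-defined I will verify each defining relation of \(\mathcal{H}^\textup{aff}_d(A)\). The symmetric-group braid and commutation relations for the \(s_r\) follow from the Coxeter relations \cref{Cox} (extended by \cref{CoxArb}); the relations stipulating how coupons \(f_r\) compose on a single strand and commute on distinct strands follow from \cref{AaRel1,AaRel2} and functoriality of the monoidal product; the intertwining of coupons with crossings is exactly \cref{AaIntertwine}; the polynomial-strand commutation of \(x_r, x_s\) on distinct strands is immediate; and the crucial dot-crossing relation in \(\mathcal{H}^\textup{aff}_d(A)\), which features Savage's Casimir term, matches the right equation of \cref{ThinAffDotRel1} together with Lemma~\ref{righttoleftthin}, once one expands the teleporter via its defining sum and identifies it with Savage's Casimir element using the definition in \cref{SS:Casimir-elements-in-A}.

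Next, I will define a candidate inverse \(\Psi: \WebAaIthin \to \mathcal{H}^\textup{aff}(A)\) by sending each generator of \(\WebAaIthin\) (which, restricted to thin objects, consists only of crossings, single-strand coupons, and single-strand affine dots) to the corresponding generator of \(\mathcal{H}^\textup{aff}(A)\). The relations to verify are \cref{Cox,AaRel1,AaRel2,AaIntertwine,ThinAffDotRel1}; all of these are either manifestly Savage relations or are equivalent to them by the same Casimir-expansion argument used above. Note that the split, merge, knothole, and merge-split relations of \(\WebAaI\) are vacuous on thin strands, so there is nothing to check for them.

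Finally, I will show \(\Phi\) and \(\Psi\) are mutually inverse. On generators this is immediate from the construction; to extend to all morphisms I will invoke \cref{normordbasis77}, which says the normally ordered diagrams form a \(\k\)-basis of the morphism spaces of \(\WebAaIthin\). Under \(\Psi\), a normally ordered diagram \(\tau \circ (\alpha_1 \otimes \cdots \otimes \alpha_d) \circ x_d^{\beta_d} \circ \cdots \circ x_1^{\beta_1}\) as in \cref{E:D-Decomposition} is sent to the element \(s_\tau \cdot f_{1,\alpha_1}\cdots f_{d,\alpha_d} \cdot x_1^{\beta_1}\cdots x_d^{\beta_d}\) of \(\mathcal{H}^\textup{aff}_d(A)\), and these are precisely Savage's PBW basis elements from \cite[Theorem 4.6]{SavageAff}. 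Since \(\Psi\) carries a basis to a basis bijectively (and \(\Phi\) is its two-sided inverse on generators), \(\Phi\) and \(\Psi\) are mutually inverse isomorphisms of monoidal \(\k\)-supercategories.

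The main obstacle is the bookkeeping in matching Savage's Casimir term in the dot-crossing relation with the expanded form of our teleporter, particularly accounting for signs coming from the Nakayama automorphism \(\psi\) and the grading; once this identification is made, the remaining verifications are straightforward translations between the two presentations, and the basis comparison then promotes the generator-level isomorphism to a genuine equivalence of categories via \cref{normordbasis77}.
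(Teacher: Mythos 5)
Your proposal is correct and follows essentially the same route the paper envisions: define the functor on Savage's generators, verify Savage's defining relations hold in $\WebAaIthin$ (with the only non-formal step being the identification of Savage's Casimir term with the teleporter in the dot-crossing relation), and then use \cite[Theorem 4.6]{SavageAff} together with \cref{normordbasis77} to see that the functor carries Savage's PBW basis bijectively to the normally ordered diagram basis, hence is an isomorphism. One small clean-up worth noting: since $\WebAaIthin$ is a full subcategory of $\AffWebAaI$ and not itself presented by generators and relations, the construction of an inverse functor $\Psi$ by "sending generators to generators" is not quite rigorous as stated — it is cleaner to show directly that $\Phi$ is full (every morphism is a $\k$-combination of normally ordered diagrams, all in the image) and faithful (basis to basis), which is what your final paragraph in effect does.
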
 

\begin{remark}
In light of Proposition~\ref{affwreathisom}, one may view \(\AffWebAaI\) as a `thickened' analogue of the (degenerate) affine wreath product algebra.
\end{remark}

\section{Asympototic faithfulness of the defining representation and a basis theorem}

\subsection{A useful ordering on thin morphisms}\label{OrderSecTemp} Let \(\bi= i_1  \cdots i_d\), \(\bj  = j_1 \cdots j_d\). We associate \(\bnu \in \mathcal{M}(\bi, \bj)\) with data \(([\bt, \bb], \sigma)\), where
\begin{align*}
[\bt, \bb] = ([t_1, b_1], \ldots, [t_{d}, b_{d}]), \qquad \sigma \in \mathfrak{S}_{d}
\end{align*}
and \(\nu_{u,\sigma u}([t_u,b_u]) = 1\) for all \(u \in [1,d]\). For instance, if we take \(\bnu = \hat{\bmu}\) as in Example~\ref{cablex}, then we associate \(\bnu\) with 
\begin{align*}
([\bt, \bb], \sigma) = (([1,x],[1,x],[2,i],[2,i],[0,j],[0,y],[0,y],[0,y],[2,z],[1,j],[1,j],[1,z]),(3,4,5)).
\end{align*}
We put a total order \(\lessdot\) on \(\mathcal{M}(\bi, \bj)\) as follows. Choose any order on \(\mathfrak{S}_{d}\) which refines the Bruhat order to a total order. We say \(([\bt, \bb], \sigma) \lessdot ([\bt', \bb'], \sigma')\) provided
\begin{enumerate}
\item \(|\bt| < |\bt'|\) (comparison of affine degrees), or;
\item \(|\bt| = |\bt'|\) and \(\sigma > \sigma'\) (comparison in the Bruhat order), or;
\item \(|\bt| = |\bt'|\), \(\sigma = \sigma'\), and \([\bt, \bb] < [\bt', \bb']\) in the lexicographic total order induced by the total order on \(\BasisP\) as in \S\ref{resPcomp}.
\end{enumerate}
We write
\begin{align*}
\WebAaIthin(\bi, \bj)_{\lessdot \bmu} = \k\{ \eta_{\bnu} \mid \bnu \in \mathcal{M}(\bi, \bj), \bnu \lessdot \bmu\}.
\end{align*}

\subsection{Explosion and cabling}
Recall the explosion map from \S\ref{ExplSec} and the morphisms \(\eta_{\bmu}\) and its cabled mate \(\eta_{\hat{\bmu}}\) from \S\ref{ijmatsec}.

\begin{lemma}\label{explodebasis}
Let \(\bmu \in \mathcal{M}(\bi^{(\bx)}, \bj^{(\by)})\). 
We have \(|\bmu|^!_{a} \exp_{\bi^{(\bx)},\bj^{(\by)}} (\eta_{\bmu}) - |\bmu|^! \eta_{\hat{\bmu}} \in \WebAaIthin(\bi^{\bx}, \bj^{\by})_{\lessdot \hat{\bmu}} \).\end{lemma}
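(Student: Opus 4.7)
The plan is to unpack the explosion $\exp_{\bi^{(\bx)}, \bj^{(\by)}}(\eta_{\bmu}) = Y_{\bj^{(\by)}} \circ \eta_{\bmu} \circ Z_{\bi^{(\bx)}}$ by propagating the outer splits and merges through the cell-and-piece structure of $\eta_{\bmu}$, isolating the multiple of $\eta_{\hat{\bmu}}$, and checking that everything else lies in $\WebAaIthin(\bi^{\bx}, \bj^{\by})_{\lessdot \hat{\bmu}}$. Recall that $\eta_{\bmu}$ breaks into cells indexed by pairs $(r,s)$ and within each cell into sub-pieces indexed by $[t,b] \in \textup{supp}(\mu_{r,s})$: for $b \in \a$ the sub-piece is a single thick strand of thickness $m = \mu_{r,s}([t,b])$ carrying coupon $b$ and $t$ affine dots, while for $b \notin \a$ it is a bundle of $m$ thin strands decorated similarly, framed by an internal thick-to-thin split-merge by the definition of $[f]^\diamond$ in \cref{greendotdef}.

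First I would use the associativity and intertwining relations \cref{AssocRel,SplitIntertwineRel,MergeIntertwineRel} to absorb $Y_{\bj^{(\by)}}$ and $Z_{\bi^{(\bx)}}$ into the cell-level splits and merges of $\eta_{\bmu}$, reducing the calculation to one at the sub-piece level. For a $b \in \a$ sub-piece of multiplicity $m$, I would push the thick coupon and thick dot stack through the bounding split using \cref{TAaSMRel,AffDotRel1}, and then apply Lemma~\ref{bigwishtosym} to resolve the residual undecorated merge-then-split on $m$ thin strands as $\sum_{\sigma \in \mathfrak{S}_m}\sigma$, yielding coefficient $1$ for the identity term. For a $b \notin \a$ sub-piece, the extra internal split-merge produces two independent $\mathfrak{S}_m$-sums, and after commuting the (even, same-color) coupons past crossings via Lemma~\ref{CoxArb} these collapse (modulo strictly lower-affine-degree corrections from \cref{DotCrossRel}) into $m! \sum_{\sigma \in \mathfrak{S}_m}\sigma$, yielding coefficient $m!$. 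Collecting identity contributions across all sub-pieces reconstructs $\eta_{\hat{\bmu}}$ with overall coefficient $\prod_{b \notin \a} m! = |\bmu|^!/|\bmu|^!_{\a}$, so
\begin{equation*}
\exp_{\bi^{(\bx)},\bj^{(\by)}}(\eta_{\bmu}) = \frac{|\bmu|^!}{|\bmu|^!_{\a}}\, \eta_{\hat{\bmu}} + R
\end{equation*}
for a remainder $R$, and multiplying by $|\bmu|^!_{\a}$ gives the stated identity modulo $|\bmu|^!_{\a} R$.

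It remains to verify that $|\bmu|^!_{\a} R$ lies in $\WebAaIthin(\bi^{\bx}, \bj^{\by})_{\lessdot \hat{\bmu}}$. The terms of $R$ arise either (a) from inserting a non-identity $\sigma' \in \mathfrak{S}_m$ in some sub-piece, which post-composes the global routing $\sigma_{\hat{\bmu}}$ by a non-identity element of the parabolic subgroup attached to that sub-piece, or (b) from pushing dots past newly introduced crossings via \cref{DotCrossRel}. Since the cabling convention makes $\sigma_{\hat{\bmu}}$ a minimal-length representative of the double coset encoded by $\bmu$, terms of type (a) preserve affine degree but strictly increase Bruhat order, which is $\lessdot$-smaller by the second clause of the definition in Section~\ref{OrderSecTemp}; terms of type (b) strictly decrease affine degree, which is $\lessdot$-smaller by the first clause. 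Either way, such terms land in $\WebAaIthin(\bi^{\bx}, \bj^{\by})_{\lessdot \hat{\bmu}}$, completing the proof.

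The main obstacle is the bookkeeping in the central step: pinning down the combinatorics of the explosion at each sub-piece to confirm that the leading coefficient is exactly $|\bmu|^!/|\bmu|^!_{\a}$ after all dot-crossing reductions, and verifying that $\sigma_{\hat{\bmu}}$ is indeed a minimal-length double-coset representative so that every non-identity sub-piece permutation produces a strictly Bruhat-larger global routing.
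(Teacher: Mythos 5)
Your proposal points in the right direction and uses the same toolbox as the paper --- \cref{bigwishtosym}, the double-coset factorization of the resulting symmetric-group sums, \cref{DotCrossRel} to produce affine-degree drops, and the $\lessdot$ ordering from \S\ref{OrderSecTemp} --- but the organization is different and two steps need repair before the argument is complete.

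The paper does not compute $\exp_{\bi^{(\bx)},\bj^{(\by)}}(\eta_{\bmu})$ directly and then multiply by $|\bmu|^!_{\a}$. Instead it begins with $|\bmu|^!_{\a}\exp(\eta_{\bmu})$ and immediately spends that factor via \cref{KnotholeRel}, converting every $\mu_{r,s}$ box to $\mu_{r,s}^{\diamond}$. This uniformizes all sub-pieces to bundles of thin strands, so there is no longer a case distinction between $b\in\a$ and $b\notin\a$. After that it applies \cref{bigwishtosym} \emph{once} at the outer level, factors $\mathfrak{S}_{\bx}$ through $\mathscr{D}^{\bmu}_{\bx}$ and $\mathfrak{S}_{\bmu}$, and tracks three kinds of terms: dot-crossing corrections (clause~1 of $\lessdot$), non-trivial outer representatives $\bom,\bu$ (clause~2), and non-stabilizing $\bsi\in\mathfrak{S}_{\bmu}$ (clause~3). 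The $|\bmu|^!$ coefficient is the number of $\bsi$ that stabilize $\hat{\bmu}$, and crucially these stabilizing $\bsi$ are \emph{absorbed into the $\bu$-sum by reindexing} and all produce the same summand $\eta_{\hat{\bmu}}$. That absorption is the mechanism behind the coefficient; it is not a per-sub-piece computation.

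Two concrete gaps in your argument. First, your opening step claims \cref{AssocRel,SplitIntertwineRel,MergeIntertwineRel} alone reduce the computation to the sub-piece level; they do not. The merge-then-split created by $Z_{\bi^{(\bx)}}$ against the first layer of $\eta_{\bmu}$ (and dually at the top) genuinely mixes strands across cells and sub-pieces, and resolving it requires \cref{bigwishtosym} at the outer level before any sub-piece locality is available. Your later application of \cref{bigwishtosym} at the sub-piece level only captures the ``identity configuration'' of that outer sum. Second, your remainder analysis lists only (a) non-identity permutations \emph{inside} a sub-piece's parabolic and (b) \cref{DotCrossRel} corrections. This misses the terms coming from non-trivial $\bom\in\mathscr{D}^{\bmu}_{\bx}$ and $\bu\in\mathfrak{S}_{\by}$, i.e.\ configurations of the outer \cref{bigwishtosym} that route strands across cells; these are the terms the paper handles via clause~2. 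Moreover your type~(a) is in tension with the paper's accounting: in the paper those within-sub-piece permutations (the stabilizing $\bsi$) are absorbed by reindexing $\bu$ and contribute to the coefficient $|\bmu|^!$ rather than producing strictly lower terms; your claim that they produce Bruhat-higher leftover terms with the same decoration, and that you still land on coefficient $m!$ per $b\notin\a$ sub-piece, would need a separate careful justification and does not match the mechanism in the paper. Your final numerical answer agrees, which is a good sign, but as written the route through ``two independent $\mathfrak{S}_m$-sums collapsing to $m!\sum_{\sigma}\sigma$'' is not substantiated, and you flag this yourself as the main obstacle. The cleanest fix is to adopt the paper's ordering of moves: spend $|\bmu|^!_{\a}$ on \cref{KnotholeRel} first, do one global \cref{bigwishtosym}, and only then organize the sum by $(\bom,\bsi,\bu)$.
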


\begin{proof}
Assume \(\bi^{(x)} = i_1^{(x_1)} \cdots i_m^{(x_m)}\) and \(\bj^{(y)} = j_1^{(y_1)} \cdots j_n^{(y_n)}\).
We compute
\begin{align*}
\\
\hackcenter{}
|\bmu|^!_{a} \exp_{\bi^{(\bx)},\bj^{(\by)}} (\eta_{\bmu})
&= 
|\bmu|^!_{a} 
\;\;
\hackcenter{
\begin{overpic}[height=35mm]{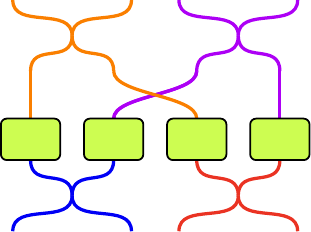}
  \put(4,-1){\makebox(0,0)[t]{$\scriptstyle i_1^{(1)}$}}  
    \put(23,-1){\makebox(0,0)[t]{$\scriptstyle \cdots$}}      
   \put(42,-1){\makebox(0,0)[t]{$\scriptstyle i_1^{(1)}$}}
      \put(50,12){\makebox(0,0)[t]{$\scriptstyle \cdots$}}
    \put(58,-1){\makebox(0,0)[t]{$\scriptstyle i_m^{(1)}$}}      
        \put(77,-1){\makebox(0,0)[t]{$\scriptstyle \cdots$}}      
     \put(96,-1){\makebox(0,0)[t]{$\scriptstyle i_m^{(1)}$}} 
     %%%%%  
     \put(4,75){\makebox(0,0)[b]{$\scriptstyle j_1^{(1)}$}}  
    \put(23,75){\makebox(0,0)[b]{$\scriptstyle \cdots$}}      
   \put(42,75){\makebox(0,0)[b]{$\scriptstyle j_1^{(1)}$}}
      \put(50,62){\makebox(0,0)[b]{$\scriptstyle \cdots$}}
    \put(58,75){\makebox(0,0)[b]{$\scriptstyle j_n^{(1)}$}}      
        \put(77,75){\makebox(0,0)[b]{$\scriptstyle \cdots$}}      
     \put(96,75){\makebox(0,0)[b]{$\scriptstyle j_n^{(1)}$}}      
     %%%%%%
     %%%%%%
       \put(10,29){\makebox(0,0)[]{$\scriptstyle \mu_{1,1}$}}  
          \put(36,29){\makebox(0,0)[]{$\scriptstyle \mu_{1,n}$}}  
             \put(63,29){\makebox(0,0)[]{$\scriptstyle \mu_{m,1}$}}  
                \put(90,29){\makebox(0,0)[]{$\scriptstyle \mu_{m,n}$}}
                \put(24,29){\makebox(0,0)[]{$\scriptstyle \cdots$}}   
                 \put(77,29){\makebox(0,0)[]{$\scriptstyle \cdots$}}   
                      \put(24,49){\makebox(0,0)[]{$\scriptstyle \cdots$}}   
                 \put(77,49){\makebox(0,0)[]{$\scriptstyle \cdots$}}   
         \put(25,12){\makebox(0,0)[l]{$\scriptstyle i_1^{(x_1)}$}}             
   \put(79,12){\makebox(0,0)[l]{$\scriptstyle i_m^{(x_m)}$}}       
      \put(25,63){\makebox(0,0)[l]{$\scriptstyle j_1^{(y_1)}$}}             
   \put(79,63){\makebox(0,0)[l]{$\scriptstyle j_n^{(y_n)}$}}    
   %%%%%          
\end{overpic}
}
\;
\substack{
(\ref{KnotholeRel})
\\
=
\\
{}
}
\;
\hackcenter{
\begin{overpic}[height=35mm]{WebAffImages/wp1.pdf}
  \put(4,-1){\makebox(0,0)[t]{$\scriptstyle i_1^{(1)}$}}  
    \put(23,-1){\makebox(0,0)[t]{$\scriptstyle \cdots$}}      
   \put(42,-1){\makebox(0,0)[t]{$\scriptstyle i_1^{(1)}$}}
      \put(50,12){\makebox(0,0)[t]{$\scriptstyle \cdots$}}
    \put(58,-1){\makebox(0,0)[t]{$\scriptstyle i_m^{(1)}$}}      
        \put(77,-1){\makebox(0,0)[t]{$\scriptstyle \cdots$}}      
     \put(96,-1){\makebox(0,0)[t]{$\scriptstyle i_m^{(1)}$}} 
     %%%%%  
     \put(4,75){\makebox(0,0)[b]{$\scriptstyle j_1^{(1)}$}}  
    \put(23,75){\makebox(0,0)[b]{$\scriptstyle \cdots$}}      
   \put(42,75){\makebox(0,0)[b]{$\scriptstyle j_1^{(1)}$}}
      \put(50,62){\makebox(0,0)[b]{$\scriptstyle \cdots$}}
    \put(58,75){\makebox(0,0)[b]{$\scriptstyle j_n^{(1)}$}}      
        \put(77,75){\makebox(0,0)[b]{$\scriptstyle \cdots$}}      
     \put(96,75){\makebox(0,0)[b]{$\scriptstyle j_n^{(1)}$}}      
     %%%%%%
     %%%%%%
       \put(10,29){\makebox(0,0)[]{$\scriptstyle \mu_{1,1}^\diamond$}}  
          \put(36,29){\makebox(0,0)[]{$\scriptstyle \mu_{1,n}^\diamond$}}  
             \put(63,29){\makebox(0,0)[]{$\scriptstyle \mu_{m,1}^\diamond$}}  
                \put(90,29){\makebox(0,0)[]{$\scriptstyle \mu_{m,n}^\diamond$}}
                \put(24,29){\makebox(0,0)[]{$\scriptstyle \cdots$}}   
                 \put(77,29){\makebox(0,0)[]{$\scriptstyle \cdots$}}   
                      \put(24,49){\makebox(0,0)[]{$\scriptstyle \cdots$}}   
                 \put(77,49){\makebox(0,0)[]{$\scriptstyle \cdots$}}   
         \put(25,12){\makebox(0,0)[l]{$\scriptstyle i_1^{(x_1)}$}}             
   \put(79,12){\makebox(0,0)[l]{$\scriptstyle i_m^{(x_m)}$}}       
      \put(25,63){\makebox(0,0)[l]{$\scriptstyle j_1^{(y_1)}$}}             
   \put(79,63){\makebox(0,0)[l]{$\scriptstyle j_n^{(y_n)}$}}    
   %%%%%          
\end{overpic}
}
\\
\\
\\
&
\substack{
(\ref{AssocRel})
\\
(\ref{MergeIntertwineRel})
\\
=
\\
{}
\\
{}
}
\hackcenter{
\begin{overpic}[height=35mm]{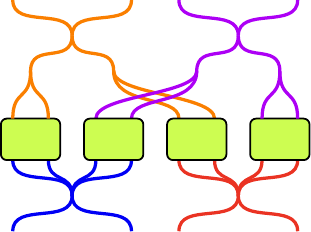}
  \put(4,-1){\makebox(0,0)[t]{$\scriptstyle i_1^{(1)}$}}  
    \put(23,-1){\makebox(0,0)[t]{$\scriptstyle \cdots$}}      
   \put(42,-1){\makebox(0,0)[t]{$\scriptstyle i_1^{(1)}$}}
      \put(50,12){\makebox(0,0)[t]{$\scriptstyle \cdots$}}
    \put(58,-1){\makebox(0,0)[t]{$\scriptstyle i_m^{(1)}$}}      
        \put(77,-1){\makebox(0,0)[t]{$\scriptstyle \cdots$}}      
     \put(96,-1){\makebox(0,0)[t]{$\scriptstyle i_m^{(1)}$}} 
     %%%%%  
     \put(4,75){\makebox(0,0)[b]{$\scriptstyle j_1^{(1)}$}}  
    \put(23,75){\makebox(0,0)[b]{$\scriptstyle \cdots$}}      
   \put(42,75){\makebox(0,0)[b]{$\scriptstyle j_1^{(1)}$}}
      \put(50,62){\makebox(0,0)[b]{$\scriptstyle \cdots$}}
    \put(58,75){\makebox(0,0)[b]{$\scriptstyle j_n^{(1)}$}}      
        \put(77,75){\makebox(0,0)[b]{$\scriptstyle \cdots$}}      
     \put(96,75){\makebox(0,0)[b]{$\scriptstyle j_n^{(1)}$}}      
     %%%%%%
     %%%%%%
         \put(10,39){\makebox(0,0)[]{$\scriptstyle \cdots$}}
       \put(10,29){\makebox(0,0)[]{$\scriptstyle \hat\mu_{1,1}$}}
        \put(11,20){\makebox(0,0)[]{$\scriptstyle \cdots$}}  
         \put(38,39){\makebox(0,0)[]{$\scriptstyle \cdots$}}
          \put(36,29){\makebox(0,0)[]{$\scriptstyle \hat\mu_{1,n}$}}  
           \put(35,20){\makebox(0,0)[]{$\scriptstyle \cdots$}}
           \put(61,39){\makebox(0,0)[]{$\scriptstyle \cdots$}}
             \put(63,29){\makebox(0,0)[]{$\scriptstyle \hat\mu_{m,1}$}}  
               \put(64,20){\makebox(0,0)[]{$\scriptstyle \cdots$}}  
                  \put(90,39){\makebox(0,0)[]{$\scriptstyle \cdots$}}
                \put(90,29){\makebox(0,0)[]{$\scriptstyle \hat\mu_{m,n}$}}
                  \put(89,20){\makebox(0,0)[]{$\scriptstyle \cdots$}}  
                \put(24,29){\makebox(0,0)[]{$\scriptstyle \cdots$}}   
                 \put(77,29){\makebox(0,0)[]{$\scriptstyle \cdots$}}   
                      \put(24,49){\makebox(0,0)[]{$\scriptstyle \cdots$}}   
                 \put(77,49){\makebox(0,0)[]{$\scriptstyle \cdots$}}   
         \put(25,12){\makebox(0,0)[l]{$\scriptstyle i_1^{(x_1)}$}}             
   \put(79,12){\makebox(0,0)[l]{$\scriptstyle i_m^{(x_m)}$}}       
      \put(25,63){\makebox(0,0)[l]{$\scriptstyle j_1^{(y_1)}$}}             
   \put(79,63){\makebox(0,0)[l]{$\scriptstyle j_n^{(y_n)}$}}    
   %%%%%          
\end{overpic}
}
\substack{
\textup{L.}\ref{bigwishtosym}
\\
=
\\
{}
}
\sum_{\substack{\bw \in \mathfrak{S}_{\bx} \\ \bu \in \mathfrak{S}_{\by}}}
\hackcenter{
\begin{overpic}[height=35mm]{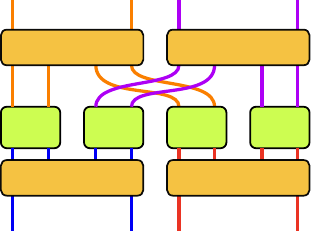}
  \put(4,-1){\makebox(0,0)[t]{$\scriptstyle i_1^{(1)}$}}  
    \put(23,-1){\makebox(0,0)[t]{$\scriptstyle \cdots$}}      
   \put(42,-1){\makebox(0,0)[t]{$\scriptstyle i_1^{(1)}$}}
    \put(58,-1){\makebox(0,0)[t]{$\scriptstyle i_m^{(1)}$}}      
        \put(77,-1){\makebox(0,0)[t]{$\scriptstyle \cdots$}}      
     \put(96,-1){\makebox(0,0)[t]{$\scriptstyle i_m^{(1)}$}} 
     %%%%%  
     \put(4,75){\makebox(0,0)[b]{$\scriptstyle j_1^{(1)}$}}  
    \put(23,75){\makebox(0,0)[b]{$\scriptstyle \cdots$}}      
   \put(42,75){\makebox(0,0)[b]{$\scriptstyle j_1^{(1)}$}}
    \put(58,75){\makebox(0,0)[b]{$\scriptstyle j_m^{(1)}$}}      
        \put(77,75){\makebox(0,0)[b]{$\scriptstyle \cdots$}}      
     \put(96,75){\makebox(0,0)[b]{$\scriptstyle j_m^{(1)}$}}      
     %%%%%%
     %%%%%%
         \put(10,43){\makebox(0,0)[]{$\scriptstyle \cdots$}}
       \put(10,33){\makebox(0,0)[]{$\scriptstyle \hat\mu_{1,1}$}}
        \put(10,24.5){\makebox(0,0)[]{$\scriptstyle \cdots$}}  
         \put(38,43){\makebox(0,0)[]{$\scriptstyle \cdots$}}
          \put(36,33){\makebox(0,0)[]{$\scriptstyle \hat\mu_{1,n}$}}  
           \put(36,24.5){\makebox(0,0)[]{$\scriptstyle \cdots$}}
           \put(61,43){\makebox(0,0)[]{$\scriptstyle \cdots$}}
             \put(63,33){\makebox(0,0)[]{$\scriptstyle \hat\mu_{m,1}$}}  
               \put(63,24.5){\makebox(0,0)[]{$\scriptstyle \cdots$}}  
                  \put(90,43){\makebox(0,0)[]{$\scriptstyle \cdots$}}
                \put(90,33){\makebox(0,0)[]{$\scriptstyle \hat\mu_{m,n}$}}
                  \put(90,24.5){\makebox(0,0)[]{$\scriptstyle \cdots$}}  
                 \put(50.5,33){\makebox(0,0)[]{$\scriptstyle \cdots$}}    
                \put(24,33){\makebox(0,0)[]{$\scriptstyle \cdots$}}   
                 \put(77,33){\makebox(0,0)[]{$\scriptstyle \cdots$}}   
                      \put(24,47){\makebox(0,0)[]{$\scriptstyle \cdots$}}   
                 \put(77,47){\makebox(0,0)[]{$\scriptstyle \cdots$}}   
         \put(23,17){\makebox(0,0)[]{$\scriptstyle w_1$}}        
         \put(50.5,17){\makebox(0,0)[]{$\scriptstyle \cdots$}}        
   \put(77,17){\makebox(0,0)[]{$\scriptstyle w_m$}}       
      \put(23,59){\makebox(0,0)[]{$\scriptstyle u_1$}}  
         \put(50.5,59){\makebox(0,0)[]{$\scriptstyle \cdots$}}             
   \put(77,59){\makebox(0,0)[]{$\scriptstyle u_n$}}    
   %%%%%          
\end{overpic}
}
\\
\end{align*}
where \(\bw = (w_1, \ldots, w_m) \in \mathfrak{S}_{\bx}\) and \(\bu = (u_1, \ldots, u_n) \in \mathfrak{S}_{\by}\). Now write 
\begin{align*}
\mathfrak{S}_{\bmu} := \mathfrak{S}_{|\mu_{1,1}|} \times \cdots \times \mathfrak{S}_{|\mu_{1,n}|} \times \cdots \times \mathfrak{S}_{|\mu_{m,1}|} \times \cdots \times \mathfrak{S}_{|\mu_{m,n}|},
\end{align*}
and let \(\mathscr{D}_{\bx}^{\bmu}\) designate a set of minimal coset representatives for \(\mathfrak{S}_{\bx} / \mathfrak{S}_{\bmu}\).
Continuing on then, we have that the sum above is equal to 
\begin{align*}
\\
\sum_{\substack{\bom \in \mathcal{D}^{ \bmu }_{\bx} \\ \bu \in \mathfrak{S}_{\by}   \\ \bsi \in \mathfrak{S}_{ \bmu } }}
\hackcenter{
\begin{overpic}[height=35mm]{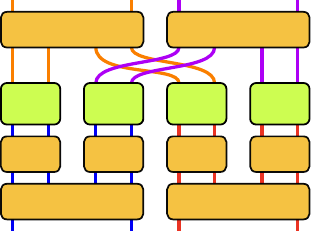}
  \put(4,-1){\makebox(0,0)[t]{$\scriptstyle i_1^{(1)}$}}  
    \put(23,-1){\makebox(0,0)[t]{$\scriptstyle \cdots$}}      
   \put(42,-1){\makebox(0,0)[t]{$\scriptstyle i_1^{(1)}$}}
    \put(58,-1){\makebox(0,0)[t]{$\scriptstyle i_m^{(1)}$}}      
        \put(77,-1){\makebox(0,0)[t]{$\scriptstyle \cdots$}}      
     \put(96,-1){\makebox(0,0)[t]{$\scriptstyle i_m^{(1)}$}} 
     %%%%%  
     \put(4,75){\makebox(0,0)[b]{$\scriptstyle j_1^{(1)}$}}  
    \put(23,75){\makebox(0,0)[b]{$\scriptstyle \cdots$}}      
   \put(42,75){\makebox(0,0)[b]{$\scriptstyle j_1^{(1)}$}}
    \put(58,75){\makebox(0,0)[b]{$\scriptstyle j_m^{(1)}$}}      
        \put(77,75){\makebox(0,0)[b]{$\scriptstyle \cdots$}}      
     \put(96,75){\makebox(0,0)[b]{$\scriptstyle j_m^{(1)}$}}      
     %%%%%%
     %%%%%%
        \put(10,50){\makebox(0,0)[]{$\scriptstyle \cdots$}}
      \put(10,41){\makebox(0,0)[]{$\scriptstyle \hat\mu_{1,1}$}}
         \put(10,32){\makebox(0,0)[]{$\scriptstyle \cdots$}}
       \put(10,24){\makebox(0,0)[]{$\scriptstyle \sigma_{1,1}$}}
        \put(10,17){\makebox(0,0)[]{$\scriptstyle \cdots$}} 
 \put(38,50){\makebox(0,0)[]{$\scriptstyle \cdots$}}
      \put(36,41){\makebox(0,0)[]{$\scriptstyle \hat\mu_{1,n}$}}
         \put(36,32){\makebox(0,0)[]{$\scriptstyle \cdots$}}
       \put(36,24){\makebox(0,0)[]{$\scriptstyle \sigma_{1,n}$}}
        \put(36,17){\makebox(0,0)[]{$\scriptstyle \cdots$}} 
 \put(61,50){\makebox(0,0)[]{$\scriptstyle \cdots$}}
      \put(63,41){\makebox(0,0)[]{$\scriptstyle \hat\mu_{m,1}$}}
         \put(63,32){\makebox(0,0)[]{$\scriptstyle \cdots$}}
       \put(63,24){\makebox(0,0)[]{$\scriptstyle \sigma_{m,1}$}}
        \put(63,17){\makebox(0,0)[]{$\scriptstyle \cdots$}} 
 \put(90,50){\makebox(0,0)[]{$\scriptstyle \cdots$}}
      \put(90,41){\makebox(0,0)[]{$\scriptstyle \hat\mu_{m,n}$}}
         \put(90,32){\makebox(0,0)[]{$\scriptstyle \cdots$}}
       \put(90,24){\makebox(0,0)[]{$\scriptstyle \sigma_{m,n}$}}
        \put(90,17){\makebox(0,0)[]{$\scriptstyle \cdots$}} 
                 \put(50.5,33){\makebox(0,0)[]{$\scriptstyle \cdots$}}    
                \put(24,24){\makebox(0,0)[]{$\scriptstyle \cdots$}}   
                 \put(77,24){\makebox(0,0)[]{$\scriptstyle \cdots$}}   
                      \put(24,41){\makebox(0,0)[]{$\scriptstyle \cdots$}}   
                 \put(77,41){\makebox(0,0)[]{$\scriptstyle \cdots$}}   
         \put(23,9){\makebox(0,0)[]{$\scriptstyle \omega_1$}}        
         \put(50.5,9){\makebox(0,0)[]{$\scriptstyle \cdots$}}        
   \put(77,9){\makebox(0,0)[]{$\scriptstyle \omega_m$}}       
      \put(23,64){\makebox(0,0)[]{$\scriptstyle u_1$}}  
         \put(50.5,64){\makebox(0,0)[]{$\scriptstyle \cdots$}}             
   \put(77,64){\makebox(0,0)[]{$\scriptstyle u_n$}}    
   %%%%%          
\end{overpic}
}
\hspace{5mm}
\substack{
(\ref{AaIntertwine})
\\
(\ref{DotCrossRel})
\\
=
\\
{}
\\
{}
}
\sum_{\substack{\bom \in \mathcal{D}^{ \bmu }_{\bx} \\ \bu \in \mathfrak{S}_{\by}   \\ \bsi \in \mathfrak{S}_{ \bmu } }}
\hackcenter{
\begin{overpic}[height=35mm]{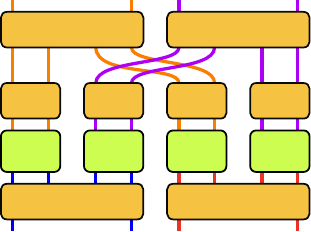}
  \put(4,-1){\makebox(0,0)[t]{$\scriptstyle i_1^{(1)}$}}  
    \put(23,-1){\makebox(0,0)[t]{$\scriptstyle \cdots$}}      
   \put(42,-1){\makebox(0,0)[t]{$\scriptstyle i_1^{(1)}$}}
    \put(58,-1){\makebox(0,0)[t]{$\scriptstyle i_m^{(1)}$}}      
        \put(77,-1){\makebox(0,0)[t]{$\scriptstyle \cdots$}}      
     \put(96,-1){\makebox(0,0)[t]{$\scriptstyle i_m^{(1)}$}} 
     %%%%%  
     \put(4,75){\makebox(0,0)[b]{$\scriptstyle j_1^{(1)}$}}  
    \put(23,75){\makebox(0,0)[b]{$\scriptstyle \cdots$}}      
   \put(42,75){\makebox(0,0)[b]{$\scriptstyle j_1^{(1)}$}}
    \put(58,75){\makebox(0,0)[b]{$\scriptstyle j_m^{(1)}$}}      
        \put(77,75){\makebox(0,0)[b]{$\scriptstyle \cdots$}}      
     \put(96,75){\makebox(0,0)[b]{$\scriptstyle j_m^{(1)}$}}      
     %%%%%%
     %%%%%%
        \put(10,50){\makebox(0,0)[]{$\scriptstyle \cdots$}}
      \put(10,41){\makebox(0,0)[]{$\scriptstyle \sigma_{1,1}$}}
         \put(10,34){\makebox(0,0)[]{$\scriptstyle \cdots$}}
       \put(10.5,25){\makebox(0,0)[]{$\scriptstyle \hat \mu_{1,1}^{\sigma_{1,1}}$}}
        \put(10,17){\makebox(0,0)[]{$\scriptstyle \cdots$}} 
 \put(38,50){\makebox(0,0)[]{$\scriptstyle \cdots$}}
      \put(36,41){\makebox(0,0)[]{$\scriptstyle \sigma_{1,n}$}}
         \put(36,34){\makebox(0,0)[]{$\scriptstyle \cdots$}}
       \put(36.5,25){\makebox(0,0)[]{$\scriptstyle \hat \mu_{1,n}^{\sigma_{1,n}}$}}
        \put(36,17){\makebox(0,0)[]{$\scriptstyle \cdots$}} 
 \put(61,50){\makebox(0,0)[]{$\scriptstyle \cdots$}}
      \put(63,41){\makebox(0,0)[]{$\scriptstyle \sigma_{m,1}$}}
         \put(63,34){\makebox(0,0)[]{$\scriptstyle \cdots$}}
       \put(63.5,25){\makebox(0,0)[]{$\scriptstyle \hat \mu_{m,1}^{\sigma_{m,1}}$}}
        \put(63,17){\makebox(0,0)[]{$\scriptstyle \cdots$}} 
 \put(90,50){\makebox(0,0)[]{$\scriptstyle \cdots$}}
      \put(90,41){\makebox(0,0)[]{$\scriptstyle \sigma_{m,n}$}}
         \put(90,34){\makebox(0,0)[]{$\scriptstyle \cdots$}}
       \put(90.5,25){\makebox(0,0)[]{$\scriptstyle \hat \mu_{m,n}^{\sigma_{m,n}}$}}
        \put(90,17){\makebox(0,0)[]{$\scriptstyle \cdots$}} 
                 \put(50.5,33){\makebox(0,0)[]{$\scriptstyle \cdots$}}    
                \put(24,24){\makebox(0,0)[]{$\scriptstyle \cdots$}}   
                 \put(77,24){\makebox(0,0)[]{$\scriptstyle \cdots$}}   
                      \put(24,41){\makebox(0,0)[]{$\scriptstyle \cdots$}}   
                 \put(77,41){\makebox(0,0)[]{$\scriptstyle \cdots$}}   
         \put(23,9){\makebox(0,0)[]{$\scriptstyle \omega_1$}}        
         \put(50.5,9){\makebox(0,0)[]{$\scriptstyle \cdots$}}        
   \put(77,9){\makebox(0,0)[]{$\scriptstyle \omega_m$}}       
      \put(23,64){\makebox(0,0)[]{$\scriptstyle u_1$}}  
         \put(50.5,64){\makebox(0,0)[]{$\scriptstyle \cdots$}}             
   \put(77,64){\makebox(0,0)[]{$\scriptstyle u_n$}}    
   %%%%%          
\end{overpic}
}
+X
\\
\end{align*}
where \(X\) is a \(\k\)-linear combination of diagrams of lesser affine degree than \(\eta_{\bmu}\), and hence \(X \in \WebAaIthin(\bi^{\bx}, \bj^{\by})_{\lessdot \hat{\bmu}} \) by Lemma~\ref{LemAffLower}. Continuing on then, we have
\begin{align}\label{wpeq4}
\nonumber\\
\sum_{\substack{\bom \in \mathcal{D}^{ \bmu }_{\bx} \\ \bu \in \mathfrak{S}_{\by}   \\ \bsi \in \mathfrak{S}_{ \bmu } }}
\hackcenter{
\begin{overpic}[height=30mm]{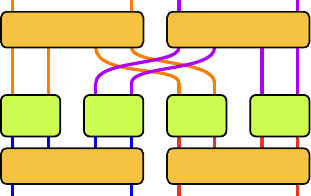}
  \put(4,-1){\makebox(0,0)[t]{$\scriptstyle i_1^{(1)}$}}  
    \put(23,-1){\makebox(0,0)[t]{$\scriptstyle \cdots$}}      
   \put(42,-1){\makebox(0,0)[t]{$\scriptstyle i_1^{(1)}$}}
    \put(58,-1){\makebox(0,0)[t]{$\scriptstyle i_m^{(1)}$}}      
        \put(77,-1){\makebox(0,0)[t]{$\scriptstyle \cdots$}}      
     \put(96,-1){\makebox(0,0)[t]{$\scriptstyle i_m^{(1)}$}} 
     %%%%%  
     \put(4,64){\makebox(0,0)[b]{$\scriptstyle j_1^{(1)}$}}  
    \put(23,64){\makebox(0,0)[b]{$\scriptstyle \cdots$}}      
   \put(42,64){\makebox(0,0)[b]{$\scriptstyle j_1^{(1)}$}}
    \put(58,64){\makebox(0,0)[b]{$\scriptstyle j_m^{(1)}$}}      
        \put(77,64){\makebox(0,0)[b]{$\scriptstyle \cdots$}}      
     \put(96,64){\makebox(0,0)[b]{$\scriptstyle j_m^{(1)}$}}      
     %%%%%%
     %%%%%%
         \put(10,35){\makebox(0,0)[]{$\scriptstyle \cdots$}}
       \put(10.5,25){\makebox(0,0)[]{$\scriptstyle \hat \mu_{1,1}^{\sigma_{1,1}}$}}
        \put(10,17){\makebox(0,0)[]{$\scriptstyle \cdots$}} 
         \put(36,35){\makebox(0,0)[]{$\scriptstyle \cdots$}}
       \put(36.5,25){\makebox(0,0)[]{$\scriptstyle \hat \mu_{1,n}^{\sigma_{1,n}}$}}
        \put(36,17){\makebox(0,0)[]{$\scriptstyle \cdots$}} 
         \put(63,35){\makebox(0,0)[]{$\scriptstyle \cdots$}}
       \put(63.5,25){\makebox(0,0)[]{$\scriptstyle \hat \mu_{m,1}^{\sigma_{m,1}}$}}
        \put(63,17){\makebox(0,0)[]{$\scriptstyle \cdots$}} 
         \put(90,35){\makebox(0,0)[]{$\scriptstyle \cdots$}}
       \put(90.5,25){\makebox(0,0)[]{$\scriptstyle \hat \mu_{m,n}^{\sigma_{m,n}}$}}
        \put(90,17){\makebox(0,0)[]{$\scriptstyle \cdots$}} 
                 \put(50.5,24){\makebox(0,0)[]{$\scriptstyle \cdots$}}    
                \put(24,24){\makebox(0,0)[]{$\scriptstyle \cdots$}}   
                 \put(77,24){\makebox(0,0)[]{$\scriptstyle \cdots$}}   
                      \put(24,41){\makebox(0,0)[]{$\scriptstyle \cdots$}}   
                 \put(77,41){\makebox(0,0)[]{$\scriptstyle \cdots$}}   
         \put(23,9){\makebox(0,0)[]{$\scriptstyle \omega_1$}}        
         \put(50.5,9){\makebox(0,0)[]{$\scriptstyle \cdots$}}        
   \put(77,9){\makebox(0,0)[]{$\scriptstyle \omega_m$}}       
      \put(23,53){\makebox(0,0)[]{$\scriptstyle u_1$}}  
         \put(50.5,53){\makebox(0,0)[]{$\scriptstyle \cdots$}}             
   \put(77,53){\makebox(0,0)[]{$\scriptstyle u_n$}}    
   %%%%%          
\end{overpic}
}
&
=
\sum_{\substack{\bom \in \mathcal{D}^{ \bmu }_{\bx} \\ \bu \in \mathfrak{S}_{\by}   \\ \bsi \in \mathfrak{S}_{ \bmu } }}
\hackcenter{
\begin{overpic}[height=30mm]{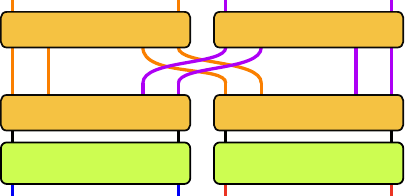}
  \put(4,-1){\makebox(0,0)[t]{$\scriptstyle i_1^{(1)}$}}  
    \put(23,-1){\makebox(0,0)[t]{$\scriptstyle \cdots$}}      
   \put(42,-1){\makebox(0,0)[t]{$\scriptstyle i_1^{(1)}$}}
    \put(58,-1){\makebox(0,0)[t]{$\scriptstyle i_m^{(1)}$}}      
        \put(77,-1){\makebox(0,0)[t]{$\scriptstyle \cdots$}}      
     \put(96,-1){\makebox(0,0)[t]{$\scriptstyle i_m^{(1)}$}} 
     %%%%%  
     \put(4,49){\makebox(0,0)[b]{$\scriptstyle j_1^{(1)}$}}  
    \put(23,49){\makebox(0,0)[b]{$\scriptstyle \cdots$}}      
   \put(42,49){\makebox(0,0)[b]{$\scriptstyle j_1^{(1)}$}}
    \put(58,49){\makebox(0,0)[b]{$\scriptstyle j_m^{(1)}$}}      
        \put(77,49){\makebox(0,0)[b]{$\scriptstyle \cdots$}}      
     \put(96,49){\makebox(0,0)[b]{$\scriptstyle j_m^{(1)}$}}      
     %%%%%%
     %%%%%%
              %%
                                    \put(24,31){\makebox(0,0)[]{$\scriptstyle \cdots$}}   
                 \put(77,31){\makebox(0,0)[]{$\scriptstyle \cdots$}}   
         \put(23,20){\makebox(0,0)[]{$\scriptstyle \omega_1$}}        
         \put(50.5,20){\makebox(0,0)[]{$\scriptstyle \cdots$}}        
   \put(77,20){\makebox(0,0)[]{$\scriptstyle \omega_m$}}       
      \put(23,41){\makebox(0,0)[]{$\scriptstyle u_1$}}  
         \put(50.5,41){\makebox(0,0)[]{$\scriptstyle \cdots$}}             
   \put(77,41){\makebox(0,0)[]{$\scriptstyle u_n$}}    
   %%%%%       
               \put(23.5,8){\makebox(0,0)[]{$\scriptstyle \left( \hat\mu_{1,1}^{\sigma_{1,1}} \otimes \cdots \otimes\hat \mu_{1,n}^{\sigma_{1,n}}\right)^{\omega_1}$}}        
         \put(50.5,9){\makebox(0,0)[]{$\scriptstyle \cdots$}}        
 \put(77,8){\makebox(0,0)[]{$\scriptstyle \left(\hat \mu_{m,1}^{\hspace{-0.5mm}\sigma_{m,1}} \otimes \cdots \otimes\hat \mu_{m,n}^{\hspace{-0.5mm}\sigma_{m,n}}\right)^{\omega_m}$}}  
\end{overpic}
}
+Y
\nonumber\\
\end{align}
where again \(Y\) is a \(\k\)-linear combination of diagrams of lesser affine degree than \(\eta_{\bmu}\), and hence \(Y \in \WebAaIthin(\bi^{\bx}, \bj^{\by})_{\lessdot \hat{\bmu}} \) by Lemma~\ref{LemAffLower}. 

Now note that the diagrammatic permutations at the top of the diagram above are already in reduced form. Thus the Bruhat-lowest element is the summand with \(\bom\), \(\bu\) trivial. Then we have that (\ref{wpeq4}) is equal to
\begin{align*}
\nonumber\\
\sum_{ \bsi \in \mathfrak{S}_{ \bmu  }}
\hackcenter{
\begin{overpic}[height=17mm]{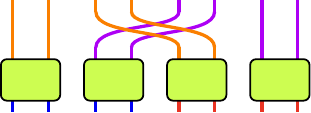}
  \put(4,-1){\makebox(0,0)[t]{$\scriptstyle i_1^{(1)}$}}  
    \put(14,-1){\makebox(0,0)[t]{$\scriptstyle i_1^{(1)}$}}  
    \put(23,-1){\makebox(0,0)[t]{$\scriptstyle \cdots$}}      
            \put(32,-1){\makebox(0,0)[t]{$\scriptstyle i_1^{(1)}$}}  
   \put(42,-1){\makebox(0,0)[t]{$\scriptstyle i_1^{(1)}$}}
       \put(58,-1){\makebox(0,0)[t]{$\scriptstyle i_m^{(1)}$}}    
    \put(68,-1){\makebox(0,0)[t]{$\scriptstyle i_m^{(1)}$}}      
        \put(77,-1){\makebox(0,0)[t]{$\scriptstyle \cdots$}}      
     \put(86,-1){\makebox(0,0)[t]{$\scriptstyle i_m^{(1)}$}} 
         \put(96,-1){\makebox(0,0)[t]{$\scriptstyle i_m^{(1)}$}}    
     %%%%%  
     \put(4,37){\makebox(0,0)[b]{$\scriptstyle j_1^{(1)}$}}  
          \put(14,37){\makebox(0,0)[b]{$\scriptstyle j_1^{(1)}$}}  
    \put(23,37){\makebox(0,0)[b]{$\scriptstyle \cdots$}}    
      \put(32,37){\makebox(0,0)[b]{$\scriptstyle j_1^{(1)}$}}  
   \put(42,37){\makebox(0,0)[b]{$\scriptstyle j_1^{(1)}$}}
    \put(58,37){\makebox(0,0)[b]{$\scriptstyle j_m^{(1)}$}}      
       \put(68,37){\makebox(0,0)[b]{$\scriptstyle j_m^{(1)}$}}    
        \put(77,37){\makebox(0,0)[b]{$\scriptstyle \cdots$}}     
         \put(86,37){\makebox(0,0)[b]{$\scriptstyle j_m^{(1)}$}}      
     \put(96,37){\makebox(0,0)[b]{$\scriptstyle j_m^{(1)}$}}      
     %%%%%%
     %%%%%%
              %%
                                    \put(24,31){\makebox(0,0)[]{$\scriptstyle \cdots$}}   
                 \put(77,31){\makebox(0,0)[]{$\scriptstyle \cdots$}}   
          \put(10,9.5){\makebox(0,0)[]{$\scriptstyle \hat\mu_{1,1}^{\sigma_{1,1}}$}}  
          \put(36.5,9.5){\makebox(0,0)[]{$\scriptstyle \hat\mu_{1,n}^{\sigma_{1,n}}$}}  
             \put(63.5,9.5){\makebox(0,0)[]{$\scriptstyle \hat\mu_{m,1}^{\sigma_{m,1}}$}}  
                \put(90.5,9.5){\makebox(0,0)[]{$\scriptstyle \hat\mu_{m,n}^{\sigma_{m,n}}$}}
                \put(24,9){\makebox(0,0)[]{$\scriptstyle \cdots$}}   
                 \put(77,9){\makebox(0,0)[]{$\scriptstyle \cdots$}}   
              \end{overpic}
}
+Z
=
|\bmu|^!
\;
\hackcenter{
\begin{overpic}[height=17mm]{WebAffImages/wp8.pdf}
  \put(4,-1){\makebox(0,0)[t]{$\scriptstyle i_1^{(1)}$}}  
    \put(14,-1){\makebox(0,0)[t]{$\scriptstyle i_1^{(1)}$}}  
    \put(23,-1){\makebox(0,0)[t]{$\scriptstyle \cdots$}}      
            \put(32,-1){\makebox(0,0)[t]{$\scriptstyle i_1^{(1)}$}}  
   \put(42,-1){\makebox(0,0)[t]{$\scriptstyle i_1^{(1)}$}}
       \put(58,-1){\makebox(0,0)[t]{$\scriptstyle i_m^{(1)}$}}    
    \put(68,-1){\makebox(0,0)[t]{$\scriptstyle i_m^{(1)}$}}      
        \put(77,-1){\makebox(0,0)[t]{$\scriptstyle \cdots$}}      
     \put(86,-1){\makebox(0,0)[t]{$\scriptstyle i_m^{(1)}$}} 
         \put(96,-1){\makebox(0,0)[t]{$\scriptstyle i_m^{(1)}$}}    
     %%%%%  
     \put(4,37){\makebox(0,0)[b]{$\scriptstyle j_1^{(1)}$}}  
          \put(14,37){\makebox(0,0)[b]{$\scriptstyle j_1^{(1)}$}}  
    \put(23,37){\makebox(0,0)[b]{$\scriptstyle \cdots$}}    
      \put(32,37){\makebox(0,0)[b]{$\scriptstyle j_1^{(1)}$}}  
   \put(42,37){\makebox(0,0)[b]{$\scriptstyle j_1^{(1)}$}}
    \put(58,37){\makebox(0,0)[b]{$\scriptstyle j_m^{(1)}$}}      
       \put(68,37){\makebox(0,0)[b]{$\scriptstyle j_m^{(1)}$}}    
        \put(77,37){\makebox(0,0)[b]{$\scriptstyle \cdots$}}     
         \put(86,37){\makebox(0,0)[b]{$\scriptstyle j_m^{(1)}$}}      
     \put(96,37){\makebox(0,0)[b]{$\scriptstyle j_m^{(1)}$}}      
     %%%%%%
     %%%%%%
              %%
                                    \put(24,31){\makebox(0,0)[]{$\scriptstyle \cdots$}}   
                 \put(77,31){\makebox(0,0)[]{$\scriptstyle \cdots$}}   
          \put(10,9.5){\makebox(0,0)[]{$\scriptstyle \hat\mu_{1,1}$}}  
          \put(36.5,9.5){\makebox(0,0)[]{$\scriptstyle\hat \mu_{1,n}$}}  
             \put(63.5,9.5){\makebox(0,0)[]{$\scriptstyle \hat\mu_{m,1}$}}  
                \put(90.5,9.5){\makebox(0,0)[]{$\scriptstyle \hat\mu_{m,n}$}}
                \put(24,9){\makebox(0,0)[]{$\scriptstyle \cdots$}}   
                 \put(77,9){\makebox(0,0)[]{$\scriptstyle \cdots$}}   
              \end{overpic}
}
+W.
\\
\nonumber
\end{align*}
where, on the left, \(Z\) is a \(\k\)-linear combination of diagrams associated with \(\bnu \in \mathcal{M}(\bi^{\bx}, \bj^{\by})\) which are lower in the total order on \(\mathcal{M}(\bi^{\bx}, \bj^{\by})\) than \(\hat{\bmu}\), thanks to \S\ref{OrderSecTemp}(2). Then, on the right \(W\), is a \(\k\)-linear combination of diagrams associated with \(\bnu \in \mathcal{M}(\bi^{\bx}, \bj^{\by})\) which are lower in the total order on \(\mathcal{M}(\bi^{\bx}, \bj^{\by})\) than \(\hat{\bmu}\), thanks to \S\ref{OrderSecTemp}(3).
But this last diagram is \(|\bmu|^!\eta_{\hat \bmu}\), and the coefficient occurs because there are exactly \(|\bmu|^!\) elements \(\bsi\) which stabilize \(\hat \bmu\).
\end{proof}

\subsection{Asymptotic faithfulness and a basis for \(\AffWebAaI\)}

\begin{theorem}\label{defrepind77}
The family of functors $\{H_n\}_{n \geq 1}$ is asympotically faithful on $\AffWebAaI$.  That is, for any given morphism space in $\AffWebAaI$ the functor $H_{n}$ is faithful for $n \gg 0$.  
\end{theorem}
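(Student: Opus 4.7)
The plan is to reduce asymptotic faithfulness on $\AffWebAaI$ to the asymptotic faithfulness on the thin subcategory $\WebAaIthin$ already established in \cref{normordind77}, using the explosion/contraction maps of \cref{ExplSec} as a bridge.

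Fix $\bi^{(\bx)}, \bj^{(\by)} \in \Ob(\AffWebAaI)$ and let $f \in \AffWebAaI(\bi^{(\bx)}, \bj^{(\by)})$. Consider the explosion $\exp_{\bi^{(\bx)}, \bj^{(\by)}}(f) = Y_{\bj^{(\by)}} \circ f \circ Z_{\bi^{(\bx)}}$, which lies in $\AffWebAaI(\bi^{\bx}, \bj^{\by}) = \WebAaIthin(\bi^{\bx}, \bj^{\by})$ since the target and source are words of thin strands. Because $H_n$ is a monoidal superfunctor and $Y_{\bi^{(\bx)}}, Z_{\bj^{(\by)}}$ are built from iterated merges and splits, we have
\begin{equation*}
H_n(\exp_{\bi^{(\bx)}, \bj^{(\by)}}(f)) = H_n(Y_{\bj^{(\by)}}) \circ H_n(f) \circ H_n(Z_{\bi^{(\bx)}}),
\end{equation*}
so whenever $H_n(f) = 0$, we also have $H_n(\exp_{\bi^{(\bx)}, \bj^{(\by)}}(f)) = 0$.

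Suppose now that $H_n(f) = 0$ for all $n$ sufficiently large. By the previous paragraph the same holds for $H_n(\exp_{\bi^{(\bx)}, \bj^{(\by)}}(f))$, and since $\exp_{\bi^{(\bx)}, \bj^{(\by)}}(f) \in \WebAaIthin(\bi^{\bx}, \bj^{\by})$, the asymptotic faithfulness of $\{H_n\}_{n \geq 1}$ on $\WebAaIthin$ (\cref{normordind77}) implies $\exp_{\bi^{(\bx)}, \bj^{(\by)}}(f) = 0$ for $n$ large enough — but of course vanishing of the explosion is independent of $n$, so $\exp_{\bi^{(\bx)}, \bj^{(\by)}}(f) = 0$ outright. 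Applying the contraction map $\con_{\bi^{(\bx)}, \bj^{(\by)}}$ and invoking \cref{L:ExplCon}, we obtain
\begin{equation*}
0 = \con_{\bi^{(\bx)}, \bj^{(\by)}}(\exp_{\bi^{(\bx)}, \bj^{(\by)}}(f)) = \bx!\,\by!\, f.
\end{equation*}
Since $\k$ is a characteristic zero integral domain, the positive integer $\bx!\,\by!$ is invertible in the field of fractions and is in particular a non-zero-divisor in $\k$, so $f = 0$, as required.

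The main conceptual point — which is what made the setup of \cref{spansec,OrderSecTemp} worthwhile — is that explosion lands exactly in $\WebAaIthin$, so the thin-strand computation of \cref{P:KeyLinearIndepedenceResult} does all the representation-theoretic work. No further obstacles arise: the functoriality of $H_n$ with respect to merges and splits is built into \cref{Hthm}, and the explosion/contraction identity of \cref{L:ExplCon} together with the characteristic zero hypothesis converts vanishing of $\exp(f)$ into vanishing of $f$. Thus the only real content in this theorem, beyond what has already been established, is the bookkeeping verification that $\exp$ intertwines $H_n$ with itself, which is immediate.
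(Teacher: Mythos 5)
The reduction to the thin subcategory via explosion is the right idea, and you correctly observe that $H_n(\exp_{\bi^{(\bx)},\bj^{(\by)}}(f)) = H_n(Y_{\bj^{(\by)}}) \circ H_n(f) \circ H_n(Z_{\bi^{(\bx)}})$, so vanishing of $H_n(f)$ forces vanishing of $H_n(\exp_{\bi^{(\bx)},\bj^{(\by)}}(f))$; \cref{normordind77} then gives $\exp_{\bi^{(\bx)},\bj^{(\by)}}(f) = 0$. The gap is in your final step: from $\bx!\,\by!\,f = 0$ you conclude $f = 0$ on the grounds that $\bx!\,\by!$ is a non-zero-divisor in $\k$. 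That inference would follow if $\AffWebAaI(\bi^{(\bx)},\bj^{(\by)})$ were known to be a torsion-free $\k$-module, but at this point in the paper it is not: \cref{spancor} only says the diagrams $\eta_{\bmu}$ span, not that they are linearly independent, so the morphism space could a priori carry $\k$-torsion. Freeness (hence torsion-freeness) of these spaces is precisely the content of \cref{affwebbasis}, which the paper derives \emph{as a corollary of the present theorem}, so appealing to it here would be circular.

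The paper's proof sidesteps this entirely. Rather than deducing $\exp(f)=0$ and trying to divide by $\bx!\,\by!$, it fixes a nonzero $\rho$, expands it in the spanning set $\rho = \sum c_{\bnu}\eta_{\bnu}$, picks a $\lessdot$-maximal $\bmu$ with $c_{\bmu}\neq 0$, and uses \cref{explodebasis} to show that $C|\bmu|^!_{\a}\exp_{\bi^{(\bx)},\bj^{(\by)}}(\rho)$ equals $C|\bmu|^!c_{\bmu}\,\eta_{\hat{\bmu}}$ plus a $\k$-linear combination of diagrams strictly lower in the $\lessdot$-order. Because $\WebAaIthin$ is already known to be $\k$-free on the normally ordered diagrams (\cref{normordbasis77}), and because $C|\bmu|^!c_{\bmu}\neq 0$ in the characteristic zero domain $\k$, this element of $\WebAaIthin(\bi^{\bx},\bj^{\by})$ is nonzero, and \cref{normordind77} applies to give $H_n$ nonzero on it for large $n$, hence $H_n(\rho)\neq 0$. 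The essential ingredient missing from your argument is \cref{explodebasis}: it is the triangularity of the explosion map with respect to $\lessdot$ that lets one establish nonvanishing of $\exp(\rho)$ directly, rather than inferring $f=0$ from $\bx!\,\by!\,f=0$, which requires a torsion-freeness hypothesis you do not yet have.
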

\begin{proof}
We have that \(\{ \eta_{\bmu} \mid \bmu \in \mathcal{M}(\bi^{(\bx)}, \bj^{(\by)})\}\) is a \(\k\)-spanning set for \(\AffWebAaI(\bi^{(\bx)}, \bj^{(\by)})\) by Corollary~\ref{spancor}. Letting \(0 \neq \rho \in \AffWebAaI(\bi^{(\bx)}, \bj^{(\by)})\), we have that 
\begin{align*}
\rho =  \sum_{\bnu \in \mathcal{M}(\bi^{(\bx)}, \bj^{(\by)})} c_{\bnu} \eta_{\bnu} \in \AffWebAaI(\bi^{(\bx)}, \bj^{(\by)})
\end{align*}
for some coefficients \(c_{\bnu} \in \k\), not all zero.
Let \(\bmu \in \mathcal{M}(\bi^{(\bx)}, \bj^{(\by)})\) be chosen such that \(c_{\bmu} \neq 0\) and \( \hat{\bnu} \lessdot \hat{\bmu} \) for all \(\bnu \in \mathcal{M}(\bi^{(\bx)}, \bj^{(\by)})\) with \(\bnu \neq \bmu\) and \(c_{\bnu} \neq 0\). Take
\begin{align*}
C = \prod_{
\substack{
\bmu \neq \bnu \in \mathcal{M}(\bi^{(\bx)}, \bj^{(\by)})\\
c_{\bnu} \neq 0
}
}
|\bnu|^!_{\a} \in \k.
\end{align*}
We have then by Lemma~\ref{explodebasis} that
\begin{align*}
C|\bmu|^!_{\a} \exp_{\bi^{(\bx)}, \bj^{(\by)}}(\rho)  &= C|\bmu|^!_{\a } \sum_{\bnu \in \mathcal{M}(\bi^{(\bx)}, \bj^{(\by)})} c_{\bnu} \exp_{\bi^{(\bx)}, \bj^{(\by)}} (\eta_{\bnu}) = C |\bmu|^! c_{\bmu} \eta_{\hat{\bmu}} + X,
\end{align*}
where \(X \in \AffWebAaI(\bi^{\bx}, \bj^{\by})_{\lessdot \hat{\bmu}}\). Thus Corollary~\ref{normordind77} implies that there exists some \(n \gg 0\) such that 
\begin{align*}
0 \neq H_n(C|\bmu|^!_{\a } \exp_{\bi^{(\bx)}, \bj^{(\by)}}(\rho)) = C |\bmu|^!_{\a } H_n(Y_{\bj^{(\by)}}) \circ H_n(\rho) \circ H_n(Z_{\bi^{(\bx)}}),
\end{align*}
and thus \(H_n(\rho) \neq 0\), completing the proof.
\end{proof}

We have the following corollary to (the proof of) Theorem~\ref{defrepind77}:

\begin{corollary}\label{affwebbasis}
The diagrams \(\{ \eta_{\bmu} \mid \bmu \in \mathcal{M}(\bi^{(\bx)}, \bj^{(\by)})\}\) form a \(\k\)-basis for \(\AffWebAaI(\bi^{(\bx)}, \bj^{(\by)})\).
\end{corollary}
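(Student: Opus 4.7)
Spanning is immediate from Corollary~\ref{spancor}, so the plan is to derive linear independence by adapting the argument inside the proof of Theorem~\ref{defrepind77}. Suppose for contradiction that $\rho := \sum_{\bnu \in \mathcal{M}(\bi^{(\bx)}, \bj^{(\by)})} c_\bnu \eta_\bnu = 0$ in $\AffWebAaI(\bi^{(\bx)}, \bj^{(\by)})$ while some $c_\bmu \neq 0$. I would choose $\bmu$ so that $\hat{\bnu} \lessdot \hat{\bmu}$ for every other $\bnu$ with $c_\bnu \neq 0$, and set $C := \prod_{\bnu \neq \bmu,\, c_\bnu \neq 0} |\bnu|^!_{\a} \in \k$.

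Applying the explosion functor $\exp_{\bi^{(\bx)}, \bj^{(\by)}}$ and invoking Lemma~\ref{explodebasis} summand-by-summand then produces, exactly as in the proof of Theorem~\ref{defrepind77}, the identity
\begin{equation*}
C|\bmu|^!_{\a}\, \exp_{\bi^{(\bx)}, \bj^{(\by)}}(\rho) \;=\; C|\bmu|^!\, c_\bmu\, \eta_{\hat{\bmu}} \;+\; X, \qquad X \in \WebAaIthin(\bi^{\bx}, \bj^{\by})_{\lessdot \hat{\bmu}}.
\end{equation*}
Under the assumption $\rho = 0$, the left-hand side is the zero morphism in $\WebAaIthin(\bi^{\bx}, \bj^{\by})$. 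The strategy is to contradict this by showing $C|\bmu|^!\, c_\bmu\, \eta_{\hat{\bmu}} + X$ is nonzero as a morphism, so that Corollary~\ref{normordind77} supplies some $n \gg 0$ with $H_n$ of it nonzero.

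To establish the nonvanishing I would expand each $\eta_{\hat{\bnu}}$ with $\hat{\bnu} \leq \hat{\bmu}$ in the normally ordered $\k$-basis of $\WebAaIthin(\bi^{\bx}, \bj^{\by})$ provided by Corollary~\ref{normordbasis77}. The total order $\lessdot$ from \S\ref{OrderSecTemp} was designed precisely so that the map sending $\hat{\bnu}$ to its leading normally ordered summand --- the summand whose affine-degree tuple, Bruhat-reduced crossing diagram, and coupon/basis labels match those of $\hat{\bnu}$ --- is strictly order-preserving. Consequently the distinguished leading normal form of $\eta_{\hat{\bmu}}$ appears in $C|\bmu|^!\, c_\bmu\, \eta_{\hat{\bmu}} + X$ with coefficient exactly $C|\bmu|^!\, c_\bmu \neq 0$, so the combination is nonzero in $\WebAaIthin$; the contradiction with $\rho = 0$ follows.

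The main obstacle is the triangularity claim that $\hat{\bnu} \mapsto (\text{leading normal form of } \eta_{\hat{\bnu}})$ respects $\lessdot$ strictly. I expect to handle this representation-theoretically rather than combinatorially by applying Theorem~\ref{P:KeyLinearIndepedenceResult} to the test vector $1 \otimes 1 \otimes v_1^{i_1} \otimes \cdots \otimes v_d^{i_d} \in \calM \otimes V^{\otimes d}$: the leading contribution of $H_n(\eta_{\hat{\bmu}})$ lives in the summand of the associated graded decomposition \eqref{E:Decomp-Equation} indexed by $\widetilde{h_{D}(\mathbf{b}^\vee)}$, and the data (affine-degree tuple, permutation, coupon labels) indexing this summand recover $\hat{\bmu}$. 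Distinct such data land in distinct associated-graded summands, so no cancellation between $\eta_{\hat{\bmu}}$ and any strictly lower $\eta_{\hat{\bnu}}$ is possible, which is exactly the triangularity required. The remainder of the argument is then the verbatim application of Theorem~\ref{defrepind77}'s proof.
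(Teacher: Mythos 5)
Your overall strategy matches the paper's: spanning from Corollary~\ref{spancor}, then explode via Lemma~\ref{explodebasis}, isolate the $\lessdot$-maximal summand $\eta_{\hat\bmu}$, and conclude from linear independence in the thin subcategory. However, the ``main obstacle'' you identify in the final paragraph is not actually there, and the work you propose to resolve it is redundant. For $\hat\bnu \in \mathcal{M}(\bi^\bx,\bj^\by)$, the morphism $\eta_{\hat\bnu}$ \emph{is} a normally ordered diagram (compare \S\ref{OrderSecTemp} with the description in \S\ref{SS:thin-category}: in the thin case each block $\mu_{r,s}$ is supported on a single pair $[t,b]$, so $\eta_{\hat\bnu}$ is exactly dots-then-coupons-then-crossings). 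There is nothing to ``expand'' and no triangularity to check: $C|\bmu|^!\,c_\bmu\,\eta_{\hat\bmu} + X$ is a linear combination of distinct elements of the basis given by Corollary~\ref{normordbasis77}, with coefficient $C|\bmu|^!\,c_\bmu \neq 0$ on $\eta_{\hat\bmu}$, hence it is nonzero --- immediately. Your proposed remedy, re-running Theorem~\ref{P:KeyLinearIndepedenceResult} on the test vector to see that distinct data occupy distinct associated-graded summands, is precisely the argument that already proves Theorem~\ref{T:asymptotic-faithfulness} and thence Corollary~\ref{normordbasis77}; you should cite that corollary and be done rather than re-derive it. A final small efficiency: once $C|\bmu|^!\,c_\bmu\,\eta_{\hat\bmu} + X \neq 0$ in $\WebAaIthin$ is known, the contradiction with $\rho = 0$ is direct (since $\exp_{\bi^{(\bx)},\bj^{(\by)}}(0) = 0$); passing through $H_n$ and Corollary~\ref{normordind77} is superfluous for the corollary, though of course essential for Theorem~\ref{defrepind77} itself, whose conclusion is about the functors $H_n$.
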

\section{The cyclotomic quotient category \(\WebLam\)}\label{cycquosecmain}
In this section we define cyclotomic quotients of the affine Frobenius web category \(\AffWebAaI\), and establish a basis theorem for these objects.
Let \(\bL = (L_1, \ldots, L_t) \in \Z_{>0}^t\) and set \(\ell = L_1 + \cdots + L_t\). Let \(\bc = (c_{i,k})_{i \in I, k \in [1,t]}\) be such that
\begin{align}\label{cycdatreq}
c_{i,k} \in (iAi)_{\bar 0}
\qquad
\textup{and}
\qquad
c_{j,k} x = \psi^{-L_k}(x) c_{i,k}
\end{align}
for all \(k \in [1,t]\), \(i,j \in I\) and \(x \in jAi\). Roughly speaking, the requirement on the right side of (\ref{cycdatreq}) ensures that \(x\) ``commutes past the coupons \(\{c_{j,k}\}\) like it commutes past \(k\) affine dots''. 
We call \(\Lambda = (\bL, \bc)\) a {\em cyclotomic datum} of {\em level \(\ell\)}.

\subsection{The thin cyclotomic quotient category \(\WebLamThin\)}\label{degencycconn}
For \(i \in I\), set
\begin{align*}
{}
f_{i,k} = 
\hackcenter{
\begin{tikzpicture}[scale=.8]
  \draw[ultra thick, blue] (0,-0.1)--(0,1.1);
   \draw[thick, fill=black]  (0,0.5) circle (5pt);
     \node[right] at  (0.15,0.5)  {$\scriptstyle L_k$};
     \node[below] at (0,-0.1) {$ \scriptstyle i^{ \scriptstyle (1)}$};
      \node[above] at (0,1.1) {$ i^{ \scriptstyle (1)}$};
\end{tikzpicture}}
-
\hackcenter{
\begin{tikzpicture}[scale=.8]
  \draw[ultra thick, blue] (0,-0.1)--(0,1.1);
  \draw[thick, fill=yellow]  (0,0.5) circle (9pt);
       \node[] at (0.02,0.47) {$  \scriptstyle c_{i\hspace{-0.3mm},\hspace{-0.3mm}k}$};
     \node[below] at (0,-0.1) {$ \scriptstyle i^{ \scriptstyle (1)}$};
      \node[above] at (0,1.1) {$ i^{ \scriptstyle (1)}$};
\end{tikzpicture}}
\qquad
\textup{and}
\qquad
f_i = f_{i,t} \circ \cdots \circ f_{i, 3} \circ  f_{i, 2}\circ f_{i,1}
\end{align*}
Let \(\WebLamThin\) be the quotient of \(\WebAffThin\) by the right tensor ideal
\begin{align*}
\mathcal{I}^{\Lambda, \textup{thin}} := \big \langle f_i \mid i \in I \big \rangle.
\end{align*}
We will often make use of bars to indicate morphisms considered in the quotient category.
Under the isomorphism \(\mathcal{H}^\textup{aff}(A) \xrightarrow{\sim} \WebAffThin\) in \cref{affwreathisom}, one may view \(\WebLamThin\) as a special case (for a much stricter type of cyclotomic datum) of Savage's \emph{cyclotomic wreath quotient algebra} (see \cite[Section 6.2]{SavageAff}). 
Set
\begin{align*}
\mathcal{M}^\Lambda(\bi^{(x)},\bj^{(y)}) = \{\bmu \in \mathcal{M}(\bi^{(x)},\bj^{(y)}) \mid \mu_{r,s}(t,b) = 0 \textup{ for all }t\geq \ell\},
\end{align*}
In other words, \(\bmu \in \mathcal{M}^\Lambda(\bi^{(x)},\bj^{(y)})\) only if \(\eta_{\bmu}\) carries at most \(\ell -1\) affine dots on any given strand (for example, for \(\bmu\) as in \cref{cablex}, we would have \(\bmu \in \mathcal{M}^\Lambda(\bi^{(\bx)}, \bj^{(\by)})\) if and only if \(\ell \geq 3\).
The next result follows from \cref{affwreathisom} and \cite[Theorem 6.11]{SavageAff}:

\begin{lemma}\label{cycthinisombasis}
Let \(\bi, \bj \in I^d\). Then \(\{\bar{\eta}_{\bmu} \mid \bmu \in \mathcal{M}^\Lambda(\bi,\bj)\}\) is a \(\k\)-basis for \(\WebLamThin(\bi,\bj)\).
\end{lemma}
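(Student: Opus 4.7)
The plan is to transfer the basis question through the isomorphism $\mathcal{H}^\textup{aff}(A) \xrightarrow{\sim} \WebAaIthin$ of \cref{affwreathisom}. Under this isomorphism, the generator $x_r$ corresponds to the affine dot on the $r$th strand, so the product of $L_k$ dots followed by a coupon $c_{i,k}$ on the $r$th strand matches (up to sign and normalization) a polynomial in Savage's generators acting on the idempotent $e_{\bi}$ for appropriate $\bi \in I^d$. First I would verify that the monoidal ideal $\mathcal{I}^{\Lambda,\textup{thin}}$ in $\WebAaIthin$ pulls back to the two-sided ideal in $\bigoplus_{d\geq 0}\mathcal{H}^\textup{aff}_d(A)$ generated by the elements $\prod_{k=1}^t(x_1^{L_k}e_i - c_{i,k}e_i)$ (one for each $i \in I$), which is a special case of Savage's cyclotomic ideal. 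The condition~\eqref{cycdatreq} ensures that these elements satisfy the hypotheses needed for Savage's theory to apply.

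Next, having identified $\WebLamThin$ with the cyclotomic wreath quotient algebra for this specific cyclotomic datum, I would invoke \cite[Theorem 6.11]{SavageAff}, which supplies a basis for the cyclotomic quotient indexed by tuples where each strand carries fewer than $\ell = L_1 + \cdots + L_t$ affine dots. The final step is then a matching: unwind the definition of $\eta_{\bmu}$ for $\bmu \in \mathcal{M}^\Lambda(\bi,\bj)$ and confirm, via the dictionary between diagrammatic and algebraic generators, that the set $\{\bar\eta_{\bmu} \mid \bmu \in \mathcal{M}^\Lambda(\bi,\bj)\}$ corresponds precisely to Savage's basis for $e_{\bj}(\mathcal{H}^\textup{aff}_d(A)/\mathcal{I}^\Lambda)e_{\bi}$. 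The constraint $\mu_{r,s}(t,b) = 0$ for $t \geq \ell$ in the definition of $\mathcal{M}^\Lambda$ is exactly the dot-degree truncation in Savage's basis.

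The main obstacle is the bookkeeping in the identification step: Savage's basis is stated in terms of permutations, affine-dot monomials, and coupons in a specific normal form, whereas our $\eta_{\bmu}$ are decorated double coset diagrams in which thin-strand matrices $\bmu$ record both the permutation data (via $\mathcal M(\bi,\bj)$) and the dot-plus-coupon decorations (via the entries $(t,b) \in \BasisP$). I would handle this by fixing a reduced-expression convention for the permutation part of $\eta_{\bmu}$ and showing that the dot/coupon decorations on each strand form precisely the truncated basis of $A[x]$-style monomials appearing in Savage's presentation. Once this bijection is set up on the nose, Savage's theorem transfers verbatim. No new calculations beyond those already contained in \cite{SavageAff} and the proof of \cref{affwreathisom} should be needed; the argument is essentially a translation.
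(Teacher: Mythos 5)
Your proposal matches the paper's argument exactly: the paper proves this lemma in one line by citing the isomorphism of Proposition~\ref{affwreathisom} together with Savage's cyclotomic basis theorem \cite[Theorem~6.11]{SavageAff}, which is precisely the two-step transfer you describe. The additional bookkeeping you outline (identifying $\mathcal{I}^{\Lambda,\textup{thin}}$ with Savage's cyclotomic ideal and matching $\eta_{\bmu}$ against his normal-form basis, with the constraint $\mu_{r,s}(t,b)=0$ for $t\geq\ell$ corresponding to his dot-degree truncation) is the content the paper leaves implicit, so you are simply filling in detail along the same route rather than taking a different one.
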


\subsection{The thick cyclotomic quotient category \(\WebLam\)}
We now endeavor to extend the \emph{thin} cyclotomic quotient category to the \emph{thick} setting in a sensible fashion. For us, `sensible' means that the resulting category should be \(\k\)-free with a natural basis which is a thick analogue of the basis in \cref{cycthinisombasis}. In order to do so we will require a certain regularity condition on the cyclotomic datum, which we now describe.

\begin{definition}
By \cref{cycthinisombasis}, we have that 
\begin{align}\label{kappaexpand}
\underbrace{
\hackcenter{
\begin{tikzpicture}[scale=.8]
  \draw[ultra thick, blue] (0,-0.1)--(0,1.1);
   \draw[thick, fill=black]  (0,0.5) circle (5pt);
     \node[right] at  (0.15,0.5)  {$\scriptstyle \ell$};
     \node[below] at (0,-0.1) {$ \scriptstyle i^{ \scriptstyle (1)}$};
      \node[above] at (0,1.1) {$ i^{ \scriptstyle (1)}$};
      \node[] at  (0.85,0.5)  {$\scriptstyle \cdots$};
      \draw[ultra thick, blue] (1.5,-0.1)--(1.5,1.1);
   \draw[thick, fill=black]  (1.5,0.5) circle (5pt);
     \node[right] at  (1.65,0.5)  {$\scriptstyle \ell$};
     \node[below] at (1.5,-0.1) {$ \scriptstyle i^{ \scriptstyle (1)}$};
      \node[above] at (1.5,1.1) {$ i^{ \scriptstyle (1)}$};
      \node[above] at  (0.75,1.1)  {$\scriptstyle \cdots$};
       \node[below] at  (0.75,-0.1)  {$\scriptstyle \cdots$};
\end{tikzpicture}
}
}_{d \textup{ times}}
= \sum_{\bmu\in \mathcal{M}^\Lambda(i^d,i^d)}\kappa^{\Lambda,d,i}_{\bmu } \overline{\eta}_{\bmu} \in \WebLamThin(i^d, i^d),
\end{align}
for some coefficients \(\kappa^{\Lambda, d,i}_{\bmu } \in \k \). 
We say that a cyclotomic datum \(\Lambda = (\bL, \bc)\) is {\em regular} provided that for all \(d \in \Z_{\geq 0}\) and \(i \in I\), there exists a morphism \(X^{\Lambda, d,i} \in \AffWebAaI(i^{(d)},i^{(d)})\) such that
\begin{align}\label{regcrit}
\sum_{\bmu\in \mathcal{M}^\Lambda(i^d,i^d)} \kappa^{\Lambda,d,i}_{\bmu }
\hackcenter{
\begin{tikzpicture}[scale=.8]
  \draw[ultra thick, blue] (0.75, -0.3)--(0.75, -0.2) .. controls ++(0,0.35) and ++(0,-0.35) .. (0,0.3)--(0,0.5)--(0,0.7) .. controls ++(0,0.35) and ++(0,-0.35) .. (0.75,1.2)--(0.75,1.3);
    \draw[ultra thick, blue] (0.75, -0.3)--(0.75, -0.2) .. controls ++(0,0.35) and ++(0,-0.35) .. (1.5,0.3)--(1.5,0.5)--(1.5,0.7) .. controls ++(0,0.35) and ++(0,-0.35) .. (0.75,1.2)--(0.75,1.3);
   %\draw[ultra thick, blue] (0,0)--(0,0.05) .. controls ++(0,0.35) and ++(0,-0.35) .. (1.2,0.45)--(1.2,1.2);
  \draw[draw=black, rounded corners, thick, fill=lime] (-0.25,0.3) rectangle ++(2,0.45);
     \node[] at  (0.75,0.5)  {$\scriptstyle \eta_{\bmu}$};
     \node[below] at (0.75,-0.3) {$ \scriptstyle i^{ \scriptstyle (d)}$};
       \node[above] at (0.75,1.3) {$ \scriptstyle i^{ \scriptstyle (d)}$};
         \node[below] at (0,0.1) {$ \scriptstyle i^{ \scriptstyle (1)}$};
         \node[below] at (1.5,0.1) {$ \scriptstyle i^{ \scriptstyle (1)}$};
      \node[] at  (0.75,0.85)  {$\scriptstyle \cdots$};
      \node[] at  (0.75,0.15)  {$\scriptstyle \cdots$};
\end{tikzpicture}
}
= d! X^{\Lambda, d,i} \in \AffWebAaI(i^{(d)},i^{(d)}),
\end{align}
where the coefficients \(\kappa_{\bmu}^{\Lambda, d,i}\) are as in \cref{kappaexpand}.
\end{definition}

\begin{remark}
Empirically speaking, many choices of cyclotomic datum wind up being regular. But this is not always the case; a counterexample arises for instance in the case of the truncated polynomial algebra \(A = A_{\bar 0} = \Z[x]/(x^4)\), \(\a = \Z\{1, x^2 \}\), as discussed in \cref{SS:Examples}, and cyclotomic datum \(\Lambda = (\bL, \bc)\) with \(\bL = (1)\), \(\bc = (x)\). It is easy to see that regularity fails when \(d=2\).
\end{remark}

\begin{definition}
Let \(\Lambda = (\bL,\bc)\) be a regular cyclotomic datum. For $d \in \Z_{> 0}$, \(i \in I\), set
\begin{align}\label{cycrelthick}
{}
f_{i}^{(d)} = 
\hackcenter{
\begin{tikzpicture}[scale=.8]
  \draw[ultra thick, blue] (0,-0.1)--(0,1.1);
   \draw[thick, fill=black]  (0,0.5) circle (5pt);
     \node[right] at  (0.15,0.5)  {$\scriptstyle \ell$};
     \node[below] at (0,-0.1) {$ \scriptstyle i^{ \scriptstyle (d)}$};
      \node[above] at (0,1.1) {$ i^{ \scriptstyle (d)}$};
\end{tikzpicture}}
-
X^{\Lambda,d,i} \in \AffWebAaI(i^{(d)}, i^{(d)}),
\end{align}
and define \(\WebLam\) to be the quotient of \(\AffWebAaI\) by the right tensor ideal
\begin{align*}
\mathcal{I}^\Lambda := \big \langle f_i^{(d)} \mid d \in \Z_{\geq 0}, i \in I \big \rangle.
\end{align*}
\end{definition}

\begin{proposition}\label{FaithThinCyc}
We have a faithful functor \(\iota^{\Lambda, \textup{thin}}: \WebLamThin \to \WebLam\) which is the identity on objects and diagrams.  
\end{proposition}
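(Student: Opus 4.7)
I would start by verifying that $\iota^{\Lambda,\textup{thin}}$ is well-defined as a functor of quotient categories. The thin ideal $\mathcal{I}^{\Lambda,\textup{thin}}$ is generated by the thin-strand elements $\{f_i\}_{i \in I}$, while the thick ideal $\mathcal{I}^\Lambda$ is generated by $\{f_i^{(d)}\}_{d \geq 1, i \in I}$, so it suffices to show $\iota(f_i) \in \mathcal{I}^\Lambda$ for all $i \in I$. Specializing \cref{regcrit} to $d=1$, the explosion and contraction become trivial and $d! = 1$, giving the identity $X^{\Lambda,1,i} = \sum_{\bmu} \kappa^{\Lambda,1,i}_{\bmu}\eta_{\bmu}$ in $\AffWebAaI(i,i)$. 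Combining this with \cref{kappaexpand} in the case $d=1$, one finds that $f_i$ and $f_i^{(1)}$ generate the same ideal in $\WebAffThin$ modulo $\mathcal{I}^{\Lambda,\textup{thin}}$, which after a short bootstrap yields $\iota(f_i) \in \mathcal{I}^\Lambda$.

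To prove faithfulness, I would exploit the basis already established in the thin setting. By \cref{cycthinisombasis}, $\WebLamThin(\bi,\bj)$ is a free $\k$-module on $\{\bar{\eta}_{\bmu} : \bmu \in \mathcal{M}^\Lambda(\bi,\bj)\}$. Since $\iota^{\Lambda,\textup{thin}}$ is the identity on diagrams, the image of this basis is exactly the corresponding set $\{\bar{\eta}_{\bmu} : \bmu \in \mathcal{M}^\Lambda(\bi,\bj)\}$ in $\WebLam(\bi,\bj)$, and the faithfulness of the functor on this hom-space is equivalent to the $\k$-linear independence of these images. I would first establish a thick spanning statement: $\{\bar{\eta}_{\bmu} : \bmu \in \mathcal{M}^\Lambda(\bi^{(\bx)}, \bj^{(\by)})\}$ spans $\WebLam(\bi^{(\bx)}, \bj^{(\by)})$. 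This follows from \cref{spancor} by invoking the cyclotomic relations $\bar{f}_i^{(d)} = 0$ together with the regularity condition \cref{regcrit} to reduce, for each $i \in I$ and each thick strand of width $d$, the number of affine dots below $\ell$.

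The main obstacle is the linear independence argument. I would follow the strategy of the proof of \cref{explodebasis}, adapted to the cyclotomic setting: suppose $\sum c_{\bmu} \bar{\eta}_{\bmu} = 0$ in $\WebLam(\bi,\bj)$ for thin $\bi,\bj$, i.e.\ $\sum c_{\bmu} \eta_{\bmu} \in \mathcal{I}^\Lambda$ in $\AffWebAaI(\bi,\bj)$. Working with a $\k$-linear expansion of this element in terms of generators of $\mathcal{I}^\Lambda$ and using an affine-degree filtration, I would reduce modulo lower affine degree to reach a relation in $\WebLamThin(\bi,\bj)$, contradicting \cref{cycthinisombasis}. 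The delicate step is controlling internal thick-strand generators $f_i^{(d)}$ with $d > 1$ that can appear in a compositional expression of a thin morphism; here the regularity condition \cref{regcrit} is crucial, since it is precisely the compatibility that allows the thick cyclotomic relations, after explosion and up to factorials, to be replaced by (iterated uses of) the thin cyclotomic relations. Making this filtration argument rigorous—and verifying that the normalization factors remain compatible with working over the integral domain $\k$—is the main piece of work.
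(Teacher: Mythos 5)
Your overall route—reduce faithfulness to showing that an element of $\mathcal{I}^\Lambda$ lying in a thin morphism space already lies in $\mathcal{I}^{\Lambda,\textup{thin}}$, and handle thick generators by exploding and comparing against the thin cyclotomic relations—is the right one and matches the paper's strategy in spirit. You also correctly identify that the factorials introduced by thick strands are the crux. However, there is a genuine gap: you do not name the mechanism that allows one to \emph{divide} by those factorials, and your proposed replacement, an ``affine-degree filtration,'' does not obviously supply it.

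Here is the issue concretely. The paper's argument shows that if a morphism in $\mathcal{I}^\Lambda \cap \WebAffThin(\bj,\bk)$ is spanned by expressions $Q_3 \circ (f_i^{(d)} \otimes Q_2) \circ Q_1$, then one can manipulate (using $d!\, f_i^{(d)} = \con_{i^{(d)},i^{(d)}}(F_i^{(d)})$ with $F_i^{(d)} \in \mathcal{I}^{\Lambda,\textup{thin}}$, explosion, and \cref{bigwishtosym}) to conclude that $d!\,Q_3 \circ (f_i^{(d)} \otimes Q_2) \circ Q_1 \in \mathcal{I}^{\Lambda,\textup{thin}}$. At this point one has a scalar multiple of the target containment, and the filtration by affine degree provides no way to strip off the $d!$—the statement ``$d!\,z = 0$ in $\WebLamThin$'' only gives ``$z=0$'' if that hom-space has no $d!$-torsion. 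The paper gets exactly this from \cref{cycthinisombasis}: $\WebLamThin(\bj,\bk)$ is $\k$-\emph{free} (not just spanned) on the claimed basis, hence torsion-free since $\k$ is a domain, and that is the step that lets you cancel $d!$. Your proposal flags the factorial compatibility as ``the main piece of work'' but leaves it unresolved; you should replace the vague filtration step with the explicit appeal to $\k$-freeness/torsion-freeness of $\WebLamThin$.

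Two smaller remarks. First, well-definedness is immediate from $f_i = f_i^{(1)}$ (so $\mathcal{I}^{\Lambda,\textup{thin}} \subseteq \mathcal{I}^\Lambda$); invoking \cref{regcrit} at $d=1$ plus a ``bootstrap'' works but is unnecessary machinery. Second, the thick spanning statement you propose to establish first is not needed for this proposition—it belongs to \cref{cycwebbasisfull}, and in the paper's logical order that result actually \emph{uses} the present proposition, so including it here risks circularity.
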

\begin{proof}
First, note that \(f_i = f_i^{(1)}\) for all \(i \in I\), so \(\mathcal{I}^{\Lambda, \textup{thin}} \subseteq \mathcal{I}^\Lambda\), and so the functor \(\iota^{\Lambda, \textup{thin}}\) is well-defined. We focus now on faithfulness.

Note that for any \(i \in I\), \(d \in \Z_{>0}\) we have
\begin{align*}
d!f_i^{(d)} = 
d!
\hackcenter{
\begin{tikzpicture}[scale=.8]
  \draw[ultra thick, blue] (0,-0.3)--(0,1.3);
   \draw[thick, fill=black]  (0,0.5) circle (5pt);
     \node[right] at  (0.15,0.5)  {$\scriptstyle \ell$};
     \node[below] at (0,-0.3) {$ \scriptstyle i^{ \scriptstyle (d)}$};
      \node[above] at (0,1.3) {$ i^{ \scriptstyle (d)}$};
\end{tikzpicture}}
-
d!X^{\Lambda,d,i} 
\;\;
\substack{
(\ref{KnotholeRel})\\
(\ref{AffDotRel1})\\
(\ref{regcrit})\\
=
\\
{}
\\
{}
\\
{}
}
\hackcenter{
\begin{tikzpicture}[scale=.8]
  \draw[ultra thick, blue] (0.75, -0.3)--(0.75, -0.2) .. controls ++(0,0.35) and ++(0,-0.35) .. (0,0.3)--(0,0.5)--(0,0.7) .. controls ++(0,0.35) and ++(0,-0.35) .. (0.75,1.2)--(0.75,1.3);
    \draw[ultra thick, blue] (0.75, -0.3)--(0.75, -0.2) .. controls ++(0,0.35) and ++(0,-0.35) .. (1.5,0.3)--(1.5,0.5)--(1.5,0.7) .. controls ++(0,0.35) and ++(0,-0.35) .. (0.75,1.2)--(0.75,1.3);
   %\draw[ultra thick, blue] (0,0)--(0,0.05) .. controls ++(0,0.35) and ++(0,-0.35) .. (1.2,0.45)--(1.2,1.2);
   \draw[thick, fill=black]  (0,0.5) circle (5pt);
     \node[right] at  (0.15,0.5)  {$\scriptstyle \ell$};
     \node[below] at (0.75,-0.3) {$ \scriptstyle i^{ \scriptstyle (d)}$};
       \node[above] at (0.75,1.3) {$ \scriptstyle i^{ \scriptstyle (d)}$};
         \node[below] at (0,0.1) {$ \scriptstyle i^{ \scriptstyle (1)}$};
         \node[below] at (1.5,0.1) {$ \scriptstyle i^{ \scriptstyle (1)}$};
      \node[] at  (0.85,0.5)  {$\scriptstyle \cdots$};
   \draw[thick, fill=black]  (1.5,0.5) circle (5pt);
     \node[right] at  (1.65,0.5)  {$\scriptstyle \ell$};
\end{tikzpicture}
}
-\sum_{\bmu\in \mathcal{M}^\Lambda(i^d,i^d)} \kappa^{\Lambda,d,i}_{\bmu } 
\hackcenter{
\begin{tikzpicture}[scale=.8]
  \draw[ultra thick, blue] (0.75, -0.3)--(0.75, -0.2) .. controls ++(0,0.35) and ++(0,-0.35) .. (0,0.3)--(0,0.5)--(0,0.7) .. controls ++(0,0.35) and ++(0,-0.35) .. (0.75,1.2)--(0.75,1.3);
    \draw[ultra thick, blue] (0.75, -0.3)--(0.75, -0.2) .. controls ++(0,0.35) and ++(0,-0.35) .. (1.5,0.3)--(1.5,0.5)--(1.5,0.7) .. controls ++(0,0.35) and ++(0,-0.35) .. (0.75,1.2)--(0.75,1.3);
   %\draw[ultra thick, blue] (0,0)--(0,0.05) .. controls ++(0,0.35) and ++(0,-0.35) .. (1.2,0.45)--(1.2,1.2);
  \draw[draw=black, rounded corners, thick, fill=lime] (-0.25,0.3) rectangle ++(2,0.45);
     \node[] at  (0.75,0.5)  {$\scriptstyle \eta_{\bmu}$};
     \node[below] at (0.75,-0.3) {$ \scriptstyle i^{ \scriptstyle (d)}$};
       \node[above] at (0.75,1.3) {$ \scriptstyle i^{ \scriptstyle (d)}$};
         \node[below] at (0,0.1) {$ \scriptstyle i^{ \scriptstyle (1)}$};
         \node[below] at (1.5,0.1) {$ \scriptstyle i^{ \scriptstyle (1)}$};
      \node[] at  (0.75,0.85)  {$\scriptstyle \cdots$};
      \node[] at  (0.75,0.15)  {$\scriptstyle \cdots$};
\end{tikzpicture}
}
=
\con_{i^{(d)}, i^{(d)}}(F_i^{(d)}),
\end{align*}
where \(F_i^{(d)} \in \WebAffThin(i^d,i^d)\) and \(\overline{F}_i^{(d)} = 0 \) in \(\WebLamThin\) by \cref{kappaexpand}, so \(F_i^{(d)} \in \mathcal{I}^{\Lambda, \textup{thin}}\).

Now assume \(n \geq d\), \(\bj = j_1 \cdots j_n, \bk = k_1 \cdots k_n \in I^n\). We consider any morphism of the form
\begin{align}\label{Q3one}
Q_3 \circ (f_i^{(d)} \otimes Q_2) \circ Q_1 \in \WebAffThin(\bj, \bk),
\end{align}
where \(Q_1 \in \AffWebAaI(\bj, i^{(d)} \bp^{(\bx)})\), \(Q_2 \in \AffWebAaI(\bp^{(\bx)}, \bq^{(\by)})\), and \(Q_3 \in \AffWebAaI(i^{(d)}\bq^{(\by)}, \bk)\) for some \(\bp^{(\bx)}, \bq^{(\by)} \in \widehat{\Omega}\), noting that \(\WebAffThin(\bj, \bk) \cap \mathcal{I}^\Lambda\) is \(\k\)-spanned by such morphisms by definition. By the above paragraph we have
\begin{align*}
d!\;Q_3 \circ (f_i^{(d)} \otimes Q_2) \circ Q_1 &=Q_3 \circ (\con_{i^{(d)}, i^{(d)}}(F_i^{(d)}) \otimes Q_2) \circ Q_1 \\
&= Q_3' \circ ((\exp_{i^{(d)}, i^{(d)}} \circ \con_{i^{(d)}, i^{(d)}}(F_i^{(d)})) \otimes\exp_{\bp^{(x)}, \bq^{(y)}}(Q_2))) \circ Q_1' 
\end{align*}
for some \(Q_1' \in \WebAffThin(\bj, i^d \bp^{\bx})\) and \(Q_3' \in \WebAffThin(i^d \bq^{\by}, \bk)\), thanks to Corollary~\ref{spancor}. Now, by Lemma~\ref{bigwishtosym} and Corollary~\ref{spancor}, the above is equal to
\begin{align}\label{Q7one}
\sum_{\sigma, \omega \in \mathfrak{S}_d} Q_3' \circ (( \sigma \circ F_i^{(d)} \circ \omega) \otimes Q_2') \circ Q_1' 
\end{align}
for some \(Q_2' \in \WebAffThin(\bp^{\bx}, \bq^{\by})\). Now note that each constituent morphism in (\ref{Q7one}) is in \(\WebAffThin\), and therefore (\ref{Q7one}) is in \(\mathcal{I}^{\Lambda, \textup{thin}}\) since \(F_i^{(d)} \in \mathcal{I}^{\Lambda, \textup{thin}}\). Thus \(d!Q_3 \circ (f_i^{(d)} \otimes Q_2) \circ Q_1  \in \mathcal{I}^{\Lambda, \textup{thin}}\), so \(Q_3 \circ (f_i^{(d)} \otimes Q_2) \circ Q_1  \in \mathcal{I}^{\Lambda, \textup{thin}}\) since \(\WebLamThin\) is torsion-free by Lemma~\ref{cycthinisombasis}.

Now, let \(\rho \in \AffWebAaI(\bj, \bk)\), and assume \(\iota^{\Lambda, \textup{thin}}(\bar \rho) = 0\). We have then that \(\rho \in \mathcal{I}^\Lambda\). Since \(\AffWebAaI(\bj, \bk) \cap \mathcal{I}^\Lambda\) is spanned by morphisms of the form (\ref{Q3one}), we have then that \(\rho \in \mathcal{I}^{\Lambda, \textup{thin}}\) by the above paragraph. Thus \(\bar{\rho} = 0 \in \WebLamThin(\bj, \bk)\) initially, so \(\iota^{\Lambda, \textup{thin}}\) is faithful, as required.
\end{proof}

\begin{proposition}\label{cycwebbasisfull}
We have that 
\(
\{\overline{\eta}_{\bmu} \mid \bmu \in \mathcal{M}^\Lambda(\bi^{(\bx)}, \bj^{(\by)})\}
\)
is a \(\k\)-basis for \(\WebLam(\bi^{(\bx)},\bj^{(\by)})\).
\end{proposition}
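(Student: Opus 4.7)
The plan is to establish both spanning and linear independence of the set $\{\bar\eta_\bmu \mid \bmu \in \mathcal{M}^\Lambda(\bi^{(\bx)}, \bj^{(\by)})\}$ in $\WebLam(\bi^{(\bx)}, \bj^{(\by)})$, in parallel with the proof of the affine basis theorem (Corollary~\ref{affwebbasis}), but reducing linear independence to the thin cyclotomic case via the faithful functor $\iota^{\Lambda,\textup{thin}}$ of Proposition~\ref{FaithThinCyc}.

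For spanning, Corollary~\ref{affwebbasis} provides the spanning set $\{\bar\eta_\bmu \mid \bmu \in \mathcal{M}(\bi^{(\bx)}, \bj^{(\by)})\}$; it remains to reduce to combinations with $\bmu \in \mathcal{M}^\Lambda$. I would define the total affine degree $\deg_{\mathrm{aff}}(\bmu) := \sum_{r,s,[t,b]} t \cdot \mu_{r,s}([t,b])$ and induct on it. If $\bmu \notin \mathcal{M}^\Lambda$, choose $[t,b]$ with $\mu_{r,s}([t,b]) > 0$ and $t \geq \ell$. This contributes to $\eta_\bmu$ a local factor of either a single thick strand of thickness $d := \mu_{r,s}([t,b])$ carrying $t$ affine dots followed by coupon $b$ (if $b \in \a$), or of $d$ parallel thin strands each carrying this decoration (if $b \notin \a$). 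Factor an $\ell$-block of dots off this stack and apply the cyclotomic relation $\bar f_i^{(d)} = 0$ (respectively $\bar f_i^{(1)} = 0$ on each thin strand) to replace this block by $X^{\Lambda, d, i}$. By the defining equation~\eqref{regcrit} of $X^{\Lambda,d,i}$, this morphism is a $\k$-linear combination of contraction-exploded $\mathcal{M}^\Lambda$-indexed diagrams, each of which carries fewer than $\ell$ dots per thin strand and hence total affine degree strictly below $\ell d$; re-expanding the result in the $\AffWebAaI$-basis via Lemma~\ref{LemAffLower} produces diagrams $\bar\eta_\bnu$ with $\deg_{\mathrm{aff}}(\bnu) < \deg_{\mathrm{aff}}(\bmu)$, to which the inductive hypothesis applies.

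For linear independence, suppose $\rho := \sum_{\bmu \in \mathcal{M}^\Lambda(\bi^{(\bx)}, \bj^{(\by)})} c_\bmu \bar\eta_\bmu = 0$ in $\WebLam$, with not all $c_\bmu$ zero. Let $\bnu$ be chosen with $c_\bnu \neq 0$ and $\hat\bnu$ maximal in the order $\lessdot$ of Section~\ref{OrderSecTemp} among $\{\hat\bmu \mid c_\bmu \neq 0\}$. Following the proof of Theorem~\ref{defrepind77}, multiply by $C |\bnu|!_\a$ for a suitable constant $C$ and apply explosion; Lemma~\ref{explodebasis} (a statement in $\AffWebAaI$ that descends to the quotient) gives
\[
C |\bnu|!_\a \; \overline{\exp_{\bi^{(\bx)}, \bj^{(\by)}}(\rho)} \; = \; C |\bnu|! \, c_\bnu \bar\eta_{\hat\bnu} \; + \; (\text{terms in } \WebLam(\bi^\bx, \bj^\by) \text{ strictly below } \hat\bnu).
\]
Since the cyclotomic condition is preserved under cabling, $\hat\bmu \in \mathcal{M}^\Lambda(\bi^\bx, \bj^\by)$ for every surviving index, so every term in this equation is the image under the faithful $\iota^{\Lambda,\textup{thin}}$ of the corresponding basis element in $\WebLamThin$. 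The relation therefore pulls back to $\WebLamThin(\bi^\bx, \bj^\by)$, where Lemma~\ref{cycthinisombasis} supplies a $\k$-basis $\{\bar\eta_{\hat\bmu}\}_{\hat\bmu \in \mathcal{M}^\Lambda}$, forcing $c_\bnu = 0$, a contradiction.

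The main obstacle is making the local substitution in Step 1 watertight. The relation $\bar f_i^{(d)} = 0$ lives in the right tensor ideal $\mathcal{I}^\Lambda$; to apply it to an $\ell$-dot stack sitting deep inside $\eta_\bmu$ (flanked by other strands, sandwiched between splits, merges, teleporters, and other coupons) one uses the fact that crossings in $\AffWebAaI$ turn a right tensor ideal into a two-sided tensor ideal, together with the intertwining relations~\eqref{SplitIntertwineRel}--\eqref{AaIntertwine} and~\eqref{AffDotRel1} to commute the $\ell$-dot stack into a position where the relation fires. One must then carefully confirm that the resulting error terms, together with the affine-degree bound on $X^{\Lambda, d, i}$ extracted from~\eqref{regcrit}, are consistent with the chosen induction invariant; the subcase $b \notin \a$ requires $d$ separate applications of $\bar f_i^{(1)} = 0$ on parallel thin strands, with the combined degree reduction verified term by term.
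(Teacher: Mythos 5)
Your proposal is correct and follows the same two-step strategy as the paper's proof: for spanning, induct on total affine degree and use the $\mathcal{I}^\Lambda$-relations (after commuting an $\ell$-block of affine dots into position via \cref{DotCrossRel} and the intertwining relations) to strictly reduce the degree of any $\bar\eta_\bmu$ with $\bmu \notin \mathcal{M}^\Lambda$; for independence, explode via \cref{explodebasis}, observe that cabling preserves the cyclotomic condition so $\hat\bnu \in \mathcal{M}^\Lambda(\bi^\bx,\bj^\by)$, and then transfer the problem to $\WebLamThin$ via the faithful functor $\iota^{\Lambda,\textup{thin}}$ of \cref{FaithThinCyc} and the thin basis of \cref{cycthinisombasis}.

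One small imprecision worth flagging in the independence step: you write that ``every term in this equation is the image under the faithful $\iota^{\Lambda,\textup{thin}}$ of the corresponding basis element.'' That is not quite accurate — the $\lessdot\hat\bnu$ error terms coming from \cref{explodebasis} need not lie in $\mathcal{M}^\Lambda(\bi^\bx,\bj^\by)$, since nothing prevents them from carrying $\geq \ell$ affine dots on some strand. The argument still closes, because when those terms are rewritten in the $\mathcal{M}^\Lambda$-basis of $\WebLamThin$ (using the spanning argument), the rewriting either leaves the affine degree unchanged (in which case the term remains distinct from $\hat\bnu$ in the total order $\lessdot$) or strictly decreases it; either way no contribution to the coefficient of $\bar\eta_{\hat\bnu}$ appears, and since $\k$ is a domain and $\WebLamThin$ is $\k$-free, vanishing of $C|\bnu|!\,c_\bnu$ forces $c_\bnu = 0$. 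Spelling this out would make the independence half fully watertight; as written it relies on an assertion that is, strictly speaking, false as stated but harmless.
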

\begin{proof}
First we prove spanning.
We have by \cref{spancor} that \(\mathcal{M}(\bi^{(\bx)}, \bj^{(\by)})\) is a spanning set for \(\WebLam(\bi^{(\bx)}, \bj^{(\by)})\). Now, assume a diagram \(D \in \AffWebAaI(\bi^{(\bx)}, \bj^{(\by)})\) has a strand which carries \(\geq \ell\) affine dots. We will show that we may write \(\overline{D} \in \WebLam(\bi^{(\bx)}, \bj^{(\by)})\) as a \(\k\)-linear combination of diagrams with lesser overall affine degree, and then the claim follows by induction.
Indeed, using \cref{DotCrossRel}, we may pull the strand with \(\geq\ell\) dots all the way to the left (modulo a \(\k\)-linear combination of diagrams with lesser affine degree), and then use the cyclotomic relation \cref{cycrelthick} to rewrite this diagram as a \(\k\)-linear combination of diagrams with lesser affine degree, completing the spanning proof.

Now we prove independence. 
Assume \( \rho=\sum_{\bmu \in \mathcal{M}^{\Lambda}(\bi^{(\bx)}, \bj^{(\by)})} \gamma_{\bmu} \eta_{\bmu} \in \AffWebAaI(\bi^{(\bx)}, \bj^{(\by)})\) is a \(\k\)-linear combination such that not all coefficients \(\gamma_{\bmu}\) are zero. Let \(\bnu\) be such that \(\hat{\bnu}\) is maximally \(\lessdot\)-dominant with \(\gamma_{\bnu} \neq 0\). Let \(K = \prod_{\bmu \in \mathcal{M}^\Lambda(\bi^{(\bx)}, \bj^{(\by)})} |\bmu|^!\).
Then by \cref{explodebasis} we have
\begin{align*}
C \overline{\exp_{\bi^{(\bx)}, \bj^{(\by)}} (\rho)}- C' \overline{\eta}_{\hat{\bnu}} \in \Web(\bi^{\bx}, \bj^{\by})_{\lessdot \hat{\bmu}}^{\Lambda, \textup{thin}}
\end{align*}
for some \(C' \in \Z_{\neq 0}\). By \cref{cycthinisombasis} we have then that \(\overline{\exp_{\bi^{(\bx)}, \bj^{(\by)}} (\rho)} \) is nonzero in \(\WebLamThin\), and thus \(\bar \rho\) is nonzero in \(\WebLam\) by \cref{FaithThinCyc}, completing the proof.
\end{proof}

\cref{cycwebbasisfull} implies the immediate
\begin{corollary}
Let \(\Lambda\) be a regular cyclotomic datum of level \(\ell = 1\). Then we have an isomorphism of categories \(\WebAaI \xrightarrow{\sim} \WebLam\) which is the identity on objects and diagrams.
\end{corollary}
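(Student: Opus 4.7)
The plan is to construct the asserted functor as a composition and then compare bases on both sides. Since $\AffWebAaI$ is obtained from $\WebAaI$ by adjoining affine dots subject to additional relations, the assignment on generators yields a canonical monoidal $\k$-linear superfunctor $\iota : \WebAaI \to \AffWebAaI$ that is the identity on objects and on the split, merge, crossing, and coupon generators. Composing with the cyclotomic quotient $\pi : \AffWebAaI \to \WebLam$ gives the candidate functor $F := \pi \circ \iota : \WebAaI \to \WebLam$, which is visibly the identity on objects and on the non-dot generating diagrams.

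Next I would unpack what $\mathcal{M}^\Lambda(\bi^{(\bx)}, \bj^{(\by)})$ becomes when $\ell = 1$. The defining condition $\mu_{r,s}(t,b) = 0$ for all $t \geq \ell$ forces $t = 0$, so an element $\bmu \in \mathcal{M}^\Lambda(\bi^{(\bx)}, \bj^{(\by)})$ is precisely an $(\bi^{(\bx)}, \bj^{(\by)})$-matrix whose entries $\mu_{r,s}$ are restricted ${}_{j_s}\BasisB_{i_r}$-compositions as in \cref{SSS:Additional-shorthand-coupons} (no affine dots are ever recorded). For such $\bmu$, the decorated double coset diagram $\eta_\bmu$ contains no affine dot generators and therefore lifts canonically to a diagram $\eta_\bmu^{\circ} \in \WebAaI(\bi^{(\bx)}, \bj^{(\by)})$ with $\iota(\eta_\bmu^{\circ}) = \eta_\bmu$. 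Consequently $F(\eta_\bmu^{\circ}) = \overline{\eta}_\bmu$.

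With this bookkeeping in hand, the isomorphism claim is a basis-to-basis comparison. By \cref{cycwebbasisfull}, the set $\{\overline{\eta}_\bmu \mid \bmu \in \mathcal{M}^\Lambda(\bi^{(\bx)}, \bj^{(\by)})\}$ is a $\k$-basis for $\WebLam(\bi^{(\bx)}, \bj^{(\by)})$, and by the preceding paragraph each of these basis elements lies in the image of $F$, proving fullness. For faithfulness I would invoke the basis theorem for $\WebAaI$ established in \cite{DKMZ}, which says that $\{\eta_\bmu^{\circ} \mid \bmu \in \mathcal{M}^\Lambda(\bi^{(\bx)}, \bj^{(\by)})\}$ is itself a $\k$-basis for $\WebAaI(\bi^{(\bx)}, \bj^{(\by)})$; since $F$ takes this basis bijectively to the basis of $\WebLam(\bi^{(\bx)}, \bj^{(\by)})$ described above, it is an isomorphism on every morphism space, hence an isomorphism of categories.

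There is no serious obstacle: the argument is essentially formal once one observes that the $\ell = 1$ case reduces the $(\bi^{(\bx)}, \bj^{(\by)})$-matrix combinatorics on both sides to the same parameterizing set. The one small point worth double-checking is that the $\k$-basis of $\WebAaI$ from \cite{DKMZ} really is parametrized by the same no-dot matrices that index $\mathcal{M}^\Lambda$ with $\ell = 1$; alternatively, one can bypass this citation by noting that the no-dot $\eta_\bmu$ form a subset of the $\AffWebAaI$-basis of \cref{affwebbasis}, hence are automatically linearly independent in $\WebAaI$ (via $\iota$), and then using the natural spanning statement for $\WebAaI$ (the non-affine specialization of \cref{spancor}) to upgrade this to a basis.
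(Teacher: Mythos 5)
Your proposal is correct and takes essentially the same route as the paper: the paper declares the corollary immediate from \cref{cycwebbasisfull}, and you are simply spelling out the implicit basis-to-basis comparison (when $\ell = 1$, the parameterizing set $\mathcal{M}^\Lambda(\bi^{(\bx)},\bj^{(\by)})$ consists exactly of the dot-free matrices, so the natural functor $\pi\circ\iota:\WebAaI\to\WebLam$ sends the $\k$-basis of $\WebAaI$ established in \cite{DKMZ} bijectively to the $\k$-basis of $\WebLam$ from \cref{cycwebbasisfull}). Your observation that one can sidestep the \cite{DKMZ} citation by reading linear independence off of \cref{affwebbasis} (via injectivity of $\iota$ on linear combinations that vanish) combined with the no-dot spanning statement for $\WebAaI$ is a clean alternative and resolves your own ``small point worth double-checking.''
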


\subsection{Regularity and the thick cyclotomic quotient category in special cases}
In general, \emph{regularity} of a cyclotomic datum \(\Lambda = (\bL, \bc)\) and the explicit definition of the associated quotient ideal \(\mathcal{I}^\Lambda\) of \(\WebLam\) requires direct computation. In this section we outline some cases where regularity can be established without computation.

\subsubsection{Regularity over a characteristic zero field}\label{reg1} If \(\k\) is a field of characteristic zero, then every morphism in \(\AffWebAaI(i^{(d)}, i^{(d)})\) is divisible by \(d!\), and hence any choice of cyclotomic datum \(\Lambda\) is regular and so \(\WebLam\) is well-defined. Indeed, in this setting we have \(\mathcal{I}^{\Lambda, \textup{thin}} = \mathcal{I}^\Lambda\) as may be seen in the proof of \cref{FaithThinCyc}.

\subsubsection{Regularity under some nice idempotent conditions}\label{reg2}
In the following proposition, we show that for various important classes of algebras \(A\) (see \cref{remzigreg}), a natural choice of cyclotomic datum is regular, and we directly describe the associated quotient ideal (note that the below proposition places no extra conditions on the underlying ring \(\k\) or the even subalgebra \(\a\)).

\begin{proposition}\label{pureidemprop}
Assume \(A\) is such that \((i^\vee)^2 = 0\) and \( i^\vee b= \delta_{b,i} i^\vee\) for all \(b \in \BasisB\), \(i \in I\), and \(\Lambda = (\bL, \bc)\) is such that \(L_k = 1\) and \(c_{i,k} = \gamma_{i,k} i^\vee\) for all \(i \in I, k \in [1,t]\), and some integers \(\gamma_{i,k} \in \Z \subseteq \k\). Then \(\Lambda\) is regular, and the defining quotient ideal of \(\WebLam\) is
\begin{align*}
\mathcal{I}^\Gamma = 
\Bigg \langle
\hackcenter{
\begin{tikzpicture}[scale=.8]
  \draw[ultra thick, blue] (0,-0.3)--(0,1.8);
   \draw[thick, fill=black]  (0,0.8) circle (5pt);
     \node[left] at  (-0.15,0.8)  {$\scriptstyle \ell$};
     \node[below] at (0,-0.3) {$ \scriptstyle i^{ \scriptstyle (d)}$};
      \node[above] at (0,1.8) {$ i^{ \scriptstyle (d)}$};
\end{tikzpicture}}
-
\;
{ \gamma_i + d -1 \choose d}
\hspace{-1mm}
\hackcenter{
\begin{tikzpicture}[scale=.8]
  \draw[ultra thick, blue] (0.75, -0.3)--(0.75, -0.2) .. controls ++(0,0.35) and ++(0,-0.35) .. (0,0.3)--(0,0.5)--(0,1.2) .. controls ++(0,0.35) and ++(0,-0.35) .. (0.75,1.7)--(0.75,1.8);
    \draw[ultra thick, blue] (0.75, -0.3)--(0.75, -0.2) .. controls ++(0,0.35) and ++(0,-0.35) .. (1.5,0.3)--(1.5,0.5)--(1.5,1.2) .. controls ++(0,0.35) and ++(0,-0.35) .. (0.75,1.7)--(0.75,1.8);
   %\draw[ultra thick, blue] (0,0)--(0,0.05) .. controls ++(0,0.35) and ++(0,-0.35) .. (1.2,0.45)--(1.2,1.2);
    \draw[thick, fill=yellow]  (0,1) circle (9pt);
       \node[] at (0,1) {$  \scriptstyle i^\vee$};
           \draw[thick, fill=yellow]  (1.5,1) circle (9pt);
       \node[] at (1.5,1) {$  \scriptstyle i^\vee$};
     \node[below] at (0.75,-0.3) {$ \scriptstyle i^{ \scriptstyle (d)}$};
       \node[above] at (0.75,1.8) {$ \scriptstyle i^{ \scriptstyle (d)}$};
         \node[below] at (0,0.1) {$ \scriptstyle i^{ \scriptstyle (1)}$};
         \node[below] at (1.5,0.1) {$ \scriptstyle i^{ \scriptstyle (1)}$};
      \node[] at  (0.75,1)  {$\scriptstyle \cdots$};
      \node[] at  (0.75,0.5)  {$\scriptstyle \cdots$};
         \draw[thick, fill=black]  (0,0.4) circle (5pt);
              \node[right] at (1.6,0.4) {$  \scriptstyle \ell-1$};
            \draw[thick, fill=black]  (1.5,0.4) circle (5pt);
              \node[left] at (-0.1,0.4) {$  \scriptstyle \ell-1$};
\end{tikzpicture}
}
\Bigg|
\;\;
i \in I, d \in \Z_{>0}
\Bigg \rangle
\end{align*}
where \(\gamma_i = \gamma_{i,1} + \cdots + \gamma_{i,t}\).
\end{proposition}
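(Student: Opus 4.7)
The plan is to reduce everything to a one-strand calculation and then to assemble via the symmetric structure. I will first exploit the hypotheses to collapse the thin generator $f_i$. Since $\psi$ fixes $i \in I$, we have $\psi(i^\vee) = i^\vee$, so by the leftmost relation in \cref{AffDotRel1} the affine dot $x$ and the coupon $[i^\vee]$ commute on a single thin $i$-strand. Using $(i^\vee)^2 = 0$, the expansion of $f_i = \prod_{k=1}^t (x - \gamma_{i,k}[i^\vee])$ kills all terms with two or more factors of $[i^\vee]$, leaving $f_i = x^\ell - \gamma_i [i^\vee] x^{\ell-1}$. Thus, modulo $\mathcal{I}^{\Lambda, \textup{thin}}$, one has the single-strand identity $x^\ell = \gamma_i [i^\vee] x^{\ell-1}$.

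Next, since $\mathcal{I}^{\Lambda, \textup{thin}}$ is closed under placing $f_i$ on any individual strand (via the symmetric braiding of crossings), the single-strand reduction applies independently on each of the $d$ parallel strands in the left-hand side of \cref{kappaexpand}. This yields $(x^\ell)^{\otimes d} = \gamma_i^d \bar{\eta}_{\bmu^*}$ in $\WebLamThin(i^d, i^d)$, where $\bmu^*$ is the ``diagonal'' matrix with $\mu^*_{k,k}([\ell-1, i^\vee]) = 1$. Consequently $\kappa^{\Lambda, d, i}_\bmu = \gamma_i^d \delta_{\bmu, \bmu^*}$, so the sum appearing in the regularity condition \cref{regcrit} collapses to the single term $\gamma_i^d \con(\eta_{\bmu^*})$. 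Using the middle relations of \cref{AffDotRel1} to pull thick affine dots through the split and merge, together with the defining identity \cref{greendotdef} for the shorthand $[i^\vee]^\diamond$, I then rewrite $\con(\eta_{\bmu^*}) = (\textup{thick dot})^{\ell-1} \circ [i^\vee]^\diamond$ on $i^{(d)}$, recognizing it as precisely the exploded diagram appearing on the right-hand side of the proposition.

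The most delicate step will be to verify the regularity condition itself, that is, to exhibit $X^{\Lambda, d, i} \in \AffWebAaI(i^{(d)}, i^{(d)})$ with $\gamma_i^d \con(\eta_{\bmu^*}) = d! \, X^{\Lambda, d, i}$, and to identify $X^{\Lambda, d, i}$ with $\binom{\gamma_i + d - 1}{d}$ times the exploded diagram. The key tool is the Schur--Weyl type identity $Y \circ Z = \sum_{\sigma \in \mathfrak{S}_d} \sigma$ from \cref{bigwishtosym} combined with the symmetry $Z \circ \sigma = Z$ coming from merge symmetry on identically-colored strands, which together produce the $d!$ symmetrizer after applying $\exp$ and $\con$. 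The combinatorial heart of the matter is matching the falling power $\gamma_i^d$ (arising from the thin reduction) against the rising factorial $\gamma_i(\gamma_i+1)\cdots(\gamma_i+d-1) = d!\binom{\gamma_i+d-1}{d}$ (the natural coefficient in the symmetrized thick expression). Reconciling these two polynomials will require a careful combinatorial bookkeeping of how the $\ell-1$ affine dots on each thin strand interact with the crossings in $Y \circ Z$, and this is where the main technical care will be needed.

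Once $X^{\Lambda, d, i}$ is established in the claimed form, the generators $f_i^{(d)} = (\textup{thick dot})^\ell - X^{\Lambda, d, i}$ of $\mathcal{I}^\Lambda$ take exactly the form displayed in the statement of the proposition, completing the explicit description of the defining ideal.
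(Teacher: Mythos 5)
Your second step contains a genuine gap. You claim $\mathcal{I}^{\Lambda, \textup{thin}}$ is ``closed under placing $f_i$ on any individual strand (via the symmetric braiding of crossings)'' and hence that $\overline{(x^\ell)^{\otimes d}} = \gamma_i^d\,\overline{\eta}_{\bmu^*}$ in $\WebLamThin(i^d, i^d)$. But $\mathcal{I}^{\Lambda,\textup{thin}}$ is only a \emph{right} tensor ideal, and conjugating $f_i$ by crossings does not reproduce $f_i$ on another strand cleanly: the dot--crossing commutation (\cref{righttoleftthin}) introduces teleporter corrections that survive modulo the ideal. The correct expansion of the thin $d$-fold $\ell$-dot diagram in $\WebLamThin(i^d,i^d)$ is a sum over \emph{all} $\sigma\in\mathfrak{S}_d$ with coefficients $\gamma_i^{d-u(\sigma)}$, where $u(\sigma)$ is the absolute length (this is (\ref{kappaexpand2})); already at $d=2$, $\ell=1$ there are two basis terms, not one. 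Your asserted $\kappa_{\bmu} = \gamma_i^d\delta_{\bmu,\bmu^*}$ is moreover arithmetically inconsistent with your own conclusion: since $\con(\eta_{\bmu^*})$ \emph{is} the exploded diagram in the statement (as you observe), feeding it into (\ref{regcrit}) would force $\gamma_i^d = d!\binom{\gamma_i+d-1}{d}$, which fails already for $\gamma_i=1$, $d=2$.

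The ``combinatorial heart'' you defer to the end is misidentified. The real work is the induction establishing (\ref{kappaexpand2}): it tracks how the teleporter corrections from moving dots past crossings assemble into the absolute-length-weighted sum over $\mathfrak{S}_d$, using the hypotheses $(i^\vee)^2=0$ and $i^\vee b = \delta_{b,i}i^\vee$ at every step to collapse each teleporter to a single surviving term. Once (\ref{kappaexpand2}) is in hand there is nothing to reconcile: contraction absorbs all crossings (\cref{CrossAbsorb}), so $\con(\eta_{([\bt,\bb],\sigma)})$ is independent of $\sigma$, and the coefficient becomes $\sum_{\sigma\in\mathfrak{S}_d}\gamma_i^{d-u(\sigma)} = \gamma_i(\gamma_i+1)\cdots(\gamma_i+d-1) = d!\binom{\gamma_i+d-1}{d}$ by the classical identity (\ref{binomeq}), delivering the rising factorial immediately. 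Your $Y\circ Z$ Schur--Weyl observation is the right tool, but it serves to establish $\sigma$-independence of the contracted diagrams rather than to manufacture the $d!$.
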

\begin{proof}
The condition \((i^\vee)^2 = 0\) ensures that
\begin{align}\label{fiexpzig}
{}
f_i = 
\hackcenter{
\begin{tikzpicture}[scale=.8]
  \draw[ultra thick, blue] (0,-0.1)--(0,1.3);
   \draw[thick, fill=black]  (0,0.65) circle (5pt);
     \node[right] at  (0.15,0.65)  {$\scriptstyle \ell$};
     \node[below] at (0,-0.1) {$ \scriptstyle i^{ \scriptstyle (1)}$};
      \node[above] at (0,1.3) {$ i^{ \scriptstyle (1)}$};
\end{tikzpicture}}
-
\gamma_i
\hackcenter{
\begin{tikzpicture}[scale=.8]
  \draw[ultra thick, blue] (0,-0.1)--(0,1.3);
  \draw[thick, fill=yellow]  (0,0.85) circle (9pt);
       \node[] at (0,0.85) {$  \scriptstyle i^\vee$};
        \draw[thick, fill=black]  (0,0.25) circle (5pt);
        \node[right] at  (0.15,0.25)  {$\scriptstyle \ell - 1$};
     \node[below] at (0,-0.1) {$ \scriptstyle i^{ \scriptstyle (1)}$};
      \node[above] at (0,1.3) {$ i^{ \scriptstyle (1)}$};
\end{tikzpicture}}
\end{align}
for all \(i \in I\). The condition \(i^\vee b= \delta_{b,i} i^\vee\) ensures that \(\sum_{b \in \BasisB} i^\vee b \otimes b^\vee = i^\vee \otimes i^\vee = \sum_{b \in \BasisB} b \otimes b^\vee  i^\vee\), and we will use this fact repeatedly in the computations that follow.

The {\em absolute length} \(u(\sigma)\) of a permutation \(\sigma \in \mathfrak{S}_d\) is the minimum number of (not necessarily simple) transpositions whose product is \(\sigma\). It is a well-known fact that for \(x \in \Z\), we have
\begin{align}\label{binomeq}
\sum_{\sigma \in \mathfrak{S}_d} x^{d - u(\sigma)} = x(x + 1) \cdots (x + d-1) = d! { x + d -1 \choose d}.
\end{align}

Now we prove by induction on \(d\) that the following claim holds in \(\WebLamThin(i^d, i^d)\):
\begin{align}\label{kappaexpand2}
\hackcenter{
% [inline block 2: 3 envs, 22660 chars -> data_tex | \begin{tikzpicture}[scale=.8]   \draw[ultra thick, blue] (0,-0.2)--(0,1.8);...]

which yields the claim (\ref{kappaexpand2}).
The proposition statement now follows from (\ref{kappaexpand2}, \ref{cycrelthick}), \cref{CrossAbsorb} and (\ref{binomeq}).
\end{proof}

\begin{remark}\label{remzigreg}
In many cases of interest (such as the zigzag algebras (see \cref{zigzagcat}), or trivial extension algebras of quiver algebras more generally (see \cref{CrossAbsorb})), the algebra \(A\) is non-negatively graded, with the degree-zero component spanned by the orthogonal idempotents \(I\), and with the trace map having degree \(-m\) for some \(m \in \Z_{>0}\). In this setting \cref{pureidemprop} gives a direct construction of most natural choices of cyclotomic quotient categories \(\WebLam\).

Indeed, in this setting the top degree \(m\) component of \(A\) is spanned by the dual elements \(I^\vee\), so \(A\) satisfies the first condition in \cref{pureidemprop}. Moreover, in this setting, \(\AffWebAaI\) inherits a natural grading as well, taking the \(x\)-thick affine dot to have degree \(xm\). Thus if one wants to construct a cyclotomic quotient \(\WebLam\) which preserves this grading, the only coherent choices of cyclotomic datum are as presented in \cref{pureidemprop}.
\end{remark}

\subsubsection{Regularity in the \(\a = A_{\bar 0}\) case}\label{reg3}
 In this section we prove that when \(\a = A_{\bar 0}\), {\em every} choice of cyclotomic datum \(\Lambda = (\bL, \bc)\) is regular, and thus \(\WebLam\) is well-defined. Understanding the explicit definition of the cyclotomic ideal \(\mathcal{I}^\Gamma\) still requires computation in general. 

Throughout this subsection we assume \(\a = A_{\bar 0}\). Note that the symmetric group \(\mathfrak{S}_d\) acts on the left of \(W_{d,i}:=\WebAffThin(i^d,i^d)\) by conjugating diagrams:
\begin{align*}
\\
\omega \cdot
\hackcenter{
\begin{overpic}[height=18mm]{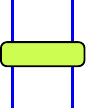}
  \put(12,-1){\makebox(0,0)[t]{$\scriptstyle i^{(1)}$}}
   \put(67,-1){\makebox(0,0)[t]{$\scriptstyle i^{(1)}$}}      
     \put(12,101){\makebox(0,0)[b]{$\scriptstyle i^{(1)}$}}
   \put(67,101){\makebox(0,0)[b]{$\scriptstyle i^{(1)}$}}       
                         \put(40,10){\makebox(0,0)[]{$\scriptstyle \cdots$}}   
                          \put(40,90){\makebox(0,0)[]{$\scriptstyle \cdots$}}   
        \put(42,50){\makebox(0,0)[]{$\scriptstyle D$}}   
\end{overpic}
}
\;\;
=
\;\;
\hackcenter{
\begin{overpic}[height=18mm]{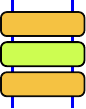}
  \put(12,-1){\makebox(0,0)[t]{$\scriptstyle i^{(1)}$}}
   \put(67,-1){\makebox(0,0)[t]{$\scriptstyle i^{(1)}$}}      
     \put(12,101){\makebox(0,0)[b]{$\scriptstyle i^{(1)}$}}
   \put(67,101){\makebox(0,0)[b]{$\scriptstyle i^{(1)}$}}       
                         \put(40,-1){\makebox(0,0)[]{$\scriptstyle \cdots$}}   
                          \put(40,101){\makebox(0,0)[]{$\scriptstyle \cdots$}}   
        \put(40,50){\makebox(0,0)[]{$\scriptstyle D$}}   
                \put(40,22){\makebox(0,0)[]{$\scriptstyle \omega$}}  
        \put(40,78){\makebox(0,0)[]{$\scriptstyle \omega^{-1}$}}  
\end{overpic}
}
\\
\end{align*}
We write \(W_{d,i}^{\mathfrak{S}_d}\) for the \(\mathfrak{S}_d\)-invariants in \(W_{d,i}\). By \cref{normordbasis77} we have that \(W_{d,i}\) has \(\k\)-basis given by normally-ordered diagrams of the form
\begin{align*}
\\
\hackcenter{}
\eta_{([\bt, \bb], \sigma)}:=
\hackcenter{
\begin{overpic}[height=15mm]{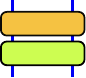}
  \put(15,-1){\makebox(0,0)[t]{$\scriptstyle i^{(1)}$}}
   \put(85,-1){\makebox(0,0)[t]{$\scriptstyle i^{(1)}$}}      
     \put(15,92){\makebox(0,0)[b]{$\scriptstyle i^{(1)}$}}
   \put(85,92){\makebox(0,0)[b]{$\scriptstyle i^{(1)}$}}       
                         \put(50,-1){\makebox(0,0)[]{$\scriptstyle \cdots$}}   
                          \put(50,101){\makebox(0,0)[]{$\scriptstyle \cdots$}}   
        \put(50,62){\makebox(0,0)[]{$\scriptstyle \sigma$}}   
                \put(50,27){\makebox(0,0)[]{$\scriptstyle [\bt, \bb]$}}  
\end{overpic}
}
=
\hackcenter{
\begin{overpic}[height=15mm]{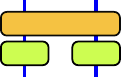}
  \put(15,-1){\makebox(0,0)[t]{$\scriptstyle i^{(1)}$}}
   \put(85,-1){\makebox(0,0)[t]{$\scriptstyle i^{(1)}$}}      
     \put(15,65){\makebox(0,0)[b]{$\scriptstyle i^{(1)}$}}
   \put(85,65){\makebox(0,0)[b]{$\scriptstyle i^{(1)}$}}       
                         \put(50,-1){\makebox(0,0)[]{$\scriptstyle \cdots$}}   
                          \put(50,65){\makebox(0,0)[]{$\scriptstyle \cdots$}}   
        \put(50,45){\makebox(0,0)[]{$\scriptstyle \sigma$}}   
                \put(20,20){\makebox(0,0)[]{$\scriptstyle [t_1,b_1]$}}  
                \put(80,20){\makebox(0,0)[]{$\scriptstyle [t_d,b_d]$}}  
\end{overpic}
}
\\
\end{align*}
where \(\sigma \in \mathfrak{S}_d\) and \([\bt, \bb] = ([t_1, b_1], \ldots, [t_d,b_d]) \in {}_i\BasisP_i^d\), 
 indexing the basis morphisms for \(W_{d,i}\) using the nomenclature in \cref{OrderSecTemp}.
We write \(Y_{d,i}:=\{([\bt, \bb], \sigma) \mid [\bt, \bb] \in {}_i\BasisP_i^d, \sigma \in \mathfrak{S}_d\}\) for the indexing set of these normally-ordered diagrams. (We of course have \(\eta_{([\bt, \bb], \sigma)} = \eta_{\bmu}\) for some \(\bmu \in \mathcal{M}(i^d,i^d)\), but in this section it will be more convenient to index via \(Y_{d,i}\)).

 There is a left \(\mathfrak{S}_d\)-action on \(\pm Y_{d,i}\) by 
\begin{align*}
([\bt, \bb], \sigma)^\omega:= (-1)^{\langle [\bt, \bb] ; \omega\rangle}([\bt, \bb]^\omega, \sigma^\omega) = (-1)^{\langle \bb ; \omega\rangle}(([t_{\omega 1}, b_{\omega 1}], \ldots, [t_{\omega d}, b_{\omega d}], \omega \sigma \omega^{-1}),
\end{align*}
where
\begin{align*}
\langle \bb; \omega\rangle = \sum_{\substack{r< s \\ \omega r > \omega s}} \bar b_r \bar b_s.
\end{align*}
For \(([\bt, \bb], \sigma) \in Y_{d,i}\), write:
\begin{align*}
\mathfrak{S}_{([\bt, \bb], \sigma)} &:= \{\omega \in \mathfrak{S}_d \mid ([\bt, \bb], \sigma)^\omega = ([\bt, \bb], \sigma)\}\\
\mathfrak{S}^\pm_{([\bt, \bb], \sigma)} &:= \{\omega \in \mathfrak{S}_d \mid ([\bt, \bb], \sigma)^\omega = \pm([\bt, \bb], \sigma)\}\\
\mathcal{O}^\pm_{([\bt, \bb], \sigma)} &:= \{([\bt, \bb]', \sigma') \mid ([\bt, \bb], \sigma)^\omega = \pm ([\bt, \bb]', \sigma')\}.
\end{align*}
We designate by \(\mathscr{D}_{([\bt, \bb], \sigma)}\) a set of minimal coset representatives for \(\mathfrak{S}_{([\bt, \bb], \sigma)} \backslash \mathfrak{S}_d\), and we designate by \(\mathscr{D}^\pm_{([\bt, \bb], \sigma)}\) a set of minimal coset representatives for \(\mathfrak{S}_{([\bt, \bb], \sigma)}^\pm \backslash \mathfrak{S}_d\). 

We choose an ordering \(\succ\) on \(Y_{d,i}\) such that \(([\bt, \bb], \sigma) \succ ([\bt, \bb]', \sigma')\) provided:
\begin{enumerate}
\item \(t_1 + \cdots + t_d > t_1' + \cdots +t_d'\) (i.e. \((([\bt, \bb], \sigma)\) has higher affine degree than \(([\bt, \bb]', \sigma')\)), or;
\item \(t_1 + \cdots + t_d = t_1' + \cdots +t_d'\), and \([\bt, \bb] > [\bt, \bb]'\) in the lexicographic order induced by the total order on \({}_i \BasisP_i\), or;
\item \([\bt, \bb] = [\bt, \bb]'\) and \(\omega > \omega'\).
\end{enumerate}
Note that \(\prec\) is slightly different than the order \(\lessdot\) on \(Y_{d,i}\) defined in \cref{OrderSecTemp}, and is more useful for the purposes of this section.
Under the ordering \(\prec\), every orbit \(\mathcal{O}^\pm_{([\bt, \bb], \sigma)}\) has a unique dominant member. We write 
\begin{align*}
\hat{Y}_{d, i}^\pm = \{([\bt, \bb], \sigma) \in Y_{d,i} \mid ([\bt, \bb], \sigma) \textup{ is \(\prec\)-dominant in }\mathcal{O}_{([\bt, \bb], \sigma)}^\pm\}.
\end{align*}
Note that if \(([\bt, \bb], \sigma) \in \hat{Y}_{d, i}^\pm\), then \([\bt, \bb] = ([t_{j_1}, b_{j_1}]^{m_1}, \ldots, [t_{j_q}, b_{j_q}]^{m_q})\) for some \(q, m_1, \ldots, m_q > 0\), where \([t_{j_r}, b_{j_r}] > [t_{j_s}, b_{j_s}]\) if \(r < s\). Writing \(\mathfrak{S}_{[\bt, \bb]} := \mathfrak{S}_{m_1} \times \cdots \times \mathfrak{S}_{m_q}\), note that we have \(\mathfrak{S}_{([\bt, \bb], \sigma)} \leq \mathfrak{S}_{[\bt, \bb]}\). 
Finally, define
\begin{align*}
\hat{Y}_{d,i} := \{([\bt, \bb], \sigma) \in \hat{Y}_{d,i}^\pm \mid \mathfrak{S}_{([\bt, \bb], \sigma)} = \mathfrak{S}_{([\bt, \bb], \sigma)}^\pm\}.
\end{align*}

\begin{lemma}\label{L0A}
Let \(([\bt, \bb], \sigma) \in Y_{d,i}\).  
\begin{enumerate}
\item We have that \(\omega^{-1} \eta_{([\bt, \bb], \sigma)} \omega = (-1)^{\langle [\bt, \bb]; \omega \rangle} \eta_{([\bt, \bb]^{\omega}, \sigma^\omega)}\) modulo terms of lower affine degree.
\item We have \(\omega^{-1} \eta_{([\bt, \bb], \sigma)} \omega = \eta_{([\bt, \bb], \sigma)}\) if \(([\bt, \bb], \sigma) \in \hat{Y}_{d,i}\) and \(\omega \in \mathfrak{S}_{([\bt, \bb], \sigma)}\).
\end{enumerate}
\end{lemma}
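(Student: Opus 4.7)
For Part (1), the plan is to slide the bottom $\omega$-crossings of the diagram $\omega^{-1} \eta_{([\bt, \bb], \sigma)} \omega$ upward past the decoration $[\bt, \bb]$. Each affine dot traverses a crossing via \cref{DotCrossRel}: its leading term merely permutes the dot to the other strand, while the remaining terms all have strictly lower affine degree. Each coupon traverses a crossing via the intertwining relations \cref{AaIntertwine} (valid for same-color thin strands by \cref{CoxArb}) with no correction. Once complete, the decoration $[\bt, \bb]$ has been reordered into $[\bt, \bb]^\omega$ at the bottom, and the three stacked crossing patterns $\omega, \sigma, \omega^{-1}$ (read bottom-to-top) collapse into the single pattern $\omega\sigma\omega^{-1} = \sigma^\omega$. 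The accompanying sign $(-1)^{\langle \bb ; \omega\rangle}$ accumulates from the super-convention that stacked coupons at equal height have the left one slightly above the right: each inversion $(r,s)$ of $\omega$ for which both $b_r$ and $b_s$ are odd contributes a factor $(-1)^{\bar b_r \bar b_s}$, and summing these over the inversions of $\omega$ produces $\langle \bb ; \omega \rangle$.

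For Part (2), the stabilizer hypotheses force $[\bt, \bb]^\omega = [\bt, \bb]$, $\sigma^\omega = \sigma$, and $(-1)^{\langle \bb ; \omega \rangle} = 1$ (the last because $\mathfrak{S}_{([\bt,\bb],\sigma)} = \mathfrak{S}_{([\bt,\bb],\sigma)}^\pm$). Thus Part (1) directly yields $\omega^{-1}\eta_{([\bt,\bb],\sigma)}\omega = \eta_{([\bt,\bb],\sigma)} + X$ for some $X$ of strictly lower affine degree, and the remaining task is to verify that $X = 0$. The plan is to factor $\eta_{([\bt, \bb], \sigma)}$ through its block-constant thick form: writing $[\bt,\bb] = ([t_{j_1}, b_{j_1}]^{m_1}, \ldots, [t_{j_q}, b_{j_q}]^{m_q})$, we realize $\eta_{([\bt, \bb], \sigma)}$ as an explosion of a thick diagram on $q$ strands of thicknesses $m_1, \ldots, m_q$, each carrying a single dot--coupon pair of its block's data and connected to the thin boundary by merges and splits. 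Since $\omega$ stabilizes both $[\bt, \bb]$ and $\sigma$, the conjugation inserts only intra-block crossings at the bottom and top of the exploded picture, and these are absorbed by the adjacent merges and splits via \cref{CrossAbsorb}, leaving the thick diagram, and hence $\eta_{([\bt, \bb], \sigma)}$ itself, unchanged.

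The main obstacle is in Part (2), specifically verifying that the factoring through a thick diagram is consistent with an arbitrary $\omega$ in the full stabilizer: the condition $\omega\sigma = \sigma\omega$ may couple $\omega$ to the block structure of $\sigma$ in subtle ways that demand careful decomposition. An alternative route, should the direct factoring prove cumbersome, would be to invoke asymptotic faithfulness of the defining representation (\cref{defrepind77}) and verify that $H_n(\omega)$ commutes with $H_n(\eta_{([\bt, \bb], \sigma)})$ in $\catEnd(\modgl)$ for all sufficiently large $n$, since the image of any stabilizer element manifestly permutes tensor factors in a manner compatible with the signed dot-and-coupon action encoded by $\eta_{([\bt, \bb], \sigma)}$.
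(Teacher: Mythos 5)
Part (1) of your argument is correct and matches the paper's intent (which simply declares this part "obvious from the relations"): sliding $\omega$-crossings through the decorations via \cref{DotCrossRel} contributes the leading term plus lower affine degree, coupons slide through by \cref{AaIntertwine}/\cref{CoxArb}, and the sign $(-1)^{\langle \bb;\omega\rangle}$ comes from the super-interchange convention.

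Part (2) is where there is a genuine gap. Your thick-strand factoring does not hold on the nose: writing $\eta_{([\bt,\bb],\textup{id})}$ as (split)$\circ$(thick decorations)$\circ$(merge) is not an equality. Composing merge-then-split on $m_k$ thin strands of the same color produces $\sum_{\tau\in\mathfrak{S}_{m_k}}\tau$ by \cref{bigwishtosym} (the Green-product/MS-relation), not $m_k!\cdot\textup{Id}$ on thin strands. Consequently the diagram built from thick strands is the $\mathfrak{S}_{[\bt,\bb]}$-symmetrization of $\eta_{([\bt,\bb],\textup{id})}$, not $\eta_{([\bt,\bb],\textup{id})}$ itself. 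The absorption argument (\cref{CrossAbsorb}) therefore only proves that the symmetrized diagram $\sum_{\tau}\tau^{-1}\eta_{([\bt,\bb],\textup{id})}\tau$ is invariant under intra-block crossings, which is trivially true and strictly weaker than what Part (2) asserts. (Trying to extract the summand one wants from the symmetrization would require knowing exactly the commutation you are trying to prove — the argument becomes circular.) Moreover, the permutation $\sigma$ generally does not respect the block partition of $[\bt,\bb]$, so it has no natural thick-strand description; you quietly sidestep this by leaving $\sigma$ at the top, but then the thick picture is only being applied to the decoration layer, where the knothole factor destroys the factoring.

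The paper's proof avoids these issues by a simpler direct computation. Factor $\eta_{([\bt,\bb],\sigma)} = \eta_{(i^d,\sigma)}\circ\eta_{([\bt,\bb],\textup{id})}$ into the crossing layer and the decoration layer. Since $\omega\in\mathfrak{S}_{([\bt,\bb],\sigma)}$ centralizes $\sigma$ and since crossing diagrams satisfy the symmetric-group relations exactly (\cref{Cox}, \cref{CoxArb}), the conjugate $\omega^{-1}\eta_{(i^d,\sigma)}\omega$ equals $\eta_{(i^d,\sigma)}$ on the nose. For the decoration layer, $\omega$ lies in $\mathfrak{S}_{[\bt,\bb]}$, so it only swaps strands carrying \emph{identical} dot-and-coupon data; the crucial fact is that a crossing with equal dots on both legs slides through \emph{exactly} — the second identity of \cref{righttoleftthin}, with no teleporter correction — and the coupons slide through by \cref{AaIntertwine}, producing precisely the sign $(-1)^{\langle[\bt,\bb];\omega\rangle}$, which is $+1$ because $\omega$ is in the honest (unsigned) stabilizer. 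No thick strands, no torsion argument, no appeal to the defining representation. Your alternative route via \cref{defrepind77} would in principle work, but declaring the commutation "manifest" is not a proof; you would still have to verify that the Casimir and flip maps commute in $\catEnd(\modgl)$, which is roughly the same amount of work.
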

\begin{proof}
(1) is obvious from relations in \(W_{d,i}\). For (2), note that if \(([\bt, \bb], \sigma) \in \hat{Y}_{d,i}\), then \(\mathfrak{S}_{([\bt, \bb], \sigma)} < \mathfrak{S}_{[\bt, \bb]}\), so by \cref{righttoleftthin} we have 
\begin{align*}
\omega^{-1} \eta_{([\bt, \bb], \sigma)} \omega  &= \omega^{-1} \eta_{(i^d, \sigma)}\eta_{([\bt, \bb], \textup{id})} \omega =
 \omega^{-1} \eta_{(i^d, \sigma)} \omega \circ \omega^{-1} \eta_{([\bt, \bb], \textup{id})} \omega\\
 &=  \eta_{(i^d, \sigma)} \circ (-1)^{\langle [\bt, \bb]; \omega \rangle} \eta_{([\bt, \bb], \textup{id})} = \eta_{([\bt, \bb], \sigma)},
\end{align*}
where \((-1)^{\langle ([\bt, \bb], \sigma) \rangle} = 1\) since \(\mathfrak{S}_{([\bt, \bb], \sigma)} = \mathfrak{S}_{([\bt, \bb], \sigma)}^\pm\).
\end{proof}

Now, for \(([\bt, \bb], \sigma) \in \hat{Y}_{d,i}\), we define:
\begin{align*}
\xi_{([\bt, \bb], \sigma)} = \sum_{\tau \in \mathscr{D}_{([\bt, \bb], \sigma)}} \tau^{-1} \eta_{([\bt, \bb], \sigma)} \tau \in W_{d,i}.
\end{align*}

\begin{lemma}\label{L0B}
For all \(([\bt, \bb], \sigma) \in \hat{Y}_{d,i}\), we have that \(\xi_{([\bt, \bb], \sigma)} \in W_{d,i}^{\mathfrak{S}_d}\) and \(\xi_{([\bt, \bb], \sigma)} - \eta_{([\bt, \bb], \sigma)} \) is a \(\k\)-linear combination of terms of the form \(\eta_{([\bt, \bb]', \sigma')}\), where \(([\bt, \bb]', \sigma') \prec ([\bt, \bb], \sigma)\).
\end{lemma}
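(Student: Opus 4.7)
The plan is to prove the two assertions of the lemma separately, with both following readily from Lemma~\ref{L0A} together with the dominance condition baked into the definition of $\hat{Y}_{d,i}$.

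For the $\mathfrak{S}_d$-invariance of $\xi_{([\bt, \bb], \sigma)}$, fix $\omega \in \mathfrak{S}_d$ and compute
\begin{align*}
\omega^{-1} \xi_{([\bt, \bb], \sigma)} \omega = \sum_{\tau \in \mathscr{D}_{([\bt, \bb], \sigma)}} (\tau \omega)^{-1} \eta_{([\bt, \bb], \sigma)} (\tau \omega).
\end{align*}
The key step is to observe that right multiplication by $\omega$ permutes the cosets $\mathfrak{S}_{([\bt, \bb], \sigma)} \tau$, so for each $\tau \in \mathscr{D}_{([\bt, \bb], \sigma)}$ there is a unique $\tau' \in \mathscr{D}_{([\bt, \bb], \sigma)}$ and a unique $\rho \in \mathfrak{S}_{([\bt, \bb], \sigma)}$ with $\tau \omega = \rho \tau'$. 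I would then use Lemma~\ref{L0A}(2), which applies because $([\bt, \bb], \sigma) \in \hat{Y}_{d,i}$, to conclude that $\rho^{-1} \eta_{([\bt, \bb], \sigma)} \rho = \eta_{([\bt, \bb], \sigma)}$. Substituting this back gives $\omega^{-1} \xi_{([\bt, \bb], \sigma)} \omega = \sum_{\tau'} (\tau')^{-1} \eta_{([\bt, \bb], \sigma)} \tau' = \xi_{([\bt, \bb], \sigma)}$.

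For the leading-term assertion, I would isolate the summand corresponding to $\tau = 1 \in \mathscr{D}_{([\bt, \bb], \sigma)}$, which contributes $\eta_{([\bt, \bb], \sigma)}$ exactly. For each remaining $\tau \in \mathscr{D}_{([\bt, \bb], \sigma)} \setminus \{1\}$, Lemma~\ref{L0A}(1) gives
\begin{align*}
\tau^{-1} \eta_{([\bt, \bb], \sigma)} \tau = (-1)^{\langle [\bt, \bb]; \tau \rangle} \eta_{([\bt, \bb]^\tau, \sigma^\tau)} + (\text{lower affine degree terms}).
\end{align*}
Here the essential point is that since $([\bt, \bb], \sigma) \in \hat{Y}_{d,i}$, we have $\mathfrak{S}_{([\bt, \bb], \sigma)} = \mathfrak{S}^\pm_{([\bt, \bb], \sigma)}$, so $\tau \notin \mathfrak{S}^\pm_{([\bt, \bb], \sigma)}$, and hence $([\bt, \bb]^\tau, \sigma^\tau) \neq ([\bt, \bb], \sigma)$. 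The dominance of $([\bt, \bb], \sigma)$ in its $\pm$-orbit (definition of $\hat{Y}^\pm_{d,i}$) then forces $([\bt, \bb]^\tau, \sigma^\tau) \prec ([\bt, \bb], \sigma)$. All lower-affine-degree correction terms are automatically $\prec$-smaller as well by clause (1) of the definition of $\prec$.

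Combining these observations, $\xi_{([\bt, \bb], \sigma)} - \eta_{([\bt, \bb], \sigma)}$ is a $\k$-linear combination of basis elements $\eta_{([\bt, \bb]', \sigma')}$ each strictly below $([\bt, \bb], \sigma)$ in the $\prec$-order, as required. The main subtlety to watch for is ensuring that the $\pm$-sign condition in the definition of $\hat{Y}_{d,i}$ (as opposed to the broader $\hat{Y}^\pm_{d,i}$) is precisely what is needed in the invariance argument, since without it a conjugating element of $\mathfrak{S}^\pm_{([\bt, \bb], \sigma)} \setminus \mathfrak{S}_{([\bt, \bb], \sigma)}$ would only fix $\eta_{([\bt, \bb], \sigma)}$ up to a sign.
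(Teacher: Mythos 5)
Your proof is correct and follows essentially the same route as the paper: the invariance argument via the bijection $\tau \mapsto \tau_\omega$ on $\mathscr{D}_{([\bt,\bb],\sigma)}$ together with Lemma~\ref{L0A}(2), and the leading-term argument by applying Lemma~\ref{L0A}(1) to each summand and invoking $\prec$-dominance of $([\bt,\bb],\sigma)$ within its $\pm$-orbit. Your version is in fact slightly more explicit than the paper's about why the $\tau=1$ term is the unique contribution to the leading coefficient (the paper instead silently uses $\mathscr{D}_{([\bt,\bb],\sigma)}=\mathscr{D}^{\pm}_{([\bt,\bb],\sigma)}$, which follows from $\mathfrak{S}_{([\bt,\bb],\sigma)}=\mathfrak{S}^{\pm}_{([\bt,\bb],\sigma)}$), but the content is the same.
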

\begin{proof}
Let \(\omega \in \mathfrak{S}_d\). For each \(\tau \in \mathscr{D}_{([\bt, \bb], \sigma)}\), there exists \(\tau_{\omega} \in \mathscr{D}_{([\bt, \bb], \sigma)}\), \(\nu_{\omega} \in \mathfrak{S}_{([\bt, \bb], \sigma)}\) such that \(\tau \omega = \nu_{\omega} \tau_{\omega}\). Then we have
\begin{align*}
\omega^{-1} \xi_{([\bt, \bb], \sigma)} \omega
&= 
\sum_{\tau \in \mathscr{D}_{([\bt, \bb], \sigma)}}
\omega^{-1} \tau^{-1}
\eta_{([\bt,\bb], \sigma)}
\tau \omega
=
\sum_{\tau \in \mathscr{D}_{([\bt, \bb], \sigma)}}
\tau_{\omega}^{-1} \nu_{\omega}^{-1}
\eta_{([\bt,\bb], \sigma)} 
\nu_{\omega} \tau_{\omega}\\
&=
\sum_{\tau \in \mathscr{D}_{([\bt, \bb], \sigma)}}
\tau_{\omega}^{-1}
\eta_{([\bt,\bb], \sigma)} 
 \tau_{\omega}
 =
 \xi_{([\bt, \bb], \sigma)},
\end{align*}

where the last equality follows from \cref{L0A}(ii). This proves the first claim. For the second claim, note that by \cref{L0A}(i), we have the following equalities modulo the \(\k\)-span of terms of the form \(\eta_{([\bt, \bb]', \sigma')}\) with lower affine degree than \(([\bt, \bb], \sigma)\):
\begin{align*}
\xi_{([\bt, \bb], \sigma)}
&\equiv \sum_{\tau \in \mathscr{D}_{([\bt, \bb], \sigma)}} (-1)^{\langle [\bt, \bb]; \tau \rangle} \eta_{([\bt, \bb]^\tau, \sigma^\tau)} 
\equiv \sum_{\tau \in \mathscr{D}^\pm_{([\bt, \bb], \sigma)}} (-1)^{\langle [\bt, \bb]; \tau \rangle} \eta_{([\bt, \bb]^\tau, \sigma^\tau)} \\
&\equiv \eta_{([\bt, \bb], \sigma)} + \sum_{\substack{([\bt, \bb]', \sigma') \in \mathcal{O}_{([\bt, \bb], \sigma)}^\pm \\ ([\bt, \bb]', \sigma') \neq ([\bt, \bb], \sigma)}} c_{([\bt, \bb]', \sigma')} \eta_{([\bt, \bb]', \sigma')},
\end{align*}
for some \(c_{([\bt, \bb]', \sigma')} \in \k\). Since \(([\bt, \bb], \sigma) \in \hat{Y}_{d,i}\), it is the most dominant member of \(\mathcal{O}_{([\bt, \bb], \sigma)}^\pm\), proving the claim.
\end{proof} 

\begin{lemma}\label{L0C}
We have \(W_{d,i}^{\mathfrak{S}_d} = \k\{ \xi_{([\bt, \bb], \sigma)} \mid ([\bt, \bb], \sigma) \in \hat{Y}_{d,i}\}\).
\end{lemma}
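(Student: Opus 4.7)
The plan is to prove the two inclusions separately. The $\supseteq$ direction is immediate from \cref{L0B}, which already shows that each $\xi_{([\bt,\bb],\sigma)}$ with $([\bt,\bb],\sigma) \in \hat Y_{d,i}$ is $\mathfrak{S}_d$-invariant.

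For $\subseteq$, I would induct on the maximal affine degree $N$ appearing in the expansion of a given invariant $w \in W_{d,i}^{\mathfrak{S}_d}$ in the basis $\{\eta_{([\bt,\bb],\sigma)}\}$. Let $W_{d,i}^{(\leq N)}$ be the span of the basis elements with $t_1+\cdots+t_d \leq N$, and $W_{d,i}^{(N)}$ the associated graded piece. Conjugation preserves this filtration, and by \cref{L0A}(i) the induced $\mathfrak{S}_d$-action on $W_{d,i}^{(N)}$ is given \emph{exactly} by the signed permutation action $\omega \cdot \overline{\eta_{([\bt,\bb],\sigma)}} = (-1)^{\langle[\bt,\bb];\omega\rangle}\overline{\eta_{([\bt,\bb]^\omega,\sigma^\omega)}}$, with no lower-order corrections. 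This cleanly separates the inductive problem degree-by-degree.

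The base case $N=0$ reduces to the classical fact that the $\mathfrak{S}_d$-invariants of the group algebra $\k[\mathfrak{S}_d]$ under conjugation are spanned by conjugacy class sums, and these are precisely the $\xi_{(i^d,\sigma)}$ (the sign character is trivial in the dot-free case, so $\mathfrak{S}_{(i^d,\sigma)} = \mathfrak{S}_{(i^d,\sigma)}^\pm$ automatically). For the inductive step, project $w$ to its top-degree image $\bar w \in W_{d,i}^{(N)}$; by the previous paragraph $\bar w$ is invariant under the signed permutation action. Decompose the degree-$N$ index set into $\mathfrak{S}_d$-orbits (forgetting signs) and pick the $\prec$-dominant representative $x \in \hat Y_{d,i}^\pm$ of each orbit. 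If $x \in \hat Y_{d,i}^\pm \setminus \hat Y_{d,i}$, then some $\omega$ fixes $x$ setwise with sign $-1$, forcing the coefficient $c_x$ of $\bar\eta_x$ to satisfy $c_x = -c_x$, hence $c_x = 0$ since $\k$ is a domain of characteristic zero; equivariance then propagates this vanishing across the orbit. If instead $x \in \hat Y_{d,i}$, so that $\mathfrak{S}_x = \mathfrak{S}_x^\pm$, the orbit-supported invariants form a rank-one $\k$-submodule generated by the orbit sum, and \cref{L0A}(i) together with the definition of $\xi_x$ identifies this orbit sum with $\bar\xi_x$. Consequently $\bar w = \sum_{x \in \hat Y_{d,i},\ \deg(x)=N} c_x \bar\xi_x$ for some scalars $c_x \in \k$. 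Setting $w' := w - \sum c_x \xi_x$, \cref{L0B} ensures $w'$ is invariant with top affine degree strictly less than $N$, and the induction closes.

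The main obstacle is the orbit analysis in the inductive step: verifying that $\bar\xi_x$ is independent of the coset representative choice in $\mathscr{D}_x$ (which uses exactly the defining condition $\mathfrak{S}_x = \mathfrak{S}_x^\pm$ of $\hat Y_{d,i}$), that it coincides with the natural signed orbit sum in $W_{d,i}^{(N)}$, and that orbits with nontrivial sign character on the stabilizer contribute no invariants. The last point is where the characteristic zero hypothesis on $\k$ is essential, since it is what makes the relation $c_x = -c_x$ force $c_x = 0$.
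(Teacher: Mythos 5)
Your proof is essentially the same as the paper's, organized slightly differently: where the paper inducts on the full $\prec$-dominance order (picking off the single $\prec$-maximal term with nonzero coefficient, establishing Claims~I and~II for it, and subtracting), you induct only on the maximal affine degree $N$ and then handle all degree-$N$ orbits simultaneously by passing to the associated graded piece. The key observations — that an orbit whose $\prec$-dominant representative lies in $\hat{Y}_{d,i}^\pm \setminus \hat{Y}_{d,i}$ contributes no invariants because $c_x = -c_x$ forces $c_x = 0$ (the paper's Claim~I), and that the invariants supported on a $\hat{Y}_{d,i}$-orbit are generated by the orbit sum which agrees with $\bar\xi_x$ (the paper's Claim~II together with \cref{L0B}) — are identical in substance. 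Passing to the associated graded is a clean way to package \cref{L0A}(i) and decouple the affine degree from the rest of the reduction, but it does not change the underlying argument.

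One slip worth fixing: your description of the base case $N=0$ is not correct as stated. At affine degree zero the basis elements are $\eta_{([\mathbf 0,\bb],\sigma)}$ with $\bb$ ranging over $(\mathcal B_i)^d$, not only over the trivial coupon $\bb = (i,\dots,i)$; so $W_{d,i}^{(0)}$ is a wreath-product object, not the group algebra $\k[\mathfrak{S}_d]$. In particular, the parenthetical claim that the sign character is automatically trivial in the dot-free case is false whenever $iAi$ has odd elements: $\langle [\bt,\bb];\omega\rangle$ depends on the parities of the $b_k$, not on the $t_k$. This does not break the induction, since your inductive step (orbit decomposition plus the $c_x=-c_x$ vanishing argument) already applies verbatim at $N=0$ and there is simply nothing of lower degree to reduce to; but the base case should be stated that way rather than as a reduction to $\k[\mathfrak{S}_d]$-invariants.
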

\begin{proof}
Let \(z \in W_{d,i}^{\mathfrak{S_d}}\). By \cref{L0B} it is enough to show that \(z \in \k\{ \xi_{([\bt, \bb], \sigma)} \mid ([\bt, \bb], \sigma) \in \hat{Y}_{d,i}\}\). By \cref{normordbasis77} we may write \(z = \sum_{([\bt, \bb], \sigma) \in S} p_{([\bt, \bb], \sigma)} \eta_{([\bt, \bb], \sigma)} + q\), where \(S \subseteq Y_{d,i}\) is a subset of elements of equal maximal affine degree, and \(q\) is a sum of terms of lower affine degree.

{\em Claim I: If \(([\bt, \bb], \sigma)\) is such that \(p_{([\bt, \bb], \sigma)} \neq 0\), then \(\mathfrak{S}_{([\bt, \bb], \sigma)} = \mathfrak{S}_{([\bt, \bb], \sigma)}^\pm\). } To see this, assume not. Then there exists \(([\bt, \bb]', \sigma')\) with \(p_{([\bt, \bb]', \sigma')} \neq 0\) and \(\omega \in \mathfrak{S}^\pm_{([\bt, \bb]', \sigma')} \backslash \mathfrak{S}_{([\bt, \bb]', \sigma')}\). By \cref{L0A}(i), \(\omega^{-1} z \omega \equiv \sum p_{([\bt, \bb], \sigma)} (-1)^{\langle [\bt, \bb]; \omega\rangle} \eta_{([\bt, \bb]^\omega, \sigma^\omega)}\) modulo terms of lower affine degree. Now, \(([\bt, \bb]', \sigma') = (([\bt, \bb]')^\omega, (\sigma')^\omega)\), so \(\eta_{([\bt, \bb]', \sigma')}\) arises with coefficient \(p_{([\bt, \bb]', \sigma')}\) in \(z\), and with coefficient \((-1)^{\langle [\bt, \bb]', \omega' \rangle}p_{([\bt, \bb]', \sigma')} = -p_{([\bt, \bb]', \sigma')} \) in \(\omega^{-1} z \omega\). But \(z = \omega^{-1} z \omega\), giving the desired contradiction and proving Claim I.

Now, let \(([\bu, \bd], \epsilon) \in S\) be \(\prec\)-maximal such that \(p_{([\bu, \bd], \epsilon)} \neq 0\).\\
\indent{\em Claim II: We have \(([\bu, \bd], \epsilon) \in \hat{Y}_{d,i}\).} To see this, assume not. Then there exists a more \(\prec\)-dominant \(([\bu, \bd]^\omega, \epsilon^\omega) \in \mathcal{O}_{([\bu, \bd], \epsilon)}^\pm\). Then by \cref{L0A}(i), we have that \(\omega^{-1}z \omega \equiv \sum (-1)^{\langle [\bt, \bb]; \omega \rangle} p_{([\bt, \bb], \omega)} \eta_{([\bt, \bb]^\omega, \sigma^\omega)}\) modulo terms of lower affine degree. The coefficient of \(\eta_{([\bu, \bd]^\omega, \epsilon^\omega)}\) in \(\omega^{-1} z \omega\) is \(\pm p_{([\bu, \bd], \epsilon)} \neq 0\), which contradicts the \(\prec\)-maximality of \(([\bu, \bd], \epsilon)\) since \(\omega^{-1} z \omega = z\) and proves Claim II.\\
\indent Finally, we have by \cref{L0B} that \(z- p_{([\bu, \bd], \epsilon)} \xi_{([\bu, \bd], \epsilon)} \in W_{d,i}^{\mathfrak{S}_d}\), and \(([\bt, \bb], \sigma) < ([\bu,\bd], \epsilon)\) for any \(([\bt, \bb], \sigma)\) such that \(\eta_{([\bt, \bb], \sigma)}\) arises in the support of \(z- p_{([\bu, \bd], \epsilon)} \xi_{([\bu, \bd], \epsilon)} \). The lemma statement then follows by induction on the dominance order on \(Y_{d,i}\).
\end{proof}

\begin{proposition}
Assume \(\Lambda = (\bL, \bc)\) is a cyclotomic datum for \(\mathcal{A} = (A, A_{\bar 0})\). Then \(\Lambda\) is regular.
\end{proposition}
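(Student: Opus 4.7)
The plan is to show that the LHS of \cref{regcrit}, which equals $\con_{i^{(d)}, i^{(d)}}(g)$ for $g := \sum_\bmu \kappa^{\Lambda, d, i}_\bmu \eta_\bmu \in W_{d,i}$, is divisible by $d!$ in $\AffWebAaI(i^{(d)}, i^{(d)})$; we would then take $X^{\Lambda, d, i} := \con(g)/d!$. The argument rests on two pillars: first, that $g$ is $\mathfrak{S}_d$-invariant in $W_{d,i}$; second, that $\con$ sends every element of $W_{d,i}^{\mathfrak{S}_d}$ into $d! \cdot \AffWebAaI(i^{(d)}, i^{(d)})$.

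For $\mathfrak{S}_d$-invariance of $g$, I would set $W_{d,i}^\Lambda := \k\{\eta_\bmu \mid \bmu \in \mathcal{M}^\Lambda(i^d, i^d)\} \subseteq W_{d,i}$ so that by \cref{cycthinisombasis} the canonical projection $W_{d,i}^\Lambda \to \WebLamThin(i^d, i^d)$ is a $\k$-linear isomorphism. I would then verify by induction on affine degree that $W_{d,i}^\Lambda$ is stable under the conjugation $\mathfrak{S}_d$-action, by analyzing the normal-form expansion of $\sigma \cdot \eta_\bmu$: the main term $\pm \eta_{\sigma \cdot \bmu}$ has per-strand dot profile the relabeling of $\bmu$'s (hence still bounded by $\ell - 1$), while the correction terms arising from \cref{ThinAffDotRel1} have strictly lower affine degree and, by induction, also land in $W_{d,i}^\Lambda$. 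Consequently the iso $W_{d,i}^\Lambda \cong \WebLamThin(i^d, i^d)$ is $\k[\mathfrak{S}_d]$-equivariant, and since $Z = (x^\ell)^{\otimes d}$ is manifestly $\mathfrak{S}_d$-invariant in $W_{d,i}$, its basis expansion $g$ in $W_{d,i}^\Lambda$ is invariant as well.

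With invariance in hand, \cref{L0C} writes $g = \sum_{\alpha \in \hat{Y}_{d,i}} c_\alpha \xi_\alpha$ in $W_{d,i}^{\mathfrak{S}_d}$, so $\con(g) = \sum_\alpha c_\alpha \con(\xi_\alpha)$ and it suffices to show each $\con(\xi_\alpha) \in d! \AffWebAaI$. Since iterated merges and splits of same-color thin strands absorb permutations by \cref{CrossAbsorb}, we have $\con(\tau^{-1} \eta_\alpha \tau) = \con(\eta_\alpha)$ for every $\tau$, so $\con(\xi_\alpha) = (d!/|\mathfrak{S}_\alpha|) \con(\eta_\alpha)$, reducing the problem to divisibility of $\con(\eta_\alpha)$ by $|\mathfrak{S}_\alpha|$. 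For $\alpha = ([\bt, \bb], \sigma) \in \hat{Y}_{d,i}$ whose underlying $[\bt, \bb]$ groups into blocks $[t_{j_k}, b_{j_k}]^{m_k}$, the defining condition $\mathfrak{S}_\alpha = \mathfrak{S}^\pm_\alpha$ forces $b_{j_k} \in A_{\bar 0} = \a$ whenever $m_k \geq 2$. I would then exploit the thick split-merge absorption \cref{TAaSMRel}, the thick dot-propagation \cref{AffDotRel1}, and the knothole relation \cref{KnotholeRel} to consolidate each block of $m_k$ parallel identical thin strands into a single thickness-$m_k$ thick strand (inheriting the even coupon and the accumulated affine dots) at the cost of a factor $m_k!$. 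The upshot is $\con(\eta_\alpha) = \bigl(\prod_k m_k!\bigr)\tilde X_\alpha = |\mathfrak{S}_{[\bt,\bb]}| \cdot \tilde X_\alpha$ for some $\tilde X_\alpha \in \AffWebAaI$, and since $\mathfrak{S}_\alpha \leq \mathfrak{S}_{[\bt,\bb]}$ the required divisibility by $|\mathfrak{S}_\alpha|$ follows.

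The main obstacle will be making the inductive argument for $\mathfrak{S}_d$-stability of $W_{d,i}^\Lambda$ fully rigorous: although the affine-degree filtration controls where lower-order corrections land, one must carefully track the dot-count bookkeeping through the teleporter-producing relation \cref{ThinAffDotRel1} to confirm that no correction term normalizes to a diagram with $\ell$ or more affine dots on a single strand.
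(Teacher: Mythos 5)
Your proposal follows essentially the same strategy as the paper, and in one respect is more explicit than the paper itself. The paper's first move is to observe that the $\ell$-dot diagram is manifestly $\mathfrak{S}_d$-invariant, and to assert (citing \cref{righttoleftthin} and \cref{L0C}) that consequently $\sum_\bmu \kappa^{\Lambda,d,i}_\bmu \eta_\bmu \in W_{d,i}^{\mathfrak{S}_d}$. Justifying that step really does require showing that the subspace $W_{d,i}^\Lambda = \k\{\eta_\bmu : \bmu \in \mathcal{M}^\Lambda\}$ is stable under conjugation and that the resulting identification with $\WebLamThin(i^d, i^d)$ is $\k[\mathfrak{S}_d]$-equivariant; you unfold this into an inductive argument on affine degree, which is exactly the bookkeeping the paper leaves implicit behind the citation of \cref{righttoleftthin}. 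You rightly flag that the induction needs care: the correction terms from \cref{ThinAffDotRel1} have strictly lower total affine degree, but one must still confirm the per-strand dot counts do not increase. This does work out — dots never multiply; a dot either slides along a (permuted) strand or is converted into a dot-free teleporter pair — but it was correct of you to identify it as the delicate point.

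The second half of your argument (absorbing crossings via \cref{CrossAbsorb} to get $\con(\xi_\alpha) = [\mathfrak{S}_d : \mathfrak{S}_\alpha]\con(\eta_\alpha)$, then consolidating blocks of identical thin strands into thick strands via \cref{TAaSMRel}, \cref{AffDotRel1}, and \cref{KnotholeRel} to extract a factor of $|\mathfrak{S}_{[\bt,\bb]}|$, and concluding via $[\mathfrak{S}_d:\mathfrak{S}_\alpha]\cdot|\mathfrak{S}_{[\bt,\bb]}| = d!\,[\mathfrak{S}_{[\bt,\bb]}:\mathfrak{S}_\alpha]$) is identical to the paper's computation. One small caveat: your claim that $\mathfrak{S}_\alpha = \mathfrak{S}_\alpha^\pm$ \emph{forces} $b_{j_k}$ to be even whenever $m_k \geq 2$ is not quite watertight, since a nontrivial $\sigma$ might fail to commute with the offending block transposition, in which case that transposition lies in neither $\mathfrak{S}_\alpha$ nor $\mathfrak{S}_\alpha^\pm$ and no parity constraint is extracted. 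However, in any case where a block carries an odd coupon with multiplicity $\geq 2$, the contraction $\con(\eta_\alpha)$ vanishes by odd-knothole annihilation \cref{OddKnotholeRel}, so divisibility by $d!$ holds trivially there; the paper glosses over this with the phrase ``which we may do given the assumption $\a = A_{\bar 0}$'' just as briskly as you do. Overall your proof matches the paper's route, with the $\mathfrak{S}_d$-invariance step developed more carefully.
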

\begin{proof}
Considering (\ref{kappaexpand}), it follows from \cref{righttoleftthin} and \cref{L0C} that 
\begin{align}
\sum_{\bmu\in \mathcal{M}(i^d,i^d)_{< \ell}} \kappa^{\Lambda,d,i}_{\bmu } \overline{\eta}_{\bmu}=
\underbrace{
\hackcenter{
\begin{tikzpicture}[scale=.8]
  \draw[ultra thick, blue] (0,-0.1)--(0,1.1);
   \draw[thick, fill=black]  (0,0.5) circle (5pt);
     \node[right] at  (0.15,0.5)  {$\scriptstyle \ell$};
     \node[below] at (0,-0.1) {$ \scriptstyle i^{ \scriptstyle (1)}$};
      \node[above] at (0,1.1) {$ i^{ \scriptstyle (1)}$};
      \node[] at  (0.85,0.5)  {$\scriptstyle \cdots$};
      \draw[ultra thick, blue] (1.5,-0.1)--(1.5,1.1);
   \draw[thick, fill=black]  (1.5,0.5) circle (5pt);
     \node[right] at  (1.65,0.5)  {$\scriptstyle \ell$};
     \node[below] at (1.5,-0.1) {$ \scriptstyle i^{ \scriptstyle (1)}$};
      \node[above] at (1.5,1.1) {$ i^{ \scriptstyle (1)}$};
      \node[above] at  (0.75,1.1)  {$\scriptstyle \cdots$};
       \node[below] at  (0.75,-0.1)  {$\scriptstyle \cdots$};
\end{tikzpicture}
}
}_{d \textup{ times}}
\in W_{d,i}^{\mathfrak{S}_d} =  \k\{ \xi_{([\bt, \bb], \sigma)} \mid ([\bt, \bb], \sigma) \in \hat{Y}_{d,i}\},
\end{align}
so in view of (\ref{regcrit}), to show regularity of \(\Lambda\) it suffices to show that 
\begin{align*}
\hackcenter{}
\hackcenter{
\begin{tikzpicture}[scale=.8]
  \draw[ultra thick, blue] (0.75, -0.3)--(0.75, -0.2) .. controls ++(0,0.35) and ++(0,-0.35) .. (0,0.3)--(0,0.5)--(0,0.7) .. controls ++(0,0.35) and ++(0,-0.35) .. (0.75,1.2)--(0.75,1.3);
    \draw[ultra thick, blue] (0.75, -0.3)--(0.75, -0.2) .. controls ++(0,0.35) and ++(0,-0.35) .. (1.5,0.3)--(1.5,0.5)--(1.5,0.7) .. controls ++(0,0.35) and ++(0,-0.35) .. (0.75,1.2)--(0.75,1.3);
   %\draw[ultra thick, blue] (0,0)--(0,0.05) .. controls ++(0,0.35) and ++(0,-0.35) .. (1.2,0.45)--(1.2,1.2);
  \draw[draw=black, rounded corners, thick, fill=lime] (-0.25,0.3) rectangle ++(2,0.45);
     \node[] at  (0.75,0.5)  {$\scriptstyle \xi_{([\bt, \bb], \sigma)}$};
     \node[below] at (0.75,-0.3) {$ \scriptstyle i^{ \scriptstyle (d)}$};
       \node[above] at (0.75,1.3) {$ \scriptstyle i^{ \scriptstyle (d)}$};
         \node[below] at (0,0.1) {$ \scriptstyle i^{ \scriptstyle (1)}$};
         \node[below] at (1.5,0.1) {$ \scriptstyle i^{ \scriptstyle (1)}$};
      \node[] at  (0.75,0.85)  {$\scriptstyle \cdots$};
      \node[] at  (0.75,0.15)  {$\scriptstyle \cdots$};
\end{tikzpicture}
}
\substack{
\textup{L.\ref{CrossAbsorb}}\\
=\\
\\
}
\sum_{\tau \in \mathscr{D}_{([\bt, \bb],\sigma)}}
\hackcenter{
\begin{tikzpicture}[scale=.8]
  \draw[ultra thick, blue] (0.75, -0.3)--(0.75, -0.2) .. controls ++(0,0.35) and ++(0,-0.35) .. (0,0.3)--(0,0.5)--(0,0.7) .. controls ++(0,0.35) and ++(0,-0.35) .. (0.75,1.2)--(0.75,1.3);
    \draw[ultra thick, blue] (0.75, -0.3)--(0.75, -0.2) .. controls ++(0,0.35) and ++(0,-0.35) .. (1.5,0.3)--(1.5,0.5)--(1.5,0.7) .. controls ++(0,0.35) and ++(0,-0.35) .. (0.75,1.2)--(0.75,1.3);
   %\draw[ultra thick, blue] (0,0)--(0,0.05) .. controls ++(0,0.35) and ++(0,-0.35) .. (1.2,0.45)--(1.2,1.2);
  \draw[draw=black, rounded corners, thick, fill=lime] (-0.25,0.3) rectangle ++(2,0.45);
     \node[] at  (0.75,0.5)  {$\scriptstyle [\bt, \bb]$};
     \node[below] at (0.75,-0.3) {$ \scriptstyle i^{ \scriptstyle (d)}$};
       \node[above] at (0.75,1.3) {$ \scriptstyle i^{ \scriptstyle (d)}$};
         \node[below] at (0,0.1) {$ \scriptstyle i^{ \scriptstyle (1)}$};
         \node[below] at (1.5,0.1) {$ \scriptstyle i^{ \scriptstyle (1)}$};
      \node[] at  (0.75,0.85)  {$\scriptstyle \cdots$};
      \node[] at  (0.75,0.15)  {$\scriptstyle \cdots$};
\end{tikzpicture}
}
=
[\mathfrak{S}_d:\mathfrak{S}_{([\bt, \bb], \sigma)}]
\hackcenter{
\begin{tikzpicture}[scale=.8]
  \draw[ultra thick, blue] (0.75, -0.3)--(0.75, -0.2) .. controls ++(0,0.35) and ++(0,-0.35) .. (0,0.3)--(0,0.5)--(0,0.7) .. controls ++(0,0.35) and ++(0,-0.35) .. (0.75,1.2)--(0.75,1.3);
    \draw[ultra thick, blue] (0.75, -0.3)--(0.75, -0.2) .. controls ++(0,0.35) and ++(0,-0.35) .. (1.5,0.3)--(1.5,0.5)--(1.5,0.7) .. controls ++(0,0.35) and ++(0,-0.35) .. (0.75,1.2)--(0.75,1.3);
   %\draw[ultra thick, blue] (0,0)--(0,0.05) .. controls ++(0,0.35) and ++(0,-0.35) .. (1.2,0.45)--(1.2,1.2);
  \draw[draw=black, rounded corners, thick, fill=lime] (-0.25,0.3) rectangle ++(2,0.45);
     \node[] at  (0.75,0.5)  {$\scriptstyle [\bt, \bb]$};
     \node[below] at (0.75,-0.3) {$ \scriptstyle i^{ \scriptstyle (d)}$};
       \node[above] at (0.75,1.3) {$ \scriptstyle i^{ \scriptstyle (d)}$};
         \node[below] at (0,0.1) {$ \scriptstyle i^{ \scriptstyle (1)}$};
         \node[below] at (1.5,0.1) {$ \scriptstyle i^{ \scriptstyle (1)}$};
      \node[] at  (0.75,0.85)  {$\scriptstyle \cdots$};
      \node[] at  (0.75,0.15)  {$\scriptstyle \cdots$};
\end{tikzpicture}
}
=
[\mathfrak{S}_d:\mathfrak{S}_{([\bt, \bb], \sigma)}] \cdot |\mathfrak{S}_{[\bt, \bb]}|
\hackcenter{
\begin{tikzpicture}[scale=.8]
  \draw[ultra thick, blue] (0.75, -0.3)--(0.75,1.3);
   %\draw[ultra thick, blue] (0,0)--(0,0.05) .. controls ++(0,0.35) and ++(0,-0.35) .. (1.2,0.45)--(1.2,1.2);
  \draw[draw=black, rounded corners, thick, fill=lime] (0.25,0.3) rectangle ++(1,0.45);
     \node[] at  (0.75,0.5)  {$\scriptstyle \eta_{\bmu}$};
     \node[below] at (0.75,-0.3) {$ \scriptstyle i^{ \scriptstyle (d)}$};
       \node[above] at (0.75,1.3) {$ \scriptstyle i^{ \scriptstyle (d)}$};
\end{tikzpicture}
},
\end{align*}
where \(\bmu \in \mathcal{M}(i^{(d)}, i^{(d)})\) is the composition such that 
\begin{align*}
\bmu([m_r,t_r]) = \#\{u \mid [m_u, t_u] = [m_r,t_r]\} 
\end{align*}
for all \(r \in [1,t]\). In essence, we get the last equality above by closing up all the knotholes with identical coupons and dots, which we may do given the assumption that \(\a = A_{\bar 0}\). Now note that
\begin{align*}
[\mathfrak{S}_d:\mathfrak{S}_{([\bt, \bb], \sigma)}] \cdot |\mathfrak{S}_{[\bt, \bb]}| &= [\mathfrak{S}_d: \mathfrak{S}_{[\bt, \bb]}] \cdot [\mathfrak{S}_{[\bt, \bb]}: \mathfrak{S}_{([\bt, \bb], \sigma)}] \cdot |\mathfrak{S}_{[\bt, \bb]}|\\
&= |\mathfrak{S}_d| \cdot [\mathfrak{S}_{[\bt, \bb]}: \mathfrak{S}_{([\bt, \bb], \sigma)}]  = d! [\mathfrak{S}_{[\bt, \bb]}: \mathfrak{S}_{([\bt, \bb], \sigma)}].
\end{align*}
Thus it follows from (\ref{cycrelthick}) that \(\Lambda\) is regular.
\end{proof}

\bibliographystyle{eprintamsplain}
\bibliography{Biblio}

\end{document}